\newcommand{\N}{{\mathds{N}}}
\newcommand{\Z}{{\mathds{Z}}}
\newcommand{\R}{{\mathds{R}}}
\newcommand{\C}{{\mathds{C}}}
\newcommand{\D}{{\mathfrak{D}}}
\newcommand{\A}{{\mathfrak{A}}}
\newcommand{\B}{{\mathfrak{B}}}
\newcommand{\M}{{\mathfrak{M}}}
\newcommand{\Lip}{{\mathsf{L}}}
\newcommand{\Hilbert}{{\mathscr{H}}}
\newcommand{\propinquity}[1]{{\mathsf{\Lambda}^{\#}_{#1}}}
\newcommand{\cBall}[4]{{{#1}\left[#3,#4\right]_{#2}}}
\newcommand{\Kantorovich}[1]{{\mathsf{mk}_{#1}}}
\newcommand{\sigmaKantorovich}[1]{{\mathsf{mk}^\sigma_{#1}}}
\newcommand{\Haus}[1]{{\mathsf{Haus}_{#1}}}
\newcommand{\StateSpace}{{\mathscr{S}}}
\newcommand{\unital}[1]{{\mathfrak{u}{#1}}}
\newcommand{\mongekant}{{Mon\-ge-Kan\-to\-ro\-vich metric}}
\newcommand{\qms}{quantum locally compact metric space}
\newcommand{\pqms}{proper quantum metric space}
\newcommand{\pqpms}{poin\-ted pro\-per quan\-tum me\-tric space}
\newcommand{\Lqcms}{{\JLL} quantum compact metric space}
\newcommand{\qcms}{quantum compact metric space}
\newcommand{\lcqms}{quantum locally compact metric space}
\newcommand{\unit}{1}
\newcommand{\sa}[1]{{\mathfrak{sa}\left({#1}\right)}}
\newcommand{\indicator}[1]{{p_{#1}}}
\newcommand{\compacts}[1]{{\mathcal{K}\left({#1}\right)}}
\newcommand{\PQMS}{{\mathbf{PQMS}}}
\newcommand{\PQMST}{{\mathbf{PQMST}}}
\newcommand{\corner}[2]{{\left[{#2}\middle\vert{#1}\right]}}
\newcommand{\lsa}[3]{{ \mathfrak{sa}\left[{#1}\middle|{#3}\right]_{#2}  }}
\newcommand{\Loc}[3]{{\mathfrak{Loc}\left[ {#1} \middle\vert {#3} \right]_{#2}}}
\newcommand{\Adm}{{\mathrm{Adm}}}
\newcommand{\JLL}{Lei\-bniz}
\newcommand{\dom}[1]{{\operatorname*{dom}\left({#1}\right)}}
\newcommand{\codom}[1]{{\operatorname*{codom}\left({#1}\right)}}
\newcommand{\diam}[2]{{\mathrm{diam}\left({#1},{#2}\right)}}
\newcommand{\norm}[2]{{\left\|{#1}\right\|_{#2}}}
\newcommand{\tunnelset}[4]{{\text{\calligra Tunnels}\,\left[{#3}\stackrel{{#1}}{\rightarrow}{#4}\middle\vert #2 \right]}}
\newcommand{\Jordan}[2]{{{#1}\circ{#2}}} 
\newcommand{\Lie}[2]{{\left\{{#1},{#2}\right\}}} 
\newcommand{\targetsettunnel}[3]{{\mathfrak{t}_{#1}\left({#2}\middle\vert{#3}\right)}}
\newcommand{\liftsettunnel}[3]{{\mathfrak{l}_{#1}\left({#2}\middle\vert{#3}\right)}}
\newcommand{\LipLoc}[3]{{\mathfrak{LipLoc}\left[ {#1} \middle\vert {#3} \right]_{#2}}}
\newcommand{\tunnelextent}[2]{{\chi\left({#1}\middle\vert{#2}\right)}}
\newcommand{\alg}[1]{{\mathfrak{#1}}}
\newcommand{\almostsubseteq}[2]{\;{\subseteq^{#1}_{#2}}\;}
\newcommand{\Set}[2]{{\left\{\begin{aligned} #1 \end{aligned} \middle\vert\begin{aligned} #2 \end{aligned}\right\}}}
\newcommand{\set}[2]{{\left\{{#1}\middle\vert{#2}\right\}}}
\newcommand{\closure}[1]{{\mathrm{cl}\left[{#1}\right]}}
\newcommand{\vertiii}[1]{{\left\vert\kern-0.25ex\left\vert\kern-0.25ex\left\vert #1\right\vert\kern-0.25ex\right\vert\kern-0.25ex\right\vert}}
\theoremstyle{plain}
\newtheorem{theorem}{Theorem}[subsection]
\newtheorem{corollary}[theorem]{Corollary}
\newtheorem{lemma}[theorem]{Lemma}
\newtheorem{proposition}[theorem]{Proposition}
\newtheorem{theorem-definition}[theorem]{Theorem-Definition}
\newtheorem{proposition-definition}[theorem]{Proposition-Definition}
\theoremstyle{definition}
\newtheorem{definition}[theorem]{Definition}
\newtheorem{notation}[theorem]{Notation}
\newtheorem{hypothesis}[theorem]{Hypthesis}
\theoremstyle{remark}
\newtheorem{example}[theorem]{Example}
\newtheorem{remark}[theorem]{Remark}
\renewcommand{\geq}{\geqslant}
\renewcommand{\leq}{\leqslant}
\numberwithin{equation}{subsection}
\begin{document}

\title[Topographic Gromov-Hausdorff quantum Hypertopology]{Topographic Gromov-Hausdorff quantum Hypertopology for Quantum Proper Metric Spaces}
\author{Fr\'{e}d\'{e}ric Latr\'{e}moli\`{e}re}
\email{frederic@math.du.edu}
\urladdr{http://www.math.du.edu/\symbol{126}frederic}
\address{Department of Mathematics \\ University of Denver \\ Denver CO 80208}

\date{\today}
\subjclass[2000]{Primary:  46L89, 46L30, 58B34.}
\keywords{Noncommutative metric geometry, Gromov-Hausdorff convergence, Monge-Kantorovich distance, non-unital C*-algebras, Quantum Metric Spaces, Lip-norms}

\begin{abstract}
We construct a topology on the class of {\pqpms s} which generalizes the topology of the Gromov-Hausdorff distance on proper metric spaces, and the topology of the dual propinquity on {\Lqcms s}. A {\pqpms} is a special type of {\lcqms} whose topography is proper, and with properties modeled on {\Lqcms s}, though they are usually not compact and include all the classical proper metric spaces. Our topology is obtained from an infra-metric which is our analogue of the Gromov-Hausdorff distance, and which is null only between iso\-metri\-cally iso\-morphic {\pqpms s}. Thus, we propose a new framework which extends noncommutative metric geometry, and in particular noncommutative Gro\-mov-Haus\-dorff topology, to the realm of quantum locally compact metric spaces.
\end{abstract}

\maketitle
\tableofcontents


\section{Introduction}

Gromov's distance between pointed locally compact metric spaces \cite{Gromov81} is a powerful tool of metric geometry and has found many remarkable applications \cite{Gromov}. Motivated by the apparent potential of importing this versatile instrument in the realm of quantum spaces, and inspired by the pioneering work of Connes \cite{Connes97, Connes} in noncommutative metric geometry, Rieffel introduced the quantum Gromov-Hausdorff distance \cite{Rieffel00}, a generalization of Gromov's distance to the class of \emph{compact} quantum metric spaces \cite{Rieffel98a,Rieffel99}. Yet, Gromov's distance was originally introduced for locally compact metric spaces and found many applications within this general setting, and a further generalization of his distance to non-compact quantum metric spaces proved a very challenging task. This is the task we propose to undertake in this paper. We hence shall define a topology on a class of {\lcqms s}, induced by an inframetric which we call the topographic Gromov-Hausdorff propinquity (or topographic propinquity in short). Following the tradition in topology, we call this topology on a class of spaces a hypertopology.

We thus propose a new framework which encompass, at once, the notions of quantum Gromov-Hausdorff convergence for quantum compact metric spaces, and the notion of Gromov-Hausdorff convergence between locally compact metric spaces, by constructing a generalization of the dual Gromov-Hausdorff propinquity \cite{Latremoliere13b} and the associated hypertopology on a large subclass of {\lcqms s} \cite{Latremoliere12b}, which we call the class of {\pqpms s}. Thus, all examples of convergence for the dual propinquity and for Gromov's original distance are also examples of convergence for our new metric. The study of further examples and the properties of our new hypertopology constitute the future of our project, while this paper focuses on the construction of our new hypertopology. We hope that our present work will find useful applications, in particular, wherever metric considerations occur in mathematical physics where the underlying space is not-compact, such as physics on the Moyal plane.

Noncommutative metric geometry proposes to study certain classes of noncommutative topological algebras as generalizations of algebras of Lipschitz functions over metric spaces. Thus, our work is based on the extension to the noncommutative world of the following classical picture. Let $(X,\mathsf{m})$ be a metric space. For any function $f : X\rightarrow\C$,  we define the Lipschitz constant of $f$ as:
\begin{equation}\label{Lipschitz-seminorm-eq}
\mathsf{Lip}_{\mathsf{m}} (f) = \left\{\frac{|f(x)-f(y)|}{\mathsf{m}(x,y)} : x,y \in X, x\not=y\right\}\text{,}
\end{equation}
allowing for $\mathsf{Lip}_{\mathsf{m}}$ to be infinite. The function $\mathsf{Lip}_{\mathsf{m}}$ is a seminorm on the space of Lipschitz functions $\mathfrak{L} = \Set{f : X\rightarrow\C}{\mathsf{Lip}_{\mathsf{m}}(f) < \infty }$. In particular, if $(X,\mathsf{m})$ is locally compact, then $\mathfrak{L}_0 = \Set{f \in C_0(X)}{\mathsf{Lip}_{\mathsf{m}}(f) < \infty }$ is a dense C*-subalgebra of the C*-algebra $C_0(X)$ of $\C$-valued continuous functions which vanish at infinity. Now, unless stated otherwise, we will only work with the restriction of $\mathsf{Lip}_{\mathsf{m}}$ to $\sa{\mathfrak{L}_0} = \sa{\C_0(X)}\cap\mathfrak{L}_0$, which is a dense subspace of the self-adjoint part $\sa{C_0(X)}$ of $C_0(X)$: the reason for this particular restriction will become apparent shortly.

The dual of the Lipschitz seminorm $\mathsf{Lip}_{\mathsf{m}}$ induces an extended metric $\Kantorovich{\mathsf{Lip}_{\mathsf{m}}}$ on the space $\StateSpace(C_0(X))$ of Radon probability measures over $X$, named the {\mongekant} \cite{Kantorovich40,Kantorovich58}. Kantorovich observed in \cite{Kantorovich40} that if $(X,\mathsf{m})$ is compact, then $\Kantorovich{\mathsf{Lip}_{\mathsf{m}}}$ is actually a metric whose topology is the restriction of the weak* topology of $\StateSpace(C(X))$, with $C(X)$ the C*-algebra of continuous $\C$-valued functions over $X$. Furthermore in this case, the embedding of $X$ in $\StateSpace(C(X))$ which maps to every point the Dirac probability measure at this point, becomes an isometry from $(X,\mathsf{m})$ into $(\StateSpace(C(X)),\Kantorovich{\mathsf{Lip}_{\mathsf{m}}})$. Thus, when $(X,\mathsf{m})$ is compact then the Lipschitz seminorm encodes all the metric information at the level of the C*-algebra of continuous functions.

This central observation prompted Rieffel to define in \cite{Rieffel98a, Rieffel99,Rieffel05} a compact quantum metric space as a pair $(\A,\Lip)$ of an order-unit space $\A$ and a densely-defined seminorm $\Lip$ on $\A$ such that the extended metric:
\begin{equation*}
\Kantorovich{\Lip}: \varphi,\psi \in \StateSpace(\A) \longmapsto \sup\left\{ |\varphi(a) - \psi(a) | : a\in \A, \Lip(a) \leq 1 \right\}
\end{equation*}
is in fact a metric for the weak* topology of the state space $\StateSpace(\A)$ of $\A$. When $(\A,\Lip)$ is a compact quantum metric space, the seminorm $\Lip$ is called a Lip-norm. We observe that Rieffel originally worked with order-unit spaces, thus no multiplicative structure was involved in this definition. Examples of compact quantum metric spaces include quantum tori \cite{Rieffel98a, Rieffel02},  finite dimensional C*-algebras \cite{Rieffel01}, group C*-algebras for hyperbolic groups endowed with a word metric \cite{Ozawa05}, among others. In particular, the examples coming from hyperbolic groups and their word metrics were introduced by Connes \cite{Connes89}, where noncommutative metric geometry begun. In general, a spectral triple does allow to define an extended distance on the state space of the associated C*-algebras, though no general condition has been established yet to determine whether this metric induces the weak* topology on the state space.

Rieffel then constructed a first extension of the Gromov-Hausdorff distance to the class of compact quantum metric spaces, aimed at providing a framework for some approximations results in mathematical physics (e.g. \cite{Connes97}) and at introducing tools from metric geometry to noncommutative geometry. The Gromov-Hausdorff distance between two compact metric spaces $(X,\mathsf{m}_X)$ and $(Y,\mathsf{m}_Y)$ is defined as the infimum of the Hausdorff distance between $\iota_X(X)$ and $\iota_Y(Y)$ in any compact metric space $(Z,\mathsf{m})$ for any two isometries $\iota_X$ and $\iota_Y$ from $(X,\mathsf{m}_X)$ and $(Y,\mathsf{m}_Y)$ into $(Z,\mathsf{m}_Z)$, respectively \cite{Gromov, Edwards75}. In fact, one may even restrict one's attention to admissible metrics on the disjoint union $X\coprod Y$, i.e. metrics for which the canonical injections of $X$ and $Y$ into $X\coprod Y$ are isometries.

In this context, we note that if $(Z,\mathsf{m}_Z)$ is a metric and $\iota_X: X\hookrightarrow Z$ is an isometry, and if $f : X\rightarrow\R$ is a Lipschitz function, then McShane's theorem \cite{McShane34} provides a function $g : Z \rightarrow\R$ whose Lipschitz constant equals the Lipschitz constant of $f$. Therefore, we see that $\mathsf{Lip}_{\mathsf{m}_X}$ is the quotient seminorm of $\mathsf{Lip}_{\mathsf{m}_Z}$ for the surjection $\iota_X^\ast: f \in C(Z) \twoheadrightarrow f\circ\iota_X$. Conversely, if $\mathsf{Lip}_{\mathsf{m}_X}$ is the quotient of $\mathsf{Lip}_{\mathsf{m}_Z}$ for $\iota_X^\ast$, then the map $\iota_X$ is easily seen to be an isometry. Thus, the dual notion of isometry is the notion of quotient of Lipschitz seminorms. We note that this relation relies completely on our choice to only work with real-valued Lipschitz functions. Indeed, as seen for instance in \cite{Rieffel10}, if $\iota_X : X\hookrightarrow Z$ is an isometry, then any Lipschitz $\C$-valued function $f$ on $(X,\mathsf{m}_X)$ with Lipschitz constant $L$ extends to a Lipschitz function on $(Z,\mathsf{m}_Z)$, though in general the infimum of the Lipschitz constant of all possible such extensions is strictly greater than the Lipschitz constant of $f$ on $X$ (the best one may hope in general is $\frac{4}{\pi}L$). This remark is the reason why we generally work with seminorms on dense subspaces of the self-adjoint part of C*-algebras rather than seminorms on dense subspaces of C*-algebras.

The construction of the quantum Gromov-Hausdorff distance proceeds naturally by duality from these observations. For any two compact quantum metric spaces $(\A,\Lip_\A)$ and $(\B,\Lip_\B)$, a Lip-norm on $\A\oplus\B$ is admissible when its quotient for the canonical surjections on $\A$ and $\B$ are, respectively, $\Lip_\A$ and $\Lip_\B$. The infimum of the Hausdorff distances between $\StateSpace(\A)$ and $\StateSpace(\B)$, identified with their isometric copies in $(\StateSpace(\A\oplus\B),\Kantorovich{\Lip})$, over all possible admissible Lip-norms $\Lip$, is the quantum Gromov-Hausdorff distance $\mathsf{dist}_q((\A,\Lip_\A),(\B,\Lip_\B))$ between $(\A,\Lip_\A)$ and $(\B,\Lip_\B)$.

The quantum Gromov-Hausdorff distance then allows the construct finite dimensional approximations of quantum tori by matrix algebras \cite{Latremoliere05}, and finite dimensional approximations of the C*-algebras of continuous functions on co-adjoint orbits of compact Lie groups \cite{Rieffel01,Rieffel09,Rieffel10c}, as well as to establish various continuity of families of compact quantum metric spaces. Recent research focuses on the continuity of certain C*-algebraic related structures such as projective modules \cite{Rieffel06,Rieffel09,Rieffel10,Rieffel10c}, which in turn raises the question of how to encode more of the C*-algebraic structure, a point to which we shall shortly return.

The matter of extending the full construction of Gromov, which was based on locally compact metric spaces, to the noncommutative realm, or in other terms, to extend Rieffel's construction to the non-compact setting, is a serious challenge, due to a series of important difficulties which we now address. The first issue is that the {\mongekant} is not quite as well behaved for noncompact metric spaces. 

Let $(X,\mathsf{d}_X)$ be a locally compact metric space and $\mathsf{Lip}_X$ be the associated Lipschitz constant, defined by Expression (\ref{Lipschitz-seminorm-eq}), and restricted to real-valued functions. The natural C*-algebraic dual object of the locally compact space $X$ is the C*-algebra $C_0(X)$ of $\C$-valued continuous functions vanishing at infinity on $X$, which is not a unital C*-algebra. In particular, its state space $\StateSpace(C_0(X))$ is not a weak* compact subset of the dual of $C_0(X)$, nor is it weak* locally compact. The {\mongekant} $\Kantorovich{\mathsf{Lip}_X}$ defined as the extended metric induced on $\StateSpace(C_0(X))$ by the dual of $\mathsf{Lip}_X$ is typically not a metric, as it may take the value $\infty$, and it does not generally metrizes the restriction of the weak* topology of $\StateSpace(C_0(X))$. Thus, the classical model on which the theory of compact quantum metric spaces is based does not directly extend to the noncompact setting \cite{Latremoliere12b}.

We proposed a first attempt at addressing these issues by replacing the {\mongekant} which a noncommutative analogue of the bounded-Lipschitz distance in \cite{Latremoliere05b}. We were able to obtain a very natural characterization of the densely-defined seminorms on separable C*-algebras whose associated bounded-Lipschitz metric metrize the weak* topology of the state space, in the spirit of \cite{Rieffel98a}, and this matter already included the introduction of a new topology on C*-algebras which is typically weaker than the strict topology, yet is the correct topology to consider for our metric considerations. We also note that our theory included Rieffel's compact quantum metric spaces as special cases, when the underlying C*-algebras were unital. 

Our first attempt introduced some very valuable concepts which we shall retrieve in our current work, yet we felt that it would prove interesting to study the geometry of the {\mongekant} in the noncommutative setting as well. Our second work on this subject in \cite{Latremoliere12b} proposed a solution for this problem. It is based upon a key observation by Dobrushin \cite{Dobrushin70} regarding the {\mongekant} on general metric spaces. While the {\mongekant} does not metrize the weak* topology on arbitrary subsets of Radon probability measures, it does so on sets which possess a strong form of the uniform tightness property, related to the underlying metric rather than the underlying topology. We call this property Dobrushin tightness. It is a major result regarding the topology of the {\mongekant}, and it first appears in relation with noncommutative metric geometry in \cite{Bellissard10}. However, using this result in the noncommutative setting is itself quite challenging, as it is not at all clear how to generalize the notion of Dobrushin tight sets from Radon probability measures to more general states of noncommutative C*-algebras.

The solution which we offer in \cite{Latremoliere12b} relies on the introduction of a new component to the signature of a quantum metric space: to a pair $(\A,\Lip)$ of a C*-algebra and a densely defined seminorm $\Lip$ on the self-adjoint part of $\A$, we add an Abelian C*-subalgebra $\M$ of $\A$ which contains an approximate unit for $\A$, which we call the topography of $(\A,\Lip,\M)$. The interpretation of the topography is that it provides a set of observables allowing for both local notions and notions of large scale geometry, such as Dobrushin tightness (or more specially, what we call tameness, which is a closely related concept) to be studied. Based on the notion of topography and the associated analogue of Dobrushin tightness, we proposed and characterized a notion of {\lcqms s} in a manner which extends Rieffel's notion of {\qcms s}.

Equipped with a suitable notion of {\lcqms s}, we undertake in this paper the task of generalizing Gromov's distance to an appropriate class of {\lcqms s}. Gromov's distance is a metric up to isometry when restricted to proper metric spaces, which are locally compact metric spaces whose closed balls are all compact. This notion fits quite well in our approach to {\lcqms s}, and we shall thus introduce a notion of {\pqms s}. The notion of a pointed metric space is replaced by a notion of a {\lcqms s} with a choice of a state which restricts to a Dirac probability measure on the topography. Thus, defining our class of appropriate {\pqpms s} is a natural process from the work we did in \cite{Latremoliere12b}.

The challenge of defining a hypertopology on the class of {\pqpms s} is caused by two issues which interact to make things difficult. The first issue has been a matter of much research \cite{Kerr02,Li03,Li05} including of our own \cite{Latremoliere13,Latremoliere13b,Latremoliere13c}: Rieffel's definition of the quantum Gromov-Hausdorff distance does not capture the C*-algebraic structure, and thus in particular, distance zero does not imply that underlying C*-algebras are *-isomorphic. This is a source of concern for two reasons. First of all, recent research in noncommutative metric geometry focuses on the study of continuity property for C*-algebra-related structures such as projective modules, and thus would likely prefer to work with a notion of metric convergence which behave well with respect to the C*-algebraic structure. Ensuring that distance zero implies *-isomorphism is a mean to test that the distance encodes enough information to recover the C*-algebraic structure, though in fact more is desirable. What one wants is a metric defined only using C*-algebras and Lip-norms which are in some sense well-behaved with respect to the multiplication. In particular, the Lipschitz seminorms satisfy the Leibniz inequality, and many examples of {\lcqms s} come with this property as well. 

To devise a generalized Gromov-Hausdorff distance which satisfies the above requirements proved elusive for a while. Rieffel introduced the quantum proximity \cite{Rieffel10c} when seeking for such a distance, though the proximity is not known to be a metric. A core difficulty is that the quotient of Lip-norms with the Leibniz property may fail to be Leibniz \cite{Blackadar91}. Earlier attempts at making *-isomorphism a necessary condition for distance zero all relied on replacing the state spaces with their {\mongekant} by more complex structures \cite{Kerr02,Li03}, and did not exploit any connection between Lip-norms and C*-algebraic structure. The first metric to exploit such a connection, in the form of the Leibniz property for Lip-norms, was our own quantum Gromov-Hausdorff propinquity \cite{Latremoliere13}. We then proposed a metric which only involves the {\mongekant} on state spaces, and exploit the Leibniz property, and satisfied all the desired constraints above: the dual Gromov-Hausdorff propinquity \cite{Latremoliere13b}, which is also a complete metric. We devised our new metrics in part due to the desire to have a well-behaved metric with respect to C*-algebras to work with when dealing with {\lcqms s}, whose very definition invoke the C*-algebraic structure, and in prevision that the future of noncommutative metric geometry will likely require such desirable properties in both the compact and locally compact setup. Our recent work on this subject \cite{Latremoliere14} strengthens the argument that the dual propinquity is a good candidate for a noncommutative C*-algebraic extension of the Gromov-Hausdorff distance in the compact case.

This first difficulty meets another matter specific to the noncompact setting in which we now work. The key condition of admissibility for Lip-norms, first introduced by Rieffel \cite{Rieffel00} as a mean to encode the notion of isometric embedding, is a noncommutative analogue of the extension theorem of McShane \cite{McShane34} for Lipschitz functions, as we discussed earlier in this introduction. However, when working in the noncompact setting, some care must be taken in applying McShane's result. While a Lipschitz function $f : X\rightarrow \R$ can still be extended to a Lipschitz function $g : Z \rightarrow\R$ if $X$ is a subspace of $Z$, in such a way that the Lipschitz constant of $g$ (and the sup-norm of $g$) is the same as the Lipschitz constant of $f$ (and the sup-norm of $f$), it is not always true that if $f$ vanishes at infinity, so does its extension $g$. Moreover, our work on {\lcqms s} in \cite{Latremoliere12b} led us to a characterization which involves compact subsets of the topographies. Thus, we want to work with compactly supported Lipschitz functions, and we wish to extend them to larger spaces while keeping some control on the size of the support. This is not a completely straightforward issue. While easy enough to understand in the classical picture, this problem does impose a new meaning for the notion of admissibility in the noncompact case. We believe that this feature of the locally compact setting is a surprising departure from the methods employed when defining noncommutative analogues of the Gromov-Hausdorff distance for {\qcms s}: a main contribution of this paper is indeed to find an appropriate notion to replace Rieffel's admissibility, i.e. the use of isometric embeddings, which turns out to lead us in a somewhat surprising direction.

These two main sources of problems interact to make any construction of a Gromov-Hausdorff topology on {\pqpms s} somewhat obscure: for instance, one must avoid the recourse to quotients of Leibniz Lip-norms, and invoke instead methods such as the ones found in \cite{Latremoliere13,Latremoliere13b,Latremoliere13c,Latremoliere14} if one wishes for some form of triangle inequality to hold when introducing a metric between {\pqpms s}. Yet we also must ensure that these methods may be applied, which constrained the notion of admissibility. In general, the outcome of our efforts as summarized in this paper is the construction of an inframetric, rather than a metric, i.e. a quantity which satisfies all the axioms of a metric except for a modified version of the triangle inequality, which involves a factor $2$ in our case. This is however enough to define a reasonable topology on the class of {\pqpms}, in the sense that the following desirable properties hold:
\begin{enumerate}
\item our hypertopology is characterized fully by its convergent sequences, and it is defined by an inframetric which we call the topographic Gromov-Hausdorff propinquity,
\item our hypertopology is ``Hausdorff modulo isometric isomorphism'': two {\pqpms s} which are limits of the same sequence for our topology must have their underlying C*-algebras *-iso\-morphic and their metric structures isometric,
\item the restriction of our hypertopology to the subclass of {\qcms s} is the same as the topology given by the dual Gro\-mov-Haus\-dorff propinquity; 
\item the restriction of our hypertopology to the subclass of classical pointed proper metric spaces is a weaker topology than the Gro\-mov-Haus\-dorff distance topology --- as with the quantum Gromov-Hausdorff distance or the dual propinquity, our use of the state space, rather than the pure state space, in our definition of convergence, weakens the inframetric we build compared with the original Gromov construction (we essentially define a form of Gromov-Kantorovich-Hausdorff distance).
\end{enumerate}

Thus, we meet the basic requirements for an extension of Gromov's topology to the noncommutative setting, and incorporate all known examples of convergences for both {\Lqcms s} and for classical proper metric spaces. Moreover, we strive for a minimal theory. All our choices of definition in this paper are designed to provide a specific and needed tool for the main argument of the paper, which is the separation property of our new hypertopology; we avoid making any more demand than what is the bare necessity imposed to us by our approach to {\lcqms s}. We propose a framework which is quite adaptable, following the model we initiated in \cite{Latremoliere13,Latremoliere13b}: we define, in fact, a family of hypertopologies which are induced by classes of what we call tunnels, under appropriate conditions. This flexibility is important as it will allow to strengthen the topology if desired, depending on future developments of the field.

Our paper begins with an exposition of the notion of Gromov-Hausdorff convergence for pointed, proper metric spaces suitable for our generalization to the noncommutative setting. The material in this section is presented as we wish to make very clear the connection between our work and the distance of Gromov in \cite{Gromov81}. The second section of this paper then presents a summary of the foundations of the theory of {\lcqms s}, as we proposed in our earlier research \cite{Latremoliere12b}, in preparation for the present paper. This is the section where we introduce the notion of {\pqpms s}, and present an alternative characterization of {\lcqms s}. These two first sections thus lay the framework within which we will work.

We then introduce the topographic Gromov-Hausdorff propinquity, our generalization of Gromov's distance, in the next two sections. To this end, we first define a form of correspondence between {\pqpms s}, which we call passages, and then define a measurement of the ``distortion'' of a passage for a given radius. This leads us to introduce tunnels and their target sets, from which the local propinquity is computed. The topographic propinquity is an inframetric obtained from the local propinquity, and the basis for the definition of our hypertopology.

The fourth part of this paper proves the main theorem, which is that our hypertopology is separated modulo pointed isometric isomorphism of {\pqpms s}. We conclude with the comparison of our new topology and the Gromov-Hausdorff topology for classical proper metric spaces, as well as with the topology of the dual Gromov-Hausdorff propinquity on {\Lqcms s}.

\section{Gromov-Hausdorff Convergence for Pointed Proper Metric Spaces}

We begin our paper with the classical picture which we wish to generalize. Our purpose is to give a presentation of the Gromov-Hausdorff distance suitable for its noncommutative generalization; in particular we will make an observation regarding the extension of Lipschitz functions with compact support which will play an important role in our construction of the propinquity.

We begin with two notations which we will employ throughout this paper. First, we introduce a notation for closed balls in a metric space:

\begin{notation}
Let $(X,\mathsf{d})$ be a metric space, $x_0\in X$ and $r\geq 0$. The closed ball:
\begin{equation*}
\Set{x\in X }{\mathsf{d}(x,x_0)\leq r}
\end{equation*}
is denoted by $\cBall{X}{\mathsf{d}}{x_0}{r}$. When the context is clear, we simply write $\cBall{X}{}{x_0}{r}$ for $\cBall{X}{\mathsf{d}}{x_0}{r}$.
\end{notation}

Second, when working with Gromov-Hausdorff distance, we will often use the following notion of approximate inclusion:

\begin{notation}
Let $A,B \subseteq Z$ be subsets of a metric space $(Z,\mathsf{d})$. We write $B\almostsubseteq{Z,\mathsf{d}}{\varepsilon} A$ when:
\begin{equation*}
B\subseteq \cBall{Z}{\mathsf{d}}{A}{\varepsilon} = \bigcup_{a\in A}\cBall{Z}{\mathsf{d}}{a}{\varepsilon}\text{.}
\end{equation*}
When the context is clear, we may simply write $B\subseteq_\varepsilon A$ for $B\almostsubseteq{Z,\mathsf{d}}{\varepsilon} A$.
\end{notation}

\subsection{Local Hausdorff Convergence}

Gromov introduced in \cite{Gromov81} the Gro\-mov-Haus\-dorff distance, a far-reaching generalization of the Hausdorff distance \cite{Hausdorff} between compact subsets of a metric space to a pseudo-metric on the class of pointed locally compact metric spaces. This pseudo-metric becomes a metric up to isometry on the class of pointed proper metric spaces, and was the foundation for much research in metric geometry \cite{Gromov,Gromov81}. An earlier work by Edwards \cite{Edwards75} introduced the notion of distance between arbitrary compact metric spaces, but a great leap was taken by Gromov, both by working in the more challenging setting of locally compact spaces, and by the profound use of this new metric to solve difficult problems in group theory and group geometry. Many examples of applications for metric convergence can be found in \cite{Gromov}. It is our purpose in this paper to extend Gromov's notion of convergence for pointed proper metric spaces to the noncommutative setting. This subsection and the next surveys the construction due to Gromov.

The Hausdorff distance between two closed subsets of a metric space, introduced by Hausdorff in \cite[p. 293]{Hausdorff}, can be described as follows:

\begin{definition}\label{Haus-def}
Let $(Z,\mathsf{d})$ be a metric space. The Hausdorff distance between two closed subsets $A,B \subseteq Z$ is defined as:
\begin{equation*}
\begin{split}
\Haus{Z,\mathsf{d}}(A,B) &= \inf \Set{\varepsilon > 0}{A\almostsubseteq{(Z,\mathsf{d})}{\varepsilon} B \text{ and } B\almostsubseteq{(Z,\mathsf{d})}{\varepsilon} A} \\
&= \inf \Set{\varepsilon > 0}{A\subseteq \cBall{Z}{\mathsf{d}}{B}{\varepsilon} \text{ and }B\subseteq\cBall{Z}{\mathsf{d}}{A}{\varepsilon} }\text{.}
\end{split}
\end{equation*}
When the context is clear, we may write $\Haus{\mathsf{d}}$ instead of $\Haus{Z,\mathsf{d}}$.
\end{definition}

The Hausdorff distance between two arbitrary closed subsets of a metric space may be infinite, so Definition (\ref{Haus-def}) introduces an extended metric, properly speaking. Of course, the Hausdorff distance is always finite between bounded closed sets, and is particularly well-behaved when the working on the class of closed subsets of a compact set, which it endows with a compact topology \cite{burago01}. 

When working with the class of all closed subsets of a locally compact metric space, however, a weaker topology than the Hausdorff topology may be desirable. For instance, if we consider a family of circles in the plane, all tangent to some fixed line $L$ at a fixed point $x$, and whose radii converge to infinity, and if we were to stand as an observer at the point $x$, then we would like to see this family of circles converging to the line $L$. This notion does rely on the choice of a base point, and is local in nature. Note that the Hausdorff distance between the line $L$ and any circle is always infinite.

We are thus led to consider a local form of Hausdorff convergence between pointed subspaces of a metric space, based upon the behavior of closed balls centered around a reference point. The theory of the Hausdorff distance suggests that we will obtain a well-behaved theory for a local form of Hausdorff convergence if we assume that closed balls are always compact, and we are thus led to introduce:

\begin{definition}
A \emph{proper metric space} $(Z,\mathsf{d})$ is a metric space whose closed balls are compact. A \emph{pointed proper metric space} $(Z,\mathsf{d},z)$ is a proper metric space $(Z,\mathsf{d})$ and a point $z\in Z$, which we shall refer to as the \emph{base point} of $(Z,\mathsf{d},z)$.
\end{definition}

We note that a proper metric space is always complete and is either compact or of infinite diameter, and that a closed subspace of a proper metric space is proper. Sometimes, the term ``boundedly compact metric space'' is used as a synonym for proper metric space. Moreover, our notion of {\lcqms s} \cite{Latremoliere12b} was designed to accommodate a generalization of proper metric spaces for the purpose of this paper, as we shall see in the next section. 

A first natural attempt at defining a weakened Hausdorff topology on the class of pointed closed subspaces of a locally compact metric space would be to simply take the distance between the closed balls around the point of reference for each possible radius: if $(X,x_0)$ and $(Y,y_0)$ were two pointed metric subspaces of a proper metric space $(Z,\mathsf{d})$, then we could define for all $r > 0$:
\begin{equation}\label{wrong-def-eq}
\inf\Set{\varepsilon > 0}{
\mathsf{d}(x_0,y_0)&\leq\varepsilon\text{ and }\\
\cBall{X}{}{x_0}{r}&\almostsubseteq{Z,\mathsf{d}}{\varepsilon} \cBall{Y}{}{y_0}{r} \text{,}\\
\cBall{Y}{}{y_0}{r}&\almostsubseteq{Z,\mathsf{d}}{\varepsilon} \cBall{X}{}{x_0}{r}}
\end{equation}
and then consider the topology on the class of all pointed subspaces of $(Z,\mathsf{d})$ generated by the family of pseudo-metric given by Expression (\ref{wrong-def-eq}) for all $r > 0$.

However, the following example shows that this idea fails to capture our intuition and is not even compatible with Hausdorff convergence for compact spaces:

\begin{example}\label{simple-compact-cv-example}
For all $n\in\N$ we set $I_n = \left\{0,\frac{1}{n+1}+2\right\}$ and $I=\left\{0,2\right\}$. Note that $(I_n)_{n\in\N}$ is a sequence of compacts in $\R$ which converge to $I$ for the Hausdorff distance. We choose $0$ as our base point for $I$ and all $I_n$, $n\in\N$. Then, the sequence consisting, for all $n\in\N$, of the closed ball $\cBall{I_n}{}{0}{2} = \left\{0\right\}$, does not converge in the Hausdorff distance to $\cBall{I}{}{0}{2} = I$ --- in fact the Hausdorff distance is always $2$.
\end{example}

The previous example suggests that we may need to relax our notion of convergence somewhat still, by recognizing that given two compact subspaces $X$ and $Y$ of a metric space $(Z,\mathsf{d})$, and given $x_0 \in X$, $y_0\in Y$, and $r > 0$, the Hausdorff distance between $X$ and $Y$ in $Z$ does not bound above the Hausdorff distance between the balls $\cBall{X}{}{x_0}{r}$ and $\cBall{Y}{}{y_0}{r}$, but instead informs us how near $\cBall{X}{}{x_0}{r}$ is from $Y$, and similarly for $\cBall{Y}{}{y_0}{r}$ and $X$. We thus modify the pseudo-metrics in Equation (\ref{wrong-def-eq}) to obtain the following quantities:

\begin{definition}\label{delta-r-def}
Let $X,Y\subseteq Z$ be two subsets of a metric space $(Z,\mathsf{d})$ and let $x_0\in X$, $y_0\in Y$. For any $r > 0$, we define:
\begin{equation*}
\delta_r^{(Z,\mathsf{d})}( (X,x_0) ,(Y,y_0) ) = \inf\Set{\varepsilon > 0}{
\mathrm{d}(x_0,y_0) &\leq \varepsilon\text{ and }\\
\cBall{X}{}{x_0}{r} &\almostsubseteq{Z,\mathsf{d}}{\varepsilon} Y\text{,}\\
\cBall{Y}{}{y_0}{r}&\almostsubseteq{Z,\mathsf{d}}{\varepsilon} X \text{.}\\
}
\end{equation*}
\end{definition}

We wish to endow the class of closed subspaces of a proper metric space with the topology generated by the ``open balls'' for all $\delta_r^{(Z,\mathsf{d})}$ $(r\geq 0)$ --- note however that $\delta_r^{(Z,\mathsf{d})}$ is not a pseudo-metric in general, though we shall see how to construct an inframetric from these quantities later on in this paper. To understand this form of convergence, we prove the following theorem which provides several characterizations of the quantities introduced in Definition (\ref{delta-r-def}). We will use the following notation throughout this paper:

\begin{notation}
The set of compact subsets of a topological space $Z$ is denoted by $\compacts{Z}$.
\end{notation}

\begin{theorem}\label{GH-equivalences-thm}
Let $(Z,\mathsf{d})$ be a proper metric space, and let $X,Y \subseteq Z$. Let $x_0\in X$ and $y_0\in Y$. Let $r>0$. The following are equivalent:
\begin{enumerate}
\item $\delta_r^{(Z,\mathsf{d})}((X,x_0),(Y,y_0)) \leq \varepsilon$,
\item there exist $K\in\compacts{Y}$ and $Q\in\compacts{X}$ such that:
\begin{enumerate}
\item $\cBall{X}{}{x_0}{r-2\varepsilon}\subseteq Q\subseteq \cBall{X}{}{x_0}{r+2\varepsilon}$, 
\item $\cBall{Y}{}{y_0}{r-2\varepsilon}\subseteq K\subseteq \cBall{Y}{}{y_0}{r+2\varepsilon}$, 
\item $\Haus{\mathds{d}}(\cBall{X}{}{x_0}{r},K)\leq\varepsilon$ and $\Haus{\mathds{d}}(Q,\cBall{Y}{}{y_0}{r})\leq\varepsilon$,
\item $\mathsf{d}(x_0,y_0)\leq \varepsilon$,
\end{enumerate}
\item there exist $K\in\compacts{Y}$ and $Q\in\compacts{X}$ such that:
\begin{enumerate}
\item $Q\subseteq \cBall{X}{}{x_0}{r+2\varepsilon}$, 
\item $K\subseteq \cBall{Y}{}{y_0}{r+2\varepsilon}$, 
\item $\Haus{\mathds{d}}(\cBall{X}{}{x_0}{r},K)\leq\varepsilon$ and $\Haus{\mathds{d}}(Q,\cBall{Y}{}{y_0}{r})\leq\varepsilon$,
\item $\mathsf{d}(x_0,y_0)\leq \varepsilon$,
\end{enumerate}
\item there exist $K\in\compacts{Y}$ and $Q\in\compacts{X}$ such that:
\begin{enumerate}
\item $\Haus{\mathds{d}}(\cBall{X}{}{x_0}{r},K)\leq\varepsilon$ and $\Haus{\mathds{d}}(Q,\cBall{Y}{}{y_0}{r})\leq\varepsilon$,
\item $\mathsf{d}(x_0,y_0)\leq \varepsilon$.
\end{enumerate}
\end{enumerate}

In particular:
\begin{equation}\label{alt-exp-eq0}
\delta_r^{(Z,\mathsf{d})}((X,x_0),(Y,y_0)) = \min\Set{\varepsilon > 0}{
\mathsf{d}(x_0,y_0)&\leq\varepsilon \text{ and }\\
\cBall{X}{}{x_0}{r}&\subseteq_\varepsilon \cBall{Y}{}{y_0}{r+2\varepsilon},
\\ \cBall{Y}{}{y_0}{r}&\subseteq_\varepsilon \cBall{X}{}{x_0}{r+2\varepsilon}}
\end{equation}
\end{theorem}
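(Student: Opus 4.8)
The plan is to prove the cycle $(1)\Rightarrow(2)\Rightarrow(3)\Rightarrow(4)\Rightarrow(1)$ and then extract the displayed formula. Three of these links are cheap. The implications $(2)\Rightarrow(3)$ and $(3)\Rightarrow(4)$ are immediate, as each merely discards some of the constraints imposed on the auxiliary compact sets $Q\in\compacts{X}$ and $K\in\compacts{Y}$. For $(4)\Rightarrow(1)$: given $K,Q$ as in (4), the estimate $\Haus{\mathsf{d}}(\cBall{X}{}{x_0}{r},K)\leq\varepsilon$ together with compactness of $K$ yields $\cBall{X}{}{x_0}{r}\subseteq\cBall{Z}{}{K}{\varepsilon}\subseteq\cBall{Z}{}{Y}{\varepsilon}$ (the distance from a point to the compact set $K$ is attained), i.e. $\cBall{X}{}{x_0}{r}\subseteq_\varepsilon Y$; symmetrically $\cBall{Y}{}{y_0}{r}\subseteq_\varepsilon X$; and $\mathsf{d}(x_0,y_0)\leq\varepsilon$ is given, so $\delta_r^{(Z,\mathsf{d})}((X,x_0),(Y,y_0))\leq\varepsilon$ straight from Definition \ref{delta-r-def}.

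The real content is $(1)\Rightarrow(2)$. I would first upgrade the hypothesis: a routine limiting argument using that $Z$ is proper (so that closed balls, hence the relevant intersections with $X$ and $Y$, are compact) shows that $\delta_r^{(Z,\mathsf{d})}((X,x_0),(Y,y_0))\leq\varepsilon$ forces the three defining conditions to hold \emph{at} $\varepsilon$ itself, namely $\mathsf{d}(x_0,y_0)\leq\varepsilon$, $\cBall{X}{}{x_0}{r}\subseteq_\varepsilon Y$ and $\cBall{Y}{}{y_0}{r}\subseteq_\varepsilon X$ (this is where one uses, or quietly assumes, that $X$ and $Y$ are closed in $Z$; otherwise one works with closures). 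I would then set
\begin{equation*}
K=\set{y\in Y}{\mathsf{d}(y,\cBall{X}{}{x_0}{r})\leq\varepsilon}\quad\text{and}\quad Q=\set{x\in X}{\mathsf{d}(x,\cBall{Y}{}{y_0}{r})\leq\varepsilon}\text{.}
\end{equation*}
Since $K$ is a closed subset of the compact ball $\cBall{Z}{}{x_0}{r+\varepsilon}$ it is compact, and $K\subseteq Y$; similarly for $Q$. The four verifications in (2) are then short triangle-inequality arguments: $K\subseteq\cBall{Y}{}{y_0}{r+2\varepsilon}$ from $\mathsf{d}(y,y_0)\leq\mathsf{d}(y,x)+\mathsf{d}(x,x_0)+\mathsf{d}(x_0,y_0)$ for a nearest $x\in\cBall{X}{}{x_0}{r}$; the reverse $\cBall{Y}{}{y_0}{r-2\varepsilon}\subseteq K$ by using $\cBall{Y}{}{y_0}{r}\subseteq_\varepsilon X$ to produce a witness in $X$ and the same triangle inequality to see it lies in $\cBall{X}{}{x_0}{r}$; and $\Haus{\mathsf{d}}(\cBall{X}{}{x_0}{r},K)\leq\varepsilon$, one half of which is built into the definition of $K$ while the other is precisely $\cBall{X}{}{x_0}{r}\subseteq_\varepsilon Y$ after noting any such witness $y$ automatically satisfies $\mathsf{d}(y,\cBall{X}{}{x_0}{r})\leq\varepsilon$, hence $y\in K$. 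The statements for $Q$ are symmetric, and together with $\mathsf{d}(x_0,y_0)\leq\varepsilon$ they give (2).

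To obtain the identity (\ref{alt-exp-eq0}), I would combine $(1)\Rightarrow(2)$ with its converse at the level of balls: from the above, $\delta_r^{(Z,\mathsf{d})}((X,x_0),(Y,y_0))\leq\varepsilon$ gives $\cBall{X}{}{x_0}{r}\subseteq_\varepsilon K\subseteq\cBall{Y}{}{y_0}{r+2\varepsilon}$ and the mirror inclusion; conversely, since $\cBall{Y}{}{y_0}{r+2\varepsilon}\subseteq Y$ and $\cBall{X}{}{x_0}{r+2\varepsilon}\subseteq X$, the conditions appearing in (\ref{alt-exp-eq0}) plus $\mathsf{d}(x_0,y_0)\leq\varepsilon$ imply the defining conditions of $\delta_r^{(Z,\mathsf{d})}$. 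Hence the set of $\varepsilon$ satisfying the conditions in (\ref{alt-exp-eq0}) is exactly $\{\varepsilon>0:\delta_r^{(Z,\mathsf{d})}((X,x_0),(Y,y_0))\leq\varepsilon\}$, so its infimum is $\delta_r^{(Z,\mathsf{d})}((X,x_0),(Y,y_0))$ and is attained whenever it is positive.

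The step I expect to be the main obstacle is $(1)\Rightarrow(2)$: choosing workable compact sets $K$ and $Q$ and running the $\pm2\varepsilon$ bookkeeping consistently. This is also where the inflation of radii by $2\varepsilon$ --- ultimately responsible for the ``$2$'' that will appear in the triangle inequality of the propinquity --- is forced, since the basepoint comparison $\mathsf{d}(x_0,y_0)\leq\varepsilon$ has to be invoked twice in the chains of triangle inequalities. A minor but genuine care point is the passage from ``for all $\varepsilon'>\varepsilon$'' to ``at $\varepsilon$'', which relies on properness of $Z$ and, strictly speaking, on the subspaces being closed.
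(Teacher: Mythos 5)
Your proposal is correct and follows essentially the same route as the paper: the same cycle of implications, the same choice of $K=\set{y\in Y}{\mathsf{d}(y,\cBall{X}{}{x_0}{r})\leq\varepsilon}$ (and its mirror $Q$), the same triangle-inequality bookkeeping, and the same use of properness (and implicit closedness of $X,Y$) to attain distances. Your treatment of the final identity --- identifying the admissible set with $\set{\varepsilon>0}{\delta_r^{(Z,\mathsf{d})}((X,x_0),(Y,y_0))\leq\varepsilon}$ via the ``upgrade at $\varepsilon$'' step --- is just a repackaging of the paper's argument that the set $E$ is closed, and is in fact slightly more explicit about the limiting argument and the attainment caveat than the paper itself.
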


\begin{proof}
Assume (1). Define:
\begin{equation*}
K = \left\{ y \in Y : \mathsf{d}(y,\cBall{X}{}{x_0}{r})\leq\varepsilon \right\}\text{.}
\end{equation*}
Let $y\in K$. Since $\cBall{X}{}{x_0}{r}$ is compact as $X$ is proper, there exists $x\in \cBall{X}{}{x_0}{r}$ such that $\mathsf{d}(x,y)\leq\varepsilon$. Therefore $K\subseteq_\varepsilon \cBall{X}{}{x_0}{r}$. Moreover, we then have:
\begin{equation*}
\mathsf{d}(y_0,y)\leq\mathsf{d}(y_0,x_0)+\mathsf{d}(x_0,x)+\mathsf{d}(x,y)\leq \varepsilon + r + \varepsilon\leq r+2\varepsilon\text{.}
\end{equation*}
Thus $K\subseteq \cBall{Y}{}{y_0}{r+2\varepsilon}$. In particular, $K\in\compacts{Y}$ as a closed bounded subset of a proper metric space. Note that by symmetry, $\cBall{Y}{}{y_0}{r}\subseteq_\varepsilon \cBall{X}{}{x_0}{r+2\varepsilon}$ as well.

Now, since $\cBall{X}{}{x_0}{r}\subseteq_\varepsilon Y$ by (1), for all $x\in \cBall{X}{}{x_0}{r}$ there exists $y \in Y$ with $\mathsf{d}(x,y)\leq\varepsilon$, and thus $\cBall{X}{}{x_0}{r}\subseteq_\varepsilon K$. Therefore, $\Haus{\mathsf{d}}(\cBall{X}{}{x_0}{r},K) \leq \varepsilon$.

Let now $y \in \cBall{Y}{}{y_0}{r-2\varepsilon}$. Since $\cBall{Y}{}{y_0}{r-2\varepsilon} \subseteq \cBall{Y}{}{y_0}{r} \subseteq_\varepsilon \cBall{X}{}{x_0}{r+2\varepsilon}$, there exists $x \in \cBall{X}{}{x_0}{r+2\varepsilon}$ such that $\mathsf{d}(x,y)\leq \varepsilon$. Now:
\begin{equation*}
\begin{split}
\mathsf{d}(x,x_0) &\leq \mathsf{d}(x_0,y_0)+\mathsf{d}(y_0,y)+\mathsf{d}(y,x)\\
&\leq \varepsilon + r - 2\varepsilon + \varepsilon\\
&\leq 2\varepsilon + r - 2\varepsilon \leq r\text{.}
\end{split}
\end{equation*}
Hence $x\in \cBall{X}{}{x_0}{r}$, so $y \in K$ by definition. Hence $\cBall{Y}{}{y_0}{r-2\varepsilon} \subseteq K$.

The rest of our proof of (2) is done by symmetry.

Of course, (2) implies (3), (3) implies (4).

Assume (4). Then $\cBall{X}{}{x_0}{r} \subseteq_\varepsilon K \subseteq Y$ and $\cBall{Y}{}{y_0}{r} \subseteq_\varepsilon Q \subseteq X$, while $\mathsf{d}(x_0,y_0) \leq\varepsilon$, so (1) holds. This completes the proof of the equivalence between our four assertions.

Since (1) implies (3), we conclude that:
\begin{equation*}
\delta_r^{(Z,\mathsf{d})}((X,x_0),(Y,y_0)) \geq \inf\Set{\varepsilon>0}{
\mathsf{d}(x_0,y_0) &\leq \varepsilon\text{ and }\\
\cBall{X}{}{x_0}{r} &\almostsubseteq{}{\varepsilon} \cBall{Y}{}{y_0}{r+2\varepsilon},\\
\cBall{Y}{}{y_0}{r} &\almostsubseteq{}{\varepsilon} \cBall{X}{}{x_0}{r+2\varepsilon}
}\text{.}
\end{equation*}
The converse inequality is trivial. It is thus enough to prove that the set:
\begin{equation*}
E = \Set{\varepsilon > 0}{
\mathrm{d}(x_0,y_0) &\leq \varepsilon\text{ and }\\
\cBall{X}{}{x_0}{r} &\almostsubseteq{Z,\mathsf{d}}\varepsilon \cBall{Y}{}{y_0}{r+2\varepsilon}\text{,}\\
\cBall{Y}{}{y_0}{r} &\almostsubseteq{Z,\mathsf{d}}{\varepsilon} \cBall{X}{}{x_0}{r+2\varepsilon} \text{ and }\\
}
\end{equation*}
is closed to conclude that Expression (\ref{alt-exp-eq0}) holds.

Let $(\varepsilon_n)_{n\in\N} \in E$ converging to $\varepsilon$. Let $x\in \cBall{X}{}{x_0}{r}$. For each $n\in\N$, since $\varepsilon_n \in E$, there exists $y^n \in \cBall{Y}{}{y_0}{r+2\varepsilon_n}$ such that $\mathsf{d}(x,y^n)\leq \varepsilon_n$ since $\cBall{Y}{}{y_0}{r+2\varepsilon_n}$ is compact, as a closed ball of a proper metric space.

Since $(\varepsilon_n)_{n\in\N}$ converges, it is bounded. Thus there exists $M \geq 0$ such that, for all $n\in\N$, we have $\varepsilon_n \leq M$. 

Consequently, the sequence $(y^n)_{n\in\N}$ lies in the closed ball $\cBall{Y}{}{y_0}{r+2M}$ by construction, and the latter ball is compact since $Y$ is closed in $Z$ and $Z$ is proper. Thus, there exists a convergent subsequence $(y^{n_k})_{k\in\N}$ of $(y^n)_{n\in\N}$, with limit denoted by $y$. By construction, $\mathsf{d}(y_0,y)\leq r + 2\varepsilon$ and $\mathsf{d}(x,y)\leq \varepsilon$. Hence, $\cBall{X}{}{x_0}{r} \subseteq_\varepsilon \cBall{Y}{}{y_0}{r+2\varepsilon}$. 

The same reasoning applies with $X$ and $Y$ roles reversed. Note that of course, $\mathsf{d}(x_0,y_0)\leq\varepsilon_n$ for all $n\in\N$ implies that $\mathsf{d}(x_0,y_0)\leq\varepsilon$. Thus, the set $E$ is closed. It is nonempty and bounded below by $0$, so we conclude that $\inf E = \min E$, i.e. the infimum is in fact the smallest element of $E$. This concludes the first part of our theorem.
\end{proof}

In particular, we note that Expression (\ref{alt-exp-eq0}) is very close to Expression (\ref{wrong-def-eq}), and thus Definition (\ref{delta-r-def}) captures our intuition well. The reader is invited to check that, in any Euclidean space $E$, for a net of spheres $(S_n)_{j\in J}$ of radii converge to infinity, all tangent at some fixed $P$ to some hyperplane $H$, and for any $r>0$, we have $\lim_{j\in J} \delta_r^E((S_j,P),(H,P)) = 0$ --- i.e. we captured the intuition which started our quest for this topology on proper pointed metric spaces.

We can now easily checked that Hausdorff convergence of compacts is equivalent to the convergence in our new sense --- which includes the trivial case of Example (\ref{simple-compact-cv-example}):

\begin{proposition}\label{cv-convergence-prop}
Let $(Z,\mathsf{d})$ be a compact metric space, $(X_n)_{n\in\N}$ a sequence of closed subsets of $Z$ and $X$ a closed subset of $Z$. The following assertions are equivalent:
\begin{enumerate}
\item the sequence $(X_n)_{n\in\N}$ of compact subspaces of $Z$ converges for the Hausdorff distance $\Haus{\mathsf{d}}$ to $X$,
\item for all $x\in X$ there exists a sequence $(x_n)_{n\in\N}$ in $Z$ converging to $x$ and such that $x_n \in X_n$ for all $n\in\N$ while:
\begin{equation*}
\lim_{n\rightarrow\infty} \sup\Set{\delta_r^{(Z,\mathsf{d})}((X_n,x_n),(X,x))}{r > 0} = 0\text{,}
\end{equation*}
\item for some sequence $(x_n)_{n\in\N}$ with $x_n\in X_n$ for all $n\in\N$, and some $x\in X$, we have for all $r>0$:
\begin{equation*}
\lim_{n\rightarrow\infty} \delta_r^{(Z,\mathsf{d})}((X_n,x_n),(X,x)) = 0\text{.}
\end{equation*}
\end{enumerate}
Moreover, we note that any of Assertions (1), (2) or (3) implies that for all convergent sequences $(x_n)_{n\in\N}$ with $x_n\in X_n$ for all $n\in\N$ and whose limit is denoted by $x$, we have $x\in X$ and for all $r>0$:
\begin{equation*}
\lim_{n\rightarrow\infty} \sup\Set{\delta_r^{(Z,\mathsf{d})}((X_n,x_n),(X,x))}{r>0} = 0\text{.}
\end{equation*}
\end{proposition}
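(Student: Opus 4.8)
The plan is to reduce the entire statement to one observation about the quantities $\delta_r^{(Z,\mathsf{d})}$ in the \emph{compact} setting: their supremum over $r>0$ degenerates to an honest Hausdorff-type distance. Writing $D=\diam{Z}{\mathsf{d}}$ for the (finite) diameter of $Z$, I claim that for all closed subsets $X,Y\subseteq Z$ and all $x_0\in X$, $y_0\in Y$,
\begin{equation*}
\sup_{r>0}\delta_r^{(Z,\mathsf{d})}((X,x_0),(Y,y_0)) = \max\left(\Haus{\mathsf{d}}(X,Y),\mathsf{d}(x_0,y_0)\right)\text{.}
\end{equation*}
To prove this, I would first note that for $r\geq D$ one has $\cBall{X}{}{x_0}{r}=X$ and $\cBall{Y}{}{y_0}{r}=Y$, so Definition \ref{delta-r-def} reads off directly (using Definition \ref{Haus-def} and the compactness of $X$ and $Y$) as $\delta_r^{(Z,\mathsf{d})}((X,x_0),(Y,y_0))=\max(\Haus{\mathsf{d}}(X,Y),\mathsf{d}(x_0,y_0))$. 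Conversely, for any $r>0$, writing $\varepsilon$ for the right-hand side above, the inclusions $\cBall{X}{}{x_0}{r}\subseteq X\almostsubseteq{Z,\mathsf{d}}{\varepsilon}Y$, its symmetric counterpart, and $\mathsf{d}(x_0,y_0)\leq\varepsilon$ together force $\delta_r^{(Z,\mathsf{d})}((X,x_0),(Y,y_0))\leq\varepsilon$. Hence the supremum is attained at $r=D$ and equals $\varepsilon$, the degenerate case $\varepsilon=0$ (which forces $X=Y$, $x_0=y_0$ and $\delta_r\equiv 0$) being immediate. This key lemma is the step I expect to require the most care, essentially because of the boundary behaviour of the infimum defining $\delta_r$ and of the several trivial configurations.

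Granting the lemma, the equivalences are brief. For $(1)\Rightarrow(2)$: given $x\in X$, compactness of $X_n$ lets us pick $x_n\in X_n$ with $\mathsf{d}(x,x_n)=\mathsf{d}(x,X_n)\leq\Haus{\mathsf{d}}(X_n,X)$, so $x_n\to x$, and the lemma gives
\begin{equation*}
\sup_{r>0}\delta_r^{(Z,\mathsf{d})}((X_n,x_n),(X,x)) = \max\left(\Haus{\mathsf{d}}(X_n,X),\mathsf{d}(x_n,x)\right)\leq\Haus{\mathsf{d}}(X_n,X)\text{,}
\end{equation*}
which converges to $0$ by (1). The implication $(2)\Rightarrow(3)$ is immediate: fix any $x\in X$, take the sequence $(x_n)_{n\in\N}$ furnished by (2), and bound $\delta_r$ by $\sup_{r>0}\delta_r$. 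For $(3)\Rightarrow(1)$: apply the hypothesis with the single value $r=D$ and invoke the lemma to obtain $\Haus{\mathsf{d}}(X_n,X)\leq\delta_D^{(Z,\mathsf{d})}((X_n,x_n),(X,x))\to 0$, which is precisely (1).

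Finally, for the ``moreover'' assertion, assume (1) (equivalently any of (1)--(3)) and let $(x_n)_{n\in\N}$ be any sequence with $x_n\in X_n$ converging to some $x\in Z$. From $\mathsf{d}(x_n,X)\leq\Haus{\mathsf{d}}(X_n,X)\to 0$ and the continuity of $y\mapsto\mathsf{d}(y,X)$ we get $\mathsf{d}(x,X)=0$, hence $x\in X$ since $X$ is closed. Applying the key lemma once more yields $\sup_{r>0}\delta_r^{(Z,\mathsf{d})}((X_n,x_n),(X,x))=\max(\Haus{\mathsf{d}}(X_n,X),\mathsf{d}(x_n,x))\to 0$, which completes the argument.
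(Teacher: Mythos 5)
Your proof is correct, but it takes a genuinely different route from the paper's. Your key lemma --- that in a compact ambient space $\sup_{r>0}\delta_r^{(Z,\mathsf{d})}((X,x_0),(Y,y_0)) = \max\left(\Haus{\mathsf{d}}(X,Y),\mathsf{d}(x_0,y_0)\right)$, the supremum being attained at $r=\diam{Z}{\mathsf{d}}$ --- does hold: for $r\geq\diam{Z}{\mathsf{d}}$ the balls exhaust $X$ and $Y$, so the set defining $\delta_r$ has exactly $\max\left(\Haus{\mathsf{d}}(X,Y),\mathsf{d}(x_0,y_0)\right)$ as its infimum, while for arbitrary $r>0$ the monotonicity of approximate inclusions (together with attainment of nearest points in the compact set $Y$, so that $X\almostsubseteq{Z,\mathsf{d}}{\Haus{\mathsf{d}}(X,Y)}Y$ with the paper's closed balls) gives the matching upper bound; the four assertions then follow in a few lines, as you show. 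The paper instead argues directly from the definition: (1)$\Rightarrow$(2) is done by an explicit choice of $x_n$ along a subsequence with tolerances $2^{-k}$, and (3)$\Rightarrow$(1) by first deducing from (3) alone that $\limsup_{n}\diam{X_n}{\mathsf{d}}\leq\diam{X}{\mathsf{d}}$ and then picking a radius $B$ dominating all these diameters so that $\cBall{X_n}{}{x_n}{B}=X_n$ and $\cBall{X}{}{x}{B}=X$; the ``moreover'' part is handled by a compactness/subsequence argument. Your approach buys brevity and a reusable closed formula for $\sup_{r>0}\delta_r$, at the price of leaning on the finiteness of $\diam{Z}{\mathsf{d}}$; the paper's diameter-bounding step extracts the needed uniform radius from assertion (3) itself rather than from the ambient space, which is closer in spirit to the techniques it later uses for non-compact proper spaces. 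Two small implicit points, shared with the paper's own proof and so not counted against you: the sets must be nonempty for the choices of $x_n$ (and of a base point in (2)$\Rightarrow$(3)) to make sense, and your nearest-point selections use that $X_n$ and $X$ are compact as closed subsets of the compact space $Z$.
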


\begin{proof}
Assume (1). Let $x\in X$. Let $(n_k)_{k\in\N}$ be a strictly increasing sequence in $\N$ such that for all $k\in\N$, if $p \geq n_k$ then $\Haus{\mathsf{d}}(X_p,X)\leq 2^{-k}$. For each $n\in\N$, if $n \in \{n_{k},\ldots, n_{k+1}-1\}$ for some $k\in\N$ then we have $\Haus{\mathsf{d}}(X_n,X)\leq 2^{-k}$ and we choose $x_n \in X_n$ such that $\mathsf{d}(x_n,x) \leq 2^{-k}$. For $n\in \{0,\ldots,n_0-1\}$ we choose $x_n\in X_n$ arbitrarily. Note that since $(n_k)_{k\in\N}$ is strictly increasing, $\Set{ \{n_k,\ldots,n_{k+1}-1\} } { k \in \N}$ is a partition of $\N$, so our construction is well-defined.

Thus by construction, $\lim_{n\rightarrow\infty} x_n = x$.
Now, if $n\in\{n_{k},\ldots,n_{k+1}-1\}$ for some $k\in\N$ then, for all $r>0$ we have:
\begin{equation*}
X_n \subseteq_{2^{-k}} X \text{ so } \cBall{X_n}{}{x_n}{r} \subseteq_{2^{-k}} X
\end{equation*}
and similarly $\cBall{X}{}{x}{r} \subseteq_{2^{-k}} X_n$ so:
\begin{equation*}
\delta_r^{\left(Z,\mathsf{d}\right)}((X_n,x_n),(X,x)) \leq 2^{-k}
\end{equation*}
and thus $\lim_{n\rightarrow\infty} \sup\set{\delta_r^{\left(Z,\mathsf{d}\right)}((X_n,x_n),(X,x))}{r>0} = 0$. This proves (2). 

Trivially, (2) implies (3).

Assume (3). For all $n\in\N$, since $X_n$ is compact, its diameter $D_n = \diam{X_n}{\mathsf{d}}$ is finite. Similarly $D = \diam{X}{\mathsf{d}} \in \R$.

Let $\varepsilon > 0$ and $R > 0$. By (3), there exists $N\in\N$ such that for all $n\geq N$, we have $\cBall{X_n}{}{x_n}{R} \subseteq_{\frac{\varepsilon}{2}} X$. If $y,z\in \cBall{X_n}{}{x_n}{R}$, then there exists $x_y,x_z\in X$ such that $\mathsf{d}(y,x_y)\leq\frac{1}{2}\varepsilon$ and $\mathsf{d}(z,x_z)\leq\frac{1}{2}\varepsilon$, so:
\begin{equation*}
\mathsf{d}(y,z)\leq\mathsf{d}(y,x_y) + \mathsf{d}(x_y,x_z) + \mathsf{d}(x_z,z) \leq \varepsilon + D\text{.}
\end{equation*}
Thus, we have:
\begin{equation}\label{compact-cv-eq0}
\forall\varepsilon > 0\; \exists N\in\N \;\forall n \geq N \quad \diam{\cBall{X_n}{}{x_n}{R}}{\mathsf{d}} \leq D+\varepsilon\text{.}
\end{equation}

Suppose that for some $R > D$ and for all $N\in\N$, there exists $n\geq N$ such that $D_n \geq R$. Let $\varepsilon = \frac{1}{2}(R-D) > 0$. By Expression (\ref{compact-cv-eq0}), there exists $N \in \N$ such that $D_n \leq D + \varepsilon = R-\varepsilon$ for all $n\geq N$. Now, there exists $n\geq N$ such that $D_n\geq R$, which is a contradiction.

Thus we deduce that $\limsup_{n\rightarrow\infty} D_n \leq D$. In particular, the sequence $(D_n)_{n\in\N}$ is bounded. Let $B \in \R$ be chosen so that $B\geq \max\set{D,D_n}{n\in\N}$. Again by (3), for all $\varepsilon > 0$ there exists $N\in\N$ such that, if $n\geq N$, we have:
\begin{equation*}
\delta_B^{(Z,\mathsf{d})}((X_n,x_n),(X,x)) < \varepsilon\text{,}
\end{equation*}
so:
\begin{equation*}
X_n = \cBall{X_n}{}{x_n}{B} \subseteq_\varepsilon X\text{ and }X = \cBall{X}{}{x}{B} \subseteq_\varepsilon X_n
\end{equation*}
so $\Haus{\mathsf{d}}(X_n,X) \leq\varepsilon$, as desired. This proves (1) and concludes the equivalences between (1), (2) and (3).

Last, assume (1) again. Let $(x_n)_{n\in\N}$ be a sequence in $Z$, converging to some $x\in Z$, and such that $x_n\in X_n$ for all $n\in\N$. 

Now, let $(X_{n_k})_{k\in\N}$ be a subsequence of $(X_n)_{n\in\N}$ such that:
\begin{equation*}
\Haus{\mathsf{d}}(X_{n_k},X)\leq \frac{1}{1+k}
\end{equation*}
for all $k\in \N$. Hence, for all $k\in\N$, there exists $y_k\in X$ such that:
\begin{equation*}
\mathsf{d}(y_k,x_{n_k})\leq \frac{1}{1+k}\text{.}
\end{equation*}

Since $(y_k)_{k\in\N}$ is a sequence in the compact $X$, there exists a subsequence $(y_{k_j})_{j\in\N}$ of $(y_k)_{k\in\N}$ converging to some $y\in X$. On the other hand, for all $j\in\N$:
\begin{equation*}
0\leq\mathsf{d}(y,x_{n_{k_j}})\leq\mathsf{d}(y,y_{k_j}) + \mathsf{d}(y_{k_j},x_{n_{k_j}}) \text{,}
\end{equation*}
so, by taking the limit as $j\rightarrow\infty$, we have $\mathsf{d}(x,y) = 0$, and thus $x \in X$ as desired.

Now, we return to the original sequence $(X_n,x_n)_{n\in\N}$. Let $\varepsilon > 0$ and $r > 0$ be given. There exists $N\in\N$ such that for all $n\geq N$, we have  $\Haus{\mathsf{d}}(X_n,X)\leq\varepsilon$, i.e. $X_n\subseteq_\varepsilon X$ --- which implies $\cBall{X_n}{}{x_n}{r}\subseteq_\varepsilon X$ for all $r > 0$ --- and $X\subseteq_\varepsilon X_n$ --- which implies $\cBall{X}{}{x}{r}\subseteq_\varepsilon X_n$ for all $r > 0$.

Let $M\in\N$ such that $\mathsf{d}(x_n,x) \leq \varepsilon$. Then for all $n\geq \max\{N,M\}$ we have $\sup\set{\delta_r^{(Z,\mathsf{d})}((X_n,x_n),(X,x))}{r>0}\leq\varepsilon$, as claimed. This proves the last statement of our proposition.
\end{proof}

\begin{remark}
Note that if we assume (3) in Proposition (\ref{cv-convergence-prop}), since:
\begin{equation*}
\lim_{n\rightarrow\infty} \delta_1((X_n,x_n),(X,x)) = 0\text{,}
\end{equation*}
by Definition (\ref{delta-r-def}), we have $(x_n)_{n\in\N}$ converges to $x$. Thus the convergence of $(x_n)_{n\in\N}$ to $x$ is built-in. Yet it actually plays no role in the proof that (3) implies (1). This is simply because, balls or large enough radii contain the whole compact set regardless of their center. In fact, Proposition (\ref{cv-convergence-prop}) implicitly refers to pointed Hausdorff convergence, and could be phrased by stating that $(X_n,x_n)_{n\in\N}$ converges for the \emph{pointed} Hausdorff distance to $(X,x)$ if and only if:
\begin{equation*}
(\delta_r^{(Z,\mathsf{d})}((X_n,x_n),(X,x)))_{n\in\N}
\end{equation*} converges to $0$ for all $r > 0$, using the notations this Proposition. Our point, however, in proving Proposition (\ref{cv-convergence-prop}), is that our new notion of convergence agrees with the usual notion of Hausdorff convergence for compact sets, hence our choice of formulation.
\end{remark}

\subsection{The Gromov-Hausdorff Distance}

We wish to work with proper metric spaces which are not given as subspaces of a fixed proper metric space, and to this end, we employ the following technique familiar when dealing with the Gromov-Hausdorff distance:

\begin{definition}\label{Delta-r-def}
Let $\mathds{X} = (X,\mathsf{d}_X,x_0)$ and $\mathds{Y} = (Y,\mathsf{d}_Y,y_0)$ be two pointed proper metric spaces and $r > 0$. We define $\Delta_r(\mathds{X},\mathds{Y})$ as:
\begin{equation}\label{Delta-r-def-eq0}
\inf \Set{ \delta_r^{(Z,\mathsf{d})}((\iota_X(X)&,\iota_X(x_0)),\\
&(\iota_Y(Y),\iota_Y(y_0)))}{ (Z,\mathsf{d}) &\text{ metric space,}\\
&\iota_X:X\hookrightarrow Z, \iota_Y:Y\hookrightarrow Z,\\
&\iota_X \text{ isometry from $(X,\mathsf{d}_X)$ into $(Z,\mathsf{d}_Z)$,}\\
&\iota_Y \text{ isometry from $(Y,\mathsf{d}_Y)$ into $(Z,\mathsf{d}_Z)$}
}\text{.}
\end{equation}
\end{definition}

We thus may define convergence of pointed proper metric spaces as:

\begin{definition}
A net $(X_j,\mathsf{d}_j,x_j)_{j\in J}$ of pointed proper metric spaces converges in the sense of Gromov-Hausdorff to a pointed proper metric space $(X,\mathsf{d},x)$ when:
\begin{equation*}
\forall r > 0 \quad \lim_{j\in J} \Delta_r((X_j,\mathsf{d}_j,x_j),(X,\mathsf{d},x)) = 0\text{.}
\end{equation*}
\end{definition}

We pause for a reflection on an important point. Expression (\ref{Delta-r-def-eq0}) in Definition (\ref{Delta-r-def}) involves embedding two pointed proper metric spaces $(X,\mathsf{d}_X,x_0)$ and $(Y,\mathsf{d}_Y,y_0)$ isometrically into a proper metric space $(Z,\mathsf{d})$; yet Theorem (\ref{GH-equivalences-thm}) suggests that we may not need isometric embeddings of the entire space. We will show in this paper, with our construction of the Gromov-Hausdorff propinquity, that indeed such relaxation of Definition (\ref{Delta-r-def}) is possible. However, the following example illustrates that some care must be taken in this matter:

\begin{example}\label{funny-example}
Let $Z = \R^2$ endowed with the metric induced by the norm $\|\cdot\|_\infty : (x,y)\in Z\mapsto \max\{|x|,|y|\}$. Let $X = \R$ and:
\begin{equation*}
Y = \Set{(x,0), \left(x,1\right)}{x\in\R}\subseteq \Z \text{.}
\end{equation*}
We endow $X$ with the usual metric and $Y$ with its metric induced by $\|\cdot\|_\infty$, so that $\iota_Y : y\in Y \mapsto y \in Z$ is a trivial isometry. We choose $0$ as the base point in $X$ and $(0,0) \in Y$ as the base point in $Y$.

Let $R \geq 1$ and $\varepsilon \in (0,1)$. Let $I=(-R-1,\infty)$, $J=(-R-2+2\varepsilon,-R-1]$ and $K = (-\infty, -R-2+2\varepsilon ]$. Define:
\begin{equation*}
\iota_X: x\in \R \longmapsto \begin{cases}
(x,\varepsilon) \text{ if $x \in I$,}\\
(-R-1, - x - R - 1 + \varepsilon ) \text{ if $x\in J$,}\\
( -2R - 3 +2\varepsilon - x  , 1 - \varepsilon ) \text{ if $x \in K$.}
\end{cases}
\end{equation*}

Thus $\iota_X(X)$ is the union of three lines: a horizontal half-line starting at $(-R-1,\varepsilon)$ and going toward the right, a vertical segment between $(-R-1,\varepsilon)$ and $(-R-1,1-\varepsilon)$, and last a horizontal half line starting at $(-R-1,1-\varepsilon)$ and going again to the right. It is straightforward to check that $\iota_X$ is continuous.

The point of the construction of $\iota_X$ is that, on the one hand, $\iota_X$ restricts to an isometry on the ball of radius $R$ centered at $0$ in $X$. On the other hand, $\iota_X$ is not an isometry from $X$ to $Z$. 

We now make two observations. First, the the ball of center $(0,\varepsilon) = \iota_X(0)$ and radius $R$ of $\iota_X(X)$ is actually the union of two horizontal segments in $Z$:
\begin{equation*}
\left[\left(-R,\varepsilon\right),\left(R,\varepsilon\right)\right]\cup\left[\left(-R,1-\varepsilon\right),\left(R,1-\varepsilon\right)\right]
\end{equation*}
and is actually within distance $\varepsilon$ from the two horizontal segments in $Y$ which constitute the ball of radius $R$ centered at $(0,0)$:
\begin{equation*}
\left[\left(-R,0\right),\left(R,0\right)\right]\cup\left[\left(-R,1\right),\left(R,1\right)\right]\text{.}
\end{equation*}

Thus, if we defined $\Delta_R$ by requiring $\iota_X$ and $\iota_Y$ to be isometric only on the balls of radius $R$, respectively centered at $x_0$ in $X$ and $y_0$ in $Y$, then we see that in the current example, $\Delta_R((X,x_0),(Y,y_0))\leq\varepsilon$ for any $\varepsilon > 0$: so $\Delta_R((X,x_0),(Y,y_0)) = 0$!

If $R < 1$, then it is easy to simply define $\iota_Y :x\in Y\mapsto (x,\varepsilon)$ for an arbitrary $\varepsilon > 0$ to get $\Delta_R(X,Y)\leq\varepsilon$, and thus again $\Delta_R((X,x_0),(Y,y_0)) = 0$.

A second observation, based upon the same computation as above, shows that both closed balls in $X$ and $Y$ of radius $R$ centered at their respective base points are mapped by $\iota_X$ and $\iota_Y$ to sets which are within $\epsilon > 0$ of both $X$ and $Y$. Thus, if we modified $\Delta_R$ so that we took the distance between the images of these balls (rather than the balls in the embedding space), we would still have $\Delta_R((X,x_0),(Y,y_0)) = 0$.

Consequently, requiring isometry only on the balls of radius $R$ and centered at the base points in $X$ and $Y$ in the definition of $\Delta_r$ would lead to $X$ and $Y$ being at Gromov-Hausdorff $0$, even though the two spaces are certainly not isometric (or even homeomorphic, for that matter!). This illustrates that we must use isometry over the whole space in Definition (\ref{Delta-r-def}).
\end{example}

Inspite of Example (\ref{funny-example}), our construction of the Gromov-Hausdorff propinquity will not involve isometric embeddings, which is quite surprising. The reason for this unexpected fact is that we will exchange the isometric embeddings of Definition (\ref{Delta-r-def}) with a form of Lipschitz extension property in the noncommutative setting, which will provide us with the needed rigidity to avoid the sort of phenomenon illustrated with Example (\ref{funny-example}) in our noncommutative setup. 

We also remark that neither $\delta_r^{(Z,\mathsf{d})}$ nor $\Delta_r$ are not pseudo-metrics, yet for any three pointed proper metric spaces $(X,\mathsf{d}_X,x_0)$, $(Y,\mathsf{d}_Y,y_0)$ and $(Z,\mathsf{d}_Z,z_0)$, we have:
\begin{multline*}
\Delta_r( (X,\mathsf{d}_X,x_0),(Y,\mathsf{d}_Y,y_0)) \leq \\\Delta_R((X,\mathsf{d}_X,x_0),(Z,\mathsf{d}_Z,z_0)) + \Delta_R((Z,\mathsf{d}_Z,z_0),(Y,\mathsf{d}_Y,y_0))
\end{multline*}
for any $R > r + \max\{\Delta_r(X,Z),\Delta_r(Z,Y)$ (and similarly with $\delta_r$, from which in fact we derive the inequality for $\Delta_r$); moreover, if $R > R'$ then:
\begin{equation*}
\Delta_{R'}((X,\mathsf{d}_X,x_0),(Y,\mathsf{d}_Y,y_0))\leq \Delta_R((X,\mathsf{d}_X,x_0),(Y,\mathsf{d}_Y,y_0))\text{,}
\end{equation*}
as can easily be checked from Theorem (\ref{GH-equivalences-thm}). These properties allow one to prove that, if we let:
\begin{equation*}
\mathsf{GH}((X,x_0,Y,y_0)) = \max\left\{ \inf \Set{r>0}{\Delta_{\frac{1}{r}}((X,x_0),(Y,y_0)) < r} , \frac{1}{2}\right\} \text{,}
\end{equation*}
for any two pointed proper metric spaces $(X,x_0)$ and $(Y,y_0)$, then we define an inframetric which is (up to the truncation, needed to ensure the triangle inequality and finiteness of the metric) the metric proposed by Gromov in \cite{Gromov81}. We thus have arrived to the desired construction. The actual proof that $\mathsf{GH}$ is an inframetric can be obtained in a similar manner as our Theorem (\ref{propinquity-inframetric-thm}) later on, which will concern our noncommutative analogue of $\mathsf{GH}$. Of course, the construction and analysis of the metric $\mathsf{GH}$ is all due to Gromov.

With this in mind, we note that:
\begin{proposition}
A net $(X_j,\mathsf{d}_j,x_j)_{j\in J}$ of pointed proper metric spaces converges to a pointed proper metric space $(X,\mathsf{d},x)$ in the Gromov-Hausdorff sense if and only if:
\begin{equation*}
\lim_{j\in J}\mathsf{GH}((X_j,\mathsf{d},x_j),(X,\mathsf{d},x)) = 0\text{.}
\end{equation*}
\end{proposition}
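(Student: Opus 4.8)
The plan is to unwind the definition of $\mathsf{GH}$ and reduce the claim to the two monotonicity facts about the quantities $\Delta_r$ recorded just above the statement: that $r\mapsto\Delta_r(\mathds{X},\mathds{Y})$ is non-decreasing (which follows from Theorem \ref{GH-equivalences-thm}), and that the $\tfrac12$-truncation in $\mathsf{GH}$ only affects the regime in which the spaces are far apart. Write, for $j\in J$,
\begin{equation*}
\phi_j = \inf\set{r>0}{\Delta_{\frac1r}((X_j,\mathsf{d}_j,x_j),(X,\mathsf{d},x)) < r}\text{,}
\end{equation*}
so that $\mathsf{GH}((X_j,\mathsf{d}_j,x_j),(X,\mathsf{d},x))$ coincides with $\phi_j$ as soon as $\phi_j<\tfrac12$. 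Consequently $\mathsf{GH}((X_j,\mathsf{d}_j,x_j),(X,\mathsf{d},x))\to 0$ if and only if $\phi_j\to 0$, and it is enough to show that $\phi_j\to 0$ if and only if $\Delta_r((X_j,\mathsf{d}_j,x_j),(X,\mathsf{d},x))\to 0$ for every $r>0$.

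For the forward implication, assume $\Delta_r\to 0$ for all $r>0$. Fix $\varepsilon\in(0,\tfrac12)$ and set $r=\tfrac\varepsilon2$. Applying the hypothesis with radius $\tfrac1r=\tfrac2\varepsilon$ gives $j_0\in J$ with $\Delta_{1/r}((X_j,\mathsf{d}_j,x_j),(X,\mathsf{d},x))<\tfrac\varepsilon2=r$ for all $j\succeq j_0$; hence $r$ lies in the set defining $\phi_j$, so $\phi_j\leq r<\varepsilon$ for those $j$. Thus $\phi_j\to 0$, hence $\mathsf{GH}((X_j,\mathsf{d}_j,x_j),(X,\mathsf{d},x))\to 0$.

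For the converse, assume $\phi_j\to 0$ and fix $\rho>0$ and $\eta>0$. Put $\delta=\min\{\eta,\tfrac1\rho\}>0$. There is $j_0\in J$ with $\phi_j<\delta$ for all $j\succeq j_0$; by definition of the infimum there is then $r_j\in(0,\delta)$ in the set defining $\phi_j$, i.e. $\Delta_{1/r_j}((X_j,\mathsf{d}_j,x_j),(X,\mathsf{d},x))<r_j$. Since $\tfrac1{r_j}>\tfrac1\delta\geq\rho$ and $r\mapsto\Delta_r$ is non-decreasing,
\begin{equation*}
\Delta_\rho((X_j,\mathsf{d}_j,x_j),(X,\mathsf{d},x))\leq\Delta_{1/r_j}((X_j,\mathsf{d}_j,x_j),(X,\mathsf{d},x))<r_j<\delta\leq\eta
\end{equation*}
for all $j\succeq j_0$. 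As $\eta>0$ was arbitrary, $\Delta_\rho((X_j,\mathsf{d}_j,x_j),(X,\mathsf{d},x))\to 0$, and as $\rho>0$ was arbitrary the net converges in the Gromov-Hausdorff sense.

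The argument is entirely formal once the monotonicity $\Delta_{r'}\leq\Delta_r$ for $0<r'\leq r$ is in hand; the only point requiring any care is the bookkeeping between a radius and its reciprocal, together with the observation that the truncation at $\tfrac12$ is invisible in the limit — which is exactly why I isolate $\phi_j$ at the outset. I do not anticipate a genuine obstacle here: this proposition merely repackages ``convergence at every radius'' into a single scalar quantity, and the substantive content lives in Theorem \ref{GH-equivalences-thm} and the monotonicity remarks preceding the statement.
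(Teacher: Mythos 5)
Your argument is correct and follows essentially the same route as the paper's proof: the forward direction feeds the radius $\varepsilon^{-1}$ (your $2/\varepsilon$) into the hypothesis to bound the infimum defining $\mathsf{GH}$, and the converse picks a threshold of the form $\min\{\eta,\rho^{-1}\}$ and invokes the monotonicity $\Delta_{R'}\leq\Delta_{R}$ for $R'\leq R$. Your explicit isolation of $\phi_j$ merely spells out the (harmless) truncation step, which the paper's proof also passes over, reading the truncation so that it is invisible once the quantities are small.
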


\begin{proof}
If $(X_j,\mathsf{d}_j,x_j)_{j\in J}$ converges then for all $\varepsilon > 0$, there exists $j_\varepsilon \in J$ such that $\Delta_{\varepsilon^{-1}}((X_j,\mathsf{d}_j,x_j),(X,\mathsf{d},x))<\varepsilon$ for all $j\succ j_\varepsilon$ (where $\succ$ is the order of the directed set $J$). Hence $\mathsf{GH}((X_j,\mathsf{d}_j,x_j),(X,\mathsf{d},x))\leq\varepsilon$ for $j\succ j_\varepsilon$.

Conversely, let $\varepsilon > 0$ and $R>0$. Let $\epsilon = \min\{\varepsilon, R^{-1}\}$. There exists $j_\varepsilon \in J$ such that for all $j\succ j_\varepsilon$ we have $\mathsf{GH}((X_j,\mathsf{d}_j,x_j),(X,\mathsf{d},x))<\epsilon$, so by definition:
\begin{equation*}
\Delta_{\epsilon^{-1}}((X_j,\mathsf{d}_j,x_j),(X,\mathsf{d},x))\leq\epsilon\text{.}
\end{equation*}

Thus for all $j\succ j_\varepsilon$ we have:
\begin{equation*}
\Delta_R((X_j,\mathsf{d}_j,x_j),(X,\mathsf{d},x))\leq\Delta_{\epsilon^{-1}}((X_j,\mathsf{d}_j,x_j),(X,\mathsf{d},x)) \leq \epsilon\leq\varepsilon\text{.}
\end{equation*}
This concludes our proof.
\end{proof}

We conclude this subsection with a characterization of Gromov-Hausdorff convergence often found in the literature \cite{burago01}, expressed in terms of convergence of nets, without explicit reference to a metric. This formulation is often useful, so we prove that it is indeed equivalent to our own approach. We also point out that when all the proper metric spaces of a convergent net are length spaces, then the original naive idea to define convergence directly in term of closed balls centered around the base point is equivalent to Gromov-Hausdorff convergence --- though we do not have a notion of quantum length space yet, so for us this approach will not be useful in this paper (and is not equivalent to Gromov-Hausdorff convergence in general, as seen in Example (\ref{simple-compact-cv-example})).

\begin{theorem}\label{net-version-GH-thm}
Let $(X_j,\mathsf{d}_j,x_j)_{j\in J}$ be a net of pointed proper metric spaces and $(X,\mathsf{d},x)$ be a pointed proper metric space. The following assertions are equivalent:
\begin{enumerate}
\item $(X_j,\mathsf{d}_j,x_j)_{j\in J}$ converges, in the sense of Gromov-Hausdorff, to $(X,\mathsf{d},x)$,
\item for all $r > 0$ and for all $\varepsilon > 0$, there exists $j_0 \in J$ such that, for all $j\succ j_0$, there exists a compact $K_j \in \compacts{X}$ and some isometric embeddings $\iota_j : X_j \hookrightarrow Z$ and $\iota:X\hookrightarrow Z$ in a proper metric space $(Z,\mathsf{d}_Z)$ such that:
\begin{enumerate}
\item The Hausdorff distance between $\iota_j(\cBall{X}{\mathsf{d}}{x}{r})$ and $\iota(K)$ is less than $\varepsilon$ in $(Z,\mathsf{d}_Z)$,
\item $\mathsf{d}_Z(x_j,x)\leq\varepsilon$,
\item $\cBall{X_j}{\mathsf{d}_j}{x_j}{r-2\varepsilon}\subseteq K$.
\end{enumerate}
\end{enumerate}
\end{theorem}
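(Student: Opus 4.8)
The plan is to read assertion~(2) off directly from Theorem~\ref{GH-equivalences-thm} together with Definition~\ref{Delta-r-def}; apart from one preliminary reduction guaranteeing properness of the ambient space, the whole argument is bookkeeping of the $\pm 2\varepsilon$ shifts in the radii of the balls involved. The preliminary reduction, needed only for $(1)\Rightarrow(2)$, is that in Definition~\ref{Delta-r-def} the ambient metric space $(Z,\mathsf{d}_Z)$ may always be taken proper: given isometric embeddings $\iota_j:X_j\hookrightarrow Z$ and $\iota:X\hookrightarrow Z$, the quantity $\delta_r^{(Z,\mathsf{d}_Z)}$ depends only on the mutual distances among the points of $\iota_j(X_j)\cup\iota(X)$, so one may replace $Z$ by $\overline{\iota_j(X_j)\cup\iota(X)}$, and each closed ball of this closed subspace lies inside the union of a closed ball of $\iota_j(X_j)$ and a closed ball of $\iota(X)$, hence is compact.

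For $(1)\Rightarrow(2)$, fix $r>0$ and $\varepsilon>0$. Gromov--Hausdorff convergence gives $j_0\in J$ with $\Delta_r((X_j,\mathsf{d}_j,x_j),(X,\mathsf{d},x))<\varepsilon$ for every $j\succ j_0$, so by the preliminary reduction there are a proper metric space $(Z,\mathsf{d}_Z)$ and isometric embeddings $\iota_j:X_j\hookrightarrow Z$, $\iota:X\hookrightarrow Z$ with $\delta_r^{(Z,\mathsf{d}_Z)}((\iota_j(X_j),\iota_j(x_j)),(\iota(X),\iota(x)))<\varepsilon$. Applying the implication $(1)\Rightarrow(2)$ of Theorem~\ref{GH-equivalences-thm} with $\iota_j(X_j)$ in the role of ``$X$'' and $\iota(X)$ in the role of ``$Y$'' yields a compact $K_j\in\compacts{X}$ with $\cBall{X}{\mathsf{d}}{x}{r-2\varepsilon}\subseteq K_j\subseteq\cBall{X}{\mathsf{d}}{x}{r+2\varepsilon}$, with $\Haus{\mathsf{d}_Z}(\iota_j(\cBall{X_j}{\mathsf{d}_j}{x_j}{r}),\iota(K_j))\leq\varepsilon$, and with $\mathsf{d}_Z(\iota_j(x_j),\iota(x))\leq\varepsilon$; these are precisely items~(a)--(c) of assertion~(2) for the embeddings $\iota_j,\iota$.

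For $(2)\Rightarrow(1)$, fix $R>0$ and $\eta>0$ and apply assertion~(2) at radius $r:=R+2\eta$ with tolerance $\eta$, obtaining $j_0$ so that, for each $j\succ j_0$, there are a proper $(Z,\mathsf{d}_Z)$, isometric embeddings $\iota_j,\iota$ and a compact $K_j\in\compacts{X}$ with~(a)--(c); I identify $X_j$ and $X$ with their images in $Z$. Item~(a) gives $\cBall{X_j}{}{x_j}{R+2\eta}\subseteq_\eta K_j$ and $K_j\subseteq_\eta\cBall{X_j}{}{x_j}{R+2\eta}$, and combining the second with~(b) shows $K_j\subseteq\cBall{X}{}{x}{R+4\eta}$, so $K_j$ is genuinely compact in $X$. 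Item~(c), namely $\cBall{X}{}{x}{R}\subseteq K_j$, then yields $\cBall{X}{}{x}{R}\subseteq K_j\subseteq_\eta\cBall{X_j}{}{x_j}{R+2\eta}\subseteq X_j$, hence $\cBall{X}{}{x}{R}\subseteq_\eta X_j$; and $\cBall{X_j}{}{x_j}{R}\subseteq\cBall{X_j}{}{x_j}{R+2\eta}\subseteq_\eta K_j\subseteq X$, hence $\cBall{X_j}{}{x_j}{R}\subseteq_\eta X$. Together with $\mathsf{d}_Z(x_j,x)\leq\eta$ from~(b), Definition~\ref{delta-r-def} gives $\delta_R^{(Z,\mathsf{d}_Z)}((X_j,x_j),(X,x))\leq\eta$, so $\Delta_R((X_j,\mathsf{d}_j,x_j),(X,\mathsf{d},x))\leq\eta$ for all $j\succ j_0$. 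Since $\eta>0$ is arbitrary, $\lim_{j\in J}\Delta_R=0$, and since $R>0$ is arbitrary, assertion~(1) follows.

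The one genuinely delicate point is the asymmetry of assertion~(2): it only controls $\cBall{X_j}{}{x_j}{r}$ against the compact $K_j\subseteq X$ and only forces the \emph{smaller} ball $\cBall{X}{}{x}{r-2\varepsilon}$ into $K_j$, whereas Definition~\ref{delta-r-def} requires two approximate inclusions, one in each direction. The resolution, which drives $(2)\Rightarrow(1)$, is that $K_j$ is automatically sandwiched as $\cBall{X}{}{x}{r-2\varepsilon}\subseteq K_j\subseteq\cBall{X}{}{x}{r+2\varepsilon}$ --- the upper inclusion being free from the Hausdorff estimate and properness --- so the single set $K_j$ simultaneously witnesses that $\cBall{X_j}{}{x_j}{R}$ is $\eta$-close to $X$ and that $\cBall{X}{}{x}{R}$ is $\eta$-close to $X_j$. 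The remaining ingredients --- the properness reduction and the term-by-term matching with Theorem~\ref{GH-equivalences-thm} --- are routine.
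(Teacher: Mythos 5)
Your argument is correct and, in substance, it is the same argument as the paper's: both directions are read off from Theorem (\ref{GH-equivalences-thm}) together with the radius shift (apply assertion (2) at radius $r+2\varepsilon$ for the converse), and your direct verification of the three conditions in Definition (\ref{delta-r-def}) to get $\delta_R^{(Z,\mathsf{d}_Z)}\leq\eta$ is a mild streamlining of the paper's converse, which instead builds the auxiliary compacts $K$ and $Q$ as $\varepsilon$-neighborhoods and then invokes the implication (4)$\Rightarrow$(1) of Theorem (\ref{GH-equivalences-thm}). Your preliminary reduction is a genuine improvement: Definition (\ref{Delta-r-def}) only quantifies over arbitrary metric spaces $Z$, while assertion (2) demands a \emph{proper} ambient space, and the paper's proof of (1)$\Rightarrow$(2) simply asserts that a proper $Z$ exists; your replacement of $Z$ by $\iota_j(X_j)\cup\iota(X)$ (closed, since complete, with every closed ball a union of two compact pieces) legitimately closes that gap. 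The one point you should be aware of is that you have repaired the garbled indices of assertion (2) in the mirror-image way from the paper. As printed, the statement is inconsistent ($K_j\in\compacts{X}$ clashes with item (c), and $\iota_j$ cannot be applied to $\cBall{X}{\mathsf{d}}{x}{r}$); the paper's own proof shows the intended reading is $K_j\in\compacts{X_j}$, item (a) comparing $\iota(\cBall{X}{\mathsf{d}}{x}{r})$ with $\iota_j(K_j)$, and item (c) exactly as printed, $\cBall{X_j}{\mathsf{d}_j}{x_j}{r-2\varepsilon}\subseteq K_j$. You instead keep $K_j\in\compacts{X}$, compare $\iota_j(\cBall{X_j}{\mathsf{d}_j}{x_j}{r})$ with $\iota(K_j)$, and read (c) as $\cBall{X}{\mathsf{d}}{x}{r-2\varepsilon}\subseteq K_j$. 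The two repaired statements are equivalent characterizations of pointed Gromov--Hausdorff convergence (your proof and the paper's are reflections of one another, with the roles of $X_j$ and $X$ exchanged), so nothing is mathematically wrong; but strictly speaking you have proved the reflected version rather than the one the paper's proof establishes, and your final remark about the ``asymmetry'' of (2) is resolved in the paper by the sandwich $\cBall{X_j}{\mathsf{d}_j}{x_j}{r-2\varepsilon}\subseteq K_j\subseteq\cBall{X_j}{\mathsf{d}_j}{x_j}{r+2\varepsilon}$ on the $X_j$ side rather than on the $X$ side.
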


\begin{proof}
Assume $(X_j,\mathsf{d}_j,x_j)_{j\in J}$ converges, in the sense of Gromov-Hausdorff convergence, to $(X,\mathsf{d},x)$. Let $\varepsilon > 0$ and $r > 0$. There exists $j_\varepsilon \in J$ such that if $j\succ j_\varepsilon$ then:
\begin{equation*}
\Delta_r((X_j,\mathsf{d}_j,x_j),(X,\mathsf{d},x)) < \varepsilon\text{.}
\end{equation*}
Let $j\succ j_\varepsilon$. There exists a proper metric space $(Z,\mathsf{d}_Z)$, an isometry $\iota_j : X_j \hookrightarrow Z$ and an isometry $\iota : X\hookrightarrow Z$ such that:
\begin{equation*}
\delta_r^{(\Z,\mathsf{d}_Z)}((\iota_j(X_j),\iota_j(x_j)),(\iota(X),\iota(x))) < \varepsilon\text{.}
\end{equation*}
By Theorem (\ref{GH-equivalences-thm}), there exists $K \in \compacts{X_j}$ such that $\Haus{\mathsf{d}_Z}(\cBall{X}{\mathsf{d}}{x}{r},K)\leq\varepsilon$ and $\cBall{X_j}{\mathsf{d}_j}{x_j}{r-2\varepsilon}\subseteq K$, while $d(x_j,x)\leq\varepsilon$, as desired. This proves that (1) implies (2).

Conversely, assume (2). Let $r > 0$ and $\varepsilon > 0$, and let $j_0\in J$ be given by (2). Let $j \succ j_0$. Thus, there exists a proper metric space $(Z,\mathsf{d}_Z)$, two isometries $\iota_j : X_j\hookrightarrow Z$ and $\iota : X\hookrightarrow Z$, and $C\in\compacts{X_j}$ such that:
\begin{enumerate}
\item $\Haus{\mathsf{d}_Z}(\iota(\cBall{X}{}{x_0}{r + 2\varepsilon}),\iota_j(C))\leq\varepsilon$,
\item $\cBall{X_j}{\mathsf{d}_j}{x_j}{r}\subseteq C$,
\item $\mathsf{d}_Z(x_j,x)\leq\varepsilon$.
\end{enumerate}

Now, let:
\begin{equation*}
K = \left\{ y \in X_j : \mathsf{d}_Z(\iota_j(y),\iota(\cBall{X}{\mathsf{d}}{x}{r}))\leq \varepsilon \right\}
\end{equation*}
and
\begin{equation*}
Q = \left\{ y \in X : \mathsf{d}_Z(\iota(y),\iota_j(\cBall{X_j}{\mathsf{d}_j}{x_j}{r}))\leq \varepsilon \right\}\text{.}
\end{equation*}
The set $Q$ is compact in $X$ while the set $K$ is compact in $X_j$. By construction, $\iota(Q) \subseteq_\varepsilon \iota_j(\cBall{X_j}{\mathsf{d}_j}{x_j}{r})$ and $\iota_j(K) \subseteq_\varepsilon \iota(\cBall{X}{\mathsf{d}}{x}{r})$.

Let $y\in \cBall{X_j}{}{x_j}{r}$. Since $\cBall{X_j}{}{x_j}{r}\subseteq C$, there exists $y'\in \cBall{X}{}{x}{r+2\varepsilon}$ such that $\mathsf{d}_Z(\iota_j(y),\iota(y'))\leq\varepsilon$. By definition, $y'\in Q$. Thus $\iota_j(\cBall{X_j}{}{x_j}{r})\subseteq_\varepsilon \iota(Q)$. Therefore $\Haus{\mathsf{d}_Z}(\iota_j(\cBall{X_j}{}{x_j}{r}),\iota(Q))\leq\varepsilon$.

Let now $y\in \cBall{X}{}{x}{r}$. There exists $y' \in C$ such that $d(y,y')\leq\varepsilon$. By definition, $y\in K$. Thus $\Haus{\mathsf{d}_Z}(\iota(\cBall{X}{}{x_0}{r}),\iota_j(K))\leq\varepsilon$.

Consequently, $\Delta_r((X_j,\mathsf{d}_j,x_j),(X,\mathsf{d},x))\leq\varepsilon$ by Theorem (\ref{GH-equivalences-thm}). This concludes our proof.
\end{proof}

Theorem (\ref{net-version-GH-thm}) is sometimes expressed in terms of correspondences or in terms of $\varepsilon$-isometries \cite[Definition 8.1.1]{burago01} (and with a factor of $2$ removed, which is a detail caused by our flexibility in not mapping base points to base points, which will be useful in our latter work). As $\varepsilon$-isometries are not continuous functions in general, it is delicate to find a noncommutative equivalent notion; however $\varepsilon$-isometries can be used to define correspondences, which in turn can be understood as multi-valued functions. To a given correspondence, one may associate a metric distortion, and our construction in this paper will bare some analogies with this view, which will help our intuition but no so much our formal development. 

Before entering the noncommutative world, we must add to this classical prelude an important observation regarding extension of Lipschitz functions which will fit our context. As seen in our introduction, there is an important relationship between the Lipschitz extension theorem of McShane \cite{McShane34} and the notion of isometric embedding in the compact case. Thus one would expect a similar relationship, of similar importance, for our work in this paper. The situation turns out to be more subtle.

\subsection{Lifting Lipschitz Functions}

The {\mongekant} $\Kantorovich{\Lip}$ associated with the Lipschitz seminorm $\Lip$ of a locally compact metric space $(X,\mathsf{d})$ is not a metric in general on the state space $\StateSpace(C_0(X))$, and the topology induced on $\StateSpace(C_0(X))$ is not the weak* topology in general --- unless $(X,\mathsf{d})$ is in fact compact. In \cite{Latremoliere12b}, we developed a theory of {\lcqms s} which proposes a possible way to handle these difficulties, and we shall summarize the relevant conclusions of \cite{Latremoliere12b} in the next section. 

One difficulty, however, which is not addressed in \cite{Latremoliere12b}, is that in general, the restriction of the {\mongekant} to the pure states of $C_0(X)$ is not isometric to the original distance $\mathsf{d}$. Indeed, if $I = (0,1)$, endowed with its natural metric, and if $f \in C_0(I)$ is $1$-Lipschitz, then $\|f\|_{C_0(I)} \leq \frac{1}{2}$ (since $f(0)=f(1)=0$). Consequently, the diameter of $\StateSpace(C_0(I))$ for the {\mongekant} is at most (and easily checked to be exactly) $\frac{1}{2}$, rather than $1$ for the original metric. The relationship between the {\mongekant} and the original metric is given by the simple observation:

\begin{proposition}\label{trivial-bound-prop}
Let $(X,\mathsf{d})$ be a locally compact metric space. If $\Kantorovich{\Lip}$ be the {\mongekant} associated with the Lipschitz seminorm  $\Lip$ for the metric $\mathsf{d}$, then for any $x,y \in X$ we have:
\begin{equation*}
\Kantorovich{\Lip}(x,y) \leq \mathsf{d}(x,y)\text{.}
\end{equation*}
\end{proposition}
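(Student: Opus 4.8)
The plan is to evaluate $\Kantorovich{\Lip}(x,y)$ using the definition of the {\mongekant} applied to the Dirac measures $\delta_x$ and $\delta_y$ at the points $x,y \in X$, viewed as states of $C_0(X)$. By definition,
\begin{equation*}
\Kantorovich{\Lip}(x,y) = \sup\left\{ |f(x) - f(y)| : f \in \sa{\mathfrak{L}_0}, \Lip(f) \leq 1 \right\}\text{,}
\end{equation*}
where $\Lip = \mathsf{Lip}_{\mathsf{d}}$ is the Lipschitz seminorm of Expression (\ref{Lipschitz-seminorm-eq}), restricted to the real-valued, compactly supported (or vanishing-at-infinity) Lipschitz functions. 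The key step is simply that if $f$ satisfies $\Lip(f) = \mathsf{Lip}_{\mathsf{d}}(f) \leq 1$, then directly from the definition of the Lipschitz constant we have $|f(x) - f(y)| \leq \mathsf{Lip}_{\mathsf{d}}(f)\,\mathsf{d}(x,y) \leq \mathsf{d}(x,y)$. Taking the supremum over all such $f$ yields $\Kantorovich{\Lip}(x,y) \leq \mathsf{d}(x,y)$.

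First I would make explicit that a point $x \in X$ is identified with the state $\varphi_x : g \in C_0(X) \mapsto g(x)$, so that $\Kantorovich{\Lip}(x,y)$ in the statement is shorthand for $\Kantorovich{\Lip}(\varphi_x, \varphi_y)$; this is the identification used throughout the introduction. Then I would unwind the definition of $\Kantorovich{\Lip}$ from the quoted formula, observing that $\varphi_x(f) - \varphi_y(f) = f(x) - f(y)$. The single substantive inequality is the pointwise bound $|f(x)-f(y)| \leq \mathsf{Lip}_{\mathsf{d}}(f)\,\mathsf{d}(x,y)$, which is immediate (for $x \neq y$) from the very definition of $\mathsf{Lip}_{\mathsf{d}}$ as a supremum of difference quotients, and is trivial when $x = y$. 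Finally, bounding $\mathsf{Lip}_{\mathsf{d}}(f) \leq 1$ and taking the supremum over the admissible $f$ closes the argument.

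There is essentially no obstacle here: the statement is a one-line consequence of the definitions, and the only point requiring a word of care is the bookkeeping around which functions are allowed in the supremum. Since we restrict to $\sa{\mathfrak{L}_0}$, i.e. real-valued Lipschitz functions vanishing at infinity, one should note that the supremum over this smaller class is no larger than the supremum over all real-valued $1$-Lipschitz functions, so the bound by $\mathsf{d}(x,y)$ persists a fortiori; no McShane-type extension is needed for this direction. (The genuinely subtle phenomenon, already flagged by the author with the example $I = (0,1)$, is that the reverse inequality can fail, so no equality is claimed.) Thus the proof is a short direct computation from the definition of the {\mongekant} and of the Lipschitz seminorm.
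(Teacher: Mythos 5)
Your proof is correct and is essentially the paper's own argument: the paper simply notes that a $1$-Lipschitz $f\in C_0(X)$ satisfies $|f(x)-f(y)|\leq\mathsf{d}(x,y)$ and takes the supremum. Your additional bookkeeping about Dirac states and the restriction to $\sa{\mathfrak{L}_0}$ is accurate but does not change the route.
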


\begin{proof}
If $f \in C_0(X)$ is $1$-Lipschitz then by definition, $|f(x)-f(y)|\leq\mathsf{d}(x,y)$. This completes our proof.
\end{proof}

This problem is intimately related to the problem of extending Lipschitz functions. Indeed, if we consider the subspace $F = \{0.1, 0.9\}\subseteq I$ and the function $f: x \in F\mapsto x$ then $f$ is trivially $1$-Lipschitz on $F$, yet it has no extension as a $1$-Lipschitz function in $C_0(I)$. 

Of course, McShane's theorem \cite{McShane34} proves that any $1$-Lipschitz function on a subspace $X$ of a metric space $(Y,\mathsf{d})$ can be extended to a $1$-Lipschitz function on $Y$; however this function will not usually vanish at infinity, even if $f$ does vanish ``at infinity'' on $X$.
\begin{example}\label{counter-example}
Let $I = (-1,1)\times\left(-\frac{1}{2},\frac{1}{2}\right)$ endowed with the metric given by the norm $\|\cdot\|_\infty$ on $\R^2$. Let $f:x\in (-1,1)\rightarrow 1-|x|\in\R$. Then $f$ is $1$-Lipschitz and $f\in C_0(-1,1)$. Yet, for all $\varepsilon \in (0,2)$, there is no $\varepsilon$-Liphitz function $g:I\rightarrow \R$ whose restriction to $(-1,1)\times\{0\}$ is $f$, while $g\in C_0(I)$.
\end{example}

For proper metric spaces, however, such pathologies do not occur: the {\mongekant} extends the original metric from which it is built to the entire state space, and we can provide a good extension result for Lipschitz functions in our setting. Both the notion of Gromov-Hausdorff convergence and the notion of {\lcqms s} we introduced \cite{Latremoliere12b} rely heavily on compact subsets and thus we will focus our attention to extending compactly supported Lipschitz functions to compactly supported Lipschitz functions. In particular, we are interested in some form of control on the support of the extension of a Lipschitz function $f$ in term of the Lipschitz constant of $f$, the radius of the support of $f$ and the norm of $f$, as well as the Hausdorff distance between the support of $f$ and the entire space. All these quantities are naturally related to the problem at hand, and in fact, the following series of results play a central role in the construction of the Gromov-Hausdorff propinquity. Of course, we could use similar methods to show that any Lipschitz function which vanish at infinity on a closed subspace of a proper metric space has an extension on the whole space which vanishes at infinity and has the same Lipschitz seminorm and norm, but this will not play a role in this paper.

We thus begin with:

\begin{proposition}\label{proper-extension-prop}
Let $(Z,\mathsf{d})$ be a proper metric space and $X\subseteq Z$ be a closed subset of $Z$. If $f : X \rightarrow\R$ be a compactly supported Lipschitz function, then there exists a compactly supported function $g : Z\rightarrow \R$ whose restriction to $X$ agrees with $f$, and such that $g$ has the same Lipschitz seminorm and the same norm on $Z$ as $f$ does on $X$. Moreover, if the support of $f$ is contained in some $\cBall{X}{}{x_0}{R}$ for some $x_0\in X$ and $R > 0$, then we can choose the support of $g$ to be contained in $\cBall{Z}{}{x_0}{R+\|f\|_{C_0(X)}}$.
\end{proposition}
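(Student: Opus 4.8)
The plan is to build $g$ in two stages: first extend $f$ to all of $Z$ without changing its Lipschitz seminorm, and then trim the extension down to compact support --- cutting it both in amplitude and in the base space --- without enlarging the Lipschitz seminorm. All of the difficulty lies in the second stage, and the key device is to perform the trimming with pointwise \emph{medians} against a cone, rather than by multiplying with a cut-off function: a product would in general replace $\mathsf{Lip}_{\mathsf{d}}(f)$ by a quantity of the order of $\mathsf{Lip}_{\mathsf{d}}(f)+(\text{slope of the cut-off})\cdot\|f\|_{C_0(X)}$, which is strictly larger, whereas a $\min$ or a $\max$ of $L$-Lipschitz functions is again $L$-Lipschitz, and hence so is any finite iteration of these two operations, in particular a pointwise median.

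In detail, write $L=\mathsf{Lip}_{\mathsf{d}}(f)$ and $M=\|f\|_{C_0(X)}$, and dispose of the trivial cases $M=0$ and $L=0$ separately. By McShane's theorem \cite{McShane34} --- concretely, via the function $z\mapsto\inf_{x\in X}\bigl(f(x)+L\,\mathsf{d}(z,x)\bigr)$, an infimum of $L$-Lipschitz functions bounded below by $-M$ --- there is $f_1:Z\to\R$ with $f_1|_X=f$ and $\mathsf{Lip}_{\mathsf{d}}(f_1)=L$. Using the hypothesis $\operatorname{supp}(f)\subseteq\cBall{X}{}{x_0}{R}\subseteq\cBall{Z}{}{x_0}{R}$, introduce the cut-off cone $v:Z\to[0,M]$ which equals $M$ on $\cBall{Z}{}{x_0}{R}$, decreases with slope $L$ as one moves away from that ball, and is supported in $\cBall{Z}{}{x_0}{R+M}$; being obtained from the $1$-Lipschitz function $z\mapsto\mathsf{d}(z,\cBall{Z}{}{x_0}{R})$ by scaling by $L$ and composing with $\min$ and $\max$ against constants, $v$ is itself $L$-Lipschitz. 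Now set $g=\max\bigl\{-v,\ \min\{v,\,f_1\}\bigr\}$, the pointwise median of $-v$, $f_1$ and $v$.

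The verification is then routine. First, $g|_X=f$: on $\operatorname{supp}(f)$ one has $v=M\geq|f|=|f_1|$, so $g=f_1=f$, and on $X\setminus\operatorname{supp}(f)$ one has $f=0=f_1$ and $v\geq0$, so $g=0=f$. Second, $\|g\|_{C_0(Z)}=M$: since $|g|\leq v\leq M$ everywhere, $\|g\|_{C_0(Z)}\leq M$, and the converse follows from $g|_X=f$. Third, $\mathsf{Lip}_{\mathsf{d}}(g)=L$: the three functions $-v$, $f_1$, $v$ are $L$-Lipschitz, hence so is their median $g$, while $\mathsf{Lip}_{\mathsf{d}}(g)\geq\mathsf{Lip}_{\mathsf{d}}(f)=L$ because $g$ extends $f$. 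Fourth, $g$ is compactly supported inside $\cBall{Z}{}{x_0}{R+M}$: from $|g|\leq v$ and the support property of $v$, the set $\{g\neq0\}$ is contained in the closed ball $\cBall{Z}{}{x_0}{R+M}$, which is compact because $Z$ is proper; hence $g\in C_0(Z)$ and in fact has compact support. Together these four points are exactly the assertions of the proposition.

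The one place that requires care --- and the real content of the last assertion --- is the compatibility of the three controlled quantities: the slope of the cone $v$ must be at most $\mathsf{Lip}_{\mathsf{d}}(f)$ so that taking the median with $f_1$ does not inflate the Lipschitz seminorm, and yet it must be large enough that $v$, and therefore $g$, is forced to vanish within distance $\|f\|_{C_0(X)}$ of the ball $\cBall{Z}{}{x_0}{R}$; it is precisely this balance, together with the exactness (on both seminorm and norm) of McShane's extension, that yields the support bound $\cBall{Z}{}{x_0}{R+\|f\|_{C_0(X)}}$. Everything else is bookkeeping with the triangle inequality and the properness of $Z$.
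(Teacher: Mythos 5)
Your construction is essentially the paper's: a McShane extension, a truncation in amplitude at $\pm\|f\|_{C_0(X)}$, and a pointwise median against a compactly supported cut-off whose Lipschitz constant does not exceed that of the extended function. The paper's cut-off is the Urysohn-type function $t_1 = M\,\mathsf{d}(\cdot,Z\setminus\cBall{Z}{}{x_0}{R+M})\big/\bigl(\mathsf{d}(\cdot,Z\setminus\cBall{Z}{}{x_0}{R+M})+\mathsf{d}(\cdot,\cBall{Z}{}{x_0}{R})\bigr)$, which plays exactly the role of your cone $v$, and the median step is identical.

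There is, however, a genuine quantitative gap in your support claim. A cone that equals $M$ on $\cBall{Z}{}{x_0}{R}$ and decreases with slope $L=\mathsf{Lip}_{\mathsf{d}}(f)$ vanishes only at distance $M/L$ from that ball, so it is supported in $\cBall{Z}{}{x_0}{R+M/L}$, not in $\cBall{Z}{}{x_0}{R+M}$. The two constraints you state at the end --- slope at most $L$ so the median does not inflate the seminorm, and decay fast enough to die within distance $M$, which forces slope at least $1$ --- are compatible only when $L\geq 1$; for $0<L<1$ your asserted containment is false, and in fact no extension can satisfy the ``Moreover'' clause as stated: take $Z=\R$, $X=\{0\}\cup[10,20]$, $f(0)=1$, $f\equiv 0$ on $[10,20]$, $R=1$, so that $\mathsf{Lip}_{\mathsf{d}}(f)=\tfrac{1}{10}$ and $\|f\|_{C_0(X)}=1$, yet any $g$ supported in $\cBall{Z}{}{0}{2}$ with $g(0)=1$ has Lipschitz constant at least $\tfrac{1}{3}$. (Similarly, your ``trivial case'' $L=0$ with $M\neq 0$ cannot actually be disposed of: a nonzero constant on a compact $X$ inside an unbounded $Z$ has no compactly supported extension of seminorm $0$.) The paper sidesteps all of this by assuming at the outset, harmlessly for its later applications, which only concern $1$-Lipschitz functions, that $f$ is $1$-Lipschitz, and its cut-off $t_1$ has slope at most $1$. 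Under that same normalization your median argument is correct verbatim (and for $L\geq 1$ your slope-$L$ cone even gives the better radius $R+M/L$); otherwise you should either restrict to $\mathsf{Lip}_{\mathsf{d}}(f)\leq 1$, er rather normalize so that the relevant seminorm is $1$, or state the support radius as $R+\|f\|_{C_0(X)}/\mathsf{Lip}_{\mathsf{d}}(f)$.
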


\begin{proof}
Without loss of generality, we assume that $f$ is $1$-Lipschitz. Since the support of $f$ is compact, it is bounded, so there exist $x_0 \in X$ and $R>0$ such that $f$ is supported on the closed ball $\cBall{X}{}{x_0}{R}$. 

By McShane's Theorem \cite{McShane34}, there exists a $1$-Lipschitz function $f_1 : Z \rightarrow \R$ such that the restriction of $f_1$ to $X$ is $f$. Define:
\begin{equation*}
f_2 = \max\{ \min\{f_1,\|f\|_{C_0(X)}\}, -\|f\|_{C_0(X)} \}\text{.}
\end{equation*}
The function $f_2$ is still $1$-Lipschitz, and it moreover satisfies $\|f_2\|_{C_0(Z)}=\|f\|_{C_0(X)}$, while the restriction of $f_2$ to $X$ is still $f$.

Let $M = \|f\|_{C_0(X)}$.

If $C = Z\setminus \cBall{Z}{}{x_0}{r+M}$ is empty, then $f_2$ answers our proposition. Let us now assume $C\not=\emptyset$, and let:
\begin{equation*}
t_1 : z \in Z\longmapsto M \cdot \frac{\mathsf{d}(z,Z\setminus \cBall{Z}{}{x_0}{R+M})}{\mathsf{d}(z,Z\setminus \cBall{Z}{}{x_0}{R+M}) + \mathsf{d}(z,\cBall{Z}{}{x_0}{R})} \text{.}
\end{equation*}

Assume there exists $x\in Z$ such that:
\begin{equation*}
\mathsf{d}(x,Z\setminus \cBall{Z}{}{x_0}{R+M}) + \mathsf{d}(x,\cBall{Z}{}{x_0}{R}) < M \text{.}
\end{equation*}
Let $\varepsilon > 0$ such that:
\begin{equation*}
\mathsf{d}(x,Z\setminus \cBall{Z}{}{x_0}{R+M}) + \mathsf{d}(x,\cBall{Z}{}{x_0}{R}) + \varepsilon < M \text{.}
\end{equation*}
Since $C$ is not empty, we can choose $z\in C$ such that:
\begin{equation*}
M > \mathsf{d}(x,Z\setminus \cBall{Z}{}{x_0}{R+M}) + \varepsilon \geq \mathsf{d}(z,x)\text{.}
\end{equation*}
Since $\cBall{Z}{}{x_0}{R}$ is compact, there exists $y\in\cBall{Z}{}{x_0}{R}$ such that:
\begin{equation*}
\mathsf{d}(x,\cBall{Z}{}{x_0}{R}) = \mathsf{d}(x,y)\text{.}
\end{equation*}
Thus:
\begin{equation*}
\begin{split}
\mathsf{d}(z,x_0) &\leq \mathsf{d}(x_0,y) + \mathsf{d}(y,z)\\
&\leq R + \mathsf{d}(x,y) + \mathsf{d}(x,z) \leq R + M\text{.}
\end{split}
\end{equation*}
Thus $z\not\in C$ and we have reached the contradiction. Thus, for all $x\in Z$ we have:
\begin{equation*}
\mathsf{d}(x,Z\setminus \cBall{Z}{}{x_0}{R+M}) + \mathsf{d}(x,\cBall{Z}{}{x_0}{R}) \geq M \text{.}
\end{equation*}
Since:
\begin{equation*}
y\in Z \mapsto\mathsf{d}(y,C)
\end{equation*}
is $1$-Lipschitz, we conclude that $t_1$ is $1$-Lipschitz.

By construction, $t_1$ is supported on the closed ball $\cBall{Z}{}{x_0}{R+M}$ which is compact since $(Z,\mathsf{d})$ is proper.

Let:
\begin{equation*}
g = \max\{ \min\{ f_2, t_1 \}, -t_1 \} \text{.}
\end{equation*}

Again, $g$ is $1$-Lipschitz and its restriction to $X$ agrees with $f$, since $t_1(x) = \|f\|_{C_0(X)}=\|f_2\|_{C_0(Z)}$ for all $x\in \cBall{X}{}{x_0}{r}$ and $t_1(x) = 0 \implies f_2(x)=f(x)=0$ for all $x\in X$. This completes the construction of $g$, though the support of $g$ has larger radius than the support of $f$ a priori. It should be noted that the support of $g$ is contained in $\cBall{Z}{\mathsf{d}}{x_0}{R+M}$ which is compact.
\end{proof}

Consequently, we can easily check:
\begin{proposition}
If $(X,\mathsf{d})$ is a proper metric space, then the {\mongekant} $\Kantorovich{\Lip}$ on $\StateSpace(C_0(X))$ associated with the Lipschitz seminorm $\Lip$ for $\mathsf{d}$, restricts to $\mathsf{d}$ on $X$, where we identify points and Dirac measures over $X$.
\end{proposition}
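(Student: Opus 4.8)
The plan is to establish the two inequalities $\Kantorovich{\Lip}(x,y)\leq\mathsf{d}(x,y)$ and $\Kantorovich{\Lip}(x,y)\geq\mathsf{d}(x,y)$ for arbitrary $x,y\in X$, where we identify $x$ and $y$ with the corresponding evaluation states (Dirac probability measures) on $C_0(X)$. The first inequality is precisely Proposition (\ref{trivial-bound-prop}), so the whole content of the statement is carried by the reverse inequality, and it is here that properness of $(X,\mathsf{d})$ enters essentially.

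For the lower bound I would exhibit an explicit admissible test function. Apply Proposition (\ref{proper-extension-prop}) to the finite, hence closed and compact, subspace $F=\{x,y\}\subseteq X$ and to the function $f\colon F\to\R$ given by $f(x)=\mathsf{d}(x,y)$ and $f(y)=0$: this $f$ is $1$-Lipschitz on $F$, compactly supported, and satisfies $\norm{f}{C_0(F)}=\mathsf{d}(x,y)$. The proposition provides a compactly supported $1$-Lipschitz function $g\colon X\to\R$ extending $f$ with $\norm{g}{C_0(X)}=\mathsf{d}(x,y)$; being compactly supported and continuous, $g\in\sa{C_0(X)}$ and $\Lip(g)\leq 1$, so $g$ is among the functions over which the supremum defining $\Kantorovich{\Lip}(x,y)$ is taken. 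Since $|g(x)-g(y)|=\mathsf{d}(x,y)$, we obtain $\Kantorovich{\Lip}(x,y)\geq\mathsf{d}(x,y)$, and combined with Proposition (\ref{trivial-bound-prop}) this gives equality. One may also bypass Proposition (\ref{proper-extension-prop}) and take $g\colon z\mapsto\max\{0,\mathsf{d}(x,y)-\mathsf{d}(z,y)\}$ directly: it is $1$-Lipschitz, supported in $\cBall{X}{}{y}{\mathsf{d}(x,y)}$ — which is compact because $(X,\mathsf{d})$ is proper — hence lies in $C_0(X)$, and again $|g(x)-g(y)|=\mathsf{d}(x,y)$.

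The only delicate point, and the one worth stressing, is the verification that the witnessing function genuinely belongs to $C_0(X)$, i.e. vanishes at infinity. Properness makes this automatic: a continuous function of compact support lies in $C_0(X)$, and closed balls are compact, so truncating to a ball preserves membership in $C_0(X)$. Without properness this step fails, as the example $I=(0,1)$ recalled just before Proposition (\ref{trivial-bound-prop}) shows, where no $1$-Lipschitz function in $C_0(I)$ separates two points by their full distance and the Monge-Kantorovich diameter collapses to $\frac{1}{2}$. Apart from that, the argument is a routine assembly of Propositions (\ref{trivial-bound-prop}) and (\ref{proper-extension-prop}), with no real obstacle.
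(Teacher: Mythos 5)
Your argument is correct and follows essentially the same route as the paper: the upper bound is Proposition (\ref{trivial-bound-prop}), and the lower bound comes from a compactly supported $1$-Lipschitz witness separating $x$ and $y$ by their full distance, obtained via Proposition (\ref{proper-extension-prop}) (the paper extends $t\mapsto\mathsf{d}(x,t)$ from the compact ball $\cBall{X}{}{x}{\mathsf{d}(x,y)}$, while you extend from the two-point set $\{x,y\}$ or write the witness $z\mapsto\max\{0,\mathsf{d}(x,y)-\mathsf{d}(z,y)\}$ explicitly). The explicit formula even bypasses the extension proposition, but the underlying idea — properness makes the truncated distance function an admissible element of $\sa{C_0(X)}$ — is identical to the paper's.
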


\begin{proof}
Let $x,y \in X$ and define $f : t \in \cBall{X}{\mathsf{d}}{x}{\mathsf{d}(x,y)} \mapsto \mathsf{d}(x,t)$. Then $f$ is $1$-Lipschitz on $\cBall{X}{\mathsf{d}}{x}{\mathsf{d}(x,y)}$ and thus, since $X$ is proper, there exists $g : X\rightarrow \R$ compactly supported on $X$ and $1$-Lipschitz by Proposition (\ref{proper-extension-prop}). Thus $\Kantorovich{\Lip}(x,y) \geq \mathsf{d}(x,y)$. The converse inequality is always true by Proposition (\ref{trivial-bound-prop}).
\end{proof}

Within the framework of the Gromov-Hausdorff distance, we will work with two closed pointed subsets $(X,x)$ and $(Y,y)$ of some proper metric space $(Z,\mathsf{d}_Z)$ and some $r > 0$, $\varepsilon > 0$, such that:
\begin{equation}\label{text-eq1}
\cBall{X}{}{x}{r}\subseteq_\varepsilon\cBall{Y}{}{y}{r+2\varepsilon}\text{,}
\end{equation}
as seen with Theorem (\ref{GH-equivalences-thm}). This additional datum will prove helpful in controlling the support of an extension of a compactly supported $1$-Lipschitz function, thus improving Proposition (\ref{proper-extension-prop}). Our motivation for such an improvement is that Proposition (\ref{proper-extension-prop}) provides us with a replica of a $1$-Lipschitz function $f$ compactly supported on $X$ as the restriction of an extension of $f$ to $Y$, but the situation is not very symmetric: the support of this restriction can have quite a bit larger diameter than the diameter of the support of $f$. This would prove a problem when trying to define a composition for tunnels, later in our paper, upon which our construction relies. Thus, we propose the following series of refinements for Lipschitz extensions.

\begin{theorem}\label{Lipschitz-classical-lift-thm}
Let $(Z,\mathsf{d})$ be a proper metric space, $X,Y \subseteq Z$, $x_0\in X$, $y_0\in Y$ and $r > 0$. Let:
\begin{equation*}
R = \inf \set{ \mathsf{d}_X(y,x_0) }{y \in X\setminus\cBall{X}{\mathsf{d}_X}{x_0}{r}} \text{.}
\end{equation*}

Let $\varepsilon \in \left(0,\frac{R}{2}\right)$ be chosen so that:
\begin{equation*}
\cBall{Y}{}{y_0}{2r+R}\subseteq_\varepsilon X\text{ and }\mathsf{d}(x_0,y_0)\leq\varepsilon\text{,}
\end{equation*}
and $\mathsf{d}(x_0,y_0) \leq \varepsilon$.

Let $f : X\rightarrow\R$ be a $1$-Lipschitz function such that $f(x) = 0$ if $\mathsf{d}(x_0,x) \geq r$.

Then $\|f\|_{C_0(X)}\leq R + r$ and there exists two $1$-Lipschitz functions $g : Z\rightarrow\R$ and $h : Y\rightarrow \R$ such that:
\begin{enumerate}
\item the restriction of $g$ to $X$ is $f$,
\item if $\mathsf{d}(x_0,y)\geq r+\|f\|_{C_0(X)}$, then $g(y) = 0$,
\item $\|g\|_{C_0(Z)}\leq \|f\|_{C_0(X)}$ and $\|h\|_{C_0(Y)}\leq\|g\|_{C_0(Y)}$,
\item $\|g-h\|_{C_0(Y)}\leq\varepsilon$ and $h(y) = 0$ whenever $\mathsf{d}(y_0,y)\geq r+2\varepsilon$.
\end{enumerate}
\end{theorem}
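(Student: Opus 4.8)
The plan is to prove the norm bound, extract $g$ from Proposition~\ref{proper-extension-prop}, and then build $h$ as a ``clipping'' of $g|_Y$; essentially all the difficulty sits in one estimate for $g$ over $Y$. For the norm bound, if $X\setminus\cBall{X}{}{x_0}{r}=\emptyset$ then $R=+\infty$ and there is nothing to prove, so assume it is nonempty: for $x\in X$ with $\mathsf{d}(x_0,x)\le r$ and any $y\in X$ with $\mathsf{d}(x_0,y)>r$ we have $f(y)=0$, hence $|f(x)|=|f(x)-f(y)|\le\mathsf{d}(x,y)\le\mathsf{d}(x,x_0)+\mathsf{d}(x_0,y)$, and taking the infimum over such $y$ gives $|f(x)|\le r+R$, while $f(x)=0$ when $\mathsf{d}(x_0,x)\ge r$; thus $\|f\|_{C_0(X)}\le R+r$. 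Since $f$ vanishes wherever $\mathsf{d}(x_0,\cdot)\ge r$, its support lies in $\cBall{X}{}{x_0}{r}$, so Proposition~\ref{proper-extension-prop} produces a compactly supported $1$-Lipschitz $g:Z\to\R$ with $g|_X=f$, $\|g\|_{C_0(Z)}=\|f\|_{C_0(X)}$, and support inside $\cBall{Z}{}{x_0}{r+\|f\|_{C_0(X)}}$; this already yields (1), (2), and the first inequality of (3).

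The heart of the proof is the estimate $|g(y)|\le\varepsilon$ for every $y\in Y$ with $\mathsf{d}(y_0,y)\ge r+2\varepsilon$. When in addition $\mathsf{d}(y_0,y)\le 2r+R$, I use $\cBall{Y}{}{y_0}{2r+R}\subseteq_\varepsilon X$ to pick $x\in X$ with $\mathsf{d}(x,y)\le\varepsilon$; then $\mathsf{d}(x_0,x)\ge\mathsf{d}(y_0,y)-\mathsf{d}(x_0,y_0)-\mathsf{d}(x,y)\ge(r+2\varepsilon)-\varepsilon-\varepsilon=r$, so $f(x)=0$ and $|g(y)|=|g(y)-g(x)|\le\mathsf{d}(x,y)\le\varepsilon$. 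When instead $\mathsf{d}(y_0,y)>2r+R$, then $\mathsf{d}(x_0,y)>2r+R-\varepsilon\ge\bigl(r+\|f\|_{C_0(X)}\bigr)-\varepsilon$ by the norm bound, so $y$ lies within $\varepsilon$ of the support sphere of $g$; combining this with the support bound and the unit-rate decay of the tent majorant for $|g|$ constructed in the proof of Proposition~\ref{proper-extension-prop} toward the boundary of that ball again gives $|g(y)|\le\varepsilon$. This is exactly where $\mathsf{d}(x_0,y_0)\le\varepsilon$, $\varepsilon<\frac R2$, and the radius $2r+R$ (which is $r$ plus the largest possible value $R+r$ of $\|f\|_{C_0(X)}$, hence the support radius of $g$) are used.

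To build $h$, set $Z_h=\set{y\in Y}{\mathsf{d}(y_0,y)\ge r+2\varepsilon}$ and $\psi:y\in Y\mapsto\mathsf{d}(y,Z_h)$, with $\psi\equiv+\infty$ if $Z_h=\emptyset$ (in which case (4) is vacuous and $h=g|_Y$ works). Then $\psi$ is $1$-Lipschitz on $Y$ and vanishes exactly on $Z_h$, and by the key estimate, $|g(y)|\le|g(y)-g(y')|+|g(y')|\le\mathsf{d}(y,y')+\varepsilon$ for all $y\in Y$, $y'\in Z_h$, so $|g(y)|\le\psi(y)+\varepsilon$ on $Y$. Put $h=\max\bigl\{\min\{g|_Y,\psi\},-\psi\bigr\}$: assembled from the $1$-Lipschitz functions $g|_Y,\psi,-\psi$ by $\min$ and $\max$, it is $1$-Lipschitz, and a direct case analysis gives $|h|\le|g|$ and $|h|\le\psi$ on $Y$. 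The first inequality gives $\|h\|_{C_0(Y)}\le\|g\|_{C_0(Y)}$, completing (3); the second forces $h=0$ on $Z_h$, which is the second half of (4). Finally, where $|g(y)|\le\psi(y)$ one has $h(y)=g(y)$, and where $|g(y)|>\psi(y)$ one has $|h(y)|=\psi(y)$ and $|g(y)-h(y)|=|g(y)|-\psi(y)\le\varepsilon$ by the displayed inequality, so $\|g-h\|_{C_0(Y)}\le\varepsilon$, the first half of (4).

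Everything is routine except the second half of the key estimate, namely controlling $g$ at points of $Y$ lying outside the ball $\cBall{Y}{}{y_0}{2r+R}$, where the hypothesis $\subseteq_\varepsilon X$ offers no direct control. There one must lean on the precise support control for $g$ and the explicit shape of its majorant from Proposition~\ref{proper-extension-prop}, together with the optimal norm bound $\|f\|_{C_0(X)}\le R+r$; I expect the constants $2r+R$ and $\varepsilon<R/2$ in the hypotheses to be calibrated precisely so that these pieces fit together.
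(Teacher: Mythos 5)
Most of your proposal runs parallel to the paper's own proof: the bound $\|f\|_{C_0(X)}\leq R+r$, obtaining $g$ from Proposition (\ref{proper-extension-prop}) (which yields (1), (2) and the first half of (3)), the estimate $|g(y)|\leq\varepsilon$ for $y\in Y$ with $r+2\varepsilon\leq\mathsf{d}(y_0,y)\leq 2r+R$ via $\cBall{Y}{}{y_0}{2r+R}\subseteq_\varepsilon X$, and the clipping construction of $h$ (the paper clips against $\mathsf{d}(\cdot,B)$ with $B=\set{y\in Y}{\mathsf{d}(y_0,y)\in[r+2\varepsilon,2r+R]}$, you clip against $\mathsf{d}(\cdot,Z_h)$; your verification of (3) and (4) from the key estimate is correct).

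The genuine gap sits exactly where you locate the heart of the argument: points $y\in Y$ with $\mathsf{d}(y_0,y)>2r+R$. From $\mathsf{d}(x_0,y)>r+\|f\|_{C_0(X)}-\varepsilon$ you conclude that $y$ is ``within $\varepsilon$ of the support sphere of $g$'' and invoke the ``unit-rate decay of the tent majorant''; but the tent $t_1$ in the proof of Proposition (\ref{proper-extension-prop}) satisfies $t_1(z)\leq\mathsf{d}\left(z,Z\setminus\cBall{Z}{}{x_0}{r+M}\right)$, $M=\|f\|_{C_0(X)}$: it decays at unit rate toward the \emph{complement} of the ball, not toward the sphere $\set{z\in Z}{\mathsf{d}(x_0,z)=r+M}$. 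In a general proper space, $\mathsf{d}(x_0,y)>r+M-\varepsilon$ gives no bound on the distance from $y$ to that complement, which may be far from $y$ or empty (in which case the Proposition's $g$ is a bare truncated McShane extension, with no tent at all), so nothing forces $|g(y)|\leq\varepsilon$ there; $|g(y)|$ can be of order $M$. The zone is not vacuous: it occurs whenever $M>r+R-\varepsilon$ (possible, since $M$ can be arbitrarily close to $r+R$) and $Y$ has a point with $\mathsf{d}(y_0,y)>2r+R$ but $\mathsf{d}(x_0,y)<r+M$. The paper does not attempt a decay estimate at such points: it uses $\mathrm{supp}(g)\subseteq\cBall{Z}{}{x_0}{r+M}\subseteq\cBall{Z}{}{x_0}{2r+R}$ to treat them as lying outside the support of $g$, and only needs the $\varepsilon$-estimate on the annulus $B$. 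If you want your version, you must actually prove $|g|\leq\varepsilon$ on all of $Z_h$; one concrete repair is to replace $g$ by $\tilde{g}=\max\{\min\{g,s\},-s\}$ with $s(z)=\max\{0,\min\{M,\,2r+R-\mathsf{d}(x_0,z)\}\}$: since $|f|\leq M\leq r+R$ and $f$ vanishes where $\mathsf{d}(x_0,\cdot)\geq r$, one has $s\geq|f|$ on $X$, so $\tilde{g}$ is still a $1$-Lipschitz extension of $f$ with $|\tilde{g}|\leq|g|$ (hence (1)--(3) persist), while $|\tilde{g}(y)|\leq s(y)<\varepsilon$ whenever $\mathsf{d}(y_0,y)>2r+R$; combined with your annulus estimate this gives $|\tilde{g}|\leq\varepsilon$ on $Z_h$ and the rest of your argument goes through.
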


\begin{proof}
We first note that:
\begin{multline*}
R + r\geq \\ \sup\Set{\|h\|_X}{h:X\rightarrow\R \text{ is $1$-Lipschitz and }(\mathsf{d}(x_0,x)\geq r \implies h(x) = 0)}\text{.}
\end{multline*}
The result is trivial if $R = \infty$. Let us assume $R < \infty$. Let $h:X\rightarrow\R$ is $1$-Lipschitz and supported on $\cBall{X}{}{x_0}{r}$. For all $\varepsilon > 0$, there exists $x\in X\setminus\cBall{X}{}{x_0}{r}$ with $\mathsf{d}_X(x,x_0) < R + \varepsilon$.  Now, $h(x) = 0$, so for any $y \in \cBall{X}{}{x_0}{r}$:
\begin{equation*}
|h(y)|=|h(y)-h(x)|\leq \mathsf{d}_X(y,x) \leq \mathsf{d}_X(y,x_0)+\mathsf{d}_X(x_0,x)\leq r + R + \varepsilon\text{.}
\end{equation*}
The result follows since $\varepsilon > 0$ is arbitrary.

We let $M=\|f\|_{C_0(X)}$ to ease notations.

The function $g$ of our theorem is obtained by applying Proposition (\ref{proper-extension-prop}) to $f$. It should be noted that the support of $g$ is contained in:
\begin{equation*}
\cBall{Z}{\mathsf{d}}{x_0}{r+M}\subseteq\cBall{Z}{\mathsf{d}}{x_0}{2r+R}\text{.}
\end{equation*}

Let $B = \set{y \in Y}{\mathsf{d}(y,y_0)\in[r+2\varepsilon,2r+R]}$. We have two distinct cases: if $B = \emptyset$, then the restriction of $g$ to $Y$ is in fact supported on $\cBall{Y}{}{y_0}{r+2\varepsilon}$ and thus we can set $h$ to be the restriction of $g$ to $Y$, and our proof is complete. 

Assume instead that $B$ is not empty. The first key observation is as follows: if $y\in B$, then there exists $x\in X$ such that $\mathsf{d}(x,y)\leq\varepsilon$ since, by assumption, $B\subseteq \cBall{Y}{}{y_0}{2r+R}\subseteq_\varepsilon X$. Now:
\begin{equation*}
\mathsf{d}(y,x_0)\geq \mathsf{d}(y,y_0)-\mathsf{d}(y_0,x_0)\geq r + 2\varepsilon - \varepsilon = r + \varepsilon \text{.}
\end{equation*}
Thus:
\begin{equation*}
\mathsf{d}(x_0,x)\geq \mathsf{d}(y,x_0)-\mathsf{d}(x,y)\geq r + \varepsilon - \varepsilon = r \text{.}
\end{equation*}
Thus $g(x) = f(x) = 0$ by assumption. On the other hand, $g$ is $1$-Lipschitz, so:
\begin{equation}\label{lip-extension-eq0}
|g(y)|\leq |g(0)|+\mathsf{d}(y,x) \leq \varepsilon\text{.}
\end{equation}

Let $t_2 : y \in Y \mapsto \mathsf{d}(y,B)$. Note that $t_2$ is $1$-Lipschitz on $Y$. We now define:
\begin{equation*}
h : y \in Y \longmapsto \max\left\{ \min\left\{ g(y), t_2(y) \right\}, -t_2(y) \right\} \text{.}
\end{equation*}

By construction, $h$ is supported on $\cBall{Y}{}{y_0}{r+2\varepsilon}$. Indeed, if $y\in Y\setminus \cBall{Y}{}{y_0}{r+2\varepsilon}$ then either $y\in B$ and then $t_2(y)=0$ or $y\not\in B$ and then $g(y) = 0$, so either way $h(y) = 0$. 
We propose to show that $h$, while not an extension of $f$ in general, satisfies $\|g-h\|_{C_0(Y)} \leq\varepsilon$.

First, note that if $y \in Y\setminus \left(B\cup \cBall{Y}{}{y_0}{r+2\varepsilon}\right)$ then $g(y) = 0 = h(y)$. If $y\in B$ then $t_2(y) = 0$, so $h(y) = 0$, yet by Equation (\ref{lip-extension-eq0}), we have $|g(y)|\leq\varepsilon$. Hence $|g(y)-h(y)|\leq \varepsilon$ for all $y\in Y\setminus \cBall{Y}{}{y_0}{r+2\varepsilon}$.

Let now $y \in \cBall{Y}{}{y_0}{r+2\varepsilon}$. Let $x \in B$ be chosen so that $\mathsf{d}(x,y) = t_2(y)$, which exists since $B$ is compact (as $Y$ is proper) and nonempty. By Inequality (\ref{lip-extension-eq0}), we have $|g(x)|\leq\varepsilon$. Since $g$ is $1$-Lipschitz, we have:

\begin{equation}\label{lip-extension-eq1}
|g(y)-g(x)|\leq \mathsf{d}(x,y) = t_2(y)\text{.}
\end{equation}

If $g(y)\geq t_2(y)\geq 0$ then by Equation (\ref{lip-extension-eq1}), we have:
\begin{equation*}
g(y)\leq t_2(y) + g(x) \leq t_2(y) + \varepsilon
\end{equation*}
and $h(y) = t_2(y)$, so $|g(y)-h(y)| = g(y)-t_2(y) \leq\varepsilon$. If instead $g(y)\leq -t_2(y)\leq 0$ then, again by Equation (\ref{lip-extension-eq1}), we have:
\begin{equation*}
g(y)\geq -\mathsf{d}(x,y) + g(x) \geq -t_2(y) - \varepsilon\text{.}
\end{equation*}
On the other hand, $h(y) = -t_2(y)$, and thus $|g(y)-h(y)| = h(y)-g(y) \leq \varepsilon$.

Last if $g(y)\in[-t_2(y),t_2(y)]$ then $h(y)=g(y)$ so $|g(y)-h(y)| = 0\leq\varepsilon$.

In conclusion $\|g-h\|_{C_0(Y)} \leq\varepsilon$ as desired.
\end{proof}

Theorem (\ref{Lipschitz-classical-lift-thm}) provides us with an approximate replica: using the notations of the theorem, we build, from $f$ on $X$, a $1$-Lipschitz function $h$ on $Y$ which is within $\varepsilon$ of some $1$-Lipschitz extension of $f$ to the whole space; moreover our replica has a support of similar diameter as the support of $f$ as long as $\varepsilon$ is small. This would do fine, but we can actually improve our result as follows.

\begin{corollary}\label{Lisphitz-classical-lift-1-corollary}
Let $(Z,\mathsf{d}_Z)$ be a proper metric space, $X,Y \subseteq Z$, $x_0\in X$, $y_0\in Y$ and $r > 0$. Let:
\begin{equation*}
R = \inf \set{ \mathsf{d}_X(y,x_0) }{y \in X\setminus\cBall{X}{\mathsf{d}_X}{x_0}{r}} \text{.}
\end{equation*}

Let $\varepsilon \in \left(0,\frac{R}{2}\right)$ be chosen so that:
\begin{equation*}
\cBall{Y}{}{y_0}{2r+R}\subseteq_\varepsilon X\text{ and }\mathsf{d}(x_0,y_0)\leq\varepsilon\text{.}
\end{equation*}

There exists a proper metric space $(W,\mathsf{w})$ and two isometric embeddings $\omega_X : (X,\mathsf{d}_X)\hookrightarrow (W,\mathsf{w})$ and $\omega_Y : (Y,\mathsf{d}_Y)\hookrightarrow (W,\mathsf{w})$ such that, for all $1$-Lipschitz function $f : X\rightarrow\R$ such that $f(x) = 0$ if $\mathsf{d}(x_0,x) \geq r$, there exists a $1$-Lipschitz function $j : W\rightarrow \R$ such that $j\circ\omega_X = f$ on $X$ and $j\circ\omega_Y(x) = 0$ for all $x\in Y$ with $\mathsf{d}_Y(y,y_0)\geq r + 2\varepsilon$.
\end{corollary}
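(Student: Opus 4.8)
The plan is to build $(W,\mathsf{w})$ once and for all as the disjoint union of the metric spaces $(X,\mathsf{d}_X)$ and $(Y,\mathsf{d}_Y)$, glued by inflating the cross-distances by $\varepsilon$: let $\mathsf{w}$ restrict to $\mathsf{d}_X$ on the copy of $X$ and to $\mathsf{d}_Y$ on the copy of $Y$, and set $\mathsf{w}(x,y)=\varepsilon+\mathsf{d}_Z(x,y)$ for $x\in X$ and $y\in Y$ (identifying $X$ and $Y$ with their images in $Z$), with $\omega_X,\omega_Y$ the two canonical inclusions. The point is that Theorem~\ref{Lipschitz-classical-lift-thm} already does all the analytic work: given an admissible $f$, it produces a $1$-Lipschitz $g:Z\to\R$ extending $f$ and a $1$-Lipschitz $h:Y\to\R$ supported in $\cBall{Y}{}{y_0}{r+2\varepsilon}$ with $\norm{g-h}{C_0(Y)}\leq\varepsilon$. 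The function $h$ fails to be the restriction of $g$ to $Y$ only up to $\varepsilon$ in sup-norm, and the $\varepsilon$ added to the cross-distances in $W$ is precisely what absorbs this discrepancy, so that the function equal to $f$ on $\omega_X(X)$ and to $h$ on $\omega_Y(Y)$ is globally $1$-Lipschitz on $W$ with no further McShane extension needed. Crucially, $W$, $\mathsf{w}$, $\omega_X$ and $\omega_Y$ depend only on $Z,X,Y,x_0,y_0,r$ and $\varepsilon$, not on $f$, which is what the statement demands.

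First I would check that $\mathsf{w}$ is a metric. Positivity and symmetry are immediate (points of $X$ and $Y$ lie at distance at least $\varepsilon>0$, and $\mathsf{w}$ is a genuine metric on each piece). The triangle inequality for three points $p,q,s\in W$ reduces, by a short case analysis on how many of them lie in $X$, to the triangle inequality in $(Z,\mathsf{d}_Z)$ together with the fact that the inclusions $X,Y\hookrightarrow Z$ are isometric: after cancelling or discarding the additive $\varepsilon$'s, each inequality becomes $\mathsf{d}_Z(p,s)\leq\mathsf{d}_Z(p,q)+\mathsf{d}_Z(q,s)$ (and when exactly two of $p,q,s$ lie in $X$, or exactly two in $Y$, there is even $2\varepsilon$ of slack). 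Next I would verify that $(W,\mathsf{w})$ is proper. Since $X$ and $Y$ are proper metric spaces they are complete, hence closed in $Z$. A closed $\mathsf{w}$-ball centred at a point $p\in X$ is the union of a closed $\mathsf{d}_X$-ball of $X$ (compact because $X$ is proper) and the set $\set{y\in Y}{\mathsf{d}_Z(p,y)\leq\rho-\varepsilon}$, which is a closed and bounded subset of the proper space $(Y,\mathsf{d}_Y)$ and hence compact; a union of two compacts is compact, and the situation is symmetric for balls centred in $Y$. So $(W,\mathsf{w})$ is a proper metric space and $\omega_X,\omega_Y$ are isometric embeddings by construction.

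Finally, given a $1$-Lipschitz $f:X\to\R$ with $f(x)=0$ whenever $\mathsf{d}(x_0,x)\geq r$, I would invoke Theorem~\ref{Lipschitz-classical-lift-thm} to obtain $g:Z\to\R$ and $h:Y\to\R$ as above, and define $j:W\to\R$ by $j=f$ on $\omega_X(X)$ and $j=h$ on $\omega_Y(Y)$. On each of the two pieces $j$ is $1$-Lipschitz, so the only thing to check is that $|f(x)-h(y)|\leq\mathsf{w}(\omega_X(x),\omega_Y(y))=\varepsilon+\mathsf{d}_Z(x,y)$ for $x\in X$, $y\in Y$; but $|f(x)-h(y)|=|g(x)-h(y)|\leq|g(x)-g(y)|+|g(y)-h(y)|\leq\mathsf{d}_Z(x,y)+\norm{g-h}{C_0(Y)}\leq\mathsf{d}_Z(x,y)+\varepsilon$, using that $g$ restricts to $f$ on $X$, that $g$ is $1$-Lipschitz on $Z$, and the estimate of Theorem~\ref{Lipschitz-classical-lift-thm}. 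Hence $j$ is $1$-Lipschitz, $j\circ\omega_X=f$ by construction, and $j\circ\omega_Y=h$ vanishes at every $y\in Y$ with $\mathsf{d}_Y(y,y_0)\geq r+2\varepsilon$ since $h$ does.

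The analytic difficulty is entirely carried by Theorem~\ref{Lipschitz-classical-lift-thm}; the only real choice here is the space $W$, and the step I expect to require the most care is recognizing that inflating the cross-distances by precisely $\varepsilon$ simultaneously (i) converts the $\varepsilon$-approximate gluing of $g$ and $h$ into an \emph{exact} $1$-Lipschitz function and (ii) does no harm to properness, and that this can be set up uniformly, independently of $f$. Verifying the triangle inequality for $\mathsf{w}$ and the properness of $(W,\mathsf{w})$ are the only genuinely computational steps, and both are routine.
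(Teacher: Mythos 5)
Your proof is correct, and it leans on the same analytic engine as the paper --- Theorem (\ref{Lipschitz-classical-lift-thm}) supplies the pair $(g,h)$ and the $\varepsilon$-estimate $\norm{g-h}{C_0(Y)}\leq\varepsilon$ --- but your auxiliary space is different: you glue only the two pieces $X\coprod Y$, with cross-distance $\mathsf{d}_Z+\varepsilon$, whereas the paper takes the three-piece space $W=X\coprod Z\coprod Y$, keeping a full copy of $Z$ in the middle, with cross-distances $\mathsf{d}_Z+\varepsilon$ to $Z$ and $\mathsf{d}_Z+2\varepsilon$ between $X$ and $Y$, and sets $j$ equal to $f$, $g$, $h$ on the three pieces. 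Your two-piece version works because the only cross estimate actually needed is $|f(x)-h(y)|\leq |g(x)-g(y)|+|g(y)-h(y)|\leq\mathsf{d}_Z(x,y)+\varepsilon$, so the $+\varepsilon$ inflation suffices; it is a genuine streamlining, it saves the $X$--$Z$ and $Y$--$Z$ Lipschitz checks, and it in effect proves Corollary (\ref{Lipschitz-lift-main-corollary}) directly (with the sharper constant $\varepsilon$ in place of the $2\varepsilon$ obtained there by restriction), which is the form actually used later. The paper's detour through $Z$ buys nothing for this statement beyond making the gluing of $g$ visible. Two small remarks: your properness argument uses that $X$ and $Y$ are themselves proper (equivalently, closed in $Z$), which is implicit in the paper's setting but worth stating since the corollary only writes $X,Y\subseteq Z$; and your parenthetical that there is ``$2\varepsilon$ of slack'' whenever exactly two of the three points lie in the same piece is only accurate when the middle point of the triangle inequality lies in the other piece --- in the remaining configuration the inequality holds with no slack, as your own computation shows, so this is a harmless imprecision rather than a gap.
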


\begin{proof}
Let $W = X\coprod Z \coprod Y$ be the disjoint union of $X$, $Z$ and $Y$. To keep our notations clear, we do not identify $X$ with $\iota_X(X)$ nor $Y$ with $\iota_Y(Y)$. 

We define the following function on $W\times W$:
\begin{equation*}
\mathsf{w} : (x,y) \in W \mapsto \begin{cases}
\mathsf{d}_X(x,y) & \text{if $x,y \in X$,}\\
\mathsf{d}_Y(x,y) & \text{if $x,y \in Y$,}\\
\mathsf{d}_Z(x,y) & \text{if $x,y \in Z$,}\\
\mathsf{d}_Z(\iota_X(x),y) + \varepsilon &\text{if $x\in X, y\in Z$,}\\
\mathsf{d}_Z(\iota_Y(x),y) + \varepsilon &\text{if $x\in Y, y\in Z$,}\\
\mathsf{d}_Z(x,\iota_X(y)) + \varepsilon &\text{if $x\in Z, y\in X$,}\\
\mathsf{d}_Z(x,\iota_Y(y)) + \varepsilon &\text{if $x\in Z, y\in Y$,}\\
\mathsf{d}_Z(\iota_X(x),\iota_Y(y)) + 2\varepsilon &\text{if $x\in X, y\in Y$,}\\
\mathsf{d}_Z(\iota_Y(x),\iota_X(y)) + 2\varepsilon &\text{if $x\in Y, y\in Z$.}\\
\end{cases}
\end{equation*}

It is easy to check that $\mathsf{w}$ thus defined is a distance on $W$ since $\mathsf{d}(\iota_X(x),\iota_X(y)) = \mathsf{d}_X(x,y)$ for $x,y \in X$ and similarly $\mathsf{d}(\iota_X(x),\iota_X(y)) = \mathsf{d}_Y(x,y)$ for $x,y\in Y$. Let $\omega_X : x\in X \mapsto x \in X \subseteq W$ and $\omega_Y : x\in Y \mapsto y \in Y\subseteq W$: by construction, $\omega_X$ and $\omega_Y$ are isometries from $(X,\mathsf{d}_X)$ and $(Y,\mathsf{d}_Y)$, respectively, into $(W,\mathsf{w})$. 

Now, let $f : X\rightarrow \R$ be a $1$ Lipschitz function supported on $\cBall{X}{}{x}{r}$ and let $g: Z\rightarrow \R$ and $h:Y\rightarrow\R$ be given by Theorem (\ref{Lipschitz-classical-lift-thm}). We define:
\begin{equation*}
j : x\in W \mapsto \begin{cases}
f(x) & \text{if $x\in X$,}\\
g(x) & \text{if $x\in Z$,}\\
h(x) & \text{if $x\in Y$.}
\end{cases}
\end{equation*}
It is obvious by construction that $j$ restricted to $X$, $Z$ and $Y$ is $1$-Lipschitz. Let $y \in Y$ and $x\in Z$ be given. Then:
\begin{equation*}
\begin{split}
|j(x)-j(y)| &= |g(x) - h(y)| \leq |g(x) - g(\iota_Y(y))| + |g(\iota_Y(y)) - h(y)|\\
&\leq \mathsf{d}(x,\iota_Y(y)) + \varepsilon = \mathsf{w}(x,y) \text{.}
\end{split}
\end{equation*}
Similarly, if $x\in X, y \in W$ then:
\begin{equation*}
\begin{split}
|j(x) - j(y)| &= |f(x) - g(y)| \\
&= |f(x) - g(\iota_X(x))| + |g(\iota_X(x))-g(y)| \\
&\leq 0 + \mathsf{d}(\iota_X(x),y) \leq \mathsf{w}(x,y) \text{.}
\end{split}
\end{equation*}
Last if $x\in X, y \in Y$, then:
\begin{equation*}
\begin{split}
|j(x)-j(y)| &= |f(x)-h(y)| \\
&\leq |f(x)-g(\iota_X(x))|+|g(\iota_X(x))-g(\iota_Y(y))|+|g(\iota_Y(y))-h(y)| \\
&\leq 0 + \mathsf{d}(\iota_X(x),\iota_Y(y)) + \varepsilon \leq \mathsf{w}(x,y) \text{.}
\end{split}
\end{equation*}
By construction, $j$ is therefore $1$-Lipschitz on $W$. Moreover, $j\circ\omega_Y = h$ is supported in $\cBall{Y}{}{y}{r+2\varepsilon}$ while $j\circ\omega_X = f$ on $X$. This proves our corollary.
\end{proof}

\begin{corollary}\label{Lipschitz-lift-main-corollary}
Let $(Z,\mathsf{d})$ be a proper metric space, $X,Y \subseteq Z$, $x_0\in X$, $y_0\in Y$ and $r > 0$. Let:
\begin{equation*}
R = \inf \set{ \mathsf{d}_X(y,x_0) }{y \in X\setminus\cBall{X}{\mathsf{d}_X}{x_0}{r}} \text{.}
\end{equation*}

Let $\varepsilon \in \left(0,\frac{R}{2}\right)$ be chosen so that:
\begin{equation*}
\cBall{Y}{}{y_0}{2r+R}\subseteq_\varepsilon X\text{ and }\mathsf{d}(x_0,y_0)\leq\varepsilon\text{.}
\end{equation*}

There exists a metric $\mathsf{w}$ on the disjoint union $X\coprod Y$ whose restrictions to $X$ and $Y$ are given respectively by $\mathsf{d}_X$ and $\mathsf{d}_Y$ such that $(X\coprod Y,\mathsf{w})$ is a proper metric space, such that, for all $1$-Lipschitz function $f : X\rightarrow\R$ such that $f(x) = 0$ if $\mathsf{d}(x_0,x) \geq r$, there exists a $1$-Lipschitz function $j : X\coprod Y \rightarrow \R$ such that the restriction of $j$ to $X$ is $f$, while $j(x) = 0$ for all $x\in Y$ with $\mathsf{d}_Y(x,y_0)\geq r + 2\varepsilon$.
\end{corollary}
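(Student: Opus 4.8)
The plan is to deduce this corollary from Corollary \ref{Lisphitz-classical-lift-1-corollary} by restriction: the auxiliary ambient space $Z$ used there is simply discarded. Apply Corollary \ref{Lisphitz-classical-lift-1-corollary} with the same data $(Z,\mathsf{d})$, $X$, $Y$, $x_0$, $y_0$, $r$, $\varepsilon$ to obtain a proper metric space $(W,\mathsf{w})$, isometric embeddings $\omega_X : (X,\mathsf{d}_X)\hookrightarrow (W,\mathsf{w})$ and $\omega_Y : (Y,\mathsf{d}_Y)\hookrightarrow (W,\mathsf{w})$, and the stated lifting property. In the explicit model built in that proof, $W$ is the disjoint union $X\coprod Z\coprod Y$ and $\omega_X$, $\omega_Y$ are the canonical inclusions, so $\omega_X(X)$ and $\omega_Y(Y)$ are genuinely disjoint slices of $W$ (their mutual $\mathsf{w}$-distance is everywhere at least $2\varepsilon$). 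Identifying $X$ with $\omega_X(X)$ and $Y$ with $\omega_Y(Y)$, we may regard $X\coprod Y$ as a subset of $W$ and define the metric $\mathsf{w}$ of the present corollary to be the restriction of the $W$-metric to it; by construction this restriction agrees with $\mathsf{d}_X$ on $X$ and with $\mathsf{d}_Y$ on $Y$.

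Next I would verify that $(X\coprod Y,\mathsf{w})$ is proper. Since $X$ and $Y$ are proper they are complete, hence $\omega_X(X)$ and $\omega_Y(Y)$ are complete subspaces of $(W,\mathsf{w})$, hence closed in $W$, and therefore $X\coprod Y=\omega_X(X)\cup\omega_Y(Y)$ is closed in $W$. A closed subset of a proper metric space is proper, because each of its closed balls $\{b\in X\coprod Y : \mathsf{w}(a,b)\leq\rho\}=(X\coprod Y)\cap\cBall{W}{\mathsf{w}}{a}{\rho}$ is a closed subset of the compact set $\cBall{W}{\mathsf{w}}{a}{\rho}$. (Alternatively, in the explicit model the copy of $Z$ lies at distance at least $\varepsilon$ from $X\cup Y$, so $X\coprod Y$ is clopen in $W$.)

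Finally I would transfer the lifting. Given a $1$-Lipschitz $f : X\rightarrow\R$ with $f(x)=0$ whenever $\mathsf{d}(x_0,x)\geq r$, Corollary \ref{Lisphitz-classical-lift-1-corollary} supplies a $1$-Lipschitz $j : W\rightarrow\R$ with $j\circ\omega_X=f$ on $X$ and $j\circ\omega_Y(x)=0$ for every $x\in Y$ with $\mathsf{d}_Y(x,y_0)\geq r+2\varepsilon$. The restriction of $j$ to $X\coprod Y\subseteq W$ is then a $1$-Lipschitz function on $(X\coprod Y,\mathsf{w})$ whose restriction to $X$ is $f$ and which vanishes on $\{x\in Y : \mathsf{d}_Y(x,y_0)\geq r+2\varepsilon\}$, which is exactly the assertion. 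The only non-formal ingredient is the properness of the subspace $X\coprod Y$, handled by the completeness/closedness argument above, so I expect no real obstacle: this corollary is essentially a repackaging of Corollary \ref{Lisphitz-classical-lift-1-corollary} that removes the auxiliary space $Z$ so that tunnels can later be constructed on disjoint unions of only two spaces.
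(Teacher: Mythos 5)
Your proposal is correct and follows essentially the same route as the paper: apply Corollary (\ref{Lisphitz-classical-lift-1-corollary}) to get the proper metric $\mathsf{w}$ on $W=X\coprod Z\coprod Y$ together with the extension $j$, then restrict both the metric and $j$ to $X\coprod Y$. Your added verification that $X\coprod Y$ is closed in $W$ (hence proper) is a detail the paper leaves implicit, and your argument for it is sound.
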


\begin{proof}
By Corollary (\ref{Lisphitz-classical-lift-1-corollary}), we may build am proper metric $\mathsf{w}$ on $W = X\coprod Z \coprod Y$ and a function $j : W\rightarrow\R$ which is $1$-Lipschitz on $W$, restricts to $f$ on $X$ and is whose restriction to $Y$ is supported on $\cBall{Y}{}{y}{r+2\varepsilon}$. Endow $X\coprod Y$ with the restriction of $\mathsf{w}$ from $W$, and note that the restriction of $j$ to $X\coprod Y$ satisfies our requirements.
\end{proof}

We now proceed to introduce the class of {\pqpms}, which we then propose to metrize.

\section{Pointed Proper Quantum Metric Spaces}

The first step in generalizing metric geometry to noncommutative geometry is of course to define the category of quantum metric spaces. The notion of quantum compact metric space was introduced by Rieffel in \cite{Rieffel98a}, refined in \cite{Rieffel99}, and has served as a successful basis for the generalization of the Gromov-Hausdorff distance to the noncommutative setup.

The notion of a {\lcqms} is the result of our research, and presented several subtle challenges. We proposed a notion of {\lcqms s} designed precisely with Gromov-Hausdorff convergence in mind in \cite{Latremoliere12b}, which itself built on the first steps toward the theory of locally compact quantum metric space found in \cite{Latremoliere05b}.

In this section, we recall from \cite{Latremoliere12b} the foundation of the theory of {\lcqms s}.

The following notations will be used throughout this paper.

\begin{notation}
Let $\A$ be a C*-algebra. The set of self-adjoint elements of $\A$ is denoted by $\sa{\A}$. The state space of $\A$ is denoted by $\StateSpace(\A)$ and the norm on $\A$ is denoted by $\|\cdot\|_\A$. We denote by $\unital{\A}$ the smallest C*-algebra containing $\A$ and a unit, i.e. $\A$ itself if it contains a unit, and $\A\oplus\C$ otherwise. Either way, the unit of $\unital{\A}$ is denoted by $\unit_\A$.
\end{notation}

\subsection{Locally Compact Quantum Compact Metric Spaces}

Metric information is encoded, in the noncommutative setting, by a seminorm which shares some properties of Lipschitz seminorms in the classical setting. The minimal assumptions for noncommutative metric geometry are summarized in the following definition:

\begin{definition}
A \emph{Lipschitz pair} $(\A,\Lip)$ is given by a C*-algebra $\A$ and a seminorm $\Lip$ defined on a dense subspace $\dom{\Lip}$ of $\sa{\unital{\A}}$, such that:
\begin{equation*}
\Set{a\in\dom{\Lip}}{\Lip(a)=0} = \R\unit_\A\text{.}
\end{equation*}
\end{definition}

We follow the convention that, if $\Lip$ is a seminorm defined on a dense subset $\dom{\Lip}$ of $\sa{\A}$, and if $a\not\in\sa{\A}$, then $\Lip(a) = \infty$. With this convention, we have $\dom{\Lip} = \set{ a\in\sa{\A} }{ \Lip(a) < \infty }$. 

The focus of our attention will be the following extended metric:

\begin{definition}
The {\mongekant} associated with a Lipschitz pair $(\A,\Lip)$ is the extended metric defined on the state space $\StateSpace(\A)$ of $\A$ by:
\begin{equation*}
\Kantorovich{\Lip}:\varphi,\psi\in\StateSpace(\A)\longmapsto \sup\Set{|\varphi(a)-\psi(a)|}{a\in\sa{\A}\text{ and }\Lip(a)\leq 1}\text{.}
\end{equation*}
\end{definition}

Our approach to {\lcqms s} involves an additional structure, which allows one to define, at once, convergence at infinity and locality, in a manner which will be particularly suited to our goal of extending the Gromov-Hausdorff distance to the noncommutative context. This structure is given by:

\begin{definition}
A \emph{topographic quantum space} $(\A,\M)$ is a C*-algebra $\A$ and an Abelian C*-subalgebra $\M$ of $\A$ containing an approximate unit of $\A$. The C*-algebra $\M$ is then called the \emph{topography} of $(\A,\M)$, or a topography on $\A$.
\end{definition}

The natural notion of morphisms of topographic quantum spaces is:

\begin{definition}\label{topomorph-def}
Let $(\A,\M_\A)$ and $(\B,\M_\B)$ be two quantum topographic spaces. A *-epimorphism $\pi : \A \rightarrow \B$ is a \emph{topographic morphism} when:
\begin{enumerate}
\item $\pi(\M_\A)\subseteq \M_\B$,
\item $\pi$ maps some approximate unit for $\A$ in $\M_\A$ to an approximate unit for $\B$.
\end{enumerate}
\end{definition}

Let $\pi : \D\rightarrow\A$ be a *-morphism which maps some approximate unit of $\D$ to an approximate unit of $\A$. We first observe that $\pi$ maps all approximate units of $\D$ to approximate units of $\A$, and of course $\pi$ is unital if $\D$ is; which in turns enforces that $\A$ is unital. We also note that if $\varphi\in\StateSpace(\B)$ then $\varphi\circ\pi \in \StateSpace(\A)$. With this in mind, we will use the following notation:

\begin{notation}
Let $\pi:\A\rightarrow\B$ be a *-morphism which maps an approximate unit of $\A$ to an approximate unit of $\B$. The map $\pi^\ast : \StateSpace(\B) \rightarrow \StateSpace(\A)$ is defined by $\varphi \in \StateSpace(\B) \mapsto \varphi\in\pi$.
\end{notation}

Given a topographic quantum space, several natural constructions become possible:
\begin{notation}
Let $(\A,\M)$ be a topographic quantum space. For any $K\in\compacts{\M^\sigma}$, we denote by $\indicator{K}$ the indicator function of $K$, seen as a projection in $\M^{\ast\ast}$, and we denote $\indicator{K}a\indicator{K}$ by $\corner{K}{a}$.

The set $\sa{\A}\cap\indicator{K}\A\indicator{K}$ is denoted by $\lsa{\A}{\M}{K}$. Moreover, the convex subset:
\begin{equation*}
\set{ \varphi \in \StateSpace(\A) }{ \varphi(\indicator{K}) = 1 }
\end{equation*}
is denoted by $\StateSpace[\A|K]$.

The set $\bigcup_{K\in\compacts{\M}}\StateSpace[\A|K]$ is denoted by $\StateSpace(\A|\M)$.
\end{notation}

The signature of a {\lcqms} is given as:

\begin{definition}
A \emph{Lipschitz triple} $(\A,\Lip,\M)$ is given by a Liphitz pair $(\A,\Lip)$ and a topographic quantum space $(\A,\M)$.
\end{definition}

The {\mongekant} associated with the Lipschitz seminorm for a continuous metric on a compact space metrizes the weak* topology on the set of Radon probability measures over the space. This feature was the fundamental property used to define compact quantum metric spaces.

When working with locally compact metric spaces instead, the {\mongekant} does not possess this property any longer. However, Dobrushin \cite{Dobrushin70} proved that the {\mongekant} still metrizes the weak* topology of some subsets of Radon probability measures which enjoy a strong form of tightness related to the metric structure. The noncommutative analogue of this property \cite{Latremoliere12b} is given by:

\begin{definition}
Let $(\A,\Lip,\M)$ be a Lipschitz triple. A subset $\mathscr{S}$ of $\StateSpace(\A)$ is \emph{tame} when:
\begin{equation*}
\lim_{K\in\compacts{\M^\sigma}}\sup\Set{|\varphi(a-\corner{K}{a})|}{\varphi\in\mathscr{S}, a\in\sa{\A},\Lip(a)\leq 1} = 0\text{.}
\end{equation*}
\end{definition}

Thus, we can define the objects of interest in our paper:

\begin{definition}
A Lipschitz triple $(\A,\Lip,\M)$ is a \emph{\lcqms} when the restriction of the {\mongekant} associated with $(\A,\Lip)$ to any tame subset $\mathscr{S}$ of $\StateSpace(\A)$ metrizes the weak* topology restricted to $\mathscr{S}$.
\end{definition}

In \cite{Latremoliere12b}, we propose a characterization of {\lcqms s} in the spirit of Rieffel's characterization of {\qcms s}, itself a noncommutative analogue of Arz{\'e}la-Ascoli theorem. Our characterization reads as follows:

\begin{theorem}\label{lcqms-thm}
Let $(\A,\Lip,\M)$ be a Lipschitz triple. The following assertions are equivalent:
\begin{enumerate}
\item $(\A,\Lip,\M)$ is a {\lcqms},
\item For all $c,d \in \M$ with compact support, and for some $\mu\in\StateSpace(\A|\M)$, the set:
\begin{equation*}
\Set{cad}{a\in\sa{\unital{A}},\Lip(a)\leq 1, \text{ and } \mu(a) = 0}
\end{equation*}
is norm precompact,
\item for all $c,d \in \M$ with compact support, and for any $\mu\in\StateSpace(\A|K)$, the set:
\begin{equation*}
\Set{cad}{a\in\sa{\unital{A}},\Lip(a)\leq 1, \text{ and } \mu(a) = 0}
\end{equation*}
is norm precompact.
\end{enumerate}
\end{theorem}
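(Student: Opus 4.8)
The plan is to treat this statement as the non\-com\-mu\-ta\-tive Arzel\`a--Ascoli theorem for {\lcqms s}, following the pattern of Rieffel's characterization of {\qcms s}, with the topography $\M$ serving to localize the argument to ``compact pieces''. I would establish $(1)\Rightarrow(3)$ and $(3)\Rightarrow(1)$, together with the trivial $(3)\Rightarrow(2)$ and a short argument $(2)\Rightarrow(3)$ showing that the choice of normalizing state is inessential. Three facts should be recorded first. (a) For $K\in\compacts{\M^\sigma}$ the set $\StateSpace[\A|K]$ is weak* compact: it is the intersection, over all compactly supported $e\in\M$ with $\indicator{K}\leq e\leq\unit_\A$, of the sets $\set{\varphi\in\StateSpace(\A)}{\varphi(e)=1}$, each of which is weak* closed in the (compact) quasi-state ball of $\A$ and avoids $0$; and it is tame, since $\varphi(\indicator{K})=1$ forces $\varphi(\indicator{K'})=1$, hence $\varphi(a-\corner{K'}{a})=0$, for every $K'\supseteq K$, by Cauchy--Schwarz. (b) If $c\in\M$ is supported in $K$ then $cad=c\,\corner{K}{a}\,d$ for all $a$, so every set occurring in (2) or (3) is a contractive image of a set of the form $\set{\corner{K}{a}}{a\in\sa{\unital{\A}},\ \Lip(a)\leq 1,\ \mu(a)=0}$. (c) Since a normal state of $\A^{\ast\ast}$ in whose support $\indicator{K}$ lies restricts to a member of $\StateSpace[\A|K]$, one has $\|\corner{K}{b}\|\leq\sup\set{|\varphi(b)|}{\varphi\in\StateSpace[\A|K]}$ for every $b$.

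For $(1)\Rightarrow(3)$, fix $K$, $\mu\in\StateSpace[\A|K]$ and compactly supported $c,d\in\M$, enlarging $K$ to contain their supports. By hypothesis and (a), $\Kantorovich{\Lip}$ metrizes the weak* topology on the weak* compact set $\StateSpace[\A|K]$, which is therefore $\Kantorovich{\Lip}$-compact, so $D:=\diam{\StateSpace[\A|K]}{\Kantorovich{\Lip}}<\infty$. The family $\set{a|_{\StateSpace[\A|K]}}{a\in\sa{\unital{\A}},\ \Lip(a)\leq 1,\ \mu(a)=0}$ of functions on the compact metric space $(\StateSpace[\A|K],\Kantorovich{\Lip})$ is then uniformly bounded by $D$, since $|\varphi(a)|=|\varphi(a)-\mu(a)|\leq\Kantorovich{\Lip}(\varphi,\mu)$, and $1$-Lipschitz for $\Kantorovich{\Lip}$, hence precompact in $C(\StateSpace[\A|K])$ by Arzel\`a--Ascoli; by (c) the set $\set{\corner{K}{a}}{\Lip(a)\leq 1,\ \mu(a)=0}$ is then norm-precompact, and so is its image under the contraction $b\mapsto cbd$, which is the set of (3). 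The implication $(3)\Rightarrow(2)$ is immediate, and for $(2)\Rightarrow(3)$: given a compact $L$, choose compactly supported $e_L\in\M$ with $\indicator{L}\leq e_L\leq\unit_\A$; by (2) the set $\set{e_L a e_L}{\Lip(a)\leq 1,\ \mu(a)=0}$ is bounded, and since $\varphi(a)=\varphi(e_L a e_L)$ for $\varphi\in\StateSpace[\A|L]$ this forces $\diam{\StateSpace[\A|L]}{\Kantorovich{\Lip}}<\infty$, so $\Kantorovich{\Lip}(\mu,\mu')<\infty$ for $\mu,\mu'\in\StateSpace[\A|L]$. Then, for $K$, $\mu'\in\StateSpace[\A|K]$ and compactly supported $c,d$, taking $L$ to contain $K$, a compact support of $\mu$, and the supports of $c,d$, the scalars $\mu(a)$ stay bounded over $\set{a}{\Lip(a)\leq 1,\ \mu'(a)=0}$ because $|\mu(a)-\mu'(a)|\leq\Kantorovich{\Lip}(\mu,\mu')$, so $cad=c(a-\mu(a)\unit_\A)d+\mu(a)\,cd$ exhibits the set of (3) as the sum of the precompact set of (2) with a bounded subset of the line $\R\,cd$, hence as a precompact set.

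The heart is $(3)\Rightarrow(1)$. Let $\StateSpace$ be a tame subset of $\StateSpace(\A)$. I would show $\Kantorovich{\Lip}$ is weak* continuous on $\StateSpace\times\StateSpace$; this suffices, since then its balls are weak* open, so the $\Kantorovich{\Lip}$-topology is contained in the weak* topology on $\StateSpace$, whereas the reverse containment holds already on $\StateSpace(\A)$ because $\Kantorovich{\Lip}$-convergence forces convergence on the dense subspace $\dom{\Lip}$ of $\sa{\A}$, hence weak* convergence (the states being uniformly norm-continuous). Now $\Kantorovich{\Lip}(\varphi,\psi)=\sup\set{\varphi(a)-\psi(a)}{a\in\sa{\unital{\A}},\ \Lip(a)\leq 1}$ is a supremum of weak* continuous functions of $(\varphi,\psi)$, hence weak* lower semicontinuous; for upper semicontinuity it is enough to produce, for each $\varepsilon>0$, finitely many $a_1,\dots,a_n$ with $\Lip(a_j)\leq 1$ such that $\Kantorovich{\Lip}(\varphi,\psi)\leq\max_j\bigl(\varphi(a_j)-\psi(a_j)\bigr)+\varepsilon$ on $\StateSpace\times\StateSpace$, since $\Kantorovich{\Lip}$ is then squeezed between a lower semicontinuous function and a continuous one within $\varepsilon$. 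To build these, fix $\mu\in\StateSpace(\A|\M)$, use tameness of $\StateSpace$ to choose $K$ (containing a compact support of $\mu$) making $\sup\set{|\varphi(a-\corner{K}{a})|}{\varphi\in\StateSpace,\ \Lip(a)\leq 1}$ small, take compactly supported $e\in\M$ with $\indicator{K}\leq e\leq\unit_\A$, and invoke (3) with $c=d=e$: the set $\set{eae}{\Lip(a)\leq 1,\ \mu(a)=0}$ is norm-precompact, hence so is $\set{\corner{K}{a}}{\Lip(a)\leq 1,\ \mu(a)=0}$, and we let $\corner{K}{a_1},\dots,\corner{K}{a_n}$ be a finite $\varepsilon$-net of it. Since replacing $a$ by $a-\mu(a)\unit_\A$ alters neither $\varphi(a)-\psi(a)$ nor $\Lip(a)$, one may assume $\mu(a)=0$ and write $\varphi(a)-\psi(a)=(\varphi-\psi)(a-\corner{K}{a})+(\varphi-\psi)(\corner{K}{a}-\corner{K}{a_j})+(\varphi-\psi)(\corner{K}{a_j}-a_j)+(\varphi-\psi)(a_j)$, with the first three terms bounded by a fixed multiple of $\varepsilon$ via tameness and the net property; rescaling $\varepsilon$ finishes the step.

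The step I expect to be the main obstacle is $(3)\Rightarrow(1)$: the non-routine insight is to replace ``$\Kantorovich{\Lip}$ metrizes the weak* topology on tame sets'' by the equivalent ``$\Kantorovich{\Lip}$ is weak* continuous on tame sets'', and then to recognize that upper semicontinuity of $\Kantorovich{\Lip}$ is precisely the statement that the supremum defining it is uniformly approximable by finitely many $a_j$ --- which is exactly what the norm-precompactness in (3) provides, once tameness is used to substitute $\corner{K}{a}$ for $a$. Coordinating the normalizing state, the identity $cad=c\,\corner{K}{a}\,d$, and the corner-norm estimate $\|\corner{K}{b}\|\leq\sup_{\StateSpace[\A|K]}|\varphi(b)|$ is the technical glue; each is routine on its own, but one must be careful that tame sets need not be weak* closed, so the semicontinuity argument has to be run directly rather than through a compactness reduction.
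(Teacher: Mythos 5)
You should first note that this paper does not actually prove Theorem (\ref{lcqms-thm}): it is quoted from \cite{Latremoliere12b}, so your argument can only be judged on its own merits rather than against an in-paper proof. With that said, your implications $(1)\Rightarrow(3)$, $(3)\Rightarrow(2)$ and $(2)\Rightarrow(3)$ are essentially sound, modulo two small repairs: the finiteness of $\diam{\StateSpace[\A|K]}{\Kantorovich{\Lip}}$ does not follow from compactness alone, since $\Kantorovich{\Lip}$ is only an \emph{extended} metric --- you need the convexity (hence weak* connectedness) of $\StateSpace[\A|K]$ to exclude infinite distances before invoking compactness; and your corner-norm estimate (c) should be restricted to self-adjoint elements, as in Lemma (\ref{norm-lemma}).

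The genuine gap is in $(3)\Rightarrow(1)$, exactly at the step you describe as the technical glue. Tameness, as defined in this paper, controls $|\varphi(a-\corner{K}{a})|$ only for $a\in\sa{\A}$ with $\Lip(a)\leq 1$, whereas your decomposition applies it to the unit-shifted elements $a'=a-\mu(a)\unit_\A\in\sa{\unital{\A}}$ (both the generic $a'$ and the net elements $a_j$). Since $\corner{K}{a'}=\corner{K}{a}-\mu(a)\indicator{K}$, one has $(\varphi-\psi)(a'-\corner{K}{a'})=(\varphi-\psi)(a-\corner{K}{a})+\mu(a)\,(\varphi-\psi)(\indicator{K})$, so your ``first three terms'' pick up the extra quantity $\mu(a)\,(\varphi-\psi)(\indicator{K})$. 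This is not harmless: $|\mu(a)|$ is unbounded over $\set{a\in\sa{\A}}{\Lip(a)\leq 1}$ (already for $C_0(\R)$ with the Lipschitz seminorm, tent functions of height $N$ centered at the support of $\mu$ give $\mu(a)=N$), and the definition of tameness gives no direct control of $\sup_{\varphi,\psi\in\mathscr{S}}|(\varphi-\psi)(\indicator{K})|$, let alone of the product uniformly in $a$. In the commutative case the term vanishes because tameness in fact forces all states of a tame set to be supported in a single common compact, so that $\varphi(\indicator{K})=1$ for all $\varphi\in\mathscr{S}$ once $K$ is large --- but that fact itself needs an argument (it relies on the abundance of $1$-Lipschitz bump functions in $\dom{\Lip}\cap\sa{\A}$), and its noncommutative analogue is not available for a general Lipschitz triple, where $\dom{\Lip}\cap\sa{\A}$ need not contain elements detecting mass far from $K$. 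Establishing enough uniform concentration of $\mathscr{S}$ on $\indicator{K}$ to absorb the shift by $\mu(a)\unit_\A$ is precisely the hard content of the cited proof; as written, your key inequality $\Kantorovich{\Lip}(\varphi,\psi)\leq\max_j\bigl(\varphi(a_j)-\psi(a_j)\bigr)+C\varepsilon$ on $\mathscr{S}\times\mathscr{S}$ is not established, and with it the upper semicontinuity argument and the whole implication $(3)\Rightarrow(1)$ remain open.
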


We now wish to restrict our attention to a class of {\lcqms s} which generalize the notion of proper metric spaces to the noncommutative setting. To this end, we must understand the metric structure on the topography. In \cite{Latremoliere12b}, we developed the theory of {\lcqms s} at the highest level of generality we thought appropriate, and we noted that in this context, there was a natural distance on the Gel'fand spectrum of the topography of a {\lcqms}, as we recall in the next theorem. 

\begin{theorem}\label{old-metric-thm}
Let $(\A,\Lip,\M)$ be a {\lcqms}. For any $\mu\in\StateSpace(\M)$, let:
\begin{equation*}
\StateSpace(\A|\mu) = \Set{\varphi\in\StateSpace(\A)}{\forall m\in \M \quad \varphi(m) = \mu(m) }\text{.}
\end{equation*}
The function:
\begin{equation*}
\mathsf{d}{\Lip}:\mu,\nu\in\StateSpace(\M)\longmapsto \inf\Set{\Kantorovich{\Lip}(\varphi,\psi)}{\varphi\in\StateSpace(\A|\mu),\psi\in\StateSpace(\A|\nu)}
\end{equation*}
is an extended metric on $\StateSpace(\M)$ which metrizes the weak* topology of any tame subset of $\StateSpace(\M)$. In particular, it metrizes the Gel'fand spectrum $\M^\sigma$ of $\M$.
\end{theorem}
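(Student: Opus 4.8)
The plan is to get the soft properties (well-definedness, symmetry, vanishing on the diagonal, and separation) from compactness of the fibres $\StateSpace(\A|\mu)$, then to obtain the triangle inequality by a duality argument that eliminates the ``bridge'' state, and finally to deduce the metrization statement from a quantum Dobrushin argument in the style of Rieffel's characterization of Lip-norms, specializing at the very end to compacts of $\M^\sigma$.

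First I would record that each fibre $\StateSpace(\A|\mu)$ is nonempty and weak*-compact: nonemptiness is the Hahn--Banach extension of states, and the extension stays a state (of norm $1$) because $\M$ contains an approximate unit $(e_\lambda)$ of $\A$ and $\mu(e_\lambda)\to 1$; weak*-compactness holds because $\StateSpace(\A|\mu)$ is weak*-closed and no weak*-limit of its members can lose mass, again since $\M$ carries an approximate unit of $\A$. Hence $\mathsf{d}_{\Lip}$ is well defined on all of $\StateSpace(\M)^2$, it is manifestly symmetric, and $\mathsf{d}_{\Lip}(\mu,\mu)=0$ by choosing $\varphi=\psi$. Moreover $(\varphi,\psi)\mapsto\Kantorovich{\Lip}(\varphi,\psi)=\sup\{|\varphi(a)-\psi(a)|:a\in\sa{\A},\Lip(a)\leq 1\}$ is a supremum of weak*-continuous functions, hence weak*-lower semicontinuous, so its infimum over the weak*-compact convex set $\StateSpace(\A|\mu)\times\StateSpace(\A|\nu)$ is attained, say at $(\varphi_0,\psi_0)$. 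If $\mathsf{d}_{\Lip}(\mu,\nu)=0$ then $\Kantorovich{\Lip}(\varphi_0,\psi_0)=0$; but the Lipschitz-pair axiom $\{a:\Lip(a)=0\}=\R\unit_\A$ together with the density of $\dom{\Lip}$ in $\sa{\unital{\A}}$ forces two states agreeing on $\dom{\Lip}$ to coincide, so $\varphi_0=\psi_0$, whence $\mu=\varphi_0|_{\M}=\psi_0|_{\M}=\nu$. This settles separation.

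The triangle inequality is the main obstacle. The naive argument fails: given near-optimal couples $(\varphi_1,\psi_1)$ for $(\mu,\nu)$ and $(\varphi_2,\psi_2)$ for $(\nu,\rho)$, the two bridge states $\psi_1$ and $\varphi_2$ both restrict to $\nu$ on $\M$ but may be Kantorovich-far apart in $\StateSpace(\A)$, so they cannot be identified. My plan is to remove the bridge variable by duality, rewriting $\mathsf{d}_{\Lip}(\mu,\nu)$ as a Monge--Kantorovich-type distance on $\StateSpace(\M)$ attached to the seminorm that $\Lip$ induces on the topography. Concretely, working inside $\unital{\A}$ (so each $\StateSpace(\A|\mu)$ is a weak*-compact convex face), I would invoke a minimax theorem (Sion's theorem applies, since $\StateSpace(\A|\mu)\times\StateSpace(\A|\nu)$ is weak*-compact and convex, the pairing $(\varphi,\psi,a)\mapsto\varphi(a)-\psi(a)$ is bilinear, and $\{a\in\dom{\Lip}:\Lip(a)\leq 1\}$ is convex and balanced) to exchange the infimum over the fibres with the supremum over the $\Lip$-ball. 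This turns $\mathsf{d}_{\Lip}(\mu,\nu)$ into a supremum of the form $\sup\{(\mu-\nu)(m)\}$ over a fixed convex balanced family of test elements of $\sa{\M}$ depending only on $\Lip$ and $\M$, from which symmetry, the triangle inequality, and finiteness of $\mathsf{d}_{\Lip}$ between nearby states all become transparent. The delicate points are verifying the minimax hypotheses, handling the passage between $\A$ and $\unital{\A}$, and identifying the resulting family of test elements in $\M$; finiteness along the way is controlled by the norm-precompactness supplied by Theorem \ref{lcqms-thm}. (Alternatively one can attempt a direct gluing of states that exploits the commutativity of $\M$, but the duality route is cleaner.)

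For the metrization statement, let $\mathscr{T}\subseteq\StateSpace(\M)$ be tame. That $\mathsf{d}_{\Lip}$-convergence in $\mathscr{T}$ implies weak*-convergence follows from the bound $|\mu(m)-\nu(m)|\leq\mathsf{d}_{\Lip}(\mu,\nu)$ for every $m\in\sa{\M}$ with $\Lip(m)\leq 1$ (immediate, since such an $m$ is valued the same by $\mu$ and by any of its extensions), combined with tameness, which says that these $m$ together with their compressions $\corner{K}{m}$ approximate arbitrary elements of $\M$ uniformly on $\mathscr{T}$ as $K$ runs over compacts of $\M$. The converse --- weak*-convergence implies $\mathsf{d}_{\Lip}$-convergence --- is the quantum Dobrushin step: Theorem \ref{lcqms-thm} gives an Arzel\`a--Ascoli-type total boundedness of the relevant $\Lip$-balls once compressed by a compact of $\M$, and tameness lets one choose such a compact uniformly over $\mathscr{T}$, so a weak*-convergent net converges uniformly on the now totally bounded test family, i.e. in $\mathsf{d}_{\Lip}$; this parallels Rieffel's argument in the compact case. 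Finally, for any $K\in\compacts{\M^\sigma}$ the set of Dirac point masses at the points of $K$ is a tame subset of $\StateSpace(\M)$, because $m-\corner{K'}{m}$ vanishes at every point of $K$ as soon as $K'\supseteq K$; hence $\mathsf{d}_{\Lip}$ metrizes the subspace topology of each such $K$, and since $\M^\sigma$ is locally compact and inducing the topology is a local matter, $\mathsf{d}_{\Lip}$ metrizes $\M^\sigma$.
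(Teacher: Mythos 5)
You should first note that the paper itself offers no proof of Theorem (\ref{old-metric-thm}): it is recalled verbatim from \cite{Latremoliere12b}, so your argument can only be measured against the statement as transcribed here together with this paper's definitions. Your soft steps are fine (the fibres $\StateSpace(\A|\mu)$ are nonempty and weak*-compact, $\Kantorovich{\Lip}$ is weak*-lower semicontinuous, the infimum is attained, and coincidence of the optimal pair gives separation). The genuine gap is exactly where you locate the difficulty, the triangle inequality, and your proposed repair does not work. Sion's theorem does apply to $\inf_{(\varphi,\psi)}\sup_{\Lip(a)\leq 1}(\varphi-\psi)(a)$, but what it produces is $\sup_{\Lip(a)\leq 1}\bigl[\min\{\varphi(a):\varphi\in\StateSpace(\A|\mu)\}-\max\{\psi(a):\psi\in\StateSpace(\A|\nu)\}\bigr]$: the lower envelope used at $\mu$ and the upper envelope used at $\nu$ are \emph{different} functions of $a$ (fibrewise infimum and supremum in the commutative picture), they are not elements of $\sa{\M}$, and they do not telescope; no ``fixed convex balanced family of test elements of $\sa{\M}$'' comes out. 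Indeed no such rewriting can exist, because with the definitions of the present paper the displayed formula can fail the triangle inequality outright: take $X=\{0,1,9,10\}\subseteq\R$, $\A=C(X)$ with the Lipschitz seminorm of the Euclidean metric, and $\M\subseteq\A$ the unital subalgebra of functions constant on $\{1,9\}$. This is a {\lcqms} (every subset of $\StateSpace(\A)$ is tame since $\M^\sigma$ is finite), the three characters $a,b,c$ of $\M$ have fibres $\{\delta_0\}$, the probability measures on $\{1,9\}$, and $\{\delta_{10}\}$, and one computes $\mathsf{d}_{\Lip}(a,b)\leq\Kantorovich{\Lip}(\delta_0,\delta_1)=1$, $\mathsf{d}_{\Lip}(b,c)\leq\Kantorovich{\Lip}(\delta_9,\delta_{10})=1$, while $\mathsf{d}_{\Lip}(a,c)=\Kantorovich{\Lip}(\delta_0,\delta_{10})=10$. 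So either the transcription omits hypotheses or a formulation from \cite{Latremoliere12b} (for instance a Hausdorff-type distance between the fibres, which \emph{does} satisfy the triangle inequality), or additional structure must be used; in any case a proof cannot be completed from the data you allow yourself, since your intended final form $\sup_m(\mu-\nu)(m)$ would imply the very inequality that fails above.

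The metrization half also needs repair, independently of the above. For the easy direction you should not lean on elements of $\sa{\M}$ with $\Lip\leq 1$: nothing in the definition of a {\lcqms} makes $(\M,\Lip)$ a Lipschitz pair --- that is precisely the later ``standard'' hypothesis --- and your gloss of tameness (approximating elements of $\M$) is not what the definition says; instead use density of $\dom{\Lip}$ in $\sa{\unital{\A}}$ and contractivity of states to get $\varphi_n-\psi_n\to 0$ weak* from $\Kantorovich{\Lip}(\varphi_n,\psi_n)\to 0$, then restrict to $\M$. For the hard direction, ``converges uniformly on the totally bounded test family'' does not parse: the test elements produced by Theorem (\ref{lcqms-thm}) live in $\A$, while $\mu_n,\mu$ act only on $\M$, so the entire content is the construction of compatible lifts $\varphi_n\in\StateSpace(\A|\mu_n)$, $\psi_n\in\StateSpace(\A|\mu)$ with $\Kantorovich{\Lip}(\varphi_n,\psi_n)\to 0$, which your sketch never produces. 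Finally, the closing inference is false as a general principle: a metric inducing the correct topology on every compact subset of a locally compact space need not induce its topology (on $\N$ discrete, $d(n,m)=|1/n-1/m|$ with $d(0,n)=1/n$ metrizes every finite, hence every compact, subset but not $\N$), and the set of \emph{all} characters of $\M$ is typically not tame (a $1$-Lipschitz element of $C_0(\R)$ can be large far from any fixed compact), so the ``in particular'' clause requires a genuine tightness argument, not a locality remark.
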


Unfortunately, this metric is not quite appropriate for our purpose. Indeed, our goal is to obtain an hypertopology on some class of {\lcqms s} such that two objects are separated by open sets whenever they are not isometrically isomorphic, which we propose should involve the topography, as we shall see in the next section. Now, the notion of {\lcqms} makes fairly weak requirements on the interaction between the Lip norm and the topography. The next section will introduce a natural stronger requirement which appears needed to obtain the desired separation property, and in turn, we obtain a new and more appropriate metric on the topography, as we shall see.

\subsection{Pointed Proper Quantum Metric Spaces}

The noncommutative analogue of a proper metric space is a {\lcqms} which inherits some of the basic properties we shall need from proper metric spaces to ensure that our new hypertopology has the desired separation property. The first consideration is to put a proper metric on the topography of a {\lcqms}. To this end, we propose the following natural approach. We begin with notations which will be used throughout this paper.

\begin{notation}
The Gel'fand spectrum of a commutative C*-algebra $\M$ is denoted by $\M^\sigma$. The set of all compact subsets of $\M^\sigma$ is denoted by $\compacts{\M^\sigma}$.
\end{notation}

\begin{notation}
We define:
\begin{equation*}
\Loc{\A}{\M_\A}{K} = \set{ a \in \lsa{\A}{\M_\A}{K} }{ \Lip(a) < \infty }\text{.}
\end{equation*}
We also set:
\begin{equation*}
\Loc{\A}{\M_\A}{\star} = \bigcup_{K \in \compacts{\M^\sigma}}\Loc{\A}{\M_\A}{K}\text{.}
\end{equation*}
\end{notation}

An element of $\lsa{\A}{\M_\A}{\star}$ may be called locally supported, and an element of $\Loc{\A}{\M_\A}{\star}$ is thus a Lipschitz locally supported element, hence our notation. We avoid the use of the term compactly supported as this notion now depends on a choice of topography within our framework.

Note that if $a\in\A\cap \indicator{K}\A\indicator{K}$, then there exists $b \in \indicator{K}\A\indicator{K}$ such that $a = \corner{K}{b}$. Now:
\begin{equation*}
a = \indicator{K}b\indicator{K} = \indicator{K}\indicator{K} b \indicator{K}\indicator{K} = \indicator{K}a\indicator{K} = \corner{K}{a}
\end{equation*}
so $\Loc{\A}{\M_\A}{K} = \set{ a \in \sa{\A} } { \corner{K}{a} = a\text{ and }\Lip(a)<\infty }$.

We now propose to relate the topography and the Lip norm of a {\lcqms} so that we may employ the restriction of the Lip-norm to the topography to metrize it. In what follows, we shall write the restriction of a Lip-norm to a C*-subalgebra with the same symbol as the original Lip-norm.

\begin{definition}
A {\lcqms} $(\A,\Lip,\M)$ is \emph{standard} when the set:
\begin{equation*}
\set{ a \in \sa{\M} } { \Lip(m) < \infty }
\end{equation*}
is dense in $\sa{\M}$, i.e. when $(\M,\Lip)$ is a Lipschitz pair (where we use the same notation for the restriction of $\Lip$ to $\sa{\M}$ and $\Lip$ itself).
\end{definition}

\begin{notation}
Let $(\A,\Lip,\M)$ be a standard {\lcqms}, so that $(\M,\Lip)$ is a Lipschitz pair. The {\mongekant} on $\StateSpace(\M)$ is denoted by $\sigmaKantorovich{\Lip}$.
\end{notation}

The naturality of our notion of a standard metric space, and an informal justification for choosing this approach to metrizing the topography of a {\lcqms} whenever possible, is established with the following theorem. The formal use of this notion will be seen later on in this paper.

\begin{theorem}
Let $(\A,\Lip,\M)$ be a {\lcqms}. Then $(\A,\Lip,\M)$ is standard if and only if $(\M,\Lip,\M)$ is a {\lcqms}.
\end{theorem}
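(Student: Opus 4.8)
The plan is to prove both implications directly from the definitions and the characterization of {\lcqms s} in Theorem (\ref{lcqms-thm}). Suppose first that $(\A,\Lip,\M)$ is standard. Then by definition $(\M,\Lip)$ is a Lipschitz pair, and since $\M$ is Abelian it is its own topography with an approximate unit of itself, so $(\M,\Lip,\M)$ is a Lipschitz triple; it remains to verify that it is a {\lcqms}. I would use the equivalence $(1)\iff(2)$ in Theorem (\ref{lcqms-thm}): for $c,d\in\M$ with compact support and a suitable $\mu\in\StateSpace(\M|\M)$, the set $\set{cad}{a\in\sa{\unital{\M}},\Lip(a)\leq 1,\mu(a)=0}$ must be norm precompact \emph{inside $\M$}. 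Here the key observation is that $c a d$ with $c,d\in\M$ lies in $\M$ and, since $\M$ is a C*-subalgebra of $\A$ containing an approximate unit of $\A$, it contains $\indicator{K}$ for each $K\in\compacts{\M^\sigma}$; thus the ``locally supported'' Lipschitz elements of $\M$ are exactly the elements $a\in\Loc{\A}{\M_\A}{\star}$ that happen to lie in $\sa{\M}$. The set in question for $(\M,\Lip,\M)$ is then a \emph{subset} of the corresponding set for $(\A,\Lip,\M)$ (intersected with $\M$), which is precompact by hypothesis; norm precompactness passes to closed subsets, so we are done. Some care is needed with the choice of the reference state $\mu$: one picks $\mu\in\StateSpace(\M|K)$ and extends it (or uses it directly) — this is where standardness guarantees $(\M,\Lip)$ is genuinely a Lipschitz pair so that $\StateSpace(\M|\M)$ is nonempty and the hypothesis of Theorem (\ref{lcqms-thm}) applies.

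For the converse, suppose $(\M,\Lip,\M)$ is a {\lcqms}. Then in particular $(\M,\Lip)$ is a Lipschitz pair, which means precisely that $\set{m\in\sa{\M}}{\Lip(m)<\infty}$ is dense in $\sa{\M}$ — but that is exactly the definition of $(\A,\Lip,\M)$ being standard. So this direction is essentially immediate once one unwinds definitions: being a {\lcqms} presupposes being a Lipschitz triple, hence in the commutative case presupposes that the seminorm is densely defined on $\sa{\M}$, which is standardness. The only thing to double-check is that $\dom{\Lip}\cap\sa{\M}$ being dense in $\sa{\M}$ is literally the condition in the definition of ``standard,'' and that the kernel condition $\set{a\in\dom{\Lip}\cap\sa{\M}}{\Lip(a)=0}=\R\unit_\M$ is inherited from the ambient Lipschitz pair $(\A,\Lip)$ (restricting to $\M$ cannot enlarge the kernel beyond the scalars, and the scalars lie in $\M$ since $\unit_\A$ belongs to the topography's weak-* closure / the unitization).

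The main obstacle I anticipate is purely bookkeeping in the first (forward) implication: one must be careful that the norm-precompactness criterion of Theorem (\ref{lcqms-thm}) is being applied with the correct unitization ($\unital{\M}$ versus $\unital{\A}$) and the correct multiplier projections $\indicator{K}$ viewed in $\M^{\ast\ast}\subseteq\A^{\ast\ast}$, and that compressing by $c,d\in\M$ keeps everything inside $\M$ so that precompactness in $\A$ yields precompactness in $\M$. No genuinely hard analysis is involved; the content is that the topography, when standard, satisfies the same Arzelà–Ascoli-type compactness condition as the ambient space because all the relevant elements and compressions already live in $\M$.
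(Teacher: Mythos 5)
Your argument is correct and follows essentially the paper's own proof: the converse direction is definitional (a {\lcqms} is in particular a Lipschitz triple, so $(\M,\Lip)$ is a Lipschitz pair), and the forward direction applies Theorem (\ref{lcqms-thm}) after observing that the compressed set $\set{cmd}{m\in\sa{\unital{\M}},\ \mu(m)=0,\ \Lip(m)\leq 1}$ lies inside $\sa{\M}$ and inside the corresponding precompact set for $(\A,\Lip,\M)$, so it is totally bounded. One parenthetical slip worth correcting: $\indicator{K}$ is a projection in $\M^{\ast\ast}$, not an element of $\M$ in general, but nothing in your argument actually relies on that claim.
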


\begin{proof}
If $(\M,\Lip,\M)$ is a {\lcqms} then $(\M,\Lip)$ is a Lipschitz pair, so $(\A,\Lip,\M)$ is standard.

Suppose now $(\A,\Lip,\M)$ is standard. Let $c,d \in \sa{\M_\A}$ be compactly supported and $\mu \in \StateSpace(\A|\M)$. By Theorem (\ref{lcqms-thm}), the set:
\begin{equation*}
\set{ c a d }{ a\in \sa{\unital{\A}}, \mu(a) = 0 \text{ and }\Lip(a) \leq 1 }
\end{equation*}
is totally bounded in $\sa{\A}$. Consequently, the set:
\begin{equation*}
\set{ c m d }{ m \in \sa{\unital{\M}}, \mu(m) = 0 \text{ and }\Lip(m) \leq 1 }
\end{equation*}
is totally bounded in $\sa{\M}$, so by Theorem (\ref{lcqms-thm}) again, we conclude that $(\M,\Lip,\M)$ is a {\lcqms}.
\end{proof}

We now have a metric structure on the topography of a standard {\lcqms}. As the reader may expect, we will require that for this metric, the spectrum of the topography of a {\pqms} should be a proper metric space.

We also need to relate the Lip-norm of a {\lcqms} with the multiplicative structure of the underlying C*-algebra. Once again, \cite{Latremoliere12b} was developed in the spirit of \cite{Rieffel99} to allow for a high degree of generality and flexibility. As recent research has proven, connecting the metric structure and the multiplicative structure of a quantum metric space proves a powerful asset in the development of our theory \cite{Rieffel05,Rieffel09,Rieffel10,Rieffel10c,Latremoliere13}. In fact, we developed \cite{Latremoliere13b,Latremoliere13c,Latremoliere14} a noncommutative analogue of the Gromov-Hausdorff distance for {\qcms s} whose Lip-norms are Leibniz, in an appropriate sense, and showed that this metric, called the dual Gromov-Hausdorff propinquity, answered several questions in the current research in metric noncommutative geometry. The current paper proposes to extend this metric to {\pqms}, and thus we naturally require a form of Leibniz identity to hold for our Lip-norms as well. We refer to \cite{Latremoliere13b} for a discussion of these matters in the compact case.

We are thus led to introduce:

\begin{definition}
A \emph{Leibniz Lipschitz pair} $(\A,\Lip)$ is a Lipschitz pair such that, for all $a,b \in \sa{\A}$:
\begin{align*}
\Lip\left(\Jordan{a}{b}\right)&\leq \|a\|_\A\Lip(b) + \Lip(a)\|b\|_\A\text{,}\\
\intertext{and}
\Lip\left(\Lie{a}{b}\right)&\leq \|a\|_\A\Lip(b) + \Lip(a)\|b\|_\A\text{.}
\end{align*}
\end{definition}

We now have the following basic ingredients: a metric on the topography of a standard {\lcqms} and a notion of Leibniz identity. In other words, we added a connection between Lip-norms and topography and a connection between Lip-norms and algebra. 

We now can propose our definition for {\pqms}.

\begin{definition}\label{pqms-def}
A {\lcqms} $(\A,\Lip,\M)$ is a \emph{\pqms} when:
\begin{enumerate}
\item $\A$ is separable,
\item $(\A,\Lip)$ is a Leibniz Lipschitz pair,
\item $\Lip$ is lower semi-continuous with respect to the norm topology on $\sa{\A}$,
\item $\Loc{\A}{\M}{\star}$ is norm dense in $\dom{\Lip}$,
\item the set $\Loc{\A}{\M}{\star}\cap\M_\A$ is dense in $\sa{\M_\A}$ (in particular, $(\A,\Lip,\M)$ is standard),
\item $(\M^\sigma,\sigmaKantorovich{\Lip})$ is a proper metric space.
\end{enumerate}
\end{definition}

We note that, for $(\A,\Lip,\M)$ a {\lcqms}, if $(e_n)_{n\in\N}$ is an approximate unit for $\A$ in $\M$, such that $\lim_{n\rightarrow 0}\Lip(e_n) = 0$ and $\|e_n\|_\A\leq 1$ for all $n\in\N$, then $(\A,\Lip,\M)$ is a {\pqms} provided that $\Lip$ is lower semi-continuous and Leibniz.

\begin{example}
If $(X,\mathsf{d})$ is a proper metric space then $(C_0(X),\Lip,C_0(X))$ is a {\pqms} for $\Lip$ the usual Lipschitz seminorm of $\mathsf{d}$.
\end{example}

\begin{example}
In the next section, we shall see that {\Lqcms} are compact examples of {\pqms s}.
\end{example}

\begin{definition}
A \emph{\pqpms} $(\A,\Lip,\M,\mu)$ is a {\pqms} $(\A,\Lip,\M)$ together with a pure state $\mu \in \M^\sigma$. The state $\mu$, identified with a point in $\M^\sigma$, is called the \emph{base point} of $(\A,\Lip,\M,\mu)$.
\end{definition}

We propose some lighter notations when dealing with {\pqms s} in the rest of this paper:
\begin{notation}
Let $(\A,\Lip,\M,\mu)$ be a {\pqpms} and $r > 0$. Since $(\M^\sigma,\sigmaKantorovich{\Lip})$ is proper, the closed ball $\cBall{\M^\sigma}{\sigmaKantorovich{\Lip}}{\mu}{r}$ is compact. We thus employ the following short notations:
\begin{itemize}
\item $ \Loc{\A}{\M_\A}{\mu,r} = \Loc{\A}{\M_\A}{\cBall{\M^\sigma}{\sigmaKantorovich{\Lip}}{\mu}{r}}$,
\item $\StateSpace[\A|\mu,r] = \StateSpace[\A|\cBall{\M^\sigma}{\sigmaKantorovich{\Lip}}{\mu}{r}]$.
\end{itemize}
\end{notation}

We also observe that for a {\pqpms} $(\A,\Lip,\M,\mu)$, we have the identity:
\begin{equation*}
\Loc{\A}{\M_\A}{\star} = \bigcup_{r > 0} \Loc{\A}{\M_\A}{\mu,r}\text{.}
\end{equation*}

We wish to complement our definition of {\pqpms s} with an observation. If $(X,\mathsf{d})$ is proper, then we can find an approximate unit $(e_n)_{n\in\N}$ in $C_0(X)$ with the properties that $\|e_n\|_{C_0(X)}\leq 1$ and $e_n$ is compactly supported for all $n\in\N$ and $\lim_{n\rightarrow \infty}\Lip(e_n) = 0$, where $\Lip$ is the Lipschitz seminorm of $\mathsf{d}$. To see this, simply consider, for some $x\in X$:
\begin{equation}\label{proper-local-eq0}
e_n : x \in X \mapsto \frac{ \mathsf{d}(x,X\setminus \cBall{X}{}{x}{n}) }{\mathsf{d}(x,X\setminus\cBall{X}{}{x}{2n}) + \mathsf{d}(x,\cBall{X}{}{x}{n})}
\end{equation}
for all $n\in\N$ and note that $\Lip(e_n) \leq\frac{1}{n}$ for all $n\in\N$. We will prove that the existence of such an approximate unit is in fact equivalent to $X$ being proper.

It would be natural to require such an approximate unit to exist in the noncommutative case. However, it appears that our Definition (\ref{pqms-def}) is strong enough to carry out the construction of our hypertopology. On the other hand, requiring the existence of a natural approximate unit inspired by the theory of classical proper metric spaces can be use to provide a small improvement in the separation properties of our hypertopology, so we now formalize this idea.

\begin{definition}\label{strongly-proper-def}
A triple $(\A,\Lip,\M)$ is \emph{strongly proper} quantum metric space when:
\begin{enumerate}
\item $\Lip$ is defined on a dense subset of $\A$,
\item $\A$ is separable, and $\Lip$ is lower semi-continuous,
\item $(\A,\Lip,\M)$ is a standard {\lcqms} (where we identify $\Lip$ with its restriction to $\sa{\A}$),
\item for all $a,b\in\A$ we have:
\begin{equation*}
\Lip(ab)\leq\|a\|_\A \Lip(b) + \|b\|_\A\Lip(a)\text{,}
\end{equation*}
\item there exists an approximate unit $(e_n)_{n\in\N}$ in $\sa{\M}$ for $\A$ such that for all $n\in\N$, we have $\|e_n\|_\A\leq 1$ and $e_n \in \Loc{\A}{\M_\A}{\star}$, $\lim_{n\rightarrow\infty} \Lip(e_n) = 0$.
\end{enumerate}
\end{definition}

We shall always identify the seminorm of a strongly proper {\lcqms} with its restriction to the self-adjoint part of its domain when appropriate. Strongly proper {\lcqms s} are {\pqms s}:

\begin{proposition}\label{strongly-proper-implies-proper-prop}
A strongly proper quantum metric space $(\A,\Lip,\M)$ is a {\pqpms}, and moreover for any $a\in\sa{\A}$ with $\Lip(a)<\infty$ there exists a sequence $(a_n)_{n\in\N}$ with $a_n\in\Loc{\A}{\M}{\star}$ for all $n\in\N$, converging to $a$ in norm and such that $\lim_{n\rightarrow\infty}\Lip(a_n) = \Lip(a)$. If moreover $a\in\M_\A$ then we can choose $a_n \in \M_\A$ for all $n\in\N$.
\end{proposition}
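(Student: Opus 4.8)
The plan is to verify, one by one, the six conditions of Definition (\ref{pqms-def}) and, interleaved with them, the approximation statement; conditions (1)--(3) are read off from Definition (\ref{strongly-proper-def}), condition (4) is exactly the approximation statement, condition (5) follows from it together with standardness, and condition (6) is the only item needing a genuine argument. Thus I would begin with the routine items: separability of $\A$ and lower semi-continuity of $\Lip$ are given (the latter restricts to $\sa{\A}$ since it is a closed subspace), and $(\A,\Lip)$ is a Lipschitz pair because it is the Lipschitz pair underlying the {\lcqms} $(\A,\Lip,\M)$. For the Leibniz property of that pair I would write the Jordan and Lie products in self-adjoint form, $\Jordan{a}{b}=\frac{1}{2}(ab+ba)$ and $\Lie{a}{b}=\frac{1}{2i}(ab-ba)$, and apply condition (4) of Definition (\ref{strongly-proper-def}) to $ab$ and to $ba$: since $\Lip$ is a seminorm this gives $\Lip(\Jordan{a}{b})\leq\|a\|_\A\Lip(b)+\|b\|_\A\Lip(a)$ and the identical bound for $\Lie{a}{b}$, for all $a,b\in\sa{\A}$, so $(\A,\Lip)$ is a Leibniz Lipschitz pair.

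\textbf{The approximation statement.} Given $a\in\sa{\A}$ with $\Lip(a)<\infty$, I would take $a_n=e_na e_n$. Then $a_n$ is self-adjoint (as $a$ and $e_n$ are), and $\|a_n-a\|_\A\to 0$ since $(e_n)_{n\in\N}$ is an approximate unit for $\A$ and $\|e_n\|_\A\leq 1$. Writing $e_n=\corner{K_n}{e_n}$ with $K_n\in\compacts{\M^\sigma}$, valid since $e_n\in\Loc{\A}{\M}{\star}$, one has $\indicator{K_n}e_n=e_n=e_n\indicator{K_n}$ and hence $\corner{K_n}{a_n}=a_n$, so $a_n$ is locally supported on $K_n$. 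Two applications of condition (4) of Definition (\ref{strongly-proper-def}), with $\|e_n\|_\A\leq 1$, yield $\Lip(a_n)\leq\Lip(a)+2\|a\|_\A\Lip(e_n)<\infty$; therefore $a_n\in\Loc{\A}{\M}{K_n}\subseteq\Loc{\A}{\M}{\star}$ and $\limsup_n\Lip(a_n)\leq\Lip(a)$, while lower semi-continuity of $\Lip$ applied to $a_n\to a$ gives $\Lip(a)\leq\liminf_n\Lip(a_n)$; hence $\Lip(a_n)\to\Lip(a)$. If moreover $a\in\sa{\M_\A}$, then $a_n\in\sa{\M_\A}$ since $\M_\A$ is a $\ast$-subalgebra containing every $e_n$. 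This settles the approximation statement and with it condition (4) of Definition (\ref{pqms-def}); condition (5) then follows, since standardness (part of condition (3) of Definition (\ref{strongly-proper-def})) lets one first approximate any element of $\sa{\M_\A}$ in norm by one of finite Lip-norm, to which the approximation statement inside $\M$ then applies.

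\textbf{Condition (6): $(\M^\sigma,\sigmaKantorovich{\Lip})$ is proper.} This is where the approximate unit is really used, and I expect it to be the main obstacle. First, since $(\A,\Lip,\M)$ is standard, $(\M,\Lip,\M)$ is itself a {\lcqms}, and for this triple $\StateSpace(\M|\mu)=\{\mu\}$, so the metric delivered by Theorem (\ref{old-metric-thm}) is $\sigmaKantorovich{\Lip}$ and it metrizes the Gel'fand topology of $\M^\sigma$. Under $\M\cong C_0(\M^\sigma)$, each $e_n$ is a real-valued function on $\M^\sigma$; from $e_n=\corner{K_n}{e_n}$ it vanishes off $K_n$, so $\mathrm{supp}(e_n)$ is compact; from $(e_n)_{n\in\N}$ being an approximate unit, $e_n(x)\to 1$ for every $x\in\M^\sigma$ (test against a function equal to $1$ near $x$); and from the definition of the {\mongekant}, $|e_n(x)-e_n(y)|\leq\Lip(e_n)\,\sigmaKantorovich{\Lip}(x,y)$. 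Fixing $\mu\in\M^\sigma$ and $r>0$ and choosing $n$ with $e_n(\mu)>\frac{3}{4}$ and $\Lip(e_n)\,r<\frac{1}{4}$ (possible as $\Lip(e_n)\to 0$), every $x\in\cBall{\M^\sigma}{\sigmaKantorovich{\Lip}}{\mu}{r}$ satisfies $e_n(x)>\frac{1}{2}$, so that closed ball is a closed subset of the compact set $\{x\in\M^\sigma:|e_n(x)|\geq\frac{1}{2}\}\subseteq\mathrm{supp}(e_n)$, hence compact. The delicate points are identifying $\sigmaKantorovich{\Lip}$ with the metric of Theorem (\ref{old-metric-thm}) for the auxiliary {\lcqms} $(\M,\Lip,\M)$ (needed so that one may speak of supports and pointwise values of the $e_n$) and running this bump-function argument with the right constants; everything else is a direct unwinding of the definitions of strongly proper quantum metric space and of $\Loc{\A}{\M}{\star}$.
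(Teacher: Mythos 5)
Your proposal is correct and follows essentially the same route as the paper: the same approximants $e_nae_n$ with the Leibniz estimate and lower semi-continuity for the approximation statement, and the same bump-function argument (compact support of $e_n$, pointwise convergence to $1$, and the Lipschitz bound $|e_n(x)-e_n(y)|\leq\Lip(e_n)\sigmaKantorovich{\Lip}(x,y)$) for properness of $(\M^\sigma,\sigmaKantorovich{\Lip})$. The only superfluous worry is the appeal to Theorem (\ref{old-metric-thm}): the needed Lipschitz estimate for $e_n$ on $\M^\sigma$ follows directly from the definition of $\sigmaKantorovich{\Lip}$ on $\StateSpace(\M)$, which is all the paper uses.
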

\begin{proof}
Let $(\A,\Lip,\M)$ be a strongly proper quantum metric space, and let $(e_n)_{n\in\N}$ be an approximate unit with the properties listed in Definition (\ref{strongly-proper-def}). Thus $(\A,\Lip)$ is a Leibniz Lipschitz pair \cite[Proposition 2.17]{Latremoliere13} and $\A$ is separable. Moreover, $\Lip$ is lower semi-continuous. For all $n\in\N$, let $K_n\in\compacts{\M^\sigma}$ such that $e_n \in \Loc{\A}{\M}{K_n}$.

Let $a\in\sa{\A}$ with $\Lip(a)<\infty$ and let $\varepsilon > 0$. There exists $N\in\N$ such that for all $n\geq N$, we have $\|e_n a e_n - a\|_{\A} < \varepsilon$ since $(e_n)_{n\in\N}$ is an approximate unit for $\A$. 

Now, there exists $M\in\N$ such that $\Lip(e_n)\leq \frac{\varepsilon}{2\|a\|_\A + 1}$ for all $n\geq M$. Thus, using the Leibniz property:
\begin{equation}
\Lip(e_n a e_n) \leq 2\Lip(e_n)\|a\|_\A + \Lip(a) \leq \varepsilon + \Lip(a)\text{.}
\end{equation}
Since $e_nae_n \in \Loc{\A}{\M}{K_n}$ for all $n\in\N$, we conclude that for any $n\geq\max\{N,M\}$, we have found $a_n=e_nae_n \in \Loc{\A}{\M}{\star}$ with $\|a-a_n\|_\A\leq\varepsilon$ and $\Lip(a_n)\leq\Lip(a)+\varepsilon$. Thus $\limsup_{n\rightarrow\infty}\Lip(a_n)\leq \Lip(a)$. Yet since $\Lip$ is lower semi-continuous, $\Lip(a)\leq\liminf_{n\rightarrow\infty}\Lip(a_n)$, so:
\begin{equation*}
\lim_{n\rightarrow\infty}\Lip(a_n) = \Lip(a).
\end{equation*}

Moreover, if $a\in\sa{\M}$ then $e_nae_n\in\Loc{\A}{\M}{\star}\cap\M$. 

It remains to prove that $\M^\sigma$ is proper. Let $r > 0$ and let $x \in \M^\sigma$. Let $B = \cBall{\M^\sigma}{\sigmaKantorovich{\Lip}}{x}{r}$. Since $(e_n)_{n\in\N}$ is an approximate unit in $\M$ uniformly bounded in norm by $1$, we conclude that $(e_n(x))_{n\in\N}$ converges to $1$ and $e_n(y)\leq 1$ for all $y\in\M^\sigma$. Let $N\in\N$ such that for all $n\geq N$, we have $e_n(x) > \frac{2}{3}$.

Now, let $M\in\N$ such that $\Lip(e_n)\leq \frac{1}{3r}$ for all $n\geq M$ and let $n\geq \max{N,M}$. By definition of $\sigmaKantorovich{\Lip}$, we thus have:
\begin{equation*}
|e_n(y)-e_n(x)|\leq \frac{1}{3r}\sigmaKantorovich{\Lip}(x,y)
\end{equation*}
for all $x,y\in \M^\sigma$; thus for all $y\in B$ we conclude that $e_n(y) > \frac{1}{3}$.

Therefore, $B \subseteq e_n^{-1}([\frac{1}{3},1])$ --- where we see $e_n$ as a function on $M^\sigma$ by Gel'fand-Naimark theorem. By assumption, $e_n$ has compact support, and is continuous, so $e_n^{-1}([\frac{1}{3},1])$ is compact. Thus $B$, as a closed subset of a compact, is compact as well. Thus $(\M^\sigma,\Kantorovich{\Lip})$ is proper. This completes our proof.

\end{proof}
We are not aware of a converse to Proposition (\ref{strongly-proper-implies-proper-prop}). A difficulty stems from the fact that the restriction of the Lip-norm $\Lip$ to the topography $\M$ of a {\lcqms} $(\A,\Lip,\M)$ needs not be the standard Lipschitz seminorm for the associated metric $\sigmaKantorovich{\Lip}$: it is generically greater. Thus, even though there exists an approximate unit with Lipschitz seminorm converging to $0$ in the topography of any {\pqms} by Expression (\ref{proper-local-eq0}), it is unclear whether the Lip-norms of any such sequence converges to $0$ as well. Note however that we have shown that a locally compact metric space $X$ is proper if and only if there exists an approximate unit in $C_0(X)$ with the properties listed in Assertion (5) of Definition (\ref{strongly-proper-def}).

The commutative case suggests, in fact, that we impose an additional constraint to the approximate units which appear in Assertion (5) of Definition (\ref{strongly-proper-def}). Such a restriction implies a natural consequence about ``compact quantum metric subspaces'', as we shall now prove. We include these observations here, though this class of {\pqms s} does not seem to play a role at the level of generality of our current theory.
\begin{definition}
A strongly proper {\lcqms} $(\A,\Lip,\M)$ is \emph{localizable} when there exists an approximate unit $(e_n)_{n\in\N}$ in $\M$ for $\A$ such that:
\begin{itemize}
\item for all $n\in\N$, the element $e_n$  is a compactly supported as a function of $\M^\sigma$ with $0\leq e_n(x)\leq 1$ for all $x\in \M^\sigma$,
\item $\lim_{n\rightarrow\infty} \Lip(e_n) = 0$,
\item for all $n\leq m$ we have $e_n e_m = e_n$.
\end{itemize}
\end{definition}

\begin{proposition}
Let $(\A,\Lip,\M)$ be a localizable {\lcqms}. Then for all $K\in\compacts{\M}$, the pair $(\corner{K}{\sa{\A}},\Lip_K)$, where $\Lip_K$ is the quotient seminorm:
\begin{equation*}
\Lip_K : b\in\corner{K}{\sa{\A}} \mapsto \inf\set{\Lip(a)}{\corner{K}{a} = b}
\end{equation*}
of $\Lip$, is a compact quantum metric space in the sense of \cite{Rieffel98a,Rieffel99}.
\end{proposition}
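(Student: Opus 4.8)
The statement asserts that $(\corner{K}{\sa{\A}},\Lip_K)$ is a compact quantum metric space in the sense of Rieffel, i.e. that $\Lip_K$ is a seminorm vanishing exactly on the scalars times the unit of the order-unit space $\corner{K}{\sa{\A}}$ and that $\Kantorovich{\Lip_K}$ metrizes the weak* topology on its state space. The plan is to deduce this from the characterization of {\lcqms s} in Theorem (\ref{lcqms-thm}), which already gives us a strong precompactness property, together with the localizability hypothesis, which supplies an approximate unit $(e_n)_{n\in\N}$ that is ``eventually constant equal to $1$'' on any fixed compact set (because $e_n e_m = e_n$ for $n\leq m$ forces $e_m \equiv 1$ on the support of $e_n$). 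The first step is to identify the order unit: since $K$ is compact in $\M^\sigma$, there is some $m\in\N$ with $e_m(x) = 1$ for all $x\in K$ (using that the $e_n$ increase to $1$ and the eventual-constancy relation), so $\corner{K}{e_m} = \indicator{K}$ acts as a unit for the hereditary-type subspace $\corner{K}{\sa{\A}}$, making it an order-unit space with unit $\indicator{K}$.

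**Key steps.** First I would check that $\Lip_K$ is well-defined and finite on a dense subspace: given $b = \corner{K}{a}$ with $\Lip(a)<\infty$, the infimum defining $\Lip_K(b)$ is over a nonempty set, and density follows from Assertion (4) of Definition (\ref{pqms-def}) (or the approximate-unit argument of Proposition (\ref{strongly-proper-implies-proper-prop})) applied after compressing by $\indicator{K}$. Second, I would verify the kernel condition $\set{b}{\Lip_K(b)=0} = \R\indicator{K}$: the inclusion $\supseteq$ is clear since $\Lip(\indicator{K}e_m) $ can be made arbitrarily small; for $\subseteq$, if $\Lip_K(b)=0$ then there are $a_n$ with $\corner{K}{a_n}=b$ and $\Lip(a_n)\to 0$, and lower semicontinuity of $\Lip$ together with the $\Kantorovich{\Lip}$-diameter estimates on the tame set $\StateSpace[\A|K]$ forces $b$ to be a scalar multiple of $\indicator{K}$. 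Third, and this is the heart of the matter, I would show $\Kantorovich{\Lip_K}$ metrizes the weak* topology on $\StateSpace(\corner{K}{\sa{\A}})$. The crucial observation is that $\StateSpace(\corner{K}{\sa{\A}})$ is affinely homeomorphic to $\StateSpace[\A|K] = \set{\varphi\in\StateSpace(\A)}{\varphi(\indicator{K})=1}$, which is a \emph{tame} subset of $\StateSpace(\A)$ (any subset of $\StateSpace(\A|\M)$ contained in a single $\StateSpace[\A|K]$ is tame, since $\corner{K}{a} = a$ for the relevant compressions kills the defining limit). Since $(\A,\Lip,\M)$ is a {\lcqms}, $\Kantorovich{\Lip}$ metrizes the weak* topology on this tame set; and one checks that $\Kantorovich{\Lip}$ restricted to $\StateSpace[\A|K]$ coincides with (or is bi-Lipschitz equivalent to, which suffices for topology) $\Kantorovich{\Lip_K}$ under this identification, because testing against $a$ with $\Lip(a)\leq 1$ versus against $\corner{K}{a}$ gives the same state differences for states supported under $\indicator{K}$. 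Finally, boundedness of $\Kantorovich{\Lip_K}$ (finite diameter) follows from Rieffel's criterion: the set $\set{\corner{K}{a}}{\Lip(a)\leq 1, \mu(a)=0}$ is norm-precompact by Theorem (\ref{lcqms-thm}), hence bounded, which by the standard duality argument (e.g. \cite{Rieffel99}) is equivalent to finite $\Kantorovich{\Lip_K}$-diameter.

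**Main obstacle.** I expect the delicate point to be confirming that the Monge-Kantorovich metric of the \emph{quotient} seminorm $\Lip_K$ on the compressed space genuinely agrees (topologically) with the restriction of $\Kantorovich{\Lip}$ to $\StateSpace[\A|K]$, rather than merely being dominated by it. One direction is immediate: any $a$ with $\Lip(a)\leq 1$ gives $\corner{K}{a}$ with $\Lip_K(\corner{K}{a})\leq 1$, and for $\varphi,\psi\in\StateSpace[\A|K]$ one has $\varphi(a) = \varphi(\corner{K}{a})$ and likewise for $\psi$, so $\Kantorovich{\Lip}\leq\Kantorovich{\Lip_K}$ on that set. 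The reverse requires, given $b=\corner{K}{a}$ with $\Lip_K(b)<1$, producing an actual representative $a'$ with $\corner{K}{a'}=b$ and $\Lip(a')$ close to $\Lip_K(b)$ — which is exactly the definition of the quotient infimum — and observing that $a'$ need not be locally supported but still satisfies $\varphi(a') = \varphi(b)$ for $\varphi\in\StateSpace[\A|K]$; the localizability hypothesis is what guarantees this compression behaves well, since $\indicator{K}$ is a genuine projection obtained as a norm limit involving the $e_n$. Once this identification of metrics is in hand, the weak* metrization and the order-unit space structure both transfer directly from the ambient {\lcqms}, and the finite-diameter (compactness) assertion is the precompactness in Theorem (\ref{lcqms-thm}) restated via Rieffel's equivalence.
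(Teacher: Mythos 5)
Your proposal is correct in outline but takes a genuinely different route from the paper in its main step. The paper's own proof is short: localizability gives (by the covering/Dini argument you also sketch) some $N$ with $e_n\equiv 1$ on $K$ for $n\geq N$, hence $\corner{K}{e_n}=\indicator{K}$ and $\Lip_K(\indicator{K})=0$, so $(\corner{K}{\sa{\A}},\Lip_K)$ is a Lipschitz pair with dense domain; it then concludes at once by combining Theorem (\ref{lcqms-thm}) with Rieffel's total-boundedness criterion \cite[Theorem 1.9]{Rieffel98a}, i.e.\ it stays entirely on the algebra side and never mentions tame sets or the state space $\StateSpace[\A|K]$. You instead prove metrization directly: you observe that $\StateSpace[\A|K]$ is tame (correct, by Cauchy--Schwarz $\varphi(a)=\varphi(\corner{K'}{a})$ once $K'\supseteq K$), invoke the defining property of a {\lcqms} to metrize the weak* topology there by $\Kantorovich{\Lip}$, and transfer this through an identification of $\StateSpace(\corner{K}{\sa{\A}})$ with $\StateSpace[\A|K]$ together with the (correct) two-sided comparison of $\Kantorovich{\Lip_K}$ with the restriction of $\Kantorovich{\Lip}$. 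What your route buys is that it makes the quotient-seminorm/compression duality explicit and shows exactly where the metric lives; what the paper's route buys is economy, since Rieffel's criterion absorbs the state-space analysis (and even the kernel condition, because unboundedness of the image of the Lip-ball modulo scalars would contradict total boundedness), so no identification of state spaces is ever needed.

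The one step your route genuinely requires and that you assert without argument is the surjectivity of the map $\StateSpace[\A|K]\rightarrow\StateSpace(\corner{K}{\sa{\A}})$: metrizing the weak* topology only on the image of $\StateSpace[\A|K]$ does not suffice unless every state of the order-unit space $\corner{K}{\sa{\A}}$ arises this way. This is true, but needs a Krein--Hahn-Banach extension of a given state through the corner $\indicator{K}\A^{\ast\ast}\indicator{K}$ (whose unit is $\indicator{K}$, available by your first step), followed by the verification that the resulting state $\psi$ of $\A$ satisfies $\psi(\indicator{K})=1$ --- which one gets from a Urysohn-type function $m\in\M$ with $0\leq m\leq 1$ and $m\equiv 1$ on $K$, since then $\psi(m)=1$ and outer regularity of $\psi\vert_\M$ forces mass $1$ on $K$. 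Two smaller remarks: once metrization on the full (weak* compact) state space is in hand, finite diameter is automatic, so your closing appeal to the precompactness of $\set{\corner{K}{a}}{\Lip(a)\leq 1,\ \mu(a)=0}$ is redundant for your route (it is, essentially, the paper's route); and if you do use it, note that Theorem (\ref{lcqms-thm}) gives precompactness of $\set{e_N a e_N}{\Lip(a)\leq 1,\ \mu(a)=0}$ with $e_N\in\M$ compactly supported, and one passes to the $\indicator{K}$-compression by contractivity of $x\mapsto\indicator{K}x\indicator{K}$, since $\indicator{K}$ itself is not in $\M$.
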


\begin{proof}
Let $\Lip_K$ be the quotient of $\Lip$ to $\corner{K}{\sa{\A}}$ (seen as an order unit subspace of $\A^{\ast\ast}$ with order-unit $\indicator{K}$) for some $K\in\compacts{\M^\sigma}$. Now, since $(e_n)_{n\in\N}$ is an approximate unit for $\A$, the sequence $(e_n)_{n\in\N}$ converges pointwise to $1$ on $K$. Since $e_n e_m = e_n$ if $n\leq m$, we note that in fact, $e_m(x) = 1$ for all $x$ in the support of $e_n$ when $n < m$. We also note that $e_n e_m = e_n$ and $0\leq e_n\leq 1$ implies that $(e_n)_{n\in\N}$ is increasing. Thus, by Dini's theorem, $(e_n)_{n\in\N}$ converges uniformly to $1$ on the compact $K$.

From this we deduce that for some $N\in\N$, we have $e_n(x) = 1$ for all $n\geq N$ and $x\in K$. Thus $\corner{K}{e_n} = \indicator{K}$. Yet $\lim_{n\rightarrow \infty}\Lip(e_n) = 0$. We conclude that $\Lip_K(\indicator{K}) = 0$.

Thus $(\corner{K}{\sa{\A}},\Lip_K)$ is a Lipschitz pair (the density of the domain of $\Lip_K$ is easy to check as well: if $b\in\corner{K}{\sa{\A}}$ then there exists, for all $\varepsilon>0$, some $a\in\sa{\A}$ with $\Lip(a)<\infty$ and some $c\in\sa{\A}$ with $\corner{K}{c} = b$ and $\|a-c\|_\A\leq\varepsilon$ so $\Lip_K(\corner{K}{c})<\infty$ and $\|b  - \corner{K}{c}\|_{\corner{K}{\sa{\A}}}\leq\varepsilon$). Theorem (\ref{lcqms-thm}) and \cite[Theorem 1.9]{Rieffel98a} allows us to conclude that $(\corner{K}{\sa{\A}},\Lip_K)$ is a compact quantum metric space.
\end{proof}

In general, we do not have an argument to prove that a {\pqms} is localizable. We also will not need this property in our current work.

We now address the question of isometries between {\pqpms s}. Ideally, we wish such an object to be a morphism between the underlying C*-algebras whose dual map is an isometry and maps the base points to each other. However, the notion of morphism between non-unital C*-algebras requires some care: typically, one would want a morphism from $\A$ to $\B$ to mean a *-morphism from $\A$ to the multiplier C*-algebra of $\B$. Regular *-morphisms from $\A$ to $\B$ are the noncommutative analogues of proper maps. 

However, if $f : X \rightarrow Y$ is an isometry from a \emph{proper} metric space $(X,\mathsf{d}_X)$ to a metric space $(Y,\mathsf{d}_Y)$, and if $K\in\compacts{Y}$ is compact, then $f^{-1}(K)$ is bounded since $K$ is bounded and $f$ is an isometry. Moreover, $f^{-1}(K)$ is closed since $K$ is closed as a compact subset of a Hausdorff space. Consequently, $f^{-1}(K)$ is compact in $(X,\mathsf{d}_X)$ since the latter is proper. Thus isometries from proper metric spaces are proper maps. We can thus expect an isometry between {\pqpms s} to be a regular *-morphism, and even ideally a topographic morphism.

Moreover, as discussed in \cite{Rieffel00}, the notion of isometry between {\qcms s} is best handled when the Lip-norms involved are lower semi-continuous. We may expect that an isometry, in our context, should mean a map which preserves the Lip-norms. However, it is actually enough to preserve Lip-norms on some dense subset of their domain, in order to obtain an isometry between the associated {\mongekant}. This allows us to propose the following strong notion of isometric isomorphism between {\pqpms s}:

\begin{definition}
Let $\mathds{A} = (\A,\Lip_\A,\M_\A)$ and $\mathds{B} = (\B,\Lip_\B,\M_\B)$ be two {\pqpms s}. An isometric isomorphism from $\mathds{A}$ to $\mathds{B}$ is a topographic morphism $\pi : (\A,\M_\A) \rightarrow (\B,\M_\B)$ such that:
\begin{enumerate}
\item $\pi$ is a *-isomorphism onto $\B$,
\item $\pi$ restrict to a Jordan-Lie isomorphism from $\Loc{\A}{\M_\A}{\star}$ to $\Loc{\B}{\M_\A}{\star}$,
\item $\pi(\M_\A) = \M_\B$,
\item for all $a\in\Loc{\A}{\M_\A}{\star}$, we have $\Lip_\B\circ\pi(a) = \Lip_\A(a)$,
\item $\mu_\B\circ\pi = \mu_\A$.
\end{enumerate}
\end{definition}

Our purpose in this paper is to define a hypertopology on the class of all {\pqpms s} which is Hausdorff modulo isometric isomorphism. We also wish to relate our new topology to the dual propinquity topology \cite{Latremoliere13b,Latremoliere14}. The next section will only address very briefly the connection between {\Lqcms s} and {\pqpms s}, in order to set up our later comparison.

\subsection{\Lqcms s}

A special case of {\pqpms} is given by {\Lqcms s}, which are compact quantum metric spaces of a special kind appropriate for the construction of the dual Gromov-Hausdorff propinquity, and including many important examples of {\qms s} such as the quantum tori, classical compact metric spaces, hyperbolic group C*-algebras, and any quantum compact metric space obtained, for instance, from a spectral triple. This class was introduced in \cite{Latremoliere13} as a general framework for the construction of the quantum propinquity, and then served as the framework for the dual propinquity as well. It generalizes Rieffel's notion of a compact C*-metric space.

\begin{definition}
A {\Lqcms} $(\A,\Lip)$ is a Leibniz Lipschitz pair where $\A$ is unital, $\Lip$ is lower semi-continuous with respect to $\|\cdot\|_\A$, and such that $\Kantorovich{\Lip}$ metrizes the weak* topology of $\StateSpace(\A)$.
\end{definition}

The key remark, which we will use implicitly when needed, is:
\begin{remark}
Let $(\A,\Lip)$ be a {\Lqcms}. Let $\M_\A = \C\unit_\A$ and $\mu : t\unit_\A\mapsto t$. Then $(\A,\Lip,\M_\A,\mu)$ is a {\pqpms}, which we call the \emph{canonical {\pqpms} associated with $(\A,\Lip_\A)$}. When no confusion may arise, we may identify $(\A,\Lip)$ with this {\pqpms}.
\end{remark}

We must point out that there is no reason why we must choose the trivial topography $\C\unit_\A$ in a {\Lqcms} $(\A,\Lip)$. Many other topographies can be chosen in general, and they do play a role in our work. Indeed, if one wishes {\Lqcms s} to approximate a {\pqpms} $\mathds{A}$ which is not compact, then the topographies for our {\Lqcms s} will have to converge appropriately the final topography of $\mathds{A}$. Thus, there are many {\pqpms s} associated to a single {\Lqcms}, though the one we identified here is of course special.

We now propose an important observation connecting {\pqpms s} and {\Lqcms s}:

\begin{proposition}\label{compact-liploc-prop}
If $(\D,\Lip_\D,\M_\D)$ is a {\lcqms} with $\Lip_\D$ norm lower semi-continuous, then $\D$ is unital if, and only if there exists a compact subset $K$ of $\M_\D^\sigma$ such that the set:
\begin{equation*}
\LipLoc{\D}{\M_\D}{K} = \set{ a \in \Loc{\D}{\M_\D}{K} }{ \Lip_\A(a)\leq 1 }
\end{equation*}
is not compact in $\sa{\D}$.
\end{proposition}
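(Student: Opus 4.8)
The plan is to prove the contrapositive in both directions, using the characterization of {\lcqms s} from Theorem \ref{lcqms-thm} together with the standard Rieffel-style argument that, in the unital case, the unit ball of a Lip-norm modulo scalars is totally bounded. First suppose $\D$ is not unital. Then for every $K \in \compacts{\M_\D^\sigma}$ the projection $\indicator{K}$ is a proper projection in $\D^{\ast\ast}$, and $\LipLoc{\D}{\M_\D}{K} \subseteq \corner{K}{\sa{\D}}$. I would pick any state $\mu \in \StateSpace(\D\mid K)$ and note that $\LipLoc{\D}{\M_\D}{K}$ is contained (up to a bounded translate by multiples of $\indicator{K}$, which does not affect precompactness) in the set $\set{\corner{K}{a}}{a \in \sa{\unital{\D}}, \Lip_\D(a)\leq 1, \mu(a)=0}$, with $c = d = $ a suitable element of $\M_\D$ that acts as a unit on $K$; Theorem \ref{lcqms-thm}(2) then shows this set is norm precompact. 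One must also control the scalar direction: the restriction of $\Lip_\D$ to $\LipLoc{\D}{\M_\D}{K}$ still kills only $\R\indicator{K}$, but since $\indicator{K}\notin\D$ when $\D$ is non-unital, there is no room for an unbounded line of constants inside $\corner{K}{\sa{\D}}$ — more carefully, $\set{a\in\LipLoc{\D}{\M_\D}{K}}{\mu(a)=0}$ is norm-bounded by precompactness, and $\LipLoc{\D}{\M_\D}{K}$ is the union of this set with its translates by $t\,\corner{K}{c}$ for $|t|$ bounded since $\|\corner{K}{a}\|\leq \|\indicator{K}a\indicator{K}\|$ and $\mu(a)=0$ forces $\|a\|$ bounded. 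Closedness of $\LipLoc{\D}{\M_\D}{K}$ follows from lower semicontinuity of $\Lip_\D$ together with the fact that $\Loc{\D}{\M_\D}{K}$ is norm-closed (it is $\set{a\in\sa{\D}}{\corner{K}{a}=a,\ \Lip_\D(a)<\infty}$, and the condition $\corner{K}{a}=a$ is a norm-closed condition). A closed precompact set is compact, so for every $K$ the set $\LipLoc{\D}{\M_\D}{K}$ is compact, proving one implication.

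Conversely, suppose $\D$ is unital. I would produce a single compact $K$ for which $\LipLoc{\D}{\M_\D}{K}$ fails to be compact. Since $\M_\D$ contains an approximate unit for $\D$ and $\D$ is unital, by the remark following Definition \ref{topomorph-def} any approximate unit in $\M_\D$ must converge to $\unit_\D$, hence $\unit_\D \in \M_\D$ (as $\M_\D$ is norm-closed); thus $\M_\D^\sigma$ is compact and we may take $K = \M_\D^\sigma$, so that $\indicator{K} = \unit_\D$ and $\LipLoc{\D}{\M_\D}{K} = \set{a\in\sa{\D}}{\Lip_\D(a)\leq 1}$. This set is unbounded in norm: it contains $t\unit_\D$ for all $t\in\R$ since $\Lip_\D(\unit_\D)=0$. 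Being unbounded, it is not compact, which is exactly what we want.

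The main obstacle I anticipate is the bookkeeping in the first direction: extracting, from the abstract precompactness statement in Theorem \ref{lcqms-thm}, precompactness of $\LipLoc{\D}{\M_\D}{K}$ itself rather than of a ``cornered and recentered'' version of it, and in particular ruling out a non-compact escape to infinity in the direction of $\indicator{K}$ when $\indicator{K}$ is only a projection in $\D^{\ast\ast}$ and not in $\D$. The clean way to handle this is to observe that if $a\in\Loc{\D}{\M_\D}{K}$ with $\Lip_\D(a)\leq 1$, then choosing $c\in\Loc{\D}{\M_\D}{\star}\cap\M_\D$ with $c\equiv 1$ on $K$ (available by standardness, Definition \ref{pqms-def}(5), after enlarging $K$ slightly) gives $cac = a$, so $a$ lies in the precompact set of Theorem \ref{lcqms-thm}(3) translated by the bounded set $\set{\mu(a)\,\corner{K}{c}}{\|a\|\text{ bounded by precompactness}}$ — and the norm bound on $\mu(a)$ follows because $\set{a - \mu(a)\indicator{K} : a\in\LipLoc{\D}{\M_\D}{K}}$ being precompact forces $\set{\mu(a)}$ to be the image of the precompact set $\LipLoc{\D}{\M_\D}{K}$ under the bounded functional realizing $\corner{K}{\cdot}$ evaluated against $\mu$, hence bounded, using once more that non-unitality of $\D$ prevents $\|a\|$ from being controlled by $\|a-\mu(a)\indicator K\|$ alone unless we already know $\mu(a)$ is bounded — so the argument should instead be phrased as: $\LipLoc{\D}{\M_\D}{K}$ is precompact iff it is bounded-plus-precompact-modulo-scalars, and boundedness is automatic here precisely because $\indicator{K}\notin\D$. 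I would double-check this last point carefully, as it is where the hypothesis ``$\D$ non-unital'' is genuinely used.
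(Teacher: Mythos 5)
Your unital direction is exactly the paper's argument (unit in $\M_\D$, take $K=\M_\D^\sigma$, note $\R\unit_\D\subseteq\LipLoc{\D}{\M_\D}{K}$), so there is nothing to add there. The non-unital direction, however, contains a genuine gap, and it is precisely the one you flag yourself: your recentering scheme reduces the whole question to a uniform bound on $\set{\mu(a)}{a\in\LipLoc{\D}{\M_\D}{K}}$, i.e.\ to norm boundedness of $\LipLoc{\D}{\M_\D}{K}$, and the justification you offer is circular --- you bound $\mu(a)$ by invoking precompactness of the very set you are trying to prove precompact. The slogan ``boundedness is automatic because $\indicator{K}\notin\D$'' is not a proof: nothing in your argument excludes a sequence $a_n\in\LipLoc{\D}{\M_\D}{K}$ with $\|a_n\|_\D\to\infty$ whose normalizations approximate a multiple of $\indicator{K}$ from within $\D$; ruling that out is essentially the content of the proposition, not a formal consequence of non-unitality.

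The paper closes this gap by a different choice of state, which makes any recentering unnecessary. Since $\M_\D$ contains an approximate unit for $\D$, non-unitality of $\D$ forces $\M_\D$ to be non-unital, so $\M_\D^\sigma$ is not compact and one may pick $x\in\M_\D^\sigma\setminus K$; let $\varphi\in\StateSpace(\D)$ be a positive Hahn--Banach extension of the pure state $x$ of $\M_\D$. Writing $p=\indicator{K}$, for every $a\in\Loc{\D}{\M_\D}{K}$ the Cauchy--Schwarz inequality gives $|\varphi(a)|=|\varphi(p a p)|\leq\sqrt{\varphi(p)}\,\|ap\|_\D=\sqrt{p(x)}\,\|ap\|_\D=0$ because $x\notin K$. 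Hence the normalization $\varphi(a)=0$ appearing in Theorem (\ref{lcqms-thm}) is satisfied by \emph{every} element of $\LipLoc{\D}{\M_\D}{K}$, so this set sits directly inside a set which that theorem makes norm precompact, with no translation in a ``constant direction'' and with norm boundedness coming for free. Your closedness step (closedness of $\Loc{\D}{\M_\D}{K}$ together with lower semicontinuity of $\Lip_\D$) is correct and is the same as the paper's, and combined with the above it yields compactness. If you prefer to keep your own scheme with a state $\mu\in\StateSpace(\D|K)$, you must first supply an independent proof of the uniform bound on $\mu(a)$; the state-at-a-point-outside-$K$ trick is the efficient way to avoid having to do so.
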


\begin{proof}
If $\D$ is unital then $\unit_\D \in \M_\D$ --- in particular, $\M_\D^\sigma$ is compact. Let $K=\M_\D^\sigma$. Then $\unit_\D \in \Loc{\D}{\M_\D}{K} = \sa{\D}$ and since $\Lip_\D(\unit_\D) = 0$, we conclude that $\R\unit_\D\subseteq \LipLoc{\D}{\M_\D}{K}$; hence $\LipLoc{\D}{\M_\D}{K}$ is unbounded in norm and a fortiori not compact.

If $\D$ is not unital, then let $K\in\compacts{\M_\D^\sigma}$ be chosen. Since $K\not=\M_\D^\sigma$ as $\D$ is not unital, there exists $x\in \M_\D^\sigma$ with $x\not\in K$. Let $\delta = \sigmaKantorovich{\Lip_\D}(x,K) $; as $K$ is compact, $\delta > 0$. Let $\varphi \in \StateSpace(\D)$ be a positive Hahn-Banach extension of $x$; since $\M_\D$ contains an approximate unit, $\varphi\in\StateSpace(\D|\M_\D)$.

Now, by definition, for all $a\in\Loc{\D}{\M_\D}{K}$:
\begin{equation*}
|\varphi(a)| = |\varphi(p a p)| \leq \sqrt{\varphi(p)}\|ap\|_\D = \sqrt{p(x)}\|ap\|_\A = 0 \text{.}
\end{equation*}
Thus:
\begin{equation}\label{liploc-ineq0}
\LipLoc{\D}{\M_\D}{K} \subseteq \left\{ p a p \in \sa{\D} : \Lip_\D(a)\leq 1\text{ and }\varphi(a) = 0 \right\}
\end{equation}
and since $(\D,\Lip_\D,\M_\D)$ is a {\lcqms}, the set on the right hand-side of Inequality (\ref{liploc-ineq0}) is norm compact by Theorem (\ref{lcqms-thm}).

On the other hand, if $(a_n)_{n\in\N}$ is a sequence in $\LipLoc{\D}{\M_\D}{K}$ converging in norm to some $a\in\sa{\D}$, then since $\Loc{\D}{\M_\D}{K}$ is closed, we conclude that $a\in\Loc{\D}{\M_\D}{K}$, and since $\Lip_\D$ is lower semi-continuous, we conclude that $\Lip_\D(a) \leq 1$. Thus $a\in\LipLoc{\D}{\M_\D}{K}$. As a closed subset of a compact set, $\LipLoc{\D}{\M_\D}{K}$ is thus compact. 
\end{proof}

We refer to \cite{Latremoliere13,Latremoliere13b,Latremoliere13c,Latremoliere14} for further examples and discussions of this important class of {\qcms s} and the analogues of the  Gromov-Hausdorff distances for these spaces.

\section{Tunnels}

We now begin the construction of our hypertopology on the class of all {\pqpms s}. Our construction will actually allow for some flexibility, where we could restrict our attention to sub-classes of {\pqpms s} for which the construction we propose may lead to a stronger metric. 

The basic ingredient of our construction are tunnels. A tunnel is a mean to relate two {\pqpms s} in such a manner as allowing for the computation of a nonnegative number which we then may use to define our distance. This number ought to capture information akin to Definition (\ref{delta-r-def}). 

The challenge which we encountered during this research is the following. In all the constructions of some noncommutative analogues of the Gromov-Hausdorff in the compact setting, a notion of isometric embedding between {\qcms s} is employed. In our more general setup, however, this notion alone appears to raise difficulties. Indeed, the notion of {\lcqms s} provides us only with information on locally supported elements, as shown with Theorem (\ref{lcqms-thm}). Thus, our entire work must involve such elements. 

We could yet add a constraint to the notion of admissibility used, for instance, in \cite{Latremoliere13b} when defining tunnels, to ensure that we work only with locally supported elements. However, when exploring this matter, we soon realized that such a constraint is actually the core ingredient, and the usual notion of admissibility can be considerably weakened as long as it meets our new demands. As we shall note, when working with a compact {\pqpms}, our new notion recovers the usual notion of tunnel in \cite{Latremoliere13b}. Thus, this new approach was hidden from us, and required some effort to unearth. It is also rather surprising that we do not explicitly require the notion of isometry as part of our tunnels.

\subsection{Passages and Extent}

A tunnel will be a special king of passage:

\begin{definition}\label{passage-def}
Let $\mathds{A}_1=(\A_1,\Lip_1,\M_1,\mu_1)$ and $\mathds{A}_2=(\A_2,\Lip_2,\M_2,\mu_2)$ be two {\pqpms s}. A \emph{passage} $(\D,\Lip_\D,\M_\D,\pi_1,\mathds{A}_1,\pi_2,\mathds{A}_2)$ from $\mathds{A}_1$ to $\mathds{A}_2$ is a {\lcqms} $(\D,\M_\D,\Lip_\D)$ such that:
\begin{enumerate}
\item the Lip-norm $\Lip_\D$ is lower semi-continuous with respect to the norm $\|\cdot\|_\D$ of $\D$,
\item $\pi_\A$ and $\pi_\B$ are topographic morphisms.
\end{enumerate}
\end{definition}

\begin{notation}
Let $\mathds{A}$ and $\mathds{B}$ be two {\pqpms s}. If $\tau$ is a passage from $\mathds{A}$ to $\mathds{B}$, then the \emph{domain} $\dom{\tau}$ of $\tau$ is $\mathds{A}$ while the \emph{co-domain} $\codom{\tau}$ of $\tau$ is $\mathds{B}$.
\end{notation}

A tunnel is a passage which, for some $r > 0$, satisfies a weak form of ``local'' admissibility. By local, we shall mean that given a {\lcqms} $(\A,\Lip,\M)$, we will work with elements in $\Loc{\A}{\M_\A}{\star}$. We note that, for any $K\in\compacts{\M^\sigma}$, we have two natural objects we could consider. On the one hand, $\corner{K}{\sa{\A}}$ is an order unit space, and its state space is $\StateSpace[\A|K]$, so it seems to capture a local aspect of our metric space. However, it is not well-behaved with respect to the multiplication. Moreover, it is not a sub-object of $\A$, but rather a quotient, and we must beware to employ quotients when dealing with Leibniz Lip-norms. We also note that with our current definition of a {\pqpms}, $\corner{K}{\sa{\A}}$ is not necessarily a {\qcms} for the quotient of $\Lip$ (as $\indicator{K}$ is the order unit and may not have Lip-norm zero). This matter, actually, could be resolved if we assumed the existence of an approximate unit as described in our section of {\pqms s}, but we leave these considerations for our later work.

The other natural object is $\Loc{\A}{\M_\A}{K}$. This is a sub-object of $\sa{\A}$, and it is even a Jordan-Lie subalgebra. On the other hand, it is usually not an order unit space (it has usually no unit), and thus we can not hope to apply to it our work on compact quantum metric spaces directly. Besides, being a sub-object means that it technically correspond to a ``quantum quotient space'' rather than a subspace, as one would expect when working with an analogue of the Gromov-Hausdorff distance.

Our work, it turns out, will employ $\Loc{\A}{\M_\A}{K}$ for its algebraic properties, and $\StateSpace[\A|K]$ for its metric properties. The difficulty is to balance the use of these objects.

To understand the notion of weak admissibility for tunnels, we first introduce the notion of a lift set and a tunnel set for a passage. 

\begin{definition}\label{lift-set-def}
Let $\mathds{A} = (\A,\Lip_\A,\M_\A,\mu_\A)$ and $\mathds{B} = (\B,\Lip_\B,\M_\B,\mu_\B)$ be two {\pqpms s}. Let $\tau = (\D,\Lip_\D,\M_\D,\pi_\A,\mathds{A},\pi_\B,\mathds{B})$ be a passage from $\mathds{A}$ to $\mathds{B}$. If $K\in\compacts{\M_\D^\sigma}$, $l > 0$, $\varepsilon > 0$, $r > 0$ and $a\in\sa{\A}$ with $\Lip_\A(a)\leq l$, then the \emph{lift set} of $a$ for $\tau$ associated with $(l,r,\varepsilon,K)$ is:
\begin{equation*}
\liftsettunnel{\tau}{a}{l,r,\varepsilon,K} = \Set{d\in\Loc{\D}{\M_\D}{K}}{&\pi_\A(d) = a,\\ &\pi_\B(d) \in \Loc{\B}{\M_\B}{\mu_\B,r+4\varepsilon},\\ &\Lip_\D(d)\leq l}\text{,}
\end{equation*}
and the \emph{target set} of $a$ for $\tau$ associated with $(l,r,K,\varepsilon)$ is:
\begin{equation*}
\targetsettunnel{\tau}{a}{l,r,\varepsilon,K} = \pi_\B\left(\liftsettunnel{\tau}{a}{l,r,\varepsilon,K}\right)\text{.}
\end{equation*}
\end{definition}

Of course, as defined, a lift set for some passage may be empty. The key to our notion of tunnel is, indeed, related to when lift sets are not empty. In essence, our notion of admissibility for a passage, which depends on the choice of some radius $r > 0$, relies on whether one can lift elements locally supported on a ball centered at the base point and of radius $r$ for that passage. Formally:

\begin{definition}\label{left-admissible-def}
Let $\mathds{A} = (\A,\Lip_\A,\M_\A,\mu_\A)$ and $\mathds{B} = (\B,\Lip_\B,\M_\B,\mu_\B)$ be two {\pqpms s}. Let $\tau = (\D,\Lip_\D,\M_\D,\pi_\A,\mathds{A},\pi_\B,\mathds{B})$ be a passage from $\mathds{A}$ to $\mathds{B}$. Let $r > 0$. A pair $(\varepsilon,K)$, where $\varepsilon > 0$ and $K\in\compacts{\M_\D^\sigma}$, is \emph{$r$-left admissible} when:
\begin{enumerate}
\item $\pi_\A^\ast\left(\StateSpace[\A|\mu_\A,r]\right) \subseteq \StateSpace[\D|K]$,
\item we have:
\begin{equation*}
\StateSpace[\D|K]\almostsubseteq{(\StateSpace(\D),\Kantorovich{\Lip_\B})}{\varepsilon} \pi_\A^\ast\left(\StateSpace[\A|\mu_\A,r+4\varepsilon]\right)\text{,}
\end{equation*}
\item for all $a\in\Loc{\A}{\M_\A}{\mu_\A,r}$, we have:
\begin{equation*}
\targetsettunnel{\tau}{a}{\Lip_\A(a),r,\varepsilon,K} \not= \emptyset\text{,}
\end{equation*}
\item for all $a\in\Loc{\A}{\M_\A}{\mu_\A,r}\cap \M_\A$, we have:
\begin{equation*}
\targetsettunnel{\tau}{a}{\Lip_\A(a),r,\varepsilon,K}\cap\M_\B\not=\emptyset\text{,}
\end{equation*}
\item for all $d\in\Loc{\D}{\M_\D}{K}$, we have $\Lip_\B(\pi_\B(d)) \leq \Lip_\D(d)$.
\end{enumerate}
\end{definition}

We invite the reader to compare this notion with Theorem (\ref{GH-equivalences-thm}). 

\begin{remark}
Let $\tau=(\D,\Lip,\M,\ldots)$ be a passage from $(\A,\Lip_\A,\M_\A,\mu_\A)$ and let $a\in\Loc{\A}{\M_\A}{\mu_\A,r}$ for some $r > 0$. If $r' \geq r$, $\varepsilon' \geq \varepsilon > 0$, $l ' \geq l \geq \Lip_\A(a)$, and $K\subseteq K' \in \compacts{\M^\sigma}$, then:
\begin{equation*}
\liftsettunnel{\tau}{a}{l,r,\varepsilon,K}\subseteq \liftsettunnel{\tau}{a}{l',r',\varepsilon',K'}
\end{equation*}
and therefore:
\begin{equation*}
\targetsettunnel{\tau}{a}{l,r,\varepsilon,K}\subseteq \targetsettunnel{\tau}{a}{l',r',\varepsilon',K'}\text{.}
\end{equation*}
This observation will be used repeatedly. In particular, if $\targetsettunnel{\tau}{a}{\Lip_\A(a),r,\varepsilon,K}$ is not empty, then neither is $\targetsettunnel{\tau}{a}{l,r,\varepsilon,K}$ for $l\geq \Lip_\A(a)$.
\end{remark}

We observe that under condition of left admissibility, lift sets and tunnel sets are topologically well behaved:

\begin{lemma}\label{compact-lift-set-lemma}
Let $\mathds{A} = (\A,\Lip_\A,\M_\A,\mu_\A)$ and $\mathds{B} = (\B,\Lip_\B,\M_\B,\mu_\B)$ be two {\pqpms s}. Let $\tau = (\D,\Lip_\D,\M_\D,\pi_\A,\mathds{A},\pi_\B,\mathds{B})$ be a passage from $\mathds{A}$ to $\mathds{B}$. If $K\in\compacts{\M_\D^\sigma}$, $\varepsilon > 0$, and $r > 0$ are chosen so that $(\varepsilon,K)$ is $r$-left admissible for $\tau$, and if $a\in\Loc{\A}{\M_\A}{\mu_\A,r}$ and $l\geq\Lip_\A(a)$, then the lift set $\liftsettunnel{\tau}{a}{l,r,\varepsilon,K}$ is a nonempty compact subset of $\lsa{\A}{\M_\A}{K}$ and the target set $\targetsettunnel{\tau}{a}{l,r,\varepsilon,K}$ is a nonempty compact subset of $\lsa{\B}{\M_\B}{r+4\varepsilon}$.
\end{lemma}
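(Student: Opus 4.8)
The plan is to prove both assertions in tandem, since the target set is the $\pi_\B$-image of the lift set and $\pi_\B$ is norm-continuous, so compactness and non-emptiness transfer automatically; I only need to pin down the lift set. Non-emptiness is immediate: condition (3) of $r$-left admissibility for $(\varepsilon,K)$ guarantees $\targetsettunnel{\tau}{a}{\Lip_\A(a),r,\varepsilon,K}\neq\emptyset$, hence $\liftsettunnel{\tau}{a}{\Lip_\A(a),r,\varepsilon,K}\neq\emptyset$, and the monotonicity remark (with $l\geq\Lip_\A(a)$) upgrades this to $\liftsettunnel{\tau}{a}{l,r,\varepsilon,K}\neq\emptyset$. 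So the substance is compactness of the lift set in $\lsa{\D}{\M_\D}{K}$ (I expect the statement's $\lsa{\A}{\M_\A}{K}$ is a typo for $\lsa{\D}{\M_\D}{K}$, and similarly $\lsa{\B}{\M_\B}{\mu_\B,r+4\varepsilon}$ on the target side).

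First I would observe that every $d\in\liftsettunnel{\tau}{a}{l,r,\varepsilon,K}$ lies in $\Loc{\D}{\M_\D}{K}=\{d\in\sa{\D}:\corner{K}{d}=d,\ \Lip_\D(d)<\infty\}$ and satisfies $\Lip_\D(d)\leq l$; so the lift set is contained in $\LipLoc{\D}{\M_\D}{K}=\set{d\in\Loc{\D}{\M_\D}{K}}{\Lip_\D(d)\leq l}$ (after rescaling by $l$). The key input is that $(\D,\Lip_\D,\M_\D)$ is a {\lcqms} with $\Lip_\D$ lower semi-continuous (part of the definition of passage), so I can run the argument from the proof of Proposition~\ref{compact-liploc-prop}: fix $\mu\in\StateSpace[\D|K]$ — such a state exists because condition (1) of $r$-left admissibility gives $\pi_\A^\ast(\StateSpace[\A|\mu_\A,r])\subseteq\StateSpace[\D|K]$ and $\StateSpace[\A|\mu_\A,r]$ is nonempty — and consider $\mu$-normalized lifts. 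Since $d=\corner{K}{d}$ and $\Lip_\D(d)\leq l$, the set $\{d - \mu(d)\indicator{K} : d\in\LipLoc{\D}{\M_\D}{K}\}$ is, after absorbing $K$-supports, of the form $\{cad : a\in\sa{\unital{\D}},\ \Lip_\D(a)\leq l,\ \mu(a)=0\}$ for suitable compactly supported $c,d\in\M_\D$ (take $c=d$ to be an element of $\M_\D$ equal to $1$ on $K$), hence norm-precompact by Theorem~\ref{lcqms-thm}. Lower semi-continuity of $\Lip_\D$ together with the fact that $\Loc{\D}{\M_\D}{K}$ is norm-closed (shown earlier in the excerpt: $\Loc{\D}{\M_\D}{K}=\set{d\in\sa{\D}}{\corner{K}{d}=d,\ \Lip_\D(d)<\infty}$, with the constraint $\Lip_\D(d)\leq l$ preserved under limits) then makes $\LipLoc{\D}{\M_\D}{K}$ norm-compact modulo the $\mu$-shift; adding back the bounded term $\mu(d)\indicator{K}$, which ranges over a compact interval since the lift set is bounded, keeps it compact.

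It remains to check that the three further constraints defining $\liftsettunnel{\tau}{a}{l,r,\varepsilon,K}$ — namely $\pi_\A(d)=a$, $\pi_\B(d)\in\Loc{\B}{\M_\B}{\mu_\B,r+4\varepsilon}$, and $\Lip_\D(d)\leq l$ — cut out a \emph{closed} subset of the compact set $\LipLoc{\D}{\M_\D}{K}$ (rescaled). The condition $\pi_\A(d)=a$ is closed by norm-continuity of $\pi_\A$; $\Lip_\D(d)\leq l$ is closed by lower semi-continuity. The only delicate point, which I expect to be the main obstacle, is showing that $\{d : \pi_\B(d)\in\Loc{\B}{\M_\B}{\mu_\B,r+4\varepsilon}\}$ is closed: $\pi_\B$ is continuous, so it suffices that $\Loc{\B}{\M_\B}{\mu_\B,r+4\varepsilon}=\Loc{\B}{\M_\B}{\cBall{\M_\B^\sigma}{\sigmaKantorovich{\Lip_\B}}{\mu_\B}{r+4\varepsilon}}$ is norm-closed in $\sa{\B}$; but this is exactly the earlier identity $\Loc{\B}{\M_\B}{K'}=\set{b\in\sa{\B}}{\corner{K'}{b}=b,\ \Lip_\B(b)<\infty}$ applied with the compact ball $K'=\cBall{\M_\B^\sigma}{\sigmaKantorovich{\Lip_\B}}{\mu_\B}{r+4\varepsilon}$ (compact because $(\M_\B^\sigma,\sigmaKantorovich{\Lip_\B})$ is proper), intersected with $\{\Lip_\B\leq l\}$ via condition (5) of $r$-left admissibility, which forces $\Lip_\B(\pi_\B(d))\leq\Lip_\D(d)\leq l$; lower semi-continuity of $\Lip_\B$ then closes this set. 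A closed subset of a compact set is compact, so $\liftsettunnel{\tau}{a}{l,r,\varepsilon,K}$ is compact, nonempty, and contained in $\lsa{\D}{\M_\D}{K}$; applying $\pi_\B$ yields the corresponding statement for the target set, with the target set landing in $\Loc{\B}{\M_\B}{\mu_\B,r+4\varepsilon}\subseteq\lsa{\B}{\M_\B}{\cBall{\M_\B^\sigma}{\sigmaKantorovich{\Lip_\B}}{\mu_\B}{r+4\varepsilon}}$ by construction.
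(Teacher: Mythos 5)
Your overall strategy---realize $\liftsettunnel{\tau}{a}{l,r,\varepsilon,K}$ as a closed subset of a norm-compact ambient set and push it forward by the continuous map $\pi_\B$---is the paper's strategy in its first case, and your closedness checks (continuity of $\pi_\A$ and $\pi_\B$, lower semi-continuity of $\Lip_\D$, condition (5) of Definition (\ref{left-admissible-def}) combined with lower semi-continuity of $\Lip_\B$ to keep $\pi_\B(d)$ inside $\Loc{\B}{\M_\B}{\mu_\B,r+4\varepsilon}$) are exactly the ones the paper performs; your reading of $\lsa{\A}{\M_\A}{K}$ as $\lsa{\D}{\M_\D}{K}$ is also the intended one. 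The gap is in the compactness of the ambient set. You argue that $\LipLoc{\D}{\M_\D}{K}$ is compact ``modulo the $\mu$-shift'' and then add back $\mu(d)\indicator{K}$, ``which ranges over a compact interval since the lift set is bounded.'' As a claim about $\LipLoc{\D}{\M_\D}{K}$ this is false: by Proposition (\ref{compact-liploc-prop}), when $\D$ is unital and $K=\M_\D^\sigma$ --- a situation which genuinely occurs for passages between compact spaces, cf. Proposition (\ref{compact-tunnel-prop}) --- the set $\LipLoc{\D}{\M_\D}{K}$ contains $\R\unit_\D$, is norm-unbounded, and $\mu(d)$ is unbounded on it. This is precisely why the paper's proof splits into two cases rather than treating $\LipLoc{\D}{\M_\D}{K}$ as compact outright.

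As a claim about the lift set itself, the boundedness you invoke is true but not free: it requires condition (2) of Definition (\ref{left-admissible-def}) (every $\varphi\in\StateSpace[\D|K]$ lies within $\varepsilon$, for $\Kantorovich{\Lip_\D}$, of some $\psi\circ\pi_\A$ with $\psi\in\StateSpace[\A|\mu_\A,r+4\varepsilon]$) together with the identity $\|d\|_\D=\sup\set{|\varphi(d)|}{\varphi\in\StateSpace(\D),\ \varphi(\indicator{K})=1}$ valid for $d\in\lsa{\D}{\M_\D}{K}$ (the paper's Lemma (\ref{norm-lemma})), which yield $\|d\|_\D\leq\|a\|_\A+l\varepsilon$; you assert the bound without proof, and without it your reduction collapses exactly in the unital case. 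The paper sidesteps the issue differently: when $\LipLoc{\D}{\M_\D}{K}$ is compact it runs your sequential argument directly, and in the remaining case it deduces from Proposition (\ref{compact-liploc-prop}) that $\D$ and $\A$ are unital, then shifts by $\varphi(a)\unit_\D$ for a state $\varphi\circ\pi_\A$, so that the shift is determined by $a$ (because $\pi_\A(d)=a$) and no a priori bound on $d$ is needed, compactness coming from Rieffel's total-boundedness criterion in the compact quantum metric space $(\D,\Lip_\D)$. Two smaller repairs your write-up would need in any event: $\indicator{K}$ lies in $\M_\D^{\ast\ast}$ and not in $\D$, so the shift should be by $\mu(d)c^2$ (or be carried out in $\unital{\D}$), with the check $cdc=d$ made explicit; and Theorem (\ref{lcqms-thm}) only gives total boundedness, so the closedness step you already have is genuinely needed to conclude compactness.
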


\begin{proof}
The sets $\liftsettunnel{\tau}{a}{l,r,\varepsilon,K}$ and $\targetsettunnel{\tau}{a}{l,r,\varepsilon,K}$ are not empty by Definition (\ref{left-admissible-def}) and our choice of $(\varepsilon,K)$ and $a\in\Loc{\A}{\M_\A}{\mu_\A,r}$. Moreover $\targetsettunnel{\tau}{a}{l,r,\varepsilon,K}\subseteq\Loc{\B}{\M_\B}{\mu_\B,r+4\varepsilon}$ and $\liftsettunnel{\tau}{a}{l,r,\varepsilon,K}\subseteq\Loc{\D}{\M_\D}{K}$ by Definition (\ref{lift-set-def}). 

We differentiate two cases. 

First, we work with the assumption that $\LipLoc{\D}{\M_\D}{K}$ is compact, i.e. $K\subsetneq \M^\sigma_\D$. Let $(d_n)_{n\in\N}$ be a sequence in $\liftsettunnel{\tau}{a}{l,r,\varepsilon,K}$. Then, $(d_n)_{n\in\N}$, a sequence in the compact set $l\cdot\LipLoc{\D}{\M_\D}{K}$, admits a convergent subsequence $(d_{f(n)})_{n\in\N}$. Let $d\in\Loc{\D}{\M_\D}{K}$ be its limit.

Now, we have:
\begin{itemize}
\item by continuity of $\pi_\A$, we have $\pi_\A(d) = a$,
\item by lower semi-continuity of $\Lip_\D$, we have $\Lip_\D(d)\leq l$.
\item since $\lsa{\B}{\M_\B}{\mu_\B,r+4\varepsilon}$ is closed and $\pi_\B$ is continuous, we have $\pi_\B(d) \in \lsa{\B}{\M_\B}{\mu_\B,r+4\varepsilon}$. Now, since $\Lip_\B(\pi_\B(d))\leq \Lip_\D(d) \leq l$ by Definition (\ref{left-admissible-def}), we conclude that:
\begin{equation*}
\pi_\B(d) \in \Loc{\B}{\M_\B}{\mu_\B,r+4\varepsilon}\text{.}
\end{equation*}
\end{itemize}

Consequently, $d\in\liftsettunnel{\tau}{a}{l,r,\varepsilon,K}$. Thus $\liftsettunnel{\tau}{a}{l,r,\varepsilon,K}$, and by continuity of $\pi_\B$, the set $\targetsettunnel{\tau}{a}{l,r,\varepsilon,K}$ are norm compact. This completes our proof for this case.

Second, assume $\LipLoc{\D}{\M_\D}{K}$ is not compact. By Proposition (\ref{compact-liploc-prop}), we conclude that $\D$ is unital and $\M_\D^\sigma = K$, with $\unit_\D \in \LipLoc{\D}{\M_\D}{K}$. Moreover, $\A$ is unital as well since $\pi_\A$ is a topographic morphism.

In particular, $(\D,\Lip_\D)$ is a compact quantum metric space, and so is $(\A,\Lip_\A)$. Now, let $\varphi \in \StateSpace(\A)$. We observe that:
\begin{multline*}
\left\{ d \in  \sa{\D} :  \Lip_\D(d) \leq l, \pi_\A(d) = a \right\} \\
\subseteq \set{ d\in \sa{\D} }{ \Lip_\D(d) \leq l, \varphi\circ\pi_\A(d - \varphi(a)\unit_\D) = 0 }\\
\subseteq \set{ d\in \sa{\D} }{\Lip_\D(d) \leq l, \varphi\circ\pi_\A(d) = 0 } + \varphi(a)\unit_\D\text{,}
\end{multline*}
and the latter set is norm compact. The target set $\targetsettunnel{\tau}{a}{l,r,\varepsilon,K}$ is, again, a closed subset of $\set{ d \in \sa{\D} }{ \Lip_\D(d) \leq l, \pi_\A(d) = a }$, hence our lemma holds.
\end{proof}

Right admissibility will be based upon the following natural notion:
\begin{notation}
Let $\tau = (\D,\Lip,\M,\pi,\dom{\tau},\rho,\codom{\tau})$ be a passage. The \emph{inverse passage} $\tau^{-1}$ of $\tau$ is the passage $(\D,\Lip,\M,\rho,\codom{\tau},\pi,\dom{\tau})$.
\end{notation}

Admissibility is more than just left and right admissibility. We begin with the notion of admissible pair, and note that we add some algebraic conditions, and involve the base points as well.

\begin{definition}\label{admissible-pair-def}
Let $\tau = (\D,\Lip_\D,\M_\D,\pi,\dom{\tau},\rho,\codom{\tau})$ be a passage and let $r > 0$. 
\begin{enumerate}
\item A pair $(\varepsilon,K)$ is \emph{$r$-right admissible} when $(\varepsilon,K)$ is $r$-left admissible for $\tau^{-1}$,
\item A pair $(\varepsilon,K)$ is \emph{$r$-admissible} when:
\begin{itemize}
\item $(\varepsilon,K)$ is both $r$-left and $r$-right admissible for $\tau$,
\item if $\mu$ and $\nu$ are the respective base points of $\dom{\tau}$ and $\codom{\tau}$, then $\sigmaKantorovich{\Lip_\D}(\mu\circ\pi,\nu\circ\rho)\leq\varepsilon$,
\item for all $d,d'\in\Loc{\D}{\M_\D}{K}$, we have:
\begin{equation}\label{admissible-Jordan-eq}
\Lip_\D\left(\Jordan{d}{d'}\right)\leq \Lip_\D(d)\|d'\|_\D+\Lip_\D(d')\|d\|_\D\text{,}
\end{equation} 
\item for all $d,d'\in\Loc{\D}{\M_\D}{K}$, we have:
\begin{equation}\label{admissible-Lie-eq}
\Lip_\D\left(\Lie{d}{d'}\right)\leq \Lip_\D(d)\|d'\|_\D+\Lip_\D(d')\|d\|_\D\text{,}
\end{equation}
\end{itemize}
\end{enumerate}
\end{definition}

\begin{remark}
Let $(\varepsilon,K)$ be an $r$-admissible pair for a passage:
\begin{equation*}
\tau = (\D,\Lip,\M,\pi,\dom{\tau},\rho,\codom{\tau})\text{.}
\end{equation*}
Since $\lsa{\D}{\M_\D}{K}$ is a Jordan-Lie subalgebra of $\sa{\D}$, we conclude that Conditions (\ref{admissible-Jordan-eq}) and (\ref{admissible-Lie-eq}) together imply that $\Loc{\D}{\M_\D}{K}$ is a Jordan-Lie subalgebra of $\lsa{\D}{\M_\D}{K}$.
\end{remark}

Now, a difficulty we shall try and avoid is that, for different radii, a passage may be admissible, yet the compacts involved in Definition (\ref{admissible-pair-def}) may not be related to each other. We wish to avoid this matter for technical reasons, but informally, we could simply state that we want the noncommutative analogues of the $\delta_r$ from Definition (\ref{delta-r-def}) be monotone in $r$, as it will prove very useful and is rather natural. Thus, we can define our notion of admissibility in our context as follows:

\begin{definition}\label{admissible-def}
A number $\varepsilon > 0$ is $r$-admissible when there exists a family $(K_t)_{t \in (0,r]}$ of compacts of $\M_\D^\sigma$ such that:
\begin{enumerate}
\item for all $t\leq t' \in (0,r]$, we have $K_t\subseteq K_{t'}$,
\item for all $t \in (0,r]$, the pair $(\varepsilon,K_t)$ is $t$-admissible.
\end{enumerate}
\end{definition}

\begin{notation}\label{admissible-notation}
The set of $r$-admissible numbers for some passage $\tau$ is denoted by $\Adm(\tau|r)$.
\end{notation}

\begin{remark}\label{inclusion-adm-rmk}
By Definition (\ref{admissible-def}), if $R \geq r$ then:
\begin{equation*}
\Adm(\tau|R) \subseteq \Adm(\tau|r)\text{,}
\end{equation*}
and
\begin{equation*}
\Adm(\tau^{-1}|r) = \Adm(\tau|r)\text{.}
\end{equation*}
\end{remark}

An $r$-tunnel is a passage for which some number is $r$-admissible:

\begin{definition}\label{tunnel-def}
Let $\mathds{A}$ and $\mathds{B}$ be two {\pqpms s} and $r > 0$. An \emph{$r$-tunnel} $\tau$ from $\mathds{A}$ to $\mathds{B}$ is a passage from $\mathds{A}$ to $\mathds{B}$ such that the set $\Adm(\tau|r)$ is not empty.
\end{definition}

\begin{remark}
If $\tau$ is an $r$-tunnel, then $\tau$ is a $t$-tunnel for all $t\in (0,r]$ and $\tau^{-1}$ is an $r$-tunnel as well by Remark (\ref{inclusion-adm-rmk}).
\end{remark}

The procedure followed in \cite{Latremoliere13b} to define tunnels started with a notion of tunnel, to which a number called the length was then associated. Later \cite{Latremoliere14}, we modified slightly this number and defined the extent of a tunnel. Computing the extent is the very reason to introduce tunnels.

In our current approach, we actually defined the value of interest, essentially, as a mean to define tunnels --- an interesting reversal in our method. We are now ready to define the extent of a tunnel properly:

\begin{definition}\label{extent-def}
Let $\mathds{A}$ and $\mathds{B}$ be two {\pqpms s}, let $r > 0$, and let $\tau$ be an \emph{$r$-tunnel} from $\mathds{A}$ to $\mathds{B}$. The \emph{$r$-extent} $\tunnelextent{\tau}{r}$ is the nonnegative real number:
\begin{equation*}
\tunnelextent{\tau}{r} = \inf \Adm(\tau|r) \text{.}
\end{equation*}
When $\tau$ is an $r$-tunnel, we call $r$ a \emph{radius of admissibility}.
\end{definition}

\begin{remark}\label{extent-monotone-rmk}
For any $r$-tunnel $\tau$, we have $\tunnelextent{\tau^{-1}}{r} = \tunnelextent{\tau}{r}$, and if $t \in (0,r]$ then:
\begin{equation*}
\tunnelextent{\tau}{t} \leq \tunnelextent{\tau}{r}\text{,}
\end{equation*}
by Remark (\ref{inclusion-adm-rmk}).
\end{remark}

We believe that it is important to pause and connect our notion of $r$-tunnel with the notion of tunnels in \cite{Latremoliere13b,Latremoliere14}. Indeed, our notion of admissibility may appear surprising, if not suspicious, since it does not involve explicitly the notion of isometries. The following proposition demonstrates that in fact, $r$-tunnels are just tunnels when taken between {\Lqcms s}, and thus our notion of admissibility may possess enough strength for our purpose. The eventual proof of this lies with Theorem (\ref{coincidence-thm}).

\begin{proposition}\label{compact-tunnel-prop}
If $\mathds{A}_1 = (\A_1,\Lip_1,\M_1,\mu_1)$ and $\mathds{A}_2 = (\A_2,\Lip_2,\M_2,\mu_2)$ are two {\pqpms s}, if $r > 0$, and if:
\begin{equation*}
\tau = (\D,\Lip_\D,\M_\D,\pi_1,\mathds{A}_1,\pi_2,\mathds{A}_2)
\end{equation*}
 is an \emph{$r$-tunnel} from $\mathds{A}_1$ to $\mathds{A}_2$ such that, for some $j\in\{1,2\}$, we have $r \geq \diam{\M_j^\sigma}{\sigmaKantorovich{\Lip_j}}$, then:
\begin{enumerate}
\item $(\A_1,\Lip_1)$, $(\A_2,\Lip_2)$ and $(\D,\Lip_\D)$ are {\Lqcms s},
\item if $(\varepsilon,K)$ is $r$-admissible, then $K = \M_\D^\sigma$,
\item for all $a\in\sa{\A_j}$, we have:
\begin{equation*}
\Lip_j(a) = \inf\Set{\Lip_\D(d)}{\pi_j(d) = a}\text{,}
\end{equation*}
\item $\pi_j$ is a *-epimorphism.
\end{enumerate}
\end{proposition}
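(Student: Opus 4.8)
The plan is to reduce the whole statement to the known compact theory by exploiting the hypothesis $r\geq\diam{\M_j^\sigma}{\sigmaKantorovich{\Lip_j}}$. First I would note that this forces $\cBall{\M_j^\sigma}{\sigmaKantorovich{\Lip_j}}{\mu_j}{r}$ to equal all of $\M_j^\sigma$; since $(\M_j^\sigma,\sigmaKantorovich{\Lip_j})$ is proper of finite diameter it is compact, so $\M_j$ is unital, and as $\M_j$ is a norm-closed subalgebra containing an approximate unit of $\A_j$, its unit is $\unit_{\A_j}$ and $\A_j$ is unital. Consequently $\lsa{\A_j}{\M_j}{\M_j^\sigma} = \sa{\A_j}$, so $\Loc{\A_j}{\M_j}{\mu_j,r} = \dom{\Lip_j}$; in particular $\unit_{\A_j}\in\Loc{\A_j}{\M_j}{\mu_j,r}$ with $\Lip_j(\unit_{\A_j}) = 0$.

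The crux of the argument, and the step I expect to carry the weight, is deducing unitality of $\D$ together with item (2). Pick any $r$-admissible pair $(\varepsilon,K)$ for $\tau$ (one exists since $\tau$ is an $r$-tunnel); by Definition (\ref{admissible-pair-def}) it is both $r$-left and $r$-right admissible, and $\Adm(\tau^{-1}|r) = \Adm(\tau|r)$ by Remark (\ref{inclusion-adm-rmk}), so whichever of $j\in\{1,2\}$ we are in, lifts through $\pi_j$ are furnished by condition (3) of left admissibility (Definition (\ref{left-admissible-def}), applied to $\tau$ if $j=1$, to $\tau^{-1}$ if $j=2$). Applying that condition to $a = \unit_{\A_j}$ yields $d\in\Loc{\D}{\M_\D}{K}$ with $\pi_j(d) = \unit_{\A_j}$ and $\Lip_\D(d)\leq\Lip_j(\unit_{\A_j}) = 0$. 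Since $(\D,\Lip_\D)$ is a Lipschitz pair, $\Lip_\D(d) = 0$ forces $d\in\R\unit_\D$; were $\D$ non-unital this would give $d = 0$, contradicting $\pi_j(d) = \unit_{\A_j}\neq 0$. Hence $\D$ is unital, $\pi_j$ is then a unital *-epimorphism so $d = \unit_\D$, and $d\in\Loc{\D}{\M_\D}{K}$ means $\corner{K}{\unit_\D} = \unit_\D$, i.e. $\indicator{K} = \unit_\D$, i.e. $K = \M_\D^\sigma$ --- which is (2). As by-products, $\M_\D$ is unital with compact spectrum, $\A_{3-j} = \pi_{3-j}(\D)$ is unital as a quotient of a unital C*-algebra, and (4) is immediate since $\pi_1,\pi_2$ are *-epimorphisms by the very definition (\ref{passage-def}) of a passage.

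For item (1), with all of $\M_1^\sigma$, $\M_2^\sigma$, $\M_\D^\sigma$ now compact, taking $K$ to be the full spectrum makes $a-\corner{K}{a} = 0$, so $\StateSpace(\A_1)$, $\StateSpace(\A_2)$ and $\StateSpace(\D)$ are tame; the {\lcqms} property then forces each {\mongekant} to metrize the full weak* topology of the corresponding state space. Combining this with lower semi-continuity (built into the definitions of a {\pqpms} and of a passage) and the Leibniz inequalities --- part of the definition of a {\pqpms} for $\A_1,\A_2$, and for $\D$ coming from conditions (\ref{admissible-Jordan-eq})--(\ref{admissible-Lie-eq}) of $r$-admissibility, which now hold on $\Loc{\D}{\M_\D}{\M_\D^\sigma} = \dom{\Lip_\D}$ --- shows that $(\A_1,\Lip_1)$, $(\A_2,\Lip_2)$ and $(\D,\Lip_\D)$ are {\Lqcms s}.

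Finally, item (3): for the inequality $\Lip_j(a)\leq\inf\set{\Lip_\D(d)}{\pi_j(d) = a}$, apply condition (5) of $r$-admissibility (available in the needed left- or right-hand form since the pair is both) to any $d\in\dom{\Lip_\D} = \Loc{\D}{\M_\D}{\M_\D^\sigma}$ with $\pi_j(d) = a$, obtaining $\Lip_j(a)\leq\Lip_\D(d)$ and hence the bound after taking the infimum (the case $\Lip_\D(d) = \infty$ being vacuous). For the reverse inequality, if $\Lip_j(a) = \infty$ there is nothing to prove; otherwise $a\in\dom{\Lip_j} = \Loc{\A_j}{\M_j}{\mu_j,r}$, so condition (3) of $r$-admissibility gives a nonempty lift set $\liftsettunnel{\tau^{\pm1}}{a}{\Lip_j(a),r,\varepsilon,\M_\D^\sigma}$, i.e. a $d\in\Loc{\D}{\M_\D}{\M_\D^\sigma}$ with $\pi_j(d) = a$ and $\Lip_\D(d)\leq\Lip_j(a)$, whence $\inf\set{\Lip_\D(d)}{\pi_j(d) = a}\leq\Lip_j(a)$. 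The one genuine bookkeeping burden throughout is keeping straight which of left- versus right-admissibility supplies each fact and handling $j=1$ and $j=2$ uniformly through $\tau^{-1}$; the essential idea is the single lift of $\unit_{\A_j}$, which simultaneously pins down $\D$ as unital and $K$ as $\M_\D^\sigma$, after which everything is a specialization of the compact picture.
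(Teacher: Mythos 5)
Your proposal is correct and follows essentially the same route as the paper: lift the unit $\unit_{\A_j}$ through an $r$-admissible pair to get $d$ with $\Lip_\D(d)=0$, hence $d=\unit_\D$ and $K=\M_\D^\sigma$, then read off the compact picture, and obtain the quotient formula in (3) from the existence-of-lifts condition together with condition (5) of left/right admissibility, exactly as in the paper's argument. The only divergence is item (4), which you dispatch by citing that topographic morphisms are *-epimorphisms by Definition (\ref{topomorph-def}) --- legitimate given the paper's literal definition --- whereas the paper re-derives surjectivity of $\pi_j$ from (3) together with the fact that a *-morphism has closed range, an argument that would survive even under a weaker reading of topographic morphism.
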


\begin{proof}
To ease notation, assume that $r \geq \diam{\M_1^\sigma}{\sigmaKantorovich{\Lip_1}}$. Since $(\M_1^\sigma,\sigmaKantorovich{\Lip_1})$ is proper, we conclude that $\M_1^\sigma$ is compact, so $\M_1$ and hence $\A_1$ is unital; thus $(\A_1,\Lip_1)$ is a compact quantum metric space. Let $\unit_1 \in \A_1$ be the unit of $\A_1$: we note that $\Lip_1(\unit_1) = 0$.

Let $(\varepsilon,K)$ be $r$-admissible for $\tau$. By Definition (\ref{left-admissible-def}), we conclude that there exists $d\in\Loc{\D}{\M_\D}{K}$ such that $\pi_\A(d) = \unit_1$ and $\Lip_\D(d) = 0$. Since $(\D,\Lip_\D)$ is a Lipschitz pair, $d \in \R\unit_\D$; since $\pi_\A(d) = \unit_1$ we conclude $d = \unit_\D$. Consequently, $K = \M_\D^\sigma$ since, if $p$ is the indicator function of $K$ then $p\unit_\D p = \unit_\D$ as $\unit_\D \in \Loc{\D}{\M_\D}{K}$, so $p = \unit_\D$.

In particular, $(\D,\Lip_\D)$ is therefore a Leibniz Lipschitz pair. Moreover, $\Lip_\D$ is lower semi-continuous by assumption on passages. Last, by definition of {\lcqms}, we conclude that $(\D,\Lip_\D)$ is a {\Lqcms}. 

Now, as $\pi_\B$ is topographic, and $\D$ is unital, we conclude that $\B$ is unital as well. Thus $(\B,\Lip_\B)$ is a {\Lqcms}. This proves (1).

Now, for all $a\in\sa{\A_1}$ with $\Lip_1(a)<\infty$,  i.e. for all $a\in\Loc{\A}{\M_1}{\mu_1,r}$, there exists $d\in \sa{\D}$ such that $\pi_1(d) = a$ and $\Lip_\A(a)\leq\Lip_\D(d)\leq\Lip_\A(a)$ by Definition (\ref{left-admissible-def}). Moreover $\Lip_1(\pi_1(d))\leq\Lip_\D(d)$ for all $d\in\sa{\D}=\lsa{\D}{\M_\D}{K}$. Thus (2) holds.

Last, (2) implies that the dense subset $\{a\in\sa{\A}:\Lip_\A(a)<\infty\}$ of $\sa{\A}$ is contained in the range of $\pi_\D$ restricted to $\sa{\D}$. Since the range of $\pi_\D$ is closed as $\pi_\D$ is *-morphism, we conclude that the range of $\pi_\D$ contains $\sa{\A}$, and thus $\pi_\D$ is surjective.
\end{proof}

In the following, we identify a {\Lqcms} $(\A,\Lip)$ with the {\pqpms} $(\A,\Lip,\C\unit_\A,\epsilon)$ where $\epsilon : t\unit_\A \in \C\unit_\A\mapsto t$.

We refer to \cite{Latremoliere13b,Latremoliere14} for the definition of tunnels between {\Lqcms s}. Our new notion of a tunnel is a natural generalization for tunnels in the compact setting, and moreover our notion of extent is compatible with the extent of a tunnel between {\Lqcms s} introduced in \cite{Latremoliere14}:

\begin{corollary}\label{compact-tunnel-corollary}
If $\mathds{A} = (\A,\Lip_\A)$ and $\mathds{B} = (\B,\Lip_\B)$ are {\Lqcms s} and $\tau=(\D,\Lip,\M,\pi_\A,\mathds{A},\pi_\B,\mathds{B})$ is a passage from $\mathds{A}$ to $\mathds{B}$, then the following are equivalent:
\begin{enumerate}
\item $\tau$ is an $r$-tunnel for some $r > 0$,
\item $(\D,\Lip,\pi_\A,\pi_\B)$ is a tunnel with finite length (or finite extent),
\item $\tau$ is an $r$-tunnel for all $r > 0$.
\end{enumerate}
Moreover, if any of (1), (2) or (3) holds, the the extent of $(\D,\Lip,\M,\pi_\A,\pi_\B)$ equals to the $r$-extent of $\tau$ for all $r > 0$.
\end{corollary}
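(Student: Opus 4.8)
The plan is to reduce everything to Proposition (\ref{compact-tunnel-prop}). Under the identification of a {\Lqcms} with its canonical {\pqpms}, the topographies $\M_\A = \C\unit_\A$ and $\M_\B = \C\unit_\B$ have one-point Gel'fand spectra, so $\diam{\M_\A^\sigma}{\sigmaKantorovich{\Lip_\A}} = \diam{\M_\B^\sigma}{\sigmaKantorovich{\Lip_\B}} = 0$. Hence, the moment $\tau$ is known to be an $r$-tunnel for \emph{some} $r > 0$, the hypothesis of Proposition (\ref{compact-tunnel-prop}) is met (take $j = 1$, and note $r \geq 0$), which yields at once that $(\A,\Lip_\A)$, $(\B,\Lip_\B)$ and $(\D,\Lip_\D)$ are {\Lqcms s}, that every pair $(\varepsilon,K)$ which is $t$-admissible for $\tau$ for some $t > 0$ satisfies $K = \M_\D^\sigma$, that $\pi_\A$ and $\pi_\B$ are *-epimorphisms, and that $\Lip_\A$, $\Lip_\B$ are the quotients of $\Lip_\D$ along $\pi_\A$, $\pi_\B$ respectively. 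This is precisely the data of a tunnel from $\mathds{A}$ to $\mathds{B}$ in the sense of \cite{Latremoliere13b,Latremoliere14}, and since $\StateSpace(\D)$ is weak* compact such a tunnel has finite extent (equivalently, finite length). I would record this observation first, as it drives the entire argument.

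The implication (3)$\Rightarrow$(1) is immediate. For (1)$\Rightarrow$(3), I would take an $r_0$-tunnel $\tau$ and fix $\varepsilon \in \Adm(\tau|r_0)$, witnessed by an increasing family $(K_t)_{t\in(0,r_0]}$ of compact subsets of $\M_\D^\sigma$ with each $(\varepsilon,K_t)$ being $t$-admissible; by the first paragraph $K_t = \M_\D^\sigma$ for all $t\in(0,r_0]$. The crux is that, \emph{with $K = \M_\D^\sigma$ fixed}, every object occurring in Definitions (\ref{lift-set-def}), (\ref{left-admissible-def}) and (\ref{admissible-pair-def}) stops depending on the radius parameters: $\Loc{\D}{\M_\D}{\M_\D^\sigma} = \dom{\Lip_\D}$, $\StateSpace[\D|\M_\D^\sigma] = \StateSpace(\D)$, and since $\cBall{\M_\A^\sigma}{\sigmaKantorovich{\Lip_\A}}{\mu_\A}{r} = \M_\A^\sigma$ (likewise for $\B$) one has $\StateSpace[\A|\mu_\A,r] = \StateSpace(\A)$, $\StateSpace[\B|\mu_\B,r] = \StateSpace(\B)$, $\Loc{\A}{\M_\A}{\mu_\A,r} = \dom{\Lip_\A}$ and $\Loc{\B}{\M_\B}{\mu_\B,r} = \dom{\Lip_\B}$ for \emph{every} $r > 0$. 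Consequently the conditions defining ``$(\varepsilon,\M_\D^\sigma)$ is a $t$-admissible pair'' carry no reference to $t$, so $(\varepsilon,\M_\D^\sigma)$ is a $t$-admissible pair for all $t > 0$. Given any $r > 0$, the constant family $K_t := \M_\D^\sigma$ for $t\in(0,r]$ then witnesses, via Definition (\ref{admissible-def}), that $\varepsilon \in \Adm(\tau|r)$. Thus $\Adm(\tau|r)$ equals this same nonempty set for all $r > 0$, whence (3) holds and $\tunnelextent{\tau}{r} = \inf\Adm(\tau|r)$ is independent of $r$.

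To finish I would match (1)/(3) with (2) and identify the common value of $\tunnelextent{\tau}{r}$ with the extent $\chi(\tau)$ of the classical tunnel $(\D,\Lip_\D,\pi_\A,\pi_\B)$. The implication (1)$\Rightarrow$(2) is contained in the first paragraph. For (2)$\Rightarrow$(1), I would start from a classical tunnel $(\D,\Lip_\D,\pi_\A,\pi_\B)$ of finite extent and check that $(\varepsilon,\M_\D^\sigma)$ is a $t$-admissible pair for every $t > 0$ and every $\varepsilon > \chi(\tau)$: condition (1) of Definition (\ref{left-admissible-def}) is vacuous in the unital case; condition (5) is the quotient inequality $\Lip_\B\circ\pi_\B \leq \Lip_\D$; conditions (3) and (4) ask for the existence of lifts of prescribed Lip-norm of elements of $\dom{\Lip_\A}$ (and of $t\unit_\A$, for which $t\unit_\D$ works), which one obtains from the fact that $\Lip_\A$ is the $\pi_\A$-quotient of $\Lip_\D$ together with lower semicontinuity of $\Lip_\D$ and norm-compactness (the infimum defining the quotient is attained, by the same argument as in Lemma (\ref{compact-lift-set-lemma})); condition (2) is the bound $\Haus{\Kantorovich{\Lip_\D}}(\StateSpace(\D),\pi_\A^\ast(\StateSpace(\A))) \leq \varepsilon$, valid for $\varepsilon > \chi(\tau)$ by the definition of the extent; the conditions for $\tau^{-1}$ are identical; the Jordan--Lie inequalities in Definition (\ref{admissible-pair-def}) hold since $(\D,\Lip_\D)$ is a Leibniz Lipschitz pair; and the base-point comparison $\sigmaKantorovich{\Lip_\D}(\mu_\A\circ\pi_\A,\mu_\B\circ\pi_\B) \leq \varepsilon$ is dominated by the Hausdorff terms bounding $\chi(\tau)$. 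This gives (2)$\Rightarrow$(1) and, together with the first two paragraphs, $\tunnelextent{\tau}{r} = \inf\Adm(\tau|r) = \chi(\tau)$ for all $r > 0$, which is the ``Moreover''.

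I expect the third step to be the main obstacle: translating, clause by clause, the somewhat intricate Definitions (\ref{lift-set-def})--(\ref{admissible-def}) into the vocabulary of reaches and extents of tunnels in \cite{Latremoliere13b,Latremoliere14}. Two wrinkles call for care. First, Definition (\ref{left-admissible-def})(2) is written with $\Kantorovich{\Lip_\B}$ on $\StateSpace(\D)$, which in the unital setting must be reconciled, via the quotient property and Proposition (\ref{compact-tunnel-prop}), with the Monge--Kantorovich distance $\Kantorovich{\Lip_\D}$ that enters the classical reach and depth. Second, the nonemptiness of the target sets $\targetsettunnel{\tau}{a}{\Lip_\A(a),r,\varepsilon,\M_\D^\sigma}$ in the classical setting --- needed to verify (3) and (4) --- is not purely formal: it requires that the quotient infimum be attained, which is exactly the ``$\LipLoc{\D}{\M_\D}{K}$ not compact'' branch of Lemma (\ref{compact-lift-set-lemma}) (the branch occurring here, since $K = \M_\D^\sigma$ and $\D$ is unital), and one should make sure that branch applies cleanly.
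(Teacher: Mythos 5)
Your argument is correct and follows essentially the same route as the paper: reduce to Proposition (\ref{compact-tunnel-prop}) to force $K = \M_\D^\sigma$ together with the quotient and epimorphism properties, note that with trivial topographies all the admissibility data become radius-independent, and verify the admissibility clauses directly for a classical tunnel of finite extent, which also gives the equality of the extents. The only cosmetic difference is that in the direction (2)$\Rightarrow$(1) the paper exhibits an admissible pair using a singleton compact, whereas you use $\M_\D^\sigma$; both reduce to the same short clause-by-clause checks you describe, including the attainment of lifts via the compactness argument of Lemma (\ref{compact-lift-set-lemma}).
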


\begin{proof}
If $\tau = (\D,\Lip,\M,\pi_\A,\mathds{A},\pi_\B,\mathds{B})$ in an $r$-tunnel for some $r > 0$, then Proposition (\ref{compact-tunnel-prop}) proves that $(\D,\Lip,\M,\pi_\A,\pi_\B)$ is a tunnel. Moreover, let $(\varepsilon,K)$ be some $r$-admissible pair for $\tau$. By Proposition (\ref{compact-tunnel-prop}), we have $K = \M_\D^\sigma$. From this, we conclude easily that $\tau$ is $t$-admissible for any $t > 0$, and moreover:
\begin{equation*}
\StateSpace(\D) = \StateSpace[\D|K] \subseteq_\varepsilon \pi_\A^\ast(\StateSpace(\A))
\end{equation*}
and, since $\pi_\A^\ast(\StateSpace(\A)) \subseteq \StateSpace(\D)$, we conclude:
\begin{equation*}
\Haus{\Kantorovich{\Lip_\D}}(\StateSpace(\D),\pi_\A^\ast(\StateSpace(\A))) \leq \varepsilon\text{.}
\end{equation*}
The same holds for $\B$ in lieu of $\A$, so $\chi(\tau)\leq \tunnelextent{\tau}{r}$. Conversely, we note that $(\chi(\tau),M_\D^\sigma)$ is $t$-admissible for any $t > 0$ so $\tunnelextent{\tau}{t} \leq \chi(\tau)$, as required.

If, on the other hand, $(\D,\Lip,\M,\pi_\A,\pi_\B)$ is a tunnel, then by Definition (\ref{admissible-def}), it is straightforward that for all $\varepsilon \geq \chi(\tau)$, the pair $(\varepsilon,\{\epsilon_\D\})$ where $\epsilon_D : t\unit_\D \mapsto t$, is $r$-admissible for any $r > 0$ (note that $\lsa{\D}{\M_\D}{\{\epsilon_\D\},r} = \sa{\D}$ for any $r > 0$).

The last implication is trivial.
\end{proof}

Following on the work in \cite{Latremoliere13b}, we shall define an inframetric (unlike \cite{Latremoliere13b} were we got a metric) from tunnels and their extents. The reason why we obtain an inframetrics seem to stem from the fact that we may compose tunnels, in a manner inspired by \cite{Latremoliere14}, yet the composition adds a constraint on the radius of admissibility. We now turn to this issue.

\subsection{Tunnel Composition}

We can form new tunnels from two tunnels with the codomain of one being equal to the domain of the other, in such a manner that the extent of the resulting tunnel is controlled by the extent of the two tunnels we merge, under a natural condition on the admissibility radii of the involved tunnels. This is the topic of the following theorem, from which the relaxed triangle inequality for our eventual new inframetric derives.

\begin{theorem}\label{tunnel-composition-thm}
Let:
\begin{equation*}
\mathds{A}=(\A,\Lip_\A,\M_\A,\mu_\A)\text{,} \mathds{B} = (\B,\Lip_\B,\M_\B,\mu_\B)\text{ and }\mathds{E} = (\alg{E},\Lip_{\alg{E}},\M_{\alg{E}},\mu_{\alg{E}})
\end{equation*}
be three {\pqpms s}. Let $r > 0$.

Let $\tau_1$ be an $r$-tunnel from $\mathds{A}$ to $\mathds{B}$ and $\tau_2$ be an $r$-tunnel from $\mathds{B}$ to $\mathds{E}$. If $t > 0$ is such that:
\begin{equation*}
t + 4\max\{ \tunnelextent{\tau_1}{r}, \tunnelextent{\tau_2}{r} \} < r
\end{equation*}
then for all $\alpha > 0$ there exists a $t$-tunnel $\tau$ from $\mathds{A}$ to $\mathds{E}$ such that:
\begin{equation*}
\tunnelextent{\tau}{t} \leq \alpha + \tunnelextent{\tau_1}{r} + \tunnelextent{\tau_2}{r} \text{.}
\end{equation*}
\end{theorem}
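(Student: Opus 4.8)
The plan is to build the composed tunnel $\tau$ on a suitable C*-subalgebra of the fibered product $\D_1 \oplus_\B \D_2 = \set{(d_1,d_2) \in \D_1 \oplus \D_2}{\pi_\B^{(1)}(d_1) = \pi_{\B}^{(2)}(d_2)}$, where $\tau_1 = (\D_1,\Lip_{\D_1},\M_{\D_1},\ldots)$ and $\tau_2 = (\D_2,\Lip_{\D_2},\M_{\D_2},\ldots)$, equipped with a Lip-norm of the form $\Lip_\D(d_1,d_2) = \max\{\Lip_{\D_1}(d_1),\Lip_{\D_2}(d_2), \lambda^{-1}\norm{\rho_1(d_1) - \rho_2(d_2)}{\D}\}$ for a well-chosen parameter $\lambda$ slightly larger than $\tunnelextent{\tau_1}{r} + \tunnelextent{\tau_2}{r}$; this is the standard device from \cite{Latremoliere14} adapted to our local setting. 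The topography $\M_\D$ will be the corresponding fibered product of $\M_{\D_1}$ and $\M_{\D_2}$ over $\M_\B$, and the maps $\pi_\A^{(1)}\circ(\text{first projection})$ and $\pi_\E^{(2)}\circ(\text{second projection})$ will be the topographic morphisms of $\tau$. Lower semicontinuity of $\Lip_\D$ is immediate from lower semicontinuity of $\Lip_{\D_1},\Lip_{\D_2}$ and continuity of the norm, and the Jordan–Lie/Leibniz inequalities on the relevant local subalgebras follow from the ones for $\tau_1,\tau_2$ on $\Loc{\D_i}{\M_{\D_i}}{K_i}$ together with the submultiplicativity of the norm term.

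The main work is to verify $t$-admissibility, i.e. to produce, for each $s \in (0,t]$, a compact $K_s \subseteq \M_\D^\sigma$ with the nested family property such that $(\alpha + \tunnelextent{\tau_1}{r} + \tunnelextent{\tau_2}{r}, K_s)$ is $s$-admissible for $\tau$. Pick $\varepsilon_i \in \Adm(\tau_i|r)$ with $\varepsilon_i < \tunnelextent{\tau_i}{r} + \frac{\alpha}{4}$, and let $(K^{(i)}_u)_{u \in (0,r]}$ be the associated nested families. The point of the hypothesis $t + 4\max\{\tunnelextent{\tau_1}{r},\tunnelextent{\tau_2}{r}\} < r$ is that starting from a ball of radius $s \leq t$ in $\M_\A^\sigma$, the left-admissibility of $\tau_1$ pushes us into a ball of radius $s + 4\varepsilon_1$ in $\M_\B^\sigma$, and then left-admissibility of $\tau_2$ pushes us into a ball of radius $s + 4\varepsilon_1 + 4\varepsilon_2$ in $\M_\E^\sigma$, and we need this final radius to stay $\leq r$ so that $\tau_2$'s admissibility data at radius $s + 4\varepsilon_1$ is available — which is guaranteed since $s + 4\varepsilon_1 + 4\varepsilon_2 < t + 8\max\{\tunnelextent{\tau_1}{r},\tunnelextent{\tau_2}{r}\} + 2\alpha$, and after shrinking $\alpha$ this is $< r$ (one also uses $t + 4\max < r$ directly with a little slack). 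Concretely I set $K_s$ to be (the spectrum-level description of) the fibered product of $K^{(1)}_s$ with $K^{(2)}_{s+4\varepsilon_1}$ over the appropriate compact in $\M_\B^\sigma$; nestedness in $s$ is inherited. One then checks the five conditions of Definition~\ref{left-admissible-def} for $\tau$ and for $\tau^{-1}$: conditions (1), (5) are formal from the projections; condition (2) — the Hausdorff-type approximation of $\StateSpace[\D|K_s]$ by $\pi_\A^{\ast}(\StateSpace[\A|\mu_\A,s+4\varepsilon])$ — is obtained by chaining the condition~(2) estimates of $\tau_1$ and $\tau_2$ through the $\lambda$-term in $\Lip_\D$, which forces states of $\D$ that are concentrated on $K_s$ to have their two "legs" within $\lambda\varepsilon$-ish of each other in Monge–Kantorovich distance; conditions (3), (4) (nonempty target sets, respecting the topographies) are where the lift sets of $\tau_1$ and $\tau_2$ get glued.

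The gluing of lift sets is the heart of the argument and the step I expect to be the main obstacle: given $a \in \Loc{\A}{\M_\A}{\mu_\A,s}$, use $3$-left-admissibility of $\tau_1$ to get $d_1 \in \liftsettunnel{\tau_1}{a}{\Lip_\A(a),s,\varepsilon_1,K^{(1)}_s}$, set $b = \pi_\B^{(1)}(d_1) \in \Loc{\B}{\M_\B}{\mu_\B,s+4\varepsilon_1}$ with $\Lip_\B(b) \leq \Lip_\A(a)$, then use $3$-left-admissibility of $\tau_2$ at radius $s+4\varepsilon_1$ (available precisely because of the radius hypothesis) to get $d_2$ lifting $b$ with controlled Lip-norm, and take $(d_1,d_2) \in \D$; the norm term $\norm{\rho_1(d_1)-\rho_2(d_2)}{\D}$ must be bounded by $\lambda\,l$ to keep $\Lip_\D(d_1,d_2) \leq l$, which is where $\lambda \geq \varepsilon_1 + \varepsilon_2$ (roughly) is used, invoking the $r$-admissibility clause $\sigmaKantorovich{\Lip_{\D_i}}(\mu\circ\pi,\nu\circ\rho)\leq\varepsilon_i$ and Proposition~\ref{trivial-bound-prop} to pass between the two copies of $\B$ inside $\D_1$ and $\D_2$. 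One has to be careful that the support control ($\Loc{}{}{K_s}$ membership) of $(d_1,d_2)$ is the fibered-product compact, and that the extra factor $4\varepsilon$ slack in the definition of lift sets absorbs the accumulated errors; the radius bookkeeping is exactly engineered so that $s + 4\varepsilon_1 + 4\varepsilon_2 \leq r$ under the stated hypothesis once $\alpha$ is taken small, which is why composition costs a radius. Finally, $\tunnelextent{\tau}{t} \leq \lambda \leq \varepsilon_1 + \varepsilon_2 + (\text{slack}) \leq \alpha + \tunnelextent{\tau_1}{r} + \tunnelextent{\tau_2}{r}$ by choosing $\lambda$ appropriately and taking the infimum over admissible numbers, completing the proof.
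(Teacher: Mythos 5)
There is a genuine gap, and it sits exactly where you predicted the main obstacle would be: the treatment of the penalty term. First, your ambient algebra is ambiguous: you introduce the fibered product $\set{(d_1,d_2)}{\pi^{(1)}_\B(d_1)=\pi^{(2)}_\B(d_2)}$ but then equip it with a seminorm containing the term $\lambda^{-1}\norm{\rho_1(d_1)-\rho_2(d_2)}{\B}$, which is identically zero on that fibered product; conversely, if you really mean the direct sum $\D_1\oplus\D_2$ with this penalty (the device of the paper and of the compact theory), then the verification of condition (2) of Definition (\ref{left-admissible-def}) uses the convex decomposition of states of $\D_1\oplus\D_2$ supported on $K^1_x\coprod K^2_x$ into states pulled back from each summand, which is not available for the fibered product and is not addressed in your sketch. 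Second, and more seriously, your choice of scale $\lambda$ slightly larger than $\tunnelextent{\tau_1}{r}+\tunnelextent{\tau_2}{r}$ destroys the extent estimate: when you chain the state-space approximations, a state $\psi$ of $\B$ pulled back through the two legs contributes an error equal to the penalty scale (if $\Lip_\D(d_1,d_2)\leq 1$ then $\norm{\pi_2(d_1)-\rho_1(d_2)}{\B}\leq\lambda$, hence $\Kantorovich{\Lip_\D}(\psi\circ\pi_2\circ\iota_1,\psi\circ\rho_1\circ\iota_2)\leq\lambda$), so the admissible number you obtain is of order $\varepsilon_1+\varepsilon_2+\lambda\approx 2(\varepsilon_1+\varepsilon_2)$, not $\varepsilon_1+\varepsilon_2+\alpha$. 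Moreover your proposed justification that the penalty term on a glued lift is bounded by $\lambda\,l$ --- invoking $\sigmaKantorovich{\Lip_{\D_i}}(\mu\circ\pi,\nu\circ\rho)\leq\varepsilon_i$ and Proposition (\ref{trivial-bound-prop}) --- is not a valid estimate: those are statements about distances between states, and they give no norm bound on the difference of two distinct elements $\rho_1(d_1)$ and $\rho_2(d_2)$ in terms of their Lip-norms.

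The repair is the one used in the paper: glue the lifts through a common element of $\B$. Given $a\in\Loc{\A}{\M_\A}{\mu_\A,x}$, lift it to $d_1\in\Loc{\D_1}{\M_1}{K^1_x}$, set $b=\pi_2(d_1)\in\Loc{\B}{\M_\B}{\mu_\B,x+4\varepsilon_1}$, and (since $x+4\varepsilon_1\leq r$, which is all the radius hypothesis is needed for --- you do not need $x+4\varepsilon_1+4\varepsilon_2\leq r$) lift that same $b$ to $d_2\in\Loc{\D_2}{\M_2}{K^2_{x+4\varepsilon_1}}$; then $\pi_2(d_1)=b=\rho_1(d_2)$ exactly, the penalty term vanishes, and $\Lip(d_1,d_2)=\Lip_\A(a)$ no matter how small the scale is. This is precisely what allows the scale to be taken equal to the free parameter $\alpha$, so that the lifting conditions (3)--(4) cost nothing while the state-space chaining yields the admissible number $\varepsilon_1+\varepsilon_2+\alpha$, i.e. $\tunnelextent{\tau}{t}\leq\alpha+\tunnelextent{\tau_1}{r}+\tunnelextent{\tau_2}{r}$ as required. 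With your $\lambda$ the same gluing still works, but the resulting extent bound is roughly twice the one claimed in the theorem, so the parameter choice must be changed, and the base-point/Monge--Kantorovich argument for controlling the penalty should be discarded altogether.
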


\begin{proof}
Let $t > 0$ such that:
\begin{equation*}
t + 4\max\{ \tunnelextent{\tau_1}{r}, \tunnelextent{\tau_2}{r} \} < r\text{.}
\end{equation*}

Let $\alpha > 0$. For all $(d_1,d_2) \in \sa{\D_1\oplus\D_2}$, we set:
\begin{equation*}
\mathsf{N}(d_1,d_2) = \frac{1}{\alpha}\norm{\pi_2(d_1)-\rho_1(d_2)}{\B}\text{,}
\end{equation*}
and
\begin{equation*}
\Lip(d_1,d_2) = \max\{\Lip_1(d_1),\Lip_2(d_2),\mathsf{N}(d_1,d_2) \}\text{.}
\end{equation*}
We note that $\Lip$ is densely defined and lower semi-continuous on $\sa{\D_1\oplus\D_2}$.

For both $j\in\{1,2\}$, let us denote by $\iota_j : \D_1\oplus\D_2 \twoheadrightarrow \D_j$ the canonical surjection. Our purpose is to prove that:
\begin{equation*}
\tau = (\D_1\oplus\D_2,\Lip,\M_1\oplus\M_2,\pi_1\circ\iota_1,\mathds{A},\rho_2\circ\iota_2,\mathds{E})
\end{equation*}
is a $t$-tunnel.

For $j\in \{1,2\}$, let $\varepsilon_j > 0$ be an $r$-admissible number for $\tau_j$ such that:
\begin{equation*}
t + 4\max\{\varepsilon_1,\varepsilon_2\} \leq r\text{.}
\end{equation*}
Note that such choices of $r$-admissible numbers are possible by assumption on $t$. For $j\in\{1,2\}$, let $(K_x^j)_{x \in (0,r]}$ be an increasing family of compact subsets of $\M_j^\sigma$ such that $(\varepsilon,K_x^j)$ is $x$-admissible for $\tau_j$ and for all $x\in(0,r]$.

Let $x\in (0,t]$. 

Since $\Lip\geq\max\{\Lip_1,\Lip_2\}$, we conclude that for all $j\in\{1,2\}$ and for all $\varphi,\psi \in \StateSpace(\D_j)$:
\begin{equation*}
\Kantorovich{\Lip}(\varphi\circ\iota_j,\psi\circ\iota_j)\leq\Kantorovich{\Lip_j}(\varphi,\psi)\text{.}
\end{equation*}

Let $\varphi\in \StateSpace[\D_1|K_x^1]$. There exists $\psi\in \StateSpace[\B|\mu_\B,x+4\varepsilon_1]$ such that:
\begin{equation*}
\Kantorovich{\Lip_1}(\varphi,\psi\circ\pi_2)\leq\varepsilon_1\text{.}
\end{equation*}

Moreover, by assumption on $t$, we have $x + 4\varepsilon_1 \leq r$. Hence, by Definitions (\ref{left-admissible-def}) and (\ref{admissible-def}), we have:
\begin{equation*}
\rho_1^\ast\left(\StateSpace[\B|\mu_\B,x+4\varepsilon_1]\right) \subseteq \StateSpace[\D_2|K_{x+4\varepsilon_1}^2]\subseteq_{\varepsilon_2} \rho_2^\ast \left(\StateSpace[\alg{E}|\mu_{\alg{E}}, (x+4\varepsilon_1) + 4\varepsilon_2]\right)\text{.}
\end{equation*}

Therefore, there exists:
\begin{equation*}
\theta\in\StateSpace[\alg{E}|x + 4\varepsilon_1 + 4\varepsilon_2]
\end{equation*}
such that $\Kantorovich{\Lip_2}(\psi\circ\rho_1,\theta\circ\rho_2)\leq\varepsilon_2$.

Now, if $\Lip(d_1,d_2)\leq 1$ for some $(d_1,d_2)\in\sa{\D}$, then $\|\pi_2(d_1)-\rho_1(d_2)\|_\B \leq \alpha$, and therefore $\Kantorovich{\Lip}(\psi\circ\pi_2\circ\iota_1,\psi\circ\rho_1\circ\iota_2)\leq \alpha$.

Then:
\begin{equation*}
\begin{split}
\Kantorovich{\Lip}(\varphi\circ\iota_1,\theta\circ\rho_2\circ\iota_2) &\leq \Kantorovich{\Lip}(\varphi\circ\iota_1,\psi\circ\pi_2\circ\iota_1) + \Kantorovich{\Lip}(\psi\circ\pi_2\circ\iota_1,\psi\circ\rho_1\circ\iota_2)\\
&\quad + \Kantorovich{\Lip}(\psi\circ\rho_1\circ\iota_2,\theta\circ\rho_2\circ\iota_2)\\
&\leq \Kantorovich{\Lip_1}(\varphi,\psi\circ\pi_2) + \alpha + \Kantorovich{\Lip_2}(\psi\circ\rho_1,\theta\circ\rho_2)\\
&\leq \varepsilon_1 + \alpha + \varepsilon_2\text{.}
\end{split}
\end{equation*}

Now, if $\varphi \in \StateSpace[\D_1\oplus\D_2|K_x^1\coprod K_x^2]$, then there exists $y\in[0,1]$, $\varphi_1\in\StateSpace[\D_1|K_x^1]$ and $\varphi_2\in\StateSpace[\D_2|K_x^2]$ such that $\varphi = y\varphi_1\circ\iota_1 + (1-y)\varphi_2\circ\iota_2$. Thus there exists $\theta_1, \theta_2 \in\StateSpace[\alg{E}|t + 4\varepsilon_1 + 4\varepsilon_2]$ such that:
\begin{equation*}
\Kantorovich{\Lip}(\varphi_1,\theta_1\circ\rho_2) \leq \varepsilon_1 + \varepsilon_2 + \alpha
\end{equation*}
and
\begin{equation*}
\Kantorovich{\Lip_2}(\varphi_2,\theta_1\circ\rho_2) \leq \varepsilon_2\text{.}
\end{equation*}

Therefore, since $\Kantorovich{\Lip}$ is a convex function over $\StateSpace(\D_1\oplus\D_2)\times\StateSpace(\D_1\oplus\D_2)$:
\begin{equation*}
\begin{split}
\Kantorovich{\Lip}&(\varphi,y\theta_1\circ\rho_2\circ\iota_2+(1-y)\theta_2\circ\rho_2\circ\iota_2) \\
&\leq y\Kantorovich{\Lip}(\varphi_1,\theta_1\circ\rho_2\circ\iota_2) + (1-y)\Kantorovich{\Lip}(\varphi_2,\theta_2\circ\rho_2\circ\iota_2)\\
&\leq y\Kantorovich{\Lip_1}(\varphi_1,\theta_1\circ\rho_2) + (1-y)\Kantorovich{\Lip_2}(\varphi_2,\theta_2\circ\rho_2)\\
&\leq y(\varepsilon_1+\varepsilon_2+\alpha) + (1-y)\varepsilon_2\\
&\leq \varepsilon_1+\varepsilon_2+\alpha\text{.}
\end{split}
\end{equation*}

Thus:
\begin{equation*}
\StateSpace[\D_1\oplus\D_2|K_x^1\coprod K_x^2] \almostsubseteq{(\StateSpace(\D_1\oplus\D_2),\Kantorovich{\Lip})}{\varepsilon_1+\varepsilon_2+\alpha}\StateSpace[\alg{E}|t+4\varepsilon_1+4\varepsilon_2]\text{.}
\end{equation*}

Let $a\in\Loc{\alg{E}}{\M_{\alg{E}}}{\mu_{\alg{E}},x}$. There exists $d_2\in\Loc{\D_2}{\M_2}{K_x^2}$ such that $\rho_2(d_2)=a$, $\Lip_2(d_2) = \Lip_{\alg{E}}(a)$ and:
\begin{equation*}
b = \rho_1(d_1) \in\Loc{\B}{\M_\B}{\mu_\B,x+4\varepsilon_2} \subseteq \Loc{\B}{\M_\B}{\mu_\B,r}\text{.}
\end{equation*}
Hence there exists $d_1\in\Loc{\D_1}{\M_1}{K_x^1}$ such that $\Lip_1(d_1) = \Lip_\B(b)$, $\pi_2(d_1) = b$ and $\pi_1(d_1)\in\Loc{\A}{\M_\A}{\mu_\A,x+4\varepsilon_1+4\varepsilon_2}$.

Consequently, $\Lip(d_1,d_2) = \Lip_{\alg{E}}(a)$ while $(d_1,d_2)\in\Loc{\D_1\oplus\D_2}{\M_1\oplus\M_2}{K_x^1\coprod K_x^2}$ and $\pi_1\circ\iota_1(d_1,d_2)\in\Loc{\A}{\M_{\A}}{x+4\varepsilon_1+4\varepsilon_2}$ while $\rho_2\circ\iota_2(d_1,d_2)=a$.

This proves that $(\varepsilon_1 + \varepsilon_2 + \alpha, K_x^1\coprod K_x^2)$ is $x$-right admissible for $\tau$. The proof that $(\varepsilon_1 + \varepsilon_2 + \alpha, K_x^1\coprod K_x^2)$ is $x$-left admissible is done symmetrically.  Now, we note that $(K_x^1\cup K_x^2)_{x\in(0,t]}$ is an increasing family of compacts subsets of $(\M_1\oplus\M_2)^\sigma$.

Last, $\mathrm{N}$ satisfies, for all $d_1,d_1' \in \D_1$, $d_2,d_2' \in \D_2$:
\begin{equation*}
\begin{split}
\mathrm{N}(d_1d_1',d_2d_2') &= \frac{1}{D}\norm{\pi_2(d_1d_1') - \rho_1(d_2d_2')}{\B}\\
&\leq\frac{1}{D}\norm{\pi_2(d_1)\pi_2(d_1')-\pi_2(d_1)\rho_1(d_2')}{\B} \\
&\quad + \frac{1}{D}\norm{\pi_2(d_1)\rho_1(d_2')-\rho_1(d_2)\rho_1(d_2')}{\B}\\
&\leq \norm{d_1}{\D_1}\mathrm{N}(d_1',d_2') + \norm{d_2'}{\D_2}\mathrm{N}(d_1,d_2)\\
&\leq \norm{(d_1,d_2)}{\D_1\oplus\D_2}\mathrm{N}(d_1',d_2') + \norm{(d_1',d_2')}{\D_1\oplus\D_2}\mathrm{N}(d_1,d_2) \text{.}
\end{split}
\end{equation*}
It then follows that $\Lip$ satisfies Conditions (\ref{admissible-Jordan-eq}) and (\ref{admissible-Lie-eq}) in Definition (\ref{admissible-def}). This concludes our proof as $x\in (0,t]$ is arbitrary.
\end{proof}

We postponed to this point the proof of the existence of tunnels between appropriate {\pqpms s}, since the following proof relies on some of the techniques involved in Theorem (\ref{tunnel-composition-thm}).

\begin{proposition}\label{existence-prop}
Let $(\A_1,\Lip_1,\M_1)$ and $(\A_2,\Lip_2,\M_2)$ be two {\pqpms s}. If $r > 0$, and if either one of the following condition holds:
\begin{enumerate}
\item $r \geq \max\left\{\diam{\M_1^\sigma}{\sigmaKantorovich{\Lip_1}},\diam{\M_1^\sigma}{\sigmaKantorovich{\Lip_1}} \right\}$,
\item $r < \min\left\{\diam{\M_1^\sigma}{\sigmaKantorovich{\Lip_1}},\diam{\M_1^\sigma}{\sigmaKantorovich{\Lip_1}} \right\}$,
\end{enumerate}
then there exists a $r$-tunnel from $\mathds{A}_1$ to $\mathds{A}_2$.
\end{proposition}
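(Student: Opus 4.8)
The plan is to construct an explicit $r$-tunnel in each of the two cases, using the simplest possible passage, namely one built on a direct sum $\D = \A_1 \oplus \A_2$ with a Lip-norm patched together from $\Lip_1$, $\Lip_2$, and a ``bridging'' term that penalizes the difference of the two coordinates after mapping into a common space. This is the standard device (cf.\ the proof of Theorem~\ref{tunnel-composition-thm}), but here we have no common intermediary, so the bridge must be built by hand. The key observation is that one can always find a *-morphism from each $\A_j$ onto a common finite-dimensional (even one-dimensional) C*-algebra: evaluation at a point of $\M_j^\sigma$, for instance at the base point $\mu_j$, gives a character, and more generally any state in $\StateSpace(\A_j|\M_j)$ factors through such a bridge. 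So I would set $\mathsf{N}(d_1,d_2) = \tfrac{1}{\alpha}|\mu_1(d_1) - \mu_2(d_2)|$ (or a variant with several characters, chosen large enough to force the Hausdorff-distance estimates), and take $\Lip_\D(d_1,d_2) = \max\{\Lip_1(d_1),\Lip_2(d_2),\mathsf{N}(d_1,d_2)\}$, with topography $\M_1\oplus\M_2$ and the obvious coordinate projections. One checks that $(\D,\Lip_\D,\M_1\oplus\M_2)$ is a {\lcqms} with lower semi-continuous, Leibniz Lip-norm: lower semicontinuity and the Leibniz inequalities are inherited coordinatewise and $\mathsf{N}$ is itself Leibniz by the same computation as at the end of Theorem~\ref{tunnel-composition-thm}; that $\Kantorovich{\Lip_\D}$ behaves correctly on tame sets follows from Theorem~\ref{lcqms-thm} applied in each coordinate.

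For Case~(1), where $r \geq \max\{\diam{\M_1^\sigma}{\sigmaKantorovich{\Lip_1}},\diam{\M_2^\sigma}{\sigmaKantorovich{\Lip_2}}\}$, both $\M_j^\sigma$ are compact, hence both $\A_j$ are unital, so by Proposition~\ref{compact-liploc-prop} and Corollary~\ref{compact-tunnel-corollary} this reduces essentially to the known existence of tunnels between {\Lqcms s}: I would invoke the compact-case construction (the passage $(\A_1\oplus\A_2,\Lip_\D,\ldots)$ with $\mathsf{N}$ as above and $\alpha$ chosen appropriately) and then check, using Proposition~\ref{compact-tunnel-prop}, that the admissible pair $(\varepsilon,\M_\D^\sigma)$ one obtains is $r$-admissible for every $r > 0$, so in particular for the given $r$. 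The lift-set conditions in Definition~\ref{left-admissible-def}(3),(4) are automatic since $\Loc{\A_j}{\M_j}{\mu_j,r} = \sa{\A_j}$ in the unital case with $r$ at least the diameter, and every self-adjoint $a$ with $\Lip_j(a)\leq l$ lifts to $(a, \text{something})$ of the same Lip-norm after adjusting the second coordinate by a scalar multiple of the unit to kill $\mathsf{N}$.

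For Case~(2), where $r$ is strictly below both diameters, the point is precisely that the balls $\cBall{\M_j^\sigma}{\sigmaKantorovich{\Lip_j}}{\mu_j}{r}$ are proper subsets of $\M_j^\sigma$, so $\LipLoc{\D}{\M_\D}{K}$ is compact for the relevant $K = K^1 \coprod K^2$ with $K^j = \cBall{\M_j^\sigma}{}{\mu_j}{s}$ for suitable $s$ slightly larger than $r$; this is the regime where Lemma~\ref{compact-lift-set-lemma} applies cleanly. I would define, for $t \in (0,r]$, the compact $K_t = \cBall{\M_1^\sigma}{}{\mu_1}{t}\coprod\cBall{\M_2^\sigma}{}{\mu_2}{t}$; these form an increasing family, and I must verify that $(\varepsilon, K_t)$ is $t$-admissible for $\varepsilon$ small (governed by $\alpha$ and the bridge). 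The real work is the lifting conditions (3) and (4): given $a \in \Loc{\A_1}{\M_1}{\mu_1,t}$, form $d = (a,b)$ where $b \in \Loc{\A_2}{\M_2}{\mu_2,t+4\varepsilon}$ is chosen with $\Lip_2(b) \leq \Lip_1(a)$ and $\mu_2(b)$ close to $\mu_1(a)$; here I would use Proposition~\ref{strongly-proper-implies-proper-prop}-style density of locally supported elements together with a classical Lipschitz-extension argument — precisely the role of Corollary~\ref{Lipschitz-lift-main-corollary}, which is the noncommutative shadow of the hard case-analysis in Theorem~\ref{Lipschitz-classical-lift-thm}. The base-point condition $\sigmaKantorovich{\Lip_\D}(\mu_1\circ\iota_1,\mu_2\circ\iota_2) \leq \varepsilon$ is arranged by the definition of $\mathsf{N}$.

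The main obstacle I anticipate is Case~(2), condition (3)/(4) of Definition~\ref{left-admissible-def}: producing, for \emph{every} $a$ locally supported on the ball of radius $t$ in $\mathds{A}_1$, a genuine lift $d \in \Loc{\D}{\M_\D}{K_t}$ whose $\A_2$-component is again locally supported on a ball of comparable radius, \emph{without} enlarging the support uncontrollably and \emph{while} keeping $\Lip_\D(d) = \Lip_1(a)$. In the commutative setting this is exactly the content of the delicate support-control in Theorem~\ref{Lipschitz-classical-lift-thm} and Corollary~\ref{Lipschitz-lift-main-corollary}; in the noncommutative setting one does not have McShane's theorem, so I expect to need to choose the second coordinate $b$ of the lift to simply be a scalar — or a locally supported element forced to have very small norm outside a small ball via the approximate unit of Definition~\ref{pqms-def}(4)–(6) — so that the bridge term $\mathsf{N}(a,b)$ is dominated by $\Lip_1(a)$ after rescaling $\alpha$. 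Getting the bookkeeping right so that both the $r$-extent bound and the monotone family $(K_t)_{t\in(0,r]}$ are simultaneously satisfied is the crux; everything else is routine verification that the resulting passage meets Definitions~\ref{passage-def}, \ref{admissible-pair-def}, and \ref{admissible-def}.
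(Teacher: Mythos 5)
Your overall architecture does coincide with the paper's: case (1) is handled exactly as you suggest (Proposition (\ref{compact-tunnel-prop}) forces both spaces to be unital, the known existence of finite-extent tunnels between {\Lqcms s} supplies a tunnel, and Corollary (\ref{compact-tunnel-corollary}) makes it an $r$-tunnel for every $r>0$), and in case (2) the paper also works on $\A_1\oplus\A_2$ with topography $\M_1\oplus\M_2$, the coordinate surjections, the compacts $K_t = \cBall{\M_1^\sigma}{}{\mu_1}{t}\coprod\cBall{\M_2^\sigma}{}{\mu_2}{t}$, and a Lip-norm of the form $\max\{\Lip_1,\Lip_2,\mathsf{N}\}$. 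The genuine gap is precisely the step you yourself flag as the crux and leave open: conditions (3) and (4) of Definition (\ref{left-admissible-def}) in case (2). The devices you sketch for it would not work: there is no noncommutative McShane theorem, Corollary (\ref{Lipschitz-lift-main-corollary}) is a purely commutative statement and plays no role in this proof, and a nonzero scalar second coordinate is not even an element of $\A_2$ in the non-unital case, let alone a locally supported one.

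The paper's resolution is much simpler than anything you anticipate, and it is the idea your proposal is missing. Hypothesis (2) together with Proposition (\ref{compact-liploc-prop}) makes the sets $\LipLoc{\A_j}{\M_j}{\mu_j,r}$ norm compact, so $D = \max\set{\|a\|_{\A_j}}{j\in\{1,2\},\ a\in\LipLoc{\A_j}{\M_j}{\mu_j,r}}$ is finite; the bridge is taken to be $\mathsf{N}(a_1,a_2)=\frac{1}{D}\vertiii{\rho_1(a_1)-\rho_2(a_2)}$ for faithful representations $\rho_1,\rho_2$ on a common Hilbert space, not your base-point characters; and the lift of $a\in\Loc{\A_1}{\M_1}{\mu_1,t}$ is simply $(a,0)$. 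By homogeneity $\frac{1}{D}\|a\|_{\A_1}\leq\Lip_1(a)$ for such $a$, so $\Lip(a,0)=\Lip_1(a)$, while $\pi_2(a,0)=0$ lies in $\Loc{\A_2}{\M_2}{\mu_2,t}$ (and in $\M_2$ when $a\in\M_1$); thus the lifting conditions hold with no extension theorem and no support bookkeeping, and the family $(K_t)_{t\in(0,r]}$ is increasing for free. Equally important, you are trying to make the admissible number small (you tie $\varepsilon$ to $\alpha$ and to the support radius $t+4\varepsilon$), but the proposition only asserts the existence of an $r$-tunnel, so any admissible number suffices: the paper takes $\varepsilon = D$, possibly enormous, and discharges the state-space inclusion of Definition (\ref{left-admissible-def}) by the crude bound $\Kantorovich{\Lip}(\varphi\circ\iota_2,\mu_1\circ\pi_1)\leq D$ for $\varphi\in\StateSpace[\A_2|\mu_2,t]$ together with convexity of $\Kantorovich{\Lip}$. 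With your character-only bridge and the insistence on small $\varepsilon$, you would instead need uniform control of Monge-Kantorovich distances between states supported on balls and pulled-back states, which this framework does not provide and which the actual proof never needs.
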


\begin{proof}
Condition (1) implies, by Proposition (\ref{compact-tunnel-prop}), that $\mathds{A}_1$ and $\mathds{A}_2$ are compact. By \cite[Proposition 4.6]{Latremoliere13}, there exists a tunnel $(\D,\Lip_\D,\pi_1,\pi_2)$ of finite length from $(\A_1,\Lip_1)$ to $(\A_2,\Lip_2)$. By \cite{Latremoliere14}, the extent of $\tau$ is finite as well. By Corollary (\ref{compact-tunnel-corollary}), the passage $(\D,\Lip_\D,\pi_1,\pi_2)$ is an $r$-tunnel for any $r>0$.

Assume now Condition (2). By Proposition (\ref{compact-liploc-prop}), the set $\LipLoc{\A_j}{\M_j}{\mu_j,r}$ is compact, hence norm bounded, for all $j\in\{1,2\}$. Let:
\begin{equation*}
D = \max\left\{ \|a\|_{\A_j} : j \in \{1,2\}, a\in\Loc{\A_j}{\M_j}{\mu_j,r} \right\} < \infty \text{.}
\end{equation*}

There exist two representations $\rho_1$ and $\rho_2$ of, respectively, $\A_1$ and $\A_2$ on some Hilbert space $\Hilbert$. We define, for all $(a_1,a_2)\in\sa{\A_1\oplus\A_2}$:
\begin{equation*}
\Lip(a_1,a_2) = \max\left\{\Lip_1(a_1),\Lip_2(a_2),\frac{1}{D}\vertiii{\rho_1(a_1)-\rho_2(a_2)}\right\}
\end{equation*}
where $\vertiii{\cdot}$ is the operator norm for bounded linear operators on $\Hilbert$. We denote the canonical surjections from $\A_1\oplus\A_2$ onto $\A_1$ and $\A_2$ by $\pi_1$ and $\pi_2$ respectively. We propose to prove that the passage:
\begin{equation*}
\tau = (\A_1\oplus\A_2,\Lip,\M_1\oplus\M_2,\pi_1,\mathds{A}_1,\pi_2,\mathds{A}_2)
\end{equation*}
is an $r$-tunnel.

Let $t\in (0,r]$. Now, let $a\in\Loc{\A_1}{\M_\A}{\mu_\A,t}$. Then $\Lip(a,0) = \Lip_1(a)$ and of course $\pi_1(a,0) = a$. Moreover, $\pi_2(a,0) = 0 \in \Loc{\B}{\M_\B}{\mu_\B,t}$.

Let $K = \StateSpace[\A_1|\mu_1,t]\coprod\StateSpace[\A_2|\mu_2,t]$. If $\varphi\in\StateSpace[\A_2|\mu_2,t]$, then:
\begin{equation*}
\Kantorovich{\Lip}(\varphi,\mu_\A)\leq D\text{.}
\end{equation*}
Consequently, we have: $\StateSpace[\A_1\oplus\A_2|K] \subseteq_D \StateSpace[\A_1|\mu_1,t]$. In conclusion, $(D,K)$ is $t$-left admissible for $\tau$.

Our reasoning is symmetric in $\A_1$ and $\A_2$, and the Leibniz property for $\Lip$ is easily verified, so we conclude that $\tau$ is a $r$-tunnel of extent at most $D$. 
\end{proof}

We conclude this section with an important remark.
\begin{remark}\label{inversion-rmk}
Let:
\begin{equation*}
\tau = (\D,\Lip_\D,\M_\D,\pi_\A,\mathds{A},\pi_\B,\mathds{B})
\end{equation*}
be a $r$-tunnel from $\mathds{A} = (\A,\Lip_\A,\M_\A,\mu_\A)$ to $\mathds{B} = (\B,\Lip_\B,\M_\B,\mu_\B)$. Let $a\in\Loc{\A}{\M_\A}{\mu_\A,r}$ with $\Lip_\A(a)<\infty$ and let $(\varepsilon,K)$ be an $r$-admissible pair for $\tau$. Let $l\geq\Lip_\A(a)$. By Definition (\ref{left-admissible-def}), there exists $d\in\Loc{\D}{\M_\D}{K}$ such that $\pi_\A(d) = a$ and $\Lip_\D(d)\leq l$. Moreover $b = \pi_\B(d) \in \Loc{\B}{\M_\B}{\mu_\B,r+4\varepsilon}$ and $\Lip_\B(b)\leq l$.

Now, we observe that we can read the above assertions about $d$ in a symmetric manner, namely: for the given $b$ above, there exists $d\in\Loc{\D}{\M_\D}{K}$ with $\Lip_\D(d)\leq l$; thus $\pi_\A(a) \in\targetsettunnel{\tau^{-1}}{b}{l,r,\varepsilon,K}$ --- even though we have no reason to expect $\tau^{-1}$ to be $(r+4\varepsilon)$-admissible, of course. However, there is enough symmetry in our Definition (\ref{admissible-def}) to ensure that the following holds:
\begin{equation*}
b\in\targetsettunnel{\tau}{a}{l,r,\varepsilon,K} \iff a\in\targetsettunnel{\tau^{-1}}{b}{l,r+4\varepsilon,\varepsilon,K}
\end{equation*}
for all $a\in\Loc{\A}{\M_\A}{\mu_\A,r}$.

In this sense, we see that a composition of $\tau$ and $\tau^{-1}$ may be a good approximation for the identity map of $\A$, for small $r$-extents. This point will play an important role in the proof of Theorem (\ref{coincidence-thm}).
\end{remark}

\subsection{Target Sets of Tunnels}

In this subsection, we prove Theorem (\ref{fundamental-thm}), the fundamental result on which Theorem (\ref{coincidence-thm}) relies to prove that our hypertopology, defined in the next section, will possess the desired separation property. The main point of Theorem (\ref{fundamental-thm}) is that target sets posses a form of morphism property; tunnels can thus be seen as a generalized form of morphisms, and their extent can be interpreted as a measure of their distortion or Lipschitz number.

We begin with:

\begin{lemma}\label{norm-lemma}
Let $\A$ be a C*-algebra, $p\in\A^{\ast\ast}$ be a projection, and $a\in \sa{p\A p \cap \A}$. Then:
\begin{equation*}
\|a\|_\A = \sup\Set{|\varphi(a)|}{\varphi\in\StateSpace(\A) \text{ and }\varphi(p) = 1}\text{.}
\end{equation*}
\end{lemma}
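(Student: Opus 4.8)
The plan is to verify the claim by the standard functional-analytic argument for norms of elements in a corner, adapted to the setting where $p$ is only a projection in the enveloping von Neumann algebra $\A^{\ast\ast}$ rather than a projection in $\A$ itself. First I would handle the easy inequality: for any state $\varphi$ on $\A$ we trivially have $|\varphi(a)| \leq \|a\|_\A$, so the supremum on the right is at most $\|a\|_\A$. Hence only the reverse inequality requires work, and the hypothesis $\varphi(p) = 1$ is exactly what we need to make the supremum large enough.

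For the reverse inequality, the key observation is that $a = pap$ (since $a \in p\A p \cap \A$, so $a$ already satisfies $pap = a$; this is exactly the identity $a = \corner{K}{a}$ type manipulation used earlier in the excerpt), and $a$ is self-adjoint. Extend $a$ to the self-adjoint element $a \in \sa{\A^{\ast\ast}}$; since $\A^{\ast\ast}$ is a von Neumann algebra and $p \in \A^{\ast\ast}$ is a projection with $pap = a$, we may regard $a$ as a self-adjoint element of the reduced von Neumann algebra $p\A^{\ast\ast}p$, whose unit is $p$. There the spectral radius equals the norm, so there is a state $\psi$ on $p\A^{\ast\ast}p$ (with $\psi(p) = 1$) achieving (or approximating to within $\eta > 0$) $|\psi(a)| = \|a\|_\A$. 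Composing $\psi$ with the (completely positive, unital onto $p\A^{\ast\ast}p$) compression map $x \in \A^{\ast\ast}\mapsto pxp$ gives a state $\widetilde\psi$ on $\A^{\ast\ast}$ with $\widetilde\psi(p) = \psi(p) = 1$ and $\widetilde\psi(a) = \psi(pap) = \psi(a)$. Restricting $\widetilde\psi$ to $\A \subseteq \A^{\ast\ast}$ yields a state $\varphi \in \StateSpace(\A)$ with $\varphi(p) = 1$ and $|\varphi(a)| \geq \|a\|_\A - \eta$.

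An alternative, perhaps cleaner route avoiding $\A^{\ast\ast}$: pick a state $\varphi_0$ on $\A$ with $|\varphi_0(a)|$ arbitrarily close to $\|a\|_\A$ (possible since $a \in \sa{\A}$, using $\|a\|_\A = \sup\{|\varphi(a)| : \varphi \in \StateSpace(\A)\}$). Then perturb $\varphi_0$ to force $\varphi(p) = 1$: because $a = pap$, one checks via the Cauchy--Schwarz inequality $|\varphi_0(pap)|^2 \leq \varphi_0(p)\,\varphi_0(pa^2p) \leq \varphi_0(p)\|a\|_\A^2$ that $\varphi_0(p)$ is already close to $1$ when $|\varphi_0(a)|$ is close to $\|a\|_\A$; then a short argument (e.g. replacing $\varphi_0$ by its "localization'' at $p$, which exists as a state since $\varphi_0(p)$ is bounded away from $0$) produces the desired state with $\varphi(p) = 1$ and $|\varphi(a)|$ still close to $\|a\|_\A$. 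Taking a supremum over the approximation parameter finishes the proof.

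The main obstacle is purely the bookkeeping around $p$ living in $\A^{\ast\ast}$ and not in $\A$: one must be careful that the state one constructs genuinely extends to (or restricts from) $\A$ and that evaluating it on $p$ makes sense. Once one commits to working in $\A^{\ast\ast}$ and using the compression $x \mapsto pxp$ together with the fact that normality is not needed here (we only evaluate on the single element $a$ and on $p$), everything is routine; no nontrivial analytic input beyond the Hahn--Banach / GNS machinery already implicit in the definition of $\StateSpace(\A)$ is required.
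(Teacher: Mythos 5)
Your second (``alternative'') route is essentially the paper's own proof: starting from $\varphi_0\in\StateSpace(\A)$ with $|\varphi_0(a)|\geq\|a\|_\A-\varepsilon$, Cauchy--Schwarz together with $a=pap$ forces $\varphi_0(p)>0$, and the localized functional $\psi\colon b\mapsto \varphi_0(pbp)/\varphi_0(p)$ satisfies $\psi(p)=1$ and $|\varphi_0(a)|=\varphi_0(p)|\psi(a)|\leq|\psi(a)|$, which gives the missing inequality. The only point to make explicit is why $\psi$ is a genuine state on $\A$ with $\psi(p)=1$ in the sense required: $\psi$ is the restriction of the \emph{normal} state $\varphi_0^{\ast\ast}(p\,\cdot\,p)/\varphi_0^{\ast\ast}(p)$ on $\A^{\ast\ast}$, so an approximate unit of $\A$ witnesses that its norm is $1$ and its canonical extension is that normal state, whence the value $1$ at $p$. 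With that said, this route is correct and coincides with the paper's argument.

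Your primary route through $\A^{\ast\ast}$, by contrast, has a genuine gap at the final step. The condition $\varphi(p)=1$ in the statement refers to the value at $p$ of the \emph{canonical} weak*-continuous extension of $\varphi\in\StateSpace(\A)$. The state $\widetilde\psi=\psi(p\,\cdot\,p)$ you construct on $\A^{\ast\ast}$ need not be normal, so its restriction to $\A$ may fail to have norm one, and, more seriously, even after normalizing, the canonical extension of the restriction need not agree with $\widetilde\psi$ at $p$: the ``mass at $p$'' can live in a singular part that vanishes on $\A$. For example, with $\A=C_0(\R)$, $p=\chi_{(0,1]}\in\A^{\ast\ast}$, and $\widetilde\psi$ a weak* cluster point of evaluations at points $x_n\downarrow 0$, one has $\widetilde\psi(p)=1$ while the restriction to $\A$ is evaluation at $0$, whose canonical extension takes the value $0$ at $p$. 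From $a=pap$ and Cauchy--Schwarz you only get $\varphi^{\ast\ast}(p)\geq\left(|\varphi(a)|/\|a\|_\A\right)^2$, close to $1$ but not equal to $1$; to force equality you must localize at $p$, i.e.\ run your second route, which is exactly what the paper does.
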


\begin{proof}
Since $a\in\sa{\A}$, we have:
\begin{multline}\label{norm-lemma-eq0}
\sup\Set{|\varphi(a)|}{\varphi\in\StateSpace(\A),\varphi(p)=1}\leq\|a\|_\A\\
=\sup\Set{|\varphi(a)|}{\varphi\in\StateSpace(\A)}\text{.}
\end{multline}

Let $\varepsilon > 0$ and let $\varphi\in\StateSpace(\A)$ such that $\|a\|_\A\leq|\varphi(a)|+\varepsilon$, which exists by Inequality (\ref{norm-lemma-eq0}).

Set $\psi:b\in\A\mapsto \frac{1}{\varphi(p)}\varphi(p b p)$. On the one hand, $\psi$ is a positive linear map, since $b\geq 0\implies cbc^\ast \geq 0$ for all $b,c \in \A^{\ast\ast}$. Moreover, $\psi(\unit_\A) = \psi(p) = 1$ by construction as well, so $\psi\in\StateSpace(\A)$.

On the other hand, since $a\in \A\cap p\A p$, there exists $b\in \A$ such that $a = p b p$; then $a = p b p = p p b p p = p a p$. Thus:
\begin{equation*}
|\varphi(a)| = |\varphi(p a p)| = \varphi(p)|\psi(a)| \leq |\psi(a)|\text{,}
\end{equation*}
since $\varphi$ is a state and $0\leq p\leq 1$. Thus $\|a\|_\A\leq\varepsilon + \psi(a)$ with $\psi\in\StateSpace(\A)$, $\psi(p)=1$. This concludes our lemma since $\varepsilon > 0$ is arbitrary.
\end{proof}

We now prove the key theorem for our construction:

\begin{theorem}\label{fundamental-thm}
Let $\mathds{A}=(\A,\Lip_\A,\M_\A,\mu_\A)$ and $\mathds{B}=(\B,\Lip_\B,\M_\B,\mu_\B)$ be two pointed {\pqms s}. Let:
\begin{equation*}
\tau = (\D,\Lip_\D,\M_\D,\pi_\A,\mathds{A},\pi_\B,\mathds{B})
\end{equation*}
be an $r$-tunnel from $\mathds{A}$ to $\mathds{B}$ for some $r > 0$. Let $l \geq 0$ and let $(\varepsilon,K)$ be an $r$-admissible pair for $\tau$.

\begin{enumerate}

\item If $a\in\Loc{\A}{\M_\A}{\mu_\A,r}$ with $\Lip_\A(a)\leq l$, and if $d\in\liftsettunnel{\tau}{a}{l,r,\varepsilon,K}$, then:
\begin{equation}\label{lift-norm-bound-eq}
\|d\|_\D \leq \|a\|_\A + l \varepsilon\text{,}
\end{equation}
and therefore for all $b\in\targetsettunnel{\tau}{a}{l,r,\varepsilon,K}$:
\begin{equation}\label{target-norm-bound-eq}
\|b\|_\D \leq \|a\|_\A + l \varepsilon\text{.}
\end{equation}

\item If $a,a' \in \Loc{\A}{\M_\A}{\mu_\A,r}$ with $\Lip_\A(a) \leq l$, $\Lip_\A(a')\leq l$, then for any $b\in \targetsettunnel{\tau}{a}{l,r,\varepsilon,K}$ and $b'\in \targetsettunnel{\tau}{a}{l,r,\varepsilon,K}$, and for any $t \in \R$, we have:
\begin{equation}\label{linearity-eq}
b+tb' \in \targetsettunnel{\tau}{a+ta'}{(1+|t|)l,r,\varepsilon,K}\text{.}
\end{equation}

\item If $a,a' \in \Loc{\A}{\M_\A}{\mu_\A,r}$ with $\Lip_\A(a) \leq l$, $\Lip_\A(a')\leq l$, then:
\begin{multline}\label{distance-eq}
\sup\Set{\|b-b'\|_\B}{b\in\targetsettunnel{\tau}{a}{r,l,\varepsilon,K},b'\in\targetsettunnel{\tau}{a'}{r,l,\varepsilon,K}}\\\leq \|a-a'\|_\A + 2l \varepsilon\text{.}
\end{multline}
In particular, we have:
\begin{equation}\label{diameter-eq}
\diam{\targetsettunnel{\tau}{a}{l,r,\varepsilon,K}}{\|\cdot\|_\B}\leq 2l\varepsilon\text{.}
\end{equation}

\item if $a,a' \in \Loc{\A}{\M_\A}{\mu_\A,r}$ with $\Lip_\A(a) \leq l$, $\Lip_\A(a')\leq l$, then:
\begin{equation}\label{Jordan-eq}
\Jordan{b}{b'} \in \targetsettunnel{\tau}{\Jordan{a}{a'}}{l(\|a\|_\A+\|a'\|_\A+2l\varepsilon),r,\varepsilon,K}
\end{equation}
and
\begin{equation}\label{Lie-eq}
\Lie{b}{b'} \in \targetsettunnel{\tau}{\Lie{a}{a'}}{l(\|a\|_\A+\|a'\|_\A+2l\varepsilon),r,\varepsilon,K}\text{.}
\end{equation}
\end{enumerate}

\end{theorem}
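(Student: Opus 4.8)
The plan is to prove the five statements in sequence, each building on the previous ones, since they are all assertions about the structure of lift sets and target sets under an $r$-admissible pair $(\varepsilon,K)$. Throughout, I would fix $a$ (and later $a'$) in $\Loc{\A}{\M_\A}{\mu_\A,r}$ with Lipschitz constant at most $l$, and a lift $d \in \liftsettunnel{\tau}{a}{l,r,\varepsilon,K}$, so that $\pi_\A(d)=a$, $\Lip_\D(d)\leq l$, and $\pi_\B(d)\in\Loc{\B}{\M_\B}{\mu_\B,r+4\varepsilon}$; recall from Lemma \ref{compact-lift-set-lemma} that the lift and target sets are nonempty compact subsets of the relevant locally-supported self-adjoint parts.

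For assertion (1), the key tool is Lemma \ref{norm-lemma}: since $d\in\sa{\indicator{K}\D\indicator{K}\cap\D}$, we have $\|d\|_\D = \sup\set{|\varphi(d)|}{\varphi\in\StateSpace(\D),\varphi(\indicator{K})=1}=\sup\set{|\varphi(d)|}{\varphi\in\StateSpace[\D|K]}$. Given such a $\varphi$, by $r$-left admissibility (Definition \ref{left-admissible-def}, condition (2)) there is $\psi\in\StateSpace[\A|\mu_\A,r+4\varepsilon]$ with $\Kantorovich{\Lip_\B}(\varphi,\pi_\A^\ast(\psi))\leq\varepsilon$; but actually I want the estimate with $\Kantorovich{\Lip_\D}$, so I would instead use condition (2) with $\Kantorovich{\Lip_\D}$ after noting $\Lip_\D\geq\Lip_\B\circ\pi_\A$ is not quite what is needed --- here one should be careful and use that $\Lip_\D(d)\leq l$ directly: $|\varphi(d)-\psi(\pi_\A(d))| = |\varphi(d)-\psi(a)| \leq l\,\Kantorovich{\Lip_\D}(\varphi,\pi_\A^\ast\psi)\leq l\varepsilon$ (rescaling $d$ by $1/l$ into the unit ball of $\Lip_\D$). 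Hence $|\varphi(d)|\leq |\psi(a)| + l\varepsilon \leq \|a\|_\A + l\varepsilon$, giving \eqref{lift-norm-bound-eq}; then \eqref{target-norm-bound-eq} follows since $\|\pi_\B(d)\|_\B\leq\|d\|_\D$. For assertion (2), linearity is a direct bookkeeping check: if $d\in\liftsettunnel{\tau}{a}{l,r,\varepsilon,K}$ and $d'\in\liftsettunnel{\tau}{a'}{l,r,\varepsilon,K}$, then $d+td'$ is a self-adjoint element of $\Loc{\D}{\M_\D}{K}$ with $\pi_\A(d+td')=a+ta'$, $\Lip_\D(d+td')\leq(1+|t|)l$, and $\pi_\B(d+td')\in\Loc{\B}{\M_\B}{\mu_\B,r+4\varepsilon}$ (the $\Lip_\B$-bound comes from condition (5) of Definition \ref{left-admissible-def}), so $d+td'\in\liftsettunnel{\tau}{a+ta'}{(1+|t|)l,r,\varepsilon,K}$ and applying $\pi_\B$ gives \eqref{linearity-eq}.

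For assertion (3): given $b\in\targetsettunnel{\tau}{a}{l,r,\varepsilon,K}$ and $b'\in\targetsettunnel{\tau}{a'}{l,r,\varepsilon,K}$, I would write $b=\pi_\B(d)$, $b'=\pi_\B(d')$, and apply assertion (2) with $t=-1$: $b-b'\in\targetsettunnel{\tau}{a-a'}{2l,r,\varepsilon,K}$, so $b-b'=\pi_\B(e)$ for a single lift $e$ of $a-a'$ with $\Lip_\D(e)\leq 2l$. Then \eqref{lift-norm-bound-eq} applied to $e$ (with $l$ replaced by $2l$) gives $\|b-b'\|_\B \leq \|e\|_\D \leq \|a-a'\|_\A + 2l\varepsilon$, which is \eqref{distance-eq}; taking $a=a'$ yields $\diam{\targetsettunnel{\tau}{a}{l,r,\varepsilon,K}}{\|\cdot\|_\B}\leq 2l\varepsilon$, i.e. \eqref{diameter-eq}. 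Finally, for assertion (4) --- which I expect to be the main obstacle --- the idea is: given lifts $d,d'$ of $a,a'$ with $\Lip_\D\leq l$ and (by the remark following Definition \ref{lift-set-def}) norm bounds $\|d\|_\D\leq\|a\|_\A+l\varepsilon$, $\|d'\|_\D\leq\|a'\|_\A+l\varepsilon$ from \eqref{lift-norm-bound-eq}, form the Jordan product $\Jordan{d}{d'}\in\Loc{\D}{\M_\D}{K}$ (using that $\Loc{\D}{\M_\D}{K}$ is a Jordan-Lie subalgebra by the remark after Definition \ref{admissible-pair-def}). By the admissibility inequality \eqref{admissible-Jordan-eq}, $\Lip_\D(\Jordan{d}{d'})\leq \Lip_\D(d)\|d'\|_\D+\Lip_\D(d')\|d\|_\D \leq l(\|a'\|_\A+l\varepsilon)+l(\|a\|_\A+l\varepsilon) = l(\|a\|_\A+\|a'\|_\A+2l\varepsilon)$; moreover $\pi_\A(\Jordan{d}{d'})=\Jordan{a}{a'}$, and $\pi_\B(\Jordan{d}{d'})=\Jordan{b}{b'}\in\Loc{\B}{\M_\B}{\mu_\B,r+4\varepsilon}$ (again via condition (5) bounding its $\Lip_\B$). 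The delicate point to check is that $\Jordan{d}{d'}$ indeed lands in $\liftsettunnel{\tau}{\Jordan{a}{a'}}{L,r,\varepsilon,K}$ with $L = l(\|a\|_\A+\|a'\|_\A+2l\varepsilon)$, which requires confirming all three defining conditions of the lift set hold with this $L$; once that is done, applying $\pi_\B$ gives \eqref{Jordan-eq}, and the identical argument with \eqref{admissible-Lie-eq} in place of \eqref{admissible-Jordan-eq} gives \eqref{Lie-eq}. The only subtlety I foresee is making sure that $\Jordan{a}{a'}$ (resp.\ $\Lie{a}{a'}$) really lies in $\Loc{\A}{\M_\A}{\mu_\A,r}$ so that the target-set notation is even meaningful --- this follows because $\Loc{\A}{\M_\A}{\star}$ is a Jordan-Lie subalgebra and, as both $a,a'$ are supported in $\cBall{\M^\sigma_\A}{\sigmaKantorovich{\Lip_\A}}{\mu_\A}{r}$, so is their Jordan/Lie product; strictly this uses that the support of a product is contained in the intersection of supports, which holds for corners by $\indicator{K}$ being an idempotent.
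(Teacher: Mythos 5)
Your proposal is correct and follows essentially the same route as the paper's proof: assertion (1) via Lemma (\ref{norm-lemma}) combined with the admissibility estimate $\Kantorovich{\Lip_\D}(\varphi,\psi\circ\pi_\A)\leq\varepsilon$ and $\Lip_\D(d)\leq l$, assertion (2) by direct bookkeeping on $d+td'$, assertion (3) by applying (2) with $t=-1$ together with the norm bound, and assertion (4) from Conditions (\ref{admissible-Jordan-eq})--(\ref{admissible-Lie-eq}) and Inequality (\ref{lift-norm-bound-eq}). Your extra care in verifying that $\Jordan{d}{d'}$ satisfies all membership conditions of the lift set and that $\Jordan{a}{a'}$ lies in $\Loc{\A}{\M_\A}{\mu_\A,r}$ only makes explicit what the paper leaves implicit.
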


\begin{proof}
Let $d\in \liftsettunnel{\tau}{a}{l,r,\varepsilon,K}$. Let $\varphi \in \StateSpace[\D|K]$. By Definition (\ref{left-admissible-def}), there exists $\psi \in \StateSpace[\A|\mu_\A,r+4\varepsilon]$ such that:
\begin{equation*}
\Kantorovich{\Lip_\D}(\varphi,\psi\circ\pi_\A) \leq \varepsilon\text{.}
\end{equation*}
Thus:
\begin{equation*}
\begin{split}
|\varphi(d)| &\leq |\varphi(d) - \psi\circ\pi_\A(d)| + |\psi\circ\pi_\A(d)|\\
&\leq l \varepsilon + \|a\|_\A\text{.}
\end{split}
\end{equation*}
Consequently, by Lemma (\ref{norm-lemma}), since $d\in\Loc{\D}{\M_\D}{K}$, we conclude:
\begin{equation*}
\norm{d}{\D} \leq l\varepsilon + \|a\|_\A\text{.}
\end{equation*}

This proves Inequality (\ref{lift-norm-bound-eq}).

Since $\pi_\B$ has norm $1$, we conclude that Inequality (\ref{target-norm-bound-eq}) holds as well.

For the remainder of this proof, we fix $b\in\targetsettunnel{\tau}{a}{l,r,\varepsilon,K}$ and $b'\in\targetsettunnel{\tau}{a'}{l,r,\varepsilon,K}$ and we let $d\in\liftsettunnel{\tau}{a}{l,r,\varepsilon,K}$ and $d'\in\liftsettunnel{\tau}{a}{l,r,\varepsilon,K}$ such that $\pi_\A(d) = b$ and $\pi_\A(d')=b'$. 

For all $t\in \R$, we then have:
\begin{enumerate}
\item $\Lip_\D(d+td') \leq \Lip_\D(d) + |t|\Lip_\D(d') \leq (1+|t|)l$,
\item $\pi_\A(d+td') = a + ta'$,
\item $d+td' \in \Loc{\D}{\M_\D}{K}$,
\item $\pi_\B(d+tb') \in \Loc{\B}{\M_\D}{\mu_\B,r+4\varepsilon}$.
\end{enumerate}
Hence $d+td' \in \liftsettunnel{\tau}{a}{l,r,\varepsilon,K}$ and thus $b+tb' = \pi_\B(d+td') \in \targetsettunnel{\tau}{a+ta'}{l,r,\varepsilon,K}$.

We conclude from Inequality (\ref{linearity-eq}):
\begin{equation*}
b-b'\in\targetsettunnel{\tau}{a-a'}{2l,r,\varepsilon,K}\text{,}
\end{equation*}
hence Inequality (\ref{target-norm-bound-eq}) gives us:
\begin{equation*}
\|b-b'\|_\B \leq \|a-a'\|+2 l \varepsilon \text{.}
\end{equation*}
Inequality (\ref{diameter-eq}) follows from Inequality (\ref{distance-eq}) by setting $a=a'$.

Since $\Lip_\D$ satisfies Condition (\ref{admissible-Jordan-eq}), and by Inequality (\ref{lift-norm-bound-eq}), we compute:
\begin{equation*}
\begin{split}
\Lip_\D(\Jordan{d}{d'}) &\leq \Lip_\D(d)\|d'\|_\D + \Lip_\D(d')\|d\|_\D \\
&\leq l(\|a\|_\A + l\varepsilon) + l(\|a'\|_\A + l\varepsilon)\\
&= l(\|a\|_\A + \|a'\|_\A + 2l\varepsilon)\text{.}
\end{split}
\end{equation*}
The proof of Expression (\ref{Lie-eq}) is similar. This concludes our proof.
\end{proof}

\section{The Topographic Gromov-Hausdorff Quantum Hypertopology}

The topographic Gromov-Hausdorff quantum hypertopology is a topology on the class of {\pqpms s} which extend the topology of the Gromov-Hausdorff distance on classical proper metric spaces and the topology of the dual Gromov-Hausdorff propinquity for {\Lqcms s} at once. It is the main object of this paper, and a key feature of our topology is that it is Hausdorff modulo isometric isomorphism. This section begins by defining the noncommutative analogues of $\Delta_r$ of Definition (\ref{Delta-r-def}). It then constructs our analogue of the Gromov-Hausdorff distance for {\pqpms s}, which in our setting is an inframetric, i.e. satisfies a modified version of the triangle inequality. Nonetheless, we prove that we can define a natural topology from our inframetric. The last part of this section is to prove the separation axiom for our topology, and constitute the main theorem of this paper.

\subsection{The Local Propinquities}

Our construction allows for the same form of flexibility as offered in \cite{Latremoliere13,Latremoliere13b,Latremoliere14}, where we can actually work on proper subclasses of {\pqpms s}  and put additional constraints on tunnels, in the goal to build stronger metrics which preserve more structure than the generic construction. To this end, we first isolate the properties we need a class of tunnels to satisfy for our construction to satisfy all the desired properties.

\begin{definition}\label{w-appropriate-def}
Let $\mathcal{C}$ be a nonempty class of {\pqpms s}. A class $\mathcal{T}$ of passages is \emph{weakly appropriate} for $\mathcal{C}$ when:
\begin{enumerate}
\item if $\tau \in \mathcal{T}$, then there exists $r > 0$, $\mathds{A}\in\mathcal{C}$ and $\mathds{B} \in \mathcal{C}$ such that $\tau$ is an $r$-tunnel from $\mathds{A}$ to $\mathds{B}$,
\item if $\tau\in\mathcal{T}$ then $\tau^{-1}\in\mathcal{T}$,
\item if $\tau_1,\tau_2\in\mathcal{T}$ are $r$-tunnels for some $r >0$, if $\varepsilon_1 > \tunnelextent{\tau_1}{r}$ and $\varepsilon_2 > \tunnelextent{\tau_2}{r}$, then for any:
\begin{equation*}
0 < t \leq r - 4\max\{ \varepsilon_1,\varepsilon_2 \}
\end{equation*}
and for any $\alpha > 0$, then there exists a $t$-tunnel $\tau \in \mathcal{T}$ from $\dom{\tau_1}$ to $\codom{\tau_2}$ such that:
\begin{equation*}
\tunnelextent{\tau}{t} \leq \varepsilon_1 + \varepsilon_2 + \alpha\text{,}
\end{equation*}
\item if $h : \mathds{A} \rightarrow \mathds{B}$ is some isometric isomorphism from $\mathds{A} \in \mathcal{C}$ to $\mathds{B} \in \mathcal{C}$, then, writing $\mathds{A} = (\A,\Lip_\A,\M_\A,\mu_\A)$ and $\mathds{B} = (\B,\Lip_\B,\M_\B,\mu_\B)$, both of the following passages lie in $\mathcal{T}$:
\begin{equation*}
\left(\A,\Lip_\A,\M_\A,\mathrm{id}_\A,\mathds{A},h,\mathds{B}\right)
\end{equation*}
and
\begin{equation*}
\left(\B,\Lip_\B,\M_\B,\mathrm{id}_\B,\mathds{B},h^{-1},\mathds{A}\right)\text{,}
\end{equation*}
where we used the notation $\mathrm{id}_\alg{E}$ for the identity automorphism of any C*-algebra $\alg{E}$.
\end{enumerate}
\end{definition}

We do not require that a class of tunnels $\mathcal{T}$ weakly appropriate for $\mathcal{C}$ be connected, i.e. that any two elements of $\mathcal{C}$ be connected via an $r$-tunnel in $\mathcal{T}$ for any $r > 0$. This justifies the qualifier weakly in contrast with \cite{Latremoliere14}. 

\begin{theorem}
Let $\mathcal{PQMS}$ be the class of all {\pqpms s}. The class $\mathcal{\PQMST}$ of all passages $\tau$ for which there exists $r > 0$ such that $\tau$ is an $r$-tunnel from its domain to its codomain, is weakly appropriate for $\mathcal{\PQMS}$.
\end{theorem}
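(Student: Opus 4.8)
The plan is to verify the four conditions in Definition \ref{w-appropriate-def} directly for $\mathcal{T} = \mathcal{\PQMST}$ and $\mathcal{C} = \mathcal{\PQMS}$, most of which are essentially bookkeeping against results already established in the excerpt. Condition (1) is immediate from the very definition of $\mathcal{\PQMST}$: every $\tau \in \mathcal{\PQMST}$ is by construction an $r$-tunnel from its domain to its codomain for some $r > 0$, and both are {\pqpms s}, so they lie in $\mathcal{\PQMS}$. Condition (2) follows from the Remark after Definition \ref{tunnel-def}: if $\tau$ is an $r$-tunnel then $\tau^{-1}$ is an $r$-tunnel as well (via $\Adm(\tau^{-1}|r) = \Adm(\tau|r)$ from Remark \ref{inclusion-adm-rmk}), hence $\tau^{-1} \in \mathcal{\PQMST}$.

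Condition (3) is exactly the content of Theorem \ref{tunnel-composition-thm}, modulo a small translation. Given $r$-tunnels $\tau_1, \tau_2 \in \mathcal{\PQMST}$ and $\varepsilon_1 > \tunnelextent{\tau_1}{r}$, $\varepsilon_2 > \tunnelextent{\tau_2}{r}$, and $0 < t \leq r - 4\max\{\varepsilon_1,\varepsilon_2\}$, I would first note that since $\tunnelextent{\tau_j}{r} = \inf \Adm(\tau_j|r)$, the hypothesis $t + 4\max\{\tunnelextent{\tau_1}{r},\tunnelextent{\tau_2}{r}\} \leq t + 4\max\{\varepsilon_1,\varepsilon_2\} \leq r$ need only be strengthened to a strict inequality; this is handled by the fact that for any $\eta > 0$ we may shrink $t$ or by observing that the strict inequality $t + 4\max\{\tunnelextent{\tau_1}{r},\tunnelextent{\tau_2}{r}\} < r$ holds whenever $\varepsilon_j$ is chosen strictly above the extent and $t < r - 4\max\{\varepsilon_1,\varepsilon_2\}$; in the boundary case $t = r - 4\max\{\varepsilon_1,\varepsilon_2\}$ one applies the theorem with $t$ replaced by $t' < t$ and uses that a $t'$-tunnel with $t' \le t$ — wait, this goes the wrong way, so instead one simply picks $\varepsilon_j' \in (\tunnelextent{\tau_j}{r}, \varepsilon_j)$ so that $t + 4\max\{\varepsilon_1',\varepsilon_2'\} < r$ still, applies Theorem \ref{tunnel-composition-thm} with these, and obtains a $t$-tunnel $\tau$ with $\tunnelextent{\tau}{t} \leq \alpha + \tunnelextent{\tau_1}{r} + \tunnelextent{\tau_2}{r} < \alpha + \varepsilon_1 + \varepsilon_2$. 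Then $\tau \in \mathcal{\PQMST}$ since it is a $t$-tunnel, and $\dom{\tau} = \dom{\tau_1}$, $\codom{\tau} = \codom{\tau_2}$, which is what is required; if the target bound $\varepsilon_1 + \varepsilon_2 + \alpha$ is wanted exactly, absorb the slack into $\alpha$.

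For condition (4), given an isometric isomorphism $h : \mathds{A} \to \mathds{B}$, I would exhibit the passage $\tau_h = (\A,\Lip_\A,\M_\A,\mathrm{id}_\A,\mathds{A},h,\mathds{B})$ and check it is an $r$-tunnel for every $r > 0$ with $\tunnelextent{\tau_h}{r} = 0$. The underlying {\lcqms} is $(\A,\Lip_\A,\M_\A)$, which is a {\lcqms} with $\Lip_\A$ lower semi-continuous (part of being a {\pqpms}); $\mathrm{id}_\A$ is trivially a topographic morphism, and $h$ is a topographic morphism by definition of isometric isomorphism, so $\tau_h$ is a passage. To see it is an $r$-tunnel, I would check that for each $t \in (0,r]$ the pair $(0, K_t)$ with $K_t = \cBall{\M_\A^\sigma}{\sigmaKantorovich{\Lip_\A}}{\mu_\A}{t}$ is $t$-admissible: left admissibility is immediate since $\mathrm{id}_\A^\ast$ is the identity on state spaces and $\liftsettunnel{\tau_h}{a}{l,t,0,K_t}$ contains $a$ itself for $a\in\Loc{\A}{\M_\A}{\mu_\A,t}$ (note $\Loc{\A}{\M_\A}{K_t}$ has the needed elements because $a$ is already supported on $K_t$, and $\pi_\B(a) = h(a) \in \Loc{\B}{\M_\B}{\mu_\B,t}$ since $h$ preserves Lip-norms, topography, and base point, hence preserves $\sigmaKantorovich{}$-balls); the base-point condition $\sigmaKantorovich{\Lip_\A}(\mu_\A\circ\mathrm{id}_\A, \mu_\B\circ h) = \sigmaKantorovich{\Lip_\A}(\mu_\A,\mu_\A) = 0$ holds; and right admissibility holds symmetrically using that $h^{-1}$ enjoys the same properties, with the algebraic Leibniz conditions \eqref{admissible-Jordan-eq}, \eqref{admissible-Lie-eq} inherited from $(\A,\Lip_\A)$ being a Leibniz Lipschitz pair. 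The family $(K_t)_{t\in(0,r]}$ is increasing, so $0 \in \Adm(\tau_h|r)$ and $\tau_h$ is an $r$-tunnel, hence lies in $\mathcal{\PQMST}$; the same argument applied to $h^{-1}$ shows $(\B,\Lip_\B,\M_\B,\mathrm{id}_\B,\mathds{B},h^{-1},\mathds{A}) \in \mathcal{\PQMST}$.

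The only genuine subtlety — and thus the main obstacle — is condition (4)'s verification that lift sets are nonempty with the correct support control: one must confirm that an isometric isomorphism $h$ maps $\Loc{\A}{\M_\A}{\mu_\A,t}$ into $\Loc{\B}{\M_\B}{\mu_\B,t}$, which requires knowing that $h$ carries the closed ball $\cBall{\M_\A^\sigma}{\sigmaKantorovich{\Lip_\A}}{\mu_\A}{t}$ onto $\cBall{\M_\B^\sigma}{\sigmaKantorovich{\Lip_\B}}{\mu_\B}{t}$; this is where the conditions $\pi(\M_\A) = \M_\B$, $\Lip_\B\circ\pi = \Lip_\A$ on $\Loc{}{}{\star}$, and $\mu_\B\circ\pi = \mu_\A$ combine to show the dual map of $h$ is an isometry $(\M_\A^\sigma,\sigmaKantorovich{\Lip_\A}) \to (\M_\B^\sigma,\sigmaKantorovich{\Lip_\B})$ fixing base points. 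Everything else reduces to citing Theorem \ref{tunnel-composition-thm} and the elementary remarks on admissibility.
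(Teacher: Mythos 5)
Your proposal is correct and follows essentially the same route as the paper, which simply cites the tunnel-composition theorem for condition (3) and treats the remaining conditions (including the verification for passages built from isometric isomorphisms) as immediate. The only cosmetic point is that admissible pairs require $\varepsilon>0$, so in condition (4) you should say that $(\varepsilon,K_t)$ is $t$-admissible for every $\varepsilon>0$ (giving $r$-extent $0$) rather than that $(0,K_t)$ itself is admissible.
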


\begin{proof}
Theorem (\ref{tunnel-composition-thm}) provides the proof that Assertion (3) of Definition (\ref{w-appropriate-def}) holds for $\mathcal{PQMST}$. The other assertions of Definition (\ref{w-appropriate-def}) are trivial.
\end{proof}

To establish that a class of passages is weakly appropriate, one may attempt to check whether the construction of composed tunnels given in the proof of Theorem (\ref{tunnel-composition-thm}) applies. Examples where this construction can be applied include, for instance:
\begin{enumerate}
\item the class of all passages over all {\pqpms s} of the form $\tau=(\D,\Lip,\M,\pi,\dom{\tau},\rho,\codom{\tau})$ where $(\D,\Lip)$ is a Leibniz Lipschitz pair,
\item the class of all passages over all {\pqpms s} of the form $\tau=(\D,\Lip,\M,\pi,\dom{\tau},\rho,\codom{\tau})$ where $(\D,\Lip,\M)$ is a {\pqms},
\item the class of all passages over all {\pqpms s} of the form $\tau=(\D,\Lip,\M,\pi,\dom{\tau},\rho,\codom{\tau})$ where $(\D,\Lip)$ is a Leibniz Lipschitz pair and the maps $\pi$ and $\rho$ are *-epimorphisms such that the quotients of $\Lip$ from $\pi$ and $\rho$ are, respectively, the Lip-norms on $\dom{\tau}$ and $\codom{\tau}$,
\item the class of all passages over all {\pqpms s} of the form $\tau=(\D,\Lip,\M,\pi,\dom{\tau},\rho,\codom{\tau})$ where $(\D,\Lip,\M)$ is a {\pqms} and the maps $\pi$ and $\rho$ are *-epimorphisms such that the quotients of $\Lip$ from $\pi$ and $\rho$ are, respectively, the Lip-norms on $\dom{\tau}$ and $\codom{\tau}$,
\end{enumerate}
to quote a few interesting special classes for which one can check that Theorem (\ref{tunnel-composition-thm}) holds. In particular, the classes described in Items (3) and (4) are weakly appropriate by using the argument of \cite[Theorem 3.1]{Latremoliere14}. Other more specialized classes, weakly appropriate to possibly proper subclasses of $\PQMS$, may play an important role in future development of this theory. The main result of this paper holds for any weakly appropriate class over any nonempty class of {\pqpms s}.

\begin{notation}
Let $\mathcal{C}$ be a nonempty class of {\pqpms s} and $\mathcal{T}$ be a weakly appropriate class of tunnels for $\mathcal{C}$. Let $r > 0$ and $\mathds{A},\mathds{B} \in \mathcal{C}$. The set of all $r$-tunnels from $\mathds{A}$ to $\mathds{B}$ in $\mathcal{T}$ is denoted by:
\begin{equation*}
\tunnelset{\mathcal{T}}{r}{\mathds{A}}{\mathds{B}}\text{.}
\end{equation*}
This set may be empty.
\end{notation}

We now define the basic ingredient in the construction of our hypertopology, in analogy with our discussion in Section 2.

\begin{definition}\label{local-propinquity-def}
Let $\mathcal{C}$ be a nonempty class of {\pqpms s} and let $\mathcal{T}$ be a weakly appropriate class of tunnels over $\mathcal{C}$. Let $\mathds{A}$ and $\mathds{B}$ be two {\pqpms s} and $r > 0$. The \emph{$r$-local $\mathcal{T}$-propinquity} between $\mathds{A}$ and $\mathds{B}$ is:
\begin{equation*}
\propinquity{r,\mathcal{T}}(\mathds{A},\mathds{B}) = \inf\set{ \tunnelextent{\tau}{r} }{\tau \in \tunnelset{\mathcal{T}}{r}{\mathds{A}}{\mathds{B}} }
\end{equation*}
with the usual convention that the infimum of the empty set is $\infty$.
\end{definition}

By construction, we observe the useful property:

\begin{lemma}\label{monotone-local-propinquity-lemma}
Let $\mathcal{C}$ be a nonempty class of {\pqpms s} and let $\mathcal{T}$ be a weakly appropriate class of tunnels over $\mathcal{C}$. Let $\mathds{A}$ and $\mathds{B}$ be two {\pqpms s}. If $r' \geq r > 0$ then:
\begin{equation*}
\propinquity{r,\mathcal{T}}(\mathds{A},\mathds{B}) \leq \propinquity{r',\mathcal{T}}(\mathds{A},\mathds{B})\text{.}
\end{equation*}
\end{lemma}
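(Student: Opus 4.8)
The statement to prove is Lemma \ref{monotone-local-propinquity-lemma}: for $r' \geq r > 0$, we have $\propinquity{r,\mathcal{T}}(\mathds{A},\mathds{B}) \leq \propinquity{r',\mathcal{T}}(\mathds{A},\mathds{B})$.

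The plan is to show that every $r'$-tunnel is an $r$-tunnel (when $r' \geq r$) whose $r$-extent does not exceed its $r'$-extent, and then pass to infima. First I would recall from Remark \ref{inclusion-adm-rmk} (which itself follows from Definition \ref{admissible-def}) that if $R \geq r$ then $\Adm(\tau|R) \subseteq \Adm(\tau|r)$, simply because an admissible family $(K_t)_{t\in(0,R]}$ restricts to an admissible family $(K_t)_{t\in(0,r]}$. Hence if $\tau \in \tunnelset{\mathcal{T}}{r'}{\mathds{A}}{\mathds{B}}$, i.e. $\Adm(\tau|r') \neq \emptyset$, then also $\Adm(\tau|r) \neq \emptyset$, so $\tau \in \tunnelset{\mathcal{T}}{r}{\mathds{A}}{\mathds{B}}$ and in particular $\tunnelset{\mathcal{T}}{r'}{\mathds{A}}{\mathds{B}} \subseteq \tunnelset{\mathcal{T}}{r}{\mathds{A}}{\mathds{B}}$. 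Moreover, by Definition \ref{extent-def} and Remark \ref{extent-monotone-rmk}, $\tunnelextent{\tau}{r} = \inf \Adm(\tau|r) \leq \inf \Adm(\tau|r') = \tunnelextent{\tau}{r'}$.

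Now I would conclude by the monotonicity of infima over nested sets with a pointwise-dominated functional:
\begin{equation*}
\propinquity{r,\mathcal{T}}(\mathds{A},\mathds{B}) = \inf_{\tau \in \tunnelset{\mathcal{T}}{r}{\mathds{A}}{\mathds{B}}} \tunnelextent{\tau}{r} \leq \inf_{\tau \in \tunnelset{\mathcal{T}}{r'}{\mathds{A}}{\mathds{B}}} \tunnelextent{\tau}{r} \leq \inf_{\tau \in \tunnelset{\mathcal{T}}{r'}{\mathds{A}}{\mathds{B}}} \tunnelextent{\tau}{r'} = \propinquity{r',\mathcal{T}}(\mathds{A},\mathds{B})\text{,}
\end{equation*}
where the first inequality uses $\tunnelset{\mathcal{T}}{r'}{\mathds{A}}{\mathds{B}} \subseteq \tunnelset{\mathcal{T}}{r}{\mathds{A}}{\mathds{B}}$ and the second uses $\tunnelextent{\tau}{r} \leq \tunnelextent{\tau}{r'}$ for each fixed $\tau$. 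The edge case where $\tunnelset{\mathcal{T}}{r'}{\mathds{A}}{\mathds{B}} = \emptyset$ is handled by the convention that the infimum over the empty set is $\infty$, making the right-hand side $\infty$ and the inequality trivial.

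There is essentially no obstacle here: the lemma is a bookkeeping consequence of definitions already in place, chiefly Remark \ref{inclusion-adm-rmk} and Remark \ref{extent-monotone-rmk}. The only thing to be careful about is not to require $\tau \in \mathcal{T}$ to be an $r'$-tunnel in order to be an element of $\mathcal{T}$ — membership in the weakly appropriate class $\mathcal{T}$ is independent of the radius, and it is only the property of being an $r$-tunnel (nonemptiness of $\Adm(\tau|r)$) that varies with $r$; this is exactly why the set inclusion of tunnel sets goes the way it does.
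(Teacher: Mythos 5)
Your proof is correct and follows essentially the same route as the paper, which cites exactly the same ingredients (the monotonicity of admissible sets and extents from Remarks (\ref{inclusion-adm-rmk}) and (\ref{extent-monotone-rmk}) together with Definition (\ref{local-propinquity-def})) and simply states the conclusion as immediate. Your write-up merely makes the nested-infimum bookkeeping and the empty-set convention explicit, which is a faithful expansion of the paper's one-line argument.
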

\begin{proof}
This follows immediately from Definition (\ref{local-propinquity-def}) and Remark (\ref{extent-monotone-rmk}). 
\end{proof}

The local propinquity satisfies a form of triangle inequality, owing to the composition of tunnels:

\begin{proposition}\label{triangle-inequality-local-propinquity-prop}
Let $\mathcal{C}$ be a nonempty class of {\pqpms s} and let $\mathcal{T}$ be a class of tunnels weakly appropriate for $\mathcal{C}$. Let $\mathds{A}$, $\mathds{B}$ and $\mathds{E}$ be three {\pqpms s} in $\mathcal{C}$ and $r > 0$. If:
\begin{equation*}
R > r + 4\max\{ \propinquity{r,\mathcal{T}}(\mathds{A},\mathds{B}), \propinquity{r,\mathcal{T}}(\mathds{B},\mathds{E}) \}
\end{equation*}
then:
\begin{equation*}
\propinquity{r,\mathcal{T}}(\mathds{A},\mathds{E}) \leq \propinquity{R,\mathcal{T}}(\mathds{A},\mathds{B}) + \propinquity{R,\mathcal{T}}(\mathds{B},\mathds{E})\text{.}
\end{equation*}
\end{proposition}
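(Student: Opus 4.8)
The plan is to realize the two $R$-local propinquities by near-optimal $R$-tunnels and then splice them together into a single $r$-tunnel from $\mathds{A}$ to $\mathds{E}$, invoking the composition clause of weak appropriateness (Definition \ref{w-appropriate-def}, item (3)); the numerical hypothesis relating $R$ and $r$ is precisely what guarantees that the spliced tunnel can still be taken to have radius of admissibility $r$, while Remark \ref{extent-monotone-rmk} and Lemma \ref{monotone-local-propinquity-lemma} control how the relevant extents vary with the radius.

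First I would dispose of the trivial case: if $\propinquity{R,\mathcal{T}}(\mathds{A},\mathds{B}) = \infty$ or $\propinquity{R,\mathcal{T}}(\mathds{B},\mathds{E}) = \infty$, the right-hand side is infinite and there is nothing to prove; so assume both are finite and fix $\alpha > 0$. Using Definition \ref{local-propinquity-def}, choose $\tau_1 \in \tunnelset{\mathcal{T}}{R}{\mathds{A}}{\mathds{B}}$ and $\tau_2 \in \tunnelset{\mathcal{T}}{R}{\mathds{B}}{\mathds{E}}$ with $\tunnelextent{\tau_1}{R} < \propinquity{R,\mathcal{T}}(\mathds{A},\mathds{B}) + \alpha$ and $\tunnelextent{\tau_2}{R} < \propinquity{R,\mathcal{T}}(\mathds{B},\mathds{E}) + \alpha$, and set $\varepsilon_1,\varepsilon_2$ equal to these two upper bounds, so that $\varepsilon_j > \tunnelextent{\tau_j}{R}$ for $j\in\{1,2\}$. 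Since $\tau_1,\tau_2\in\mathcal{T}$ are $R$-tunnels, Definition \ref{w-appropriate-def}(3) produces, for every $0 < t \leq R - 4\max\{\varepsilon_1,\varepsilon_2\}$ and every $\beta > 0$, a $t$-tunnel $\tau \in \mathcal{T}$ from $\dom{\tau_1}=\mathds{A}$ to $\codom{\tau_2}=\mathds{E}$ with $\tunnelextent{\tau}{t} \leq \varepsilon_1 + \varepsilon_2 + \beta$.

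The heart of the matter is then to verify that the hypothesis permits the choice $t = r$, that is, that $r \leq R - 4\max\{\varepsilon_1,\varepsilon_2\}$ for $\alpha$ small enough; this is the bookkeeping step that combines $R > r + 4\max\{\propinquity{r,\mathcal{T}}(\mathds{A},\mathds{B}),\propinquity{r,\mathcal{T}}(\mathds{B},\mathds{E})\}$ with the monotonicity of extents in the radius (Remark \ref{extent-monotone-rmk}) and of the local propinquity (Lemma \ref{monotone-local-propinquity-lemma}), re-reading $\tau_1,\tau_2$ as $\rho$-tunnels for a suitable $\rho \in (r,R]$ before applying the composition clause if that is convenient. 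Granting this, taking $t = r$ and $\beta = \alpha$ yields an $r$-tunnel $\tau \in \mathcal{T}$ from $\mathds{A}$ to $\mathds{E}$ with $\tunnelextent{\tau}{r} \leq \propinquity{R,\mathcal{T}}(\mathds{A},\mathds{B}) + \propinquity{R,\mathcal{T}}(\mathds{B},\mathds{E}) + 3\alpha$, whence $\propinquity{r,\mathcal{T}}(\mathds{A},\mathds{E}) \leq \propinquity{R,\mathcal{T}}(\mathds{A},\mathds{B}) + \propinquity{R,\mathcal{T}}(\mathds{B},\mathds{E}) + 3\alpha$ by Definition \ref{local-propinquity-def}; letting $\alpha \to 0^+$ gives the claimed inequality. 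I expect the only real obstacle to be exactly this arithmetic check that the $4\max$-margin built into the hypothesis survives the passage from the local propinquities appearing there to the extents of the chosen near-optimal tunnels, so that the radius of admissibility of the composed tunnel does not drop below $r$; the remainder is a direct appeal to the composition property of a weakly appropriate class and to the definition of the local propinquity as an infimum of extents.
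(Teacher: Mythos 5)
Your route is the paper's own: dispose of the case where the right-hand side is infinite, pick near-optimal $R$-tunnels $\tau_1,\tau_2$ realizing the two $R$-local propinquities up to $\alpha$, feed them to clause (3) of Definition \ref{w-appropriate-def} to produce an $r$-tunnel from $\mathds{A}$ to $\mathds{E}$, and let the slack tend to $0$. The problem is precisely the step you defer under ``granting this'': the inequality $r \leq R - 4\max\{\varepsilon_1,\varepsilon_2\}$ is not a bookkeeping consequence of the hypothesis together with Remark \ref{extent-monotone-rmk} and Lemma \ref{monotone-local-propinquity-lemma}. The numbers you must feed to the composition clause satisfy $\varepsilon_j > \tunnelextent{\tau_j}{R} \geq \propinquity{R,\mathcal{T}}(\cdot,\cdot)$, so they are pinned to the $R$-local propinquities, whereas the hypothesis only bounds $R-r$ from below by $4\max$ of the $r$-local ones; Lemma \ref{monotone-local-propinquity-lemma} gives $\propinquity{r,\mathcal{T}}\leq\propinquity{R,\mathcal{T}}$, which runs in the wrong direction. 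Re-reading $\tau_1,\tau_2$ as $\rho$-tunnels for some $\rho\in(r,R]$ does not repair this: by Remark \ref{extent-monotone-rmk} their $\rho$-extents only decrease, but they remain bounded below by $\propinquity{\rho,\mathcal{T}}\geq\propinquity{r,\mathcal{T}}$, the tunnels were chosen near-optimal only at radius $R$, and as $\rho$ approaches $r$ the requirement $\rho - 4\max\{\varepsilon_1,\varepsilon_2\}\geq r$ would force the extents to vanish. So the step you single out as the ``only real obstacle'' is in fact the entire content of the statement, and your proposal does not close it.

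For perspective, the paper's proof makes exactly the move you plan and performs no such verification either: it selects $R$-admissible numbers $\varepsilon_j \leq \propinquity{R,\mathcal{T}}(\cdot,\cdot)+\varepsilon/3$ and invokes weak appropriateness directly, so the radius condition it silently uses is $R \geq r + 4\max\{\varepsilon_1,\varepsilon_2\}$, which is guaranteed only when the $R$-local propinquities (not merely the $r$-local ones) are at most roughly $(R-r)/4$. That stronger control is what is actually available in the application, Theorem \ref{propinquity-inframetric-thm}, where both larger-radius propinquities are bounded by $r_1$ and $r_2$. So your diagnosis of where the difficulty sits is accurate, but the monotonicity results you cite cannot supply the missing inequality; a complete argument needs either the hypothesis strengthened to $R > r + 4\max\{\propinquity{R,\mathcal{T}}(\mathds{A},\mathds{B}),\propinquity{R,\mathcal{T}}(\mathds{B},\mathds{E})\}$, or some a priori bound of the form $\tunnelextent{\tau_j}{R} < \frac{R-r}{4}$ on the specific tunnels being composed.
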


\begin{proof}
Let:
\begin{equation*}
R > r + 4\max\{ \propinquity{r,\mathcal{T}}(\mathds{A},\mathds{B}), \propinquity{r,\mathcal{T}}(\mathds{B},\mathds{E}) \}\text{,}
\end{equation*}
and let $\varepsilon > 0$.

We distinguish two cases. First, if there is no $R$-tunnel in $\mathcal{T}$ between $\mathds{A}$ and $\mathds{B}$ or no $R$-tunnel in $\mathcal{T}$ between $\mathds{B}$ and $\mathds{E}$, then we observe that our inequality is trivial, since the right hand-side is infinite.

Second, we assume that both:
\begin{equation*}
\tunnelset{\mathcal{T}}{R}{\mathds{A}}{\mathds{B}} \text{ and }\tunnelset{\mathcal{T}}{R}{\mathds{B}}{\mathds{E}}
\end{equation*}
are not empty. By Definition (\ref{local-propinquity-def}), there exists:
\begin{equation*}
\tau_1 \in \tunnelset{\mathcal{T}}{R}{\mathds{A}}{\mathds{B}}
\end{equation*}
such that
\begin{equation*}
\tunnelextent{\tau_1}{R} < \propinquity{R,\mathcal{T}}(\mathds{A},\mathds{B}) + \frac{\varepsilon}{3}
\end{equation*}
and there exists:
\begin{equation*}
\tau_2 \in \tunnelset{\mathcal{T}}{R}{\mathds{B}}{\mathds{E}}
\end{equation*}
such that
\begin{equation*}
\tunnelextent{\tau_2}{R} < \propinquity{R,\mathcal{T}}(\mathds{B},\mathds{E}) + \frac{\varepsilon}{3}\text{.}
\end{equation*}

By Definition (\ref{admissible-def}), there exists a $R$-admissible number $\varepsilon_1$ for $\tau_1$ and a $R$-admissible number $\varepsilon_2$ for $\tau_2$ such that:
\begin{equation*}
\varepsilon_1 \leq \propinquity{R,\mathcal{T}}(\mathds{A},\mathds{B}) + \frac{\varepsilon}{3}\text{ and }\varepsilon_2 \leq \propinquity{R,\mathcal{T}}(\mathds{B},\mathds{E}) + \frac{\varepsilon}{3}\text{.}
\end{equation*}

Since $\mathcal{T}$ is weakly appropriate, there exists:
\begin{equation*}
\tau \in \tunnelset{\mathcal{T}}{r}{\mathds{A}}{\mathds{E}}
\end{equation*}
such that $\tunnelextent{\tau}{r}\leq \varepsilon_1 + \varepsilon_2 + \frac{\varepsilon}{3}$. Hence, by Definition:

\begin{equation*}
\begin{split}
\propinquity{r,\mathcal{T}}(\mathds{A},\mathds{E}) &\leq \varepsilon_1 + \varepsilon_2 + \frac{\varepsilon}{3} \\
&\leq \propinquity{R,\mathcal{T}}(\mathds{A},\mathds{B}) + \frac{\varepsilon}{3} + \propinquity{R,\mathcal{T}}(\mathds{B},\mathds{E}) + \frac{\varepsilon}{3} + \frac{\varepsilon}{3}\\
&\leq \propinquity{R,\mathcal{T}}(\mathds{A},\mathds{B}) +  \propinquity{R,\mathcal{T}}(\mathds{B},\mathds{E}) + \varepsilon \text{.}
\end{split}
\end{equation*}

Our result is now proven since $\varepsilon > 0$ is arbitrary.
\end{proof}

We record the following easy result:
\begin{proposition}\label{symmetric-local-propinquity-prop}
Let $\mathcal{C}$ be a nonempty class of {\pqpms s} and let $\mathcal{T}$ be a class of tunnels weakly appropriate for $\mathcal{C}$. Let $\mathds{A}$ and $\mathds{B}$ be two {\pqpms s} in $\mathcal{C}$ and let $r > 0$.
\begin{enumerate}
\item $\propinquity{r,\mathcal{T}}(\mathds{A},\mathds{B}) = \propinquity{r,\mathcal{T}}(\mathds{B},\mathds{A})$,
\item if there exists an isometric isomorphism between $\mathds{A}$ and $\mathds{B}$ then $\propinquity{r,\mathcal{T}}(\mathds{A},\mathds{B}) = 0$.
\end{enumerate}
\end{proposition}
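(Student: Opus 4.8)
The first assertion will follow at once from the behaviour of passages under inversion. If $\tau$ is an $r$-tunnel from $\mathds{A}$ to $\mathds{B}$ with $\tau \in \mathcal{T}$, then $\tau^{-1} \in \mathcal{T}$ by Condition (2) of Definition (\ref{w-appropriate-def}), the passage $\tau^{-1}$ is an $r$-tunnel from $\mathds{B}$ to $\mathds{A}$ with $\Adm(\tau^{-1}|r) = \Adm(\tau|r)$ by Remark (\ref{inclusion-adm-rmk}), hence $\tunnelextent{\tau^{-1}}{r} = \tunnelextent{\tau}{r}$ by Remark (\ref{extent-monotone-rmk}). Thus $\tau \mapsto \tau^{-1}$ is an extent-preserving bijection from $\tunnelset{\mathcal{T}}{r}{\mathds{A}}{\mathds{B}}$ onto $\tunnelset{\mathcal{T}}{r}{\mathds{B}}{\mathds{A}}$, and taking the infimum of the $r$-extent over each of these sets and using Definition (\ref{local-propinquity-def}) yields $\propinquity{r,\mathcal{T}}(\mathds{A},\mathds{B}) = \propinquity{r,\mathcal{T}}(\mathds{B},\mathds{A})$.

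For the second assertion, let $h : \mathds{A} \rightarrow \mathds{B}$ be an isometric isomorphism, write $\mathds{A} = (\A,\Lip_\A,\M_\A,\mu_\A)$ and $\mathds{B} = (\B,\Lip_\B,\M_\B,\mu_\B)$, and consider the passage $\eta = (\A,\Lip_\A,\M_\A,\mathrm{id}_\A,\mathds{A},h,\mathds{B})$, which lies in $\mathcal{T}$ by Condition (4) of Definition (\ref{w-appropriate-def}). The plan is to show that every $\varepsilon > 0$ is $r$-admissible for $\eta$ in the sense of Definition (\ref{admissible-def}); granting this, $\eta$ is an $r$-tunnel from $\mathds{A}$ to $\mathds{B}$ in $\mathcal{T}$ with $\tunnelextent{\eta}{r} = \inf \Adm(\eta|r) = 0$, so $\propinquity{r,\mathcal{T}}(\mathds{A},\mathds{B}) \leq 0$, and thus it equals $0$. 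To establish $r$-admissibility of $\varepsilon$, I would take the increasing family $K_t = \cBall{\M_\A^\sigma}{\sigmaKantorovich{\Lip_\A}}{\mu_\A}{t}$ for $t \in (0,r]$ and check, for each such $t$, that the pair $(\varepsilon,K_t)$ is $t$-admissible for $\eta$ in the sense of Definition (\ref{admissible-pair-def}).

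The one substantive input is that, since $h(\M_\A) = \M_\B$, $\Lip_\B \circ h = \Lip_\A$ on $\Loc{\A}{\M_\A}{\star}\cap\M_\A$ and $\mu_\B \circ h = \mu_\A$, and since for a {\pqpms} the set $\Loc{\A}{\M_\A}{\star}\cap\M_\A$ is dense in $\sa{\M_\A}$ with $\Lip_\A$ lower semicontinuous (Definition (\ref{pqms-def})), the isomorphism $h$ restricts to an isometry between the topography metric spaces $(\M_\A^\sigma,\sigmaKantorovich{\Lip_\A})$ and $(\M_\B^\sigma,\sigmaKantorovich{\Lip_\B})$ matching $\mu_\A$ and $\mu_\B$. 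Consequently $h$ carries $\Loc{\A}{\M_\A}{\mu_\A,t}$ onto $\Loc{\B}{\M_\B}{\mu_\B,t}$ preserving $\Lip$ (and similarly $h^{-1}$), and $h^\ast$ carries $\StateSpace[\B|\mu_\B,t]$ onto $\StateSpace[\A|\mu_\A,t] = \StateSpace[\A|K_t]$. With these identifications each clause of Definition (\ref{admissible-pair-def}) for $(\varepsilon,K_t)$ becomes routine: the state-space inclusions of $t$-left and $t$-right admissibility (Definition (\ref{left-admissible-def})) hold because $\mathrm{id}_\A^\ast$ is the identity and the larger state space contains the smaller one; for $a \in \Loc{\A}{\M_\A}{\mu_\A,t}$ the element $a$ itself lies in $\liftsettunnel{\eta}{a}{\Lip_\A(a),t,\varepsilon,K_t}$, so $h(a) \in \targetsettunnel{\eta}{a}{\Lip_\A(a),t,\varepsilon,K_t}$, with $h(a)\in\M_\B$ when $a\in\M_\A$, and symmetrically for $\eta^{-1}$ via $h^{-1}$; the Lip-contraction condition holds with equality as $\Lip_\B\circ h = \Lip_\A$; the base-point condition holds since $\mu_\A\circ\mathrm{id}_\A = \mu_\A = \mu_\B\circ h$, so the relevant $\sigmaKantorovich{\Lip_\A}$-distance is $0$; and the Jordan--Lie inequalities (\ref{admissible-Jordan-eq}) and (\ref{admissible-Lie-eq}) hold because $(\A,\Lip_\A)$ is already a Leibniz Lipschitz pair.

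I expect the only genuine difficulty to lie in this last step, and it is of a bookkeeping nature: one must make sure that $h$ respects the ``supported on a ball of radius $t$'' conditions built into Definitions (\ref{lift-set-def}) and (\ref{left-admissible-def}), which is precisely what identifying $K_t$ with $\cBall{\M_\B^\sigma}{\sigmaKantorovich{\Lip_\B}}{\mu_\B}{t}$ supplies; this rests in turn on the standard fact, alluded to in the discussion preceding the definition of isometric isomorphism, that a $*$-isomorphism preserving a Lip-norm on a dense core induces an isometry of the associated {\mongekant s} on the topographies. The first assertion, by contrast, is immediate.
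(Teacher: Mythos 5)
Your proof follows the same route as the paper's: assertion (1) via the extent-preserving bijection $\tau\mapsto\tau^{-1}$ guaranteed by weak appropriateness, and assertion (2) via the passage $(\A,\Lip_\A,\M_\A,\mathrm{id}_\A,\mathds{A},h,\mathds{B})$, which the paper simply asserts to be an $r$-tunnel in $\mathcal{T}$ of $r$-extent $0$. Your explicit admissibility check with $K_t = \cBall{\M_\A^\sigma}{\sigmaKantorovich{\Lip_\A}}{\mu_\A}{t}$, including the observation that $h$ induces an isometry of the topographies matching base points (a fact the paper itself invokes in its discussion preceding the definition of isometric isomorphism), merely supplies details the paper's two-line proof omits.
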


\begin{proof}
Assertion (1) follows from the observation that if $\tau\in\mathcal{T}$ then $\tau^{-1} \in\mathcal{T}$ since $\mathcal{T}$ is appropriate, and $\tunnelextent{\tau}{r} = \tunnelextent{\tau^{-1}}{r}$.

Assertion (2) follows from the observation that for any isometric isomorphism $h$ from $\mathds{A}=(\A,\Lip,\M,\mu)$ to $\mathds{B}$, and if $\mathrm{id}_\A$ is the identity of $\A$, then for any $r > 0$, the passage $(\A,\Lip,\M,\mathrm{id}_\A,\mathds{A},h,\mathds{B})$ is a $r$-tunnel in $\mathcal{T}$ of $r$-extent $0$.
\end{proof}

\subsection{The Topographic Gromov-Hausdorff Propinquity}

The noncommutative analogue for the Gromov-Hausdorff distance which results from our research presented in this paper is:

\begin{definition}
Let $\mathcal{C}$ be a nonempty class of {\pqpms s} and let $\mathcal{T}$ be a weakly appropriate class of tunnels for $\mathcal{C}$. The \emph{$\mathcal{T}$-topographic Gromov-Hausdorff Propinquity} $\propinquity{\mathcal{T}}(\mathds{A},\mathds{B})$ is the nonnegative real number:
\begin{equation*}
\propinquity{}(\mathds{A},\mathds{B}) = \max\left\{\inf\set{\varepsilon > 0}{\propinquity{\frac{1}{\varepsilon},\mathcal{T}}(\mathds{A},\mathds{B}) < \varepsilon},\frac{\sqrt{2}}{4}\right\}\text{.}
\end{equation*}
\end{definition}

We begin with an easy observation:

\begin{lemma}
Let $\mathcal{C}$ be a nonempty class of {\pqpms s} and $\mathcal{T}$ a weakly appropriate class of tunnels for $\mathcal{C}$. Let $\mathds{A}$ and $\mathds{B}$ be two {\pqpms s} in $\mathcal{C}$. If for some $r>0$ we have:
\begin{equation*}
\propinquity{\mathcal{T}}(\mathds{A},\mathds{B}) < r
\end{equation*}
then:
\begin{equation*}
\propinquity{\frac{1}{r},\mathcal{T}}(\mathds{A},\mathds{B}) < r \text{.}
\end{equation*}
\end{lemma}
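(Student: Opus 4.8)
The plan is to unwind the definition of $\propinquity{\mathcal{T}}$ and then invoke the monotonicity of the local propinquities in the radius, which is exactly Lemma (\ref{monotone-local-propinquity-lemma}). There is no genuine obstacle here; the statement is essentially a bookkeeping consequence of how the infimum defining $\propinquity{\mathcal{T}}$ is set up together with the already-established monotonicity $\propinquity{r,\mathcal{T}} \leq \propinquity{r',\mathcal{T}}$ for $r' \geq r$.

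First I would record that the hypothesis $\propinquity{\mathcal{T}}(\mathds{A},\mathds{B}) < r$ forces $\inf\set{\varepsilon > 0}{\propinquity{\frac{1}{\varepsilon},\mathcal{T}}(\mathds{A},\mathds{B}) < \varepsilon} < r$, since $\propinquity{\mathcal{T}}(\mathds{A},\mathds{B})$ is a maximum of this infimum and the constant $\frac{\sqrt{2}}{4}$, so each of the two quantities in that maximum is $< r$. From the definition of the infimum, I can then pick $\varepsilon_0 \in (0,r)$ belonging to the set $\set{\varepsilon > 0}{\propinquity{\frac{1}{\varepsilon},\mathcal{T}}(\mathds{A},\mathds{B}) < \varepsilon}$, so that $\propinquity{\frac{1}{\varepsilon_0},\mathcal{T}}(\mathds{A},\mathds{B}) < \varepsilon_0 < r$.

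Next, since $0 < \varepsilon_0 < r$, we have $\frac{1}{r} \leq \frac{1}{\varepsilon_0}$, and Lemma (\ref{monotone-local-propinquity-lemma}) applied with the pair of radii $\frac{1}{r} \leq \frac{1}{\varepsilon_0}$ gives
\begin{equation*}
\propinquity{\frac{1}{r},\mathcal{T}}(\mathds{A},\mathds{B}) \leq \propinquity{\frac{1}{\varepsilon_0},\mathcal{T}}(\mathds{A},\mathds{B}) < \varepsilon_0 < r\text{,}
\end{equation*}
which is the desired conclusion. The only point worth a moment's care is that the set whose infimum appears in the definition of $\propinquity{\mathcal{T}}$ is genuinely nonempty under the hypothesis, so that such an $\varepsilon_0$ can be extracted; but this is immediate from $\inf\set{\varepsilon>0}{\propinquity{\frac{1}{\varepsilon},\mathcal{T}}(\mathds{A},\mathds{B}) < \varepsilon} < r < +\infty$. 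Thus the main (and essentially only) step is the application of the monotonicity lemma after this elementary extraction.
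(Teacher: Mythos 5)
Your proof is correct and takes essentially the same route as the paper: both arguments amount to unwinding the definition of $\propinquity{\mathcal{T}}$ and invoking the monotonicity of the local propinquity in the radius (Lemma (\ref{monotone-local-propinquity-lemma})). The only cosmetic difference is that the paper phrases it contrapositively (assuming $\propinquity{\frac{1}{r},\mathcal{T}}(\mathds{A},\mathds{B})\geq r$ and deducing $\propinquity{\mathcal{T}}(\mathds{A},\mathds{B})\geq r$), whereas you argue directly by extracting an $\varepsilon_0<r$ from the infimum; the two are logically the same argument.
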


\begin{proof}
Let us be given $r>0$ such that
\begin{equation*}
\propinquity{r^{-1},\mathcal{T}}(\mathds{A},\mathds{B}) \geq r\text{.}
\end{equation*}
Then for all $R \leq r$ we have:
\begin{equation*}
\propinquity{R^{-1},\mathcal{T}}(\mathds{A},\mathds{B}) \geq \propinquity{r^{-1},\mathcal{T}}(\mathds{A},\mathds{B}) \geq r \geq R
\end{equation*}
and thus $\propinquity{\mathcal{T}}(\mathds{A},\mathds{B}) \geq r$.
\end{proof}

We note that we could use the local propinquity to define our topology as well:

\begin{theorem}
Let $\mathcal{C}$ be a nonempty class of {\pqpms s} and let $\mathcal{T}$ be a weakly appropriate class of tunnels for $\mathcal{C}$. Let $(\mathds{A}_j)_{j\in J}$ be a net of {\pqpms s} in $\mathcal{C}$ and $\mathds{L}$ a {\pqpms} in $\mathcal{C}$. The following assertions are equivalent:
\begin{enumerate}
\item for all $r > 0$ we have:
\begin{equation*}
\lim_{j\in J} \propinquity{r,\mathcal{T}}(\mathds{A}_j,\mathds{L}) = 0 \text{.}
\end{equation*}
\item we have:
\begin{equation*}
\lim_{j\in J} \propinquity{\mathcal{T}}(\mathds{A}_j,\mathds{L}) = 0 \text{.}
\end{equation*}
\end{enumerate}
\end{theorem}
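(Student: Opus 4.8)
The plan is to show the two conditions are equivalent by unwinding the definition of $\propinquity{\mathcal{T}}$ and exploiting the monotonicity of $r\mapsto\propinquity{r,\mathcal{T}}$ established in Lemma (\ref{monotone-local-propinquity-lemma}), together with the elementary lemma just proven.

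\begin{proof}
Assume first that (1) holds, so $\lim_{j\in J}\propinquity{r,\mathcal{T}}(\mathds{A}_j,\mathds{L}) = 0$ for every $r > 0$. Fix $\varepsilon \in \left(0,\frac{\sqrt 2}{4}\right)$; it suffices to find $j_0 \in J$ so that $\propinquity{\mathcal{T}}(\mathds{A}_j,\mathds{L}) \leq \varepsilon$ for all $j\succ j_0$, since $\propinquity{\mathcal{T}}$ is always at least $\frac{\sqrt 2}{4}$ and hence the limit, if it is $0$, forces $\varepsilon$-smallness only for $\varepsilon$ in this range to be checked --- actually we want the limit to be $\frac{\sqrt 2}{4}$; more precisely we show $\limsup_{j}\propinquity{\mathcal{T}}(\mathds{A}_j,\mathds{L}) \leq \frac{\sqrt 2}{4}$, which combined with the lower bound gives convergence to $0$ in the relevant sense. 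Apply (1) with $r = \frac{1}{\varepsilon}$: there is $j_0\in J$ such that for all $j\succ j_0$ we have $\propinquity{\frac{1}{\varepsilon},\mathcal{T}}(\mathds{A}_j,\mathds{L}) < \varepsilon$. By the definition of $\propinquity{\mathcal{T}}$, this means $\varepsilon$ belongs to the set $\set{\delta > 0}{\propinquity{\frac{1}{\delta},\mathcal{T}}(\mathds{A}_j,\mathds{L}) < \delta}$, hence $\propinquity{\mathcal{T}}(\mathds{A}_j,\mathds{L}) \leq \max\left\{\varepsilon,\frac{\sqrt2}{4}\right\}$. Letting $\varepsilon \to 0$ along $J$ yields (2).

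Conversely, assume (2). Fix $r > 0$ and $\varepsilon > 0$; without loss of generality $\varepsilon < r$ and $\varepsilon < \frac{\sqrt 2}{4}$. By (2), there exists $j_0 \in J$ with $\propinquity{\mathcal{T}}(\mathds{A}_j,\mathds{L}) < \varepsilon$ for all $j \succ j_0$. By the preceding Lemma (applied with the role of $r$ there played by $\varepsilon$), this gives $\propinquity{\frac{1}{\varepsilon},\mathcal{T}}(\mathds{A}_j,\mathds{L}) < \varepsilon$ for all $j\succ j_0$. Since $\varepsilon < r$, we have $\frac{1}{\varepsilon} > \frac{1}{r} \geq r$ when $r\leq 1$; in general $\frac{1}{\varepsilon} \geq r$ provided $\varepsilon \leq \frac1r$, which we may arrange by further shrinking $\varepsilon$. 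Then Lemma (\ref{monotone-local-propinquity-lemma}) gives
\begin{equation*}
\propinquity{r,\mathcal{T}}(\mathds{A}_j,\mathds{L}) \leq \propinquity{\frac{1}{\varepsilon},\mathcal{T}}(\mathds{A}_j,\mathds{L}) < \varepsilon
\end{equation*}
for all $j\succ j_0$. As $\varepsilon > 0$ was arbitrary (subject to the harmless smallness constraints), this proves $\lim_{j\in J}\propinquity{r,\mathcal{T}}(\mathds{A}_j,\mathds{L}) = 0$, i.e.\ (1).
\end{proof}

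The only delicate point is bookkeeping the relation between the ``radius'' parameter $\frac1\varepsilon$ and the fixed $r$: one must be careful to shrink $\varepsilon$ enough that $\frac1\varepsilon \geq r$ so that monotonicity of the local propinquity in its radius can be invoked in the right direction. This is the step I expect to require the most care, though it is entirely routine; everything else is a direct translation through the definition of $\propinquity{\mathcal{T}}$ and the already-established monotonicity lemma. The truncation constant $\frac{\sqrt2}{4}$ plays no essential role beyond ensuring $\propinquity{\mathcal{T}}$ is well-defined and finite, and the stated equivalence concerns convergence to $0$ of the local propinquities, which is unaffected by it.
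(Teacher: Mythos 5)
Your proof is correct and follows essentially the same route as the paper: the forward direction applies (1) with $r=\varepsilon^{-1}$ and reads $\propinquity{\mathcal{T}}(\mathds{A}_j,\mathds{L})\leq\varepsilon$ directly off the definition, while the converse shrinks $\varepsilon$ to $\min\{\varepsilon,r^{-1}\}$, invokes the preceding lemma to obtain $\propinquity{\varepsilon^{-1},\mathcal{T}}(\mathds{A}_j,\mathds{L})<\varepsilon$, and concludes with the monotonicity Lemma (\ref{monotone-local-propinquity-lemma}), exactly as the paper does. Your aside about the $\frac{\sqrt{2}}{4}$ term is fair: taken literally the max-truncation would keep $\propinquity{\mathcal{T}}$ above $\frac{\sqrt{2}}{4}$, and the paper's own proof, like yours, treats it as an inessential cap and ignores it.
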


\begin{proof}
Assume that for all $r > 0$ we have:
\begin{equation*}
\lim_{j\in J}\propinquity{r,\mathcal{T}}(\mathds{A}_j,\mathds{L}) = 0\text{.}
\end{equation*}
Let $\varepsilon > 0$. There exists $j_0 \in J$ such that for all $j\succ j_0$ we have $\propinquity{\varepsilon^{-1},\mathcal{T}}(\mathds{A}_j,\mathds{L}) < \varepsilon$. Hence $\propinquity{\mathcal{T}}(\mathds{A}_j,\mathds{L}) \leq \varepsilon$ and this completes the proof that (1) implies (2).

Conversely, suppose that:
\begin{equation*}
\lim_{j\in J} \propinquity{\mathcal{T}}(\mathds{A}_j,\mathds{L}) = 0 \text{.}
\end{equation*}

Let $\varepsilon > 0$ and $r>0$. Let $\epsilon = \min\{\varepsilon, r^{-1}\}$. There exists $j_0\in J$ such that for all $j\succ j_0$ we have $\propinquity{\mathcal{T}}(\mathds{A}_j,\mathds{L})<\epsilon$. Therefore:
\begin{equation*}
\propinquity{r,\mathcal{T}}(\mathds{A}_j,\mathds{L})\leq\propinquity{\epsilon^{-1},\mathcal{T}}(\mathds{A}_j,\mathds{L})\leq \epsilon\leq\varepsilon \text{,}
\end{equation*}
thus:
\begin{equation*}
\lim_{j\in J} \propinquity{r,\mathcal{T}}(\mathds{A}_j,\mathds{L}) = 0
\end{equation*}
as desired.
\end{proof}

The following theorem shows what form of the triangle inequality the topographic propinquity satisfies:

\begin{theorem}\label{propinquity-inframetric-thm}
Let $\mathcal{C}$ be a nonempty class of {\pqpms s} and let $\mathcal{T}$ be a weakly appropriate class of tunnels for $\mathcal{C}$. The propinquity $\propinquity{\mathcal{T}}$ is a pseudo infra-metric: for any three {\pqpms s} $\mathds{\A}$, $\mathds{B}$ and $\mathds{D}$ in $\mathcal{C}$, we have:
\begin{equation*}
\propinquity{\mathcal{T}}(\mathds{A},\mathds{B})\leq 2 \left(\propinquity{\mathcal{T}}(\mathds{A},\mathds{D})+\propinquity{\mathcal{T}}(\mathds{D},\mathds{B}) \right)\text{,}
\end{equation*}
while
\begin{equation*}
\propinquity{\mathcal{T}}(\mathds{A},\mathds{B}) = \propinquity{\mathcal{T}}(\mathds{B},\mathds{A})\text{,}
\end{equation*}
and $\propinquity{\mathcal{T}}(\mathds{A},\mathds{B}) = 0$ if $\mathds{A}$ and $\mathds{B}$ are isometrically isomorphic.
\end{theorem}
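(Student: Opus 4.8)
The symmetry statement $\propinquity{\mathcal{T}}(\mathds{A},\mathds{B}) = \propinquity{\mathcal{T}}(\mathds{B},\mathds{A})$ and the vanishing $\propinquity{\mathcal{T}}(\mathds{A},\mathds{B}) = 0$ for isometrically isomorphic spaces follow immediately: by Proposition~\ref{symmetric-local-propinquity-prop}, $\propinquity{r,\mathcal{T}}(\mathds{A},\mathds{B}) = \propinquity{r,\mathcal{T}}(\mathds{B},\mathds{A})$ for every $r > 0$, hence the defining infima agree; and if $h$ is an isometric isomorphism, then $\propinquity{r,\mathcal{T}}(\mathds{A},\mathds{B}) = 0$ for all $r > 0$, so $\inf\set{\varepsilon > 0}{\propinquity{\frac{1}{\varepsilon},\mathcal{T}}(\mathds{A},\mathds{B}) < \varepsilon}$ is realized by every $\varepsilon > 0$, making it $0$; the $\max$ with $\frac{\sqrt 2}{4}$ then forces $\propinquity{\mathcal{T}}(\mathds{A},\mathds{B}) = \frac{\sqrt 2}{4}$ --- wait, here I should be careful: the statement says it equals $0$, so the convention must be that the truncation is irrelevant, i.e. the assertion is about the ``pre-truncation'' quantity or the paper intends $\frac{\sqrt2}{4}$ to read as an upper bound only when the spaces are far apart. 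I would phrase it as: the infimum defining the propinquity is $0$, and since we only claim the inframetric axioms, the $\max$ truncation does not affect the argument; I would simply quote the pre-truncated infimum being $0$ as the content of the final clause.

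The substance is the relaxed triangle inequality. The plan is to argue by contradiction in the style of the classical derivation sketched after Example~\ref{funny-example}. Set $D_1 = \propinquity{\mathcal{T}}(\mathds{A},\mathds{D})$ and $D_2 = \propinquity{\mathcal{T}}(\mathds{D},\mathds{B})$, and suppose $\propinquity{\mathcal{T}}(\mathds{A},\mathds{B}) > 2(D_1 + D_2)$. Pick $\varepsilon > 0$ with $2(D_1+D_2) + 2\varepsilon < \propinquity{\mathcal{T}}(\mathds{A},\mathds{B})$; we may also assume everything in sight is finite (if $D_1$ or $D_2$ is infinite there is nothing to prove, and one checks the propinquity is bounded by $\frac{\sqrt2}{4}$ below, which handles degenerate truncation cases). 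By definition of the propinquity as an infimum, choose $r_1$ with $\frac{1}{r_1}$ close to $D_1$ from above --- precisely, $\frac{1}{r_1} < D_1 + \varepsilon$ --- such that $\propinquity{r_1,\mathcal{T}}(\mathds{A},\mathds{D}) < \frac{1}{r_1} < D_1 + \varepsilon$; similarly choose $r_2$ with $\propinquity{r_2,\mathcal{T}}(\mathds{D},\mathds{B}) < \frac{1}{r_2} < D_2 + \varepsilon$. Let $R = \min\{r_1, r_2\}$; by Lemma~\ref{monotone-local-propinquity-lemma}, $\propinquity{R,\mathcal{T}}(\mathds{A},\mathds{D}) \le \propinquity{r_1,\mathcal{T}}(\mathds{A},\mathds{D}) < D_1 + \varepsilon$ and likewise $\propinquity{R,\mathcal{T}}(\mathds{D},\mathds{B}) < D_2 + \varepsilon$. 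Now apply Proposition~\ref{triangle-inequality-local-propinquity-prop}: pick $r > 0$ with $R > r + 4\max\{D_1+\varepsilon, D_2+\varepsilon\}$, which is possible as long as $4\max\{D_1+\varepsilon,D_2+\varepsilon\} < R$; this last condition is where the constant $\frac{\sqrt2}{4}$ and the factor $2$ enter, and I would arrange the choice of $\varepsilon$ and the bookkeeping so that it holds. Then $\propinquity{r,\mathcal{T}}(\mathds{A},\mathds{B}) \le \propinquity{R,\mathcal{T}}(\mathds{A},\mathds{D}) + \propinquity{R,\mathcal{T}}(\mathds{D},\mathds{B}) < D_1 + D_2 + 2\varepsilon$. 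Finally I would translate this back: if $\frac{1}{r}$ can also be taken $> D_1 + D_2 + 2\varepsilon$ (tuning $r$ appropriately, and invoking monotonicity once more), then $\propinquity{\frac{1}{r'},\mathcal{T}}(\mathds{A},\mathds{B}) < \frac{1}{r'}$ for a suitable $r'$, whence $\propinquity{\mathcal{T}}(\mathds{A},\mathds{B}) \le D_1 + D_2 + 2\varepsilon \le 2(D_1+D_2)+2\varepsilon$, contradicting the choice of $\varepsilon$.

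The main obstacle --- and the reason the factor $2$ appears rather than a clean triangle inequality --- is the mismatch of radii: Proposition~\ref{triangle-inequality-local-propinquity-prop} only controls $\propinquity{r,\mathcal{T}}$ in terms of $\propinquity{R,\mathcal{T}}$ for $R$ strictly larger than $r + 4\max\{\cdots\}$, so when converting the local estimates into a statement about the scale-free propinquity $\propinquity{\mathcal{T}}$, one loses a controlled amount; chasing the exact constants through the substitutions $r \leftrightarrow \frac 1\varepsilon$ is the delicate part, and the truncation at $\frac{\sqrt2}{4}$ is precisely calibrated (note $4 \cdot \frac{\sqrt2}{4} = \sqrt 2 < 2$) so that whenever both $D_1, D_2 \le \frac{\sqrt2}{4}$ the admissibility condition $4\max\{D_1,D_2\} < R$ can be met for the relevant $R \ge$ something like $\frac{1}{2(D_1+D_2)}$. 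I would therefore do the $\varepsilon$-bookkeeping carefully in a short separate paragraph, treating first the case where $2(D_1+D_2) \ge \frac{\sqrt2}{4}$ (where the claimed inequality is trivial from $\propinquity{\mathcal{T}} \le$ its natural bound, or rather: $\propinquity{\mathcal{T}}(\mathds{A},\mathds{B})$ compared to $\frac{\sqrt2}{4} \le 2(D_1+D_2)$... if $\propinquity{\mathcal{T}}(\mathds{A},\mathds{B}) = \frac{\sqrt2}{4}$ the bound holds) and then the substantive case $2(D_1+D_2) < \frac{\sqrt2}{4}$ via the contradiction argument above.
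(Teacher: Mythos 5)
Your treatment of symmetry and of vanishing for isometrically isomorphic pairs is fine and matches the paper (both follow from Proposition (\ref{symmetric-local-propinquity-prop})), and your unease about the truncation is justified: the $\max$ in the definition of $\propinquity{\mathcal{T}}$ is evidently a typo for $\min$ (compare the classical $\mathsf{GH}$, and note the paper's own proof treats $\max\{\propinquity{\mathcal{T}}(\mathds{A},\mathds{D}),\propinquity{\mathcal{T}}(\mathds{D},\mathds{B})\}\geq\frac{\sqrt{2}}{4}$ as trivial precisely because the propinquity is capped at $\frac{\sqrt{2}}{4}$). Your route for the relaxed triangle inequality --- convert $\propinquity{\mathcal{T}}<s$ into $\propinquity{\frac{1}{s},\mathcal{T}}<s$, use monotonicity in the radius, apply Proposition (\ref{triangle-inequality-local-propinquity-prop}), convert back --- is the same as the paper's (which argues directly rather than by contradiction).

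The genuine gap is in the step you defer (``I would arrange the choice of $\varepsilon$ and the bookkeeping so that it holds''): as set up, your plan targets $\propinquity{\mathcal{T}}(\mathds{A},\mathds{B})\leq D_1+D_2+2\varepsilon$, and that bound is not reachable by this method. To extract it you need a radius $r$ with $\frac{1}{r}$ comparable to $D_1+D_2+2\varepsilon$, while the hypothesis of Proposition (\ref{triangle-inequality-local-propinquity-prop}) forces $r+4\max\{D_1+\varepsilon,D_2+\varepsilon\}<R$ with $R\leq\min\{r_1,r_2\}<\frac{1}{\max\{D_1,D_2\}}$ (your $r_i$ being the radii). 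These requirements are incompatible when $D_1,D_2$ are unbalanced: take $D_1=0$ and $D_2=D$ not small; then you would need $r\geq\frac{1}{D+2\varepsilon}$ and simultaneously $r<\frac{1}{D}-4D-4\varepsilon$, impossible for small $\varepsilon$. So the factor $2$ is not slack to be removed by careful constant-chasing; it must be built into the choice of radius. The paper's proof picks $s_1,s_2$ slightly above $D_1,D_2$ (say $s_1\leq s_2$) and runs the composition at radius $\frac{1}{2(s_1+s_2)}$, so the needed inequality is $\frac{1}{2(s_1+s_2)}+4s_2<\frac{1}{s_2}$, which follows from $\frac{1}{2s_2}+4s_2\leq\frac{1}{s_2}$, i.e. $8s_2^2\leq 1$, i.e. $s_2\leq\frac{\sqrt{2}}{4}$ --- this is where the truncation constant really enters (not via $4\cdot\frac{\sqrt{2}}{4}<2$). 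One then only gets $\propinquity{\frac{1}{2(s_1+s_2)},\mathcal{T}}(\mathds{A},\mathds{B})<s_1+s_2\leq 2(s_1+s_2)$, hence $\propinquity{\mathcal{T}}(\mathds{A},\mathds{B})\leq 2(s_1+s_2)\leq 2(D_1+D_2)+\varepsilon$. If you replace your target by $2(D_1+D_2)+2\varepsilon$ (equivalently take $r\approx\frac{1}{2(D_1+D_2)}$), your outline closes and coincides with the paper's argument.
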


\begin{proof}
The symmetry of $\propinquity{\mathcal{T}}$ and the last property of our theorem follow trivially from the symmetry for local propinquities in Proposition (\ref{symmetric-local-propinquity-prop}). We now prove the triangle inequality for the propinquity.

The triangle inequality is trivial if $\max\{\propinquity{\mathcal{T}}(\mathds{A},\mathds{D}),\propinquity{\mathcal{T}}(\mathds{D},\mathds{B})\} \geq \frac{\sqrt{2}}{4}$, so we shall assume henceforth that:
\begin{equation*}
\max\{\propinquity{\mathcal{T}}(\mathds{A},\mathds{D}),\propinquity{\mathcal{T}}(\mathds{D},\mathds{B})\} < \frac{\sqrt{2}}{4}\text{.}
\end{equation*}
Let $\varepsilon > 0$, $r_1,r_2 \in \left(0,\frac{\sqrt{2}}{2}\right)$ such that:
\begin{equation*}
\propinquity{\mathcal{T}}(\mathds{A},\mathds{D}) < r_1 \leq \propinquity{\mathcal{T}}(\mathds{A},\mathds{D}) + \frac{1}{4}\varepsilon
\end{equation*}
and
\begin{equation*}
\propinquity{\mathcal{T}}(\mathds{D},\mathds{B}) < r_2 \leq \propinquity{\mathcal{T}}(\mathds{D},\mathds{B}) + \frac{1}{4}\varepsilon \text{.}
\end{equation*}

Thus $\propinquity{\frac{1}{r_1},\mathcal{T}}(\mathds{A},\mathds{D}) < r_1$ and $\propinquity{\frac{1}{r_2},\mathcal{T}}(\mathds{D},\mathds{B}) < r_2$. Without loss of generality, since $\propinquity{\mathcal{T}}$ is symmetric, we assume $r_1\leq r_2$. 

We then have:
\begin{equation*}
\begin{split}
\frac{1}{2(r_1+r_2)} + 4\max\{\{\propinquity{r_1,\mathcal{T}}(\mathds{A},\mathds{D}),\propinquity{r_2,\mathcal{T}}(\mathds{D},\mathds{B})\}&< \frac{1}{2(r_1+r_2)} + 4r_2\\
&\leq \frac{1}{2r_2} + 4r_2 = \frac{1+8r_2^2}{2r_2}\text{.}
\end{split}
\end{equation*}
Hence, if $r_2 \leq \frac{\sqrt{2}}{4}$ then $1+8r_2^2 \leq 2$ and therefore:
\begin{equation*}
\frac{1}{2(r_1+r_2)} + 4\max\{\{\propinquity{r_1,\mathcal{T}}(\mathds{A},\mathds{D}),\propinquity{r_2,\mathcal{T}}(\mathds{D},\mathds{B})\} < \frac{1}{r_2}\text{.}
\end{equation*}

Consequently, Proposition (\ref{triangle-inequality-local-propinquity-prop}) and Lemma (\ref{monotone-local-propinquity-lemma}) gives us:
\begin{equation*}
\begin{split}
\propinquity{\frac{1}{2(r_1+r_2)},\mathcal{T}}(\mathds{A},\mathds{D}) &\leq \propinquity{\frac{1}{r_2},\mathcal{T}}(\mathds{A},\mathds{B}) + \propinquity{\frac{1}{r_2},\mathcal{T}}(\mathds{B},\mathds{D})\\
&\leq \propinquity{\frac{1}{r_1},\mathcal{T}}(\mathds{A},\mathds{B}) + \propinquity{\frac{1}{r_2},\mathcal{T}}(\mathds{B},\mathds{D})\\
&< r_1 + r_2 \leq 2(r_1+r_2)\text{.}
\end{split}
\end{equation*}

Hence:
\begin{equation*}
\propinquity{}(\mathds{A},\mathds{B}) \leq 2(r_1 + r_2) \leq 2\propinquity{}(\mathds{A},\mathds{D}) + 2\propinquity{}(\mathds{D},\mathds{B}) + \varepsilon \text{.}
\end{equation*}
As $\varepsilon>0$ is arbitrary, our proof is completed.
\end{proof}

We are now ready to define our topology. The following theorem justifies our definition, which we give afterward:

\begin{theorem}
Let $\mathcal{C}$ be a nonempty class of {\pqpms s}, and $\mathcal{T}$ be a weakly appropriate class of tunnels for $\mathcal{C}$. For any subclass $\mathcal{A}$ of $\mathcal{C}$, we define the class $\closure{\mathcal{A}}$ as the class of all $\mathds{A}\in\mathcal{C}$ such that $\mathcal{A}$ is the limit of some sequence of elements in $\mathcal{A}$ for $\propinquity{\mathcal{T}}$. The operator $\closure{\cdot}$ satisfies the Kuratowsky closure operator axioms.
\end{theorem}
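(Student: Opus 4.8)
The plan is to verify the four Kuratowski closure axioms for the operator $\mathcal{A}\mapsto\closure{\mathcal{A}}$ directly, using only the pseudo infra-metric properties of $\propinquity{\mathcal{T}}$ established in Theorem (\ref{propinquity-inframetric-thm}), together with the fact that $\propinquity{\mathcal{T}}$ takes values in $[0,\frac{\sqrt 2}{4}]$ and that $\mathcal{C}$ (hence convergence in $\mathcal{C}$) is characterized by sequences. The axioms are: (i) $\closure{\emptyset}=\emptyset$; (ii) $\mathcal{A}\subseteq\closure{\mathcal{A}}$; (iii) $\closure{\mathcal{A}\cup\mathcal{B}}=\closure{\mathcal{A}}\cup\closure{\mathcal{B}}$; and (iv) idempotency $\closure{\closure{\mathcal{A}}}=\closure{\mathcal{A}}$. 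The first three are routine: (i) holds because there is no sequence in $\emptyset$; (ii) holds because a constant sequence at $\mathds A\in\mathcal A$ converges to $\mathds A$ since $\propinquity{\mathcal{T}}(\mathds A,\mathds A)=0$; (iii) follows because a sequence in $\mathcal A\cup\mathcal B$ has a subsequence entirely in $\mathcal A$ or entirely in $\mathcal B$, and (convergence being sequential) convergence of the original sequence passes to and from subsequences in the usual way, giving $\closure{\mathcal A\cup\mathcal B}\subseteq\closure{\mathcal A}\cup\closure{\mathcal B}$; the reverse inclusion is monotonicity of $\closure{\cdot}$, itself immediate from the definition.

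The substantive point, and the one I expect to be the main obstacle, is idempotency (iv). The inclusion $\closure{\mathcal A}\subseteq\closure{\closure{\mathcal A}}$ is just (ii) applied to $\closure{\mathcal A}$. For the reverse, suppose $\mathds L\in\closure{\closure{\mathcal A}}$, so there is a sequence $(\mathds B_n)_{n\in\N}$ in $\closure{\mathcal A}$ with $\propinquity{\mathcal{T}}(\mathds B_n,\mathds L)\to 0$; and for each $n$ there is a sequence in $\mathcal A$ converging to $\mathds B_n$, hence some $\mathds A_n\in\mathcal A$ with $\propinquity{\mathcal{T}}(\mathds A_n,\mathds B_n)\leq 2^{-n}$. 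I would then show $\propinquity{\mathcal{T}}(\mathds A_n,\mathds L)\to 0$. Here is the delicate part: $\propinquity{\mathcal{T}}$ is only an \emph{infra}-metric, so
\begin{equation*}
\propinquity{\mathcal{T}}(\mathds A_n,\mathds L)\leq 2\left(\propinquity{\mathcal{T}}(\mathds A_n,\mathds B_n)+\propinquity{\mathcal{T}}(\mathds B_n,\mathds L)\right)\leq 2\left(2^{-n}+\propinquity{\mathcal{T}}(\mathds B_n,\mathds L)\right)\text{,}
\end{equation*}
and the right-hand side tends to $0$ because each summand does. The factor $2$ is harmless since we only need the bound to vanish, not to be additive. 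Thus $\mathds A_n\to\mathds L$ with $\mathds A_n\in\mathcal A$, i.e. $\mathds L\in\closure{\mathcal A}$, proving $\closure{\closure{\mathcal A}}\subseteq\closure{\mathcal A}$.

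One subtlety worth flagging in the write-up: the statement defines $\closure{\mathcal A}$ using \emph{sequences}, and the relevant limits are taken within $\mathcal C$, so I should be careful that ``$\mathds L$ is the limit of a sequence in $\mathcal A$'' is genuinely the membership condition, and that the diagonal-type extraction above (picking one $\mathds A_n$ close to each $\mathds B_n$) stays inside $\mathcal A$ — which it does by construction. No completeness or uniqueness of limits is needed for the closure-operator axioms themselves; the Hausdorff-modulo-isometry property is a separate matter handled elsewhere in the paper. I would present the argument in the order (i), (ii), (iii)[with its monotonicity sub-claim], then (iv), devoting the bulk of the prose to the infra-metric estimate in (iv) and remarking explicitly that the loss of the exact triangle inequality costs nothing here because we are chasing a null sequence.
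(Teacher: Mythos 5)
Your proposal is correct and follows essentially the same route as the paper: the first three axioms are handled as routine consequences of the definition, and idempotency is proved by the same diagonal selection of elements of $\mathcal{A}$ close to the intermediate sequence, with the factor-$2$ infra-metric inequality of Theorem (\ref{propinquity-inframetric-thm}) sufficing because one only needs the resulting bound to tend to zero. Your explicit remark that the relaxed triangle inequality costs nothing here matches exactly the role it plays in the paper's argument.
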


\begin{proof}
It is straightforward that $\closure{\emptyset} = \emptyset$ and $\closure{\mathcal{C}} = \mathcal{C}$; just as easily we check that:
\begin{enumerate}
\item for all $\mathcal{A}\subseteq\mathcal{C}$ we have $\mathcal{A}\subseteq \closure{\mathcal{A}}$,
\item for all $\mathcal{A}\subseteq\mathcal{B}\subseteq\mathcal{C}$ we have $\closure{\mathcal{A}}\subseteq\closure{\mathcal{B}}$,
\item $\closure{\mathcal{A}\cup\mathcal{B}} = \closure{\mathcal{A}}\cup\closure{\mathcal{B}}$.
\end{enumerate}

It remains to show that $\closure{\cdot}$ is idempotent. Let $\mathcal{A}$ be a subclass of $\mathcal{C}$, and $\mathds{L} \in \closure{\closure{\mathcal{A}}}$. Thus, there exists a sequence $(\mathcal{L})_{n\in\N}$ in $\closure{\mathcal{A}}$ which converges to $\mathcal{L}$ for $\propinquity{\mathcal{T}}$. Now, for each $n\in\N$ there exists a sequence $(\mathds{A}_{n,k})_{k\in\N}$ converging to $\mathcal{L}$ and contained in $\mathcal{A}$.

Let $m \in \N$ with $m > 0$. There exists $n(k)\in\N$ such that $\propinquity{\mathcal{T}}(\mathds{L}_{n(k)},\mathds{L})\leq\frac{1}{4k}$. Now, there exists $j(k) \in \N$ such that $\propinquity{\mathcal{T}}(\mathds{A}_{n(k),j(k)},\mathds{L}_{n(k)})\leq \frac{1}{4k}$. Consequently, if $\mathds{B}_k = \mathds{A}_{n(k),j(k)}$, we have:
\begin{equation*}
\propinquity{\mathcal{T}}(\mathds{B}_k,\mathds{L}) \leq 2\left(\frac{1}{4k}+\frac{1}{4k}\right) = \frac{1}{k}\text{.}
\end{equation*}
Thus $(\mathds{B}_k)_{k\in\N}$ (with $\mathds{B}_0$ arbitrary in $\mathcal{A}$) is a sequence in $\mathcal{A}$ converging to $\mathds{L}$ for $\propinquity{\mathcal{T}}$. Thus $\closure{\closure{\mathcal{A}}}\subseteq\closure{\mathcal{A}}$. The other inclusion follows from $\closure{\cdot}$ being an increasing operator. This completes our proof.
\end{proof}

\begin{definition}
Let $\mathcal{C}$ be a nonempty class of {\pqpms s} and let $\mathcal{T}$ be a weakly appropriate class of tunnels for $\mathcal{C}$. The \emph{topographic Gromov-Hausdorff quantum hypertopology} defined by $\mathcal{T}$ over $\mathcal{C}$ is the topology induced on $\mathcal{C}$ by the closure operator which associates, to any subclass $\mathcal{A}$ of $\mathcal{C}$, the class of elements in $\mathcal{C}$ which are limits of sequences of elements in $\mathcal{A}$ for $\propinquity{\mathcal{T}}$. 
\end{definition}

\subsection{Separation Property}

We now prove that the hypertopology induced by a weakly appropriate class of tunnels over a class of {\pqpms s} is separated in the sense that the propinquity between two {\pqpms s} is null if and only if they are isometrically isomorphic. Thus our hypertopology is ``Hausdorff modulo isometric isomorphism''. 

\begin{hypothesis}\label{zero-hyp}
Let $\mathcal{C}$ be a nonempty class of {\pqpms s}, and let $\mathcal{T}$ be a weakly appropriate class of tunnels over $\mathcal{C}$. Let $\mathds{A}$ and $\mathds{B}$ be {\pqpms s} in $\mathcal{C}$ such that, for each $n\in\N$ with $n > 0$, there exists a $n$-tunnel:
\begin{equation*}
\tau_n = (\D_n,\Lip_n,\M_n,\pi_n,\mathds{A},\rho_n,\mathds{B})
\end{equation*}
from $\mathds{A}=(\A,\Lip_\A,\M_\A,\mu_\A)$ to $\mathds{B}=(\B,\Lip_\B,\M_\B\mu_\B)$ with an $n$-admissible number $\varepsilon_n$ where $\varepsilon_n \leq \frac{1}{n}$. Moreover, for each $n\in\N$ with $n > 0$, let $(K^n_x)_{x\in (0,n]}$ be an increasing sequence of compact subsets of $\M_n^\sigma$ such that for all $x\in (0,n]$, the pair $(\varepsilon_n,K_x^n)$ is $x$-admissible.
\end{hypothesis}

\begin{notation}
For any $R>0$, we denote the set $\{ n\in \N : n \geq R\}$ by $\N_R$.
\end{notation}

\begin{lemma}\label{key-lemma}
Assume Hypothesis (\ref{zero-hyp}) and let $r > 0$. Let $f : \N\rightarrow\N\setminus\{0\}$ be a strictly increasing function. If $a\in\Loc{\A}{\M_\A}{\mu_\A,r}$ with $\Lip_\A(a)<\infty$, then:
\begin{enumerate}
\item there exists a strictly increasing function $\varphi : \N\rightarrow\N_r$ and $b \in \Loc{\B}{\M_\B}{\mu_\B,r}$ with $\Lip_\B(b)\leq \Lip_\A(a)$ such that for all $l \geq \Lip_\A(a)$ and $R \geq r$, the sequence:
\begin{equation*}
\left(\targetsettunnel{\tau_{f\circ\varphi(n)}}{a}{l,R,\varepsilon_{f\circ\varphi(n)},K^{f\circ\varphi(n)}_R}\right)_{n\in \N_R}
\end{equation*}
converges to $\{b\}$ in the Hausdorff distance $\Haus{\|\cdot\|_\B}$ induced by the norm of $\B$ on compact subsets of $\sa{\B}$,
\item if, for any strictly increasing $\varphi: \N\rightarrow\N$, the sequence:
\begin{equation*}
\left(\targetsettunnel{\tau_{f\circ\varphi(n)}}{a}{l,R,\varepsilon_{f\circ\varphi(n)},K^{f\circ\varphi(n)}_R}\right)_{n\in \N_R}
\end{equation*}
converges for $\Haus{\|\cdot\|_\B}$, then its limit is the singleton $\{b\}$. Moreover if $(b_n)_{n\in\N}$ is a sequence in $\sa{\B}$ such that, for some $R\geq r$ and for all $n\in\N$, we have $b_n \in \targetsettunnel{\tau_{f\circ\varphi(n)}}{a}{l,R,\varepsilon_{f\circ\varphi(n)},K^{f\circ\varphi(n)}_R}$, then $(b_n)_{n\in\N}$ converges in norm to $b$.
\item if $a\in\M_\A$ then $b\in\M_\B$.
\end{enumerate}
\end{lemma}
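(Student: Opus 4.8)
The plan is to extract from the tunnels $\tau_{f(m)}$ supplied by Hypothesis \ref{zero-hyp} a subsequence along which the \emph{smallest} target sets of $a$ collapse, in the norm of $\B$, to a single point $b$, and then to feed this into the structural properties of target sets established in Theorem \ref{fundamental-thm} — nonemptiness and compactness (Lemma \ref{compact-lift-set-lemma}), the norm bound $\|c\|_\B \le \|a\|_\A + l\varepsilon$, the diameter bound $\diam{\targetsettunnel{\tau}{a}{l,r,\varepsilon,K}}{\|\cdot\|_\B}\le 2l\varepsilon$, and the inequality $\Lip_\B\circ\rho \le \Lip$ built into left admissibility — together with the monotonicity of target sets in their parameters (the remark after Definition \ref{left-admissible-def}), so as to obtain the convergence in (1), the identification of $b$ in (2), and the topographic refinement (3).

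First I would set $l_0 = \Lip_\A(a)$, reducing to $l_0 > 0$ since $l_0 = 0$ forces $a$ to be a scalar, hence $a = 0$ unless $\A$ is unital, in which case the statement is immediate. For each $m\in\N_r$ the pair $(\varepsilon_{f(m)},K^{f(m)}_r)$ is $r$-admissible, hence $r$-left-admissible, for $\tau_{f(m)}$, so Lemma \ref{compact-lift-set-lemma} makes $S_m := \targetsettunnel{\tau_{f(m)}}{a}{l_0,r,\varepsilon_{f(m)},K^{f(m)}_r}$ a nonempty compact subset of $\Loc{\B}{\M_\B}{\mu_\B,r+4\varepsilon_{f(m)}}$, and Theorem \ref{fundamental-thm} gives $\|c\|_\B \le \|a\|_\A + l_0$ (as $\varepsilon_{f(m)}\le 1$), $\Lip_\B(c)\le l_0$ for all $c\in S_m$, and $\diam{S_m}{\|\cdot\|_\B}\le 2l_0\varepsilon_{f(m)} \to 0$. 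Thus for large $m$ the sets $S_m$ all lie in $C := \{\,c \in \sa{\B} : \|c\|_\B \le \|a\|_\A + l_0,\ \Lip_\B(c)\le l_0,\ c \in \Loc{\B}{\M_\B}{\mu_\B,r+4}\,\}$, and I would check that $C$ is norm-compact: if $\cBall{\M_\B^\sigma}{\sigmaKantorovich{\Lip_\B}}{\mu_\B}{r+4}\neq\M_\B^\sigma$, then $C\subseteq l_0\,\LipLoc{\B}{\M_\B}{\cBall{\M_\B^\sigma}{\sigmaKantorovich{\Lip_\B}}{\mu_\B}{r+4}}$, compact by Proposition \ref{compact-liploc-prop}; otherwise $(\B,\Lip_\B)$ is a {\Lqcms}, for which norm-bounded $\Lip_\B$-balls are norm-compact. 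Picking $b_m \in S_m$ for $m\in\N_r$ and using compactness of $C$, I would obtain a strictly increasing $\varphi : \N \to \N_r$ and $b \in C$ with $b_{\varphi(n)}\to b$ in norm.

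Next, for arbitrary $l \ge l_0$ and $R \ge r$, writing $m_n = f\circ\varphi(n)\ge n$, one has $m_n \ge R$ for $n\in\N_R$, so $K^{m_n}_R$ is defined, $K^{m_n}_r\subseteq K^{m_n}_R$, and by monotonicity $S'_n := \targetsettunnel{\tau_{m_n}}{a}{l_0,r,\varepsilon_{m_n},K^{m_n}_r}\subseteq T_n := \targetsettunnel{\tau_{m_n}}{a}{l,R,\varepsilon_{m_n},K^{m_n}_R}$ with $b_{\varphi(n)}\in S'_n$; since $\diam{T_n}{\|\cdot\|_\B}\le 2l\varepsilon_{m_n}\to 0$, every $c\in T_n$ satisfies $\|c - b\|_\B \le \diam{T_n}{\|\cdot\|_\B} + \|b_{\varphi(n)} - b\|_\B \to 0$, giving $\Haus{\|\cdot\|_\B}(T_n,\{b\})\to 0$, which is the convergence in (1). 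For the rest of (1): $\Lip_\B(b)\le\liminf_n\Lip_\B(b_{\varphi(n)})\le l_0$ by lower semicontinuity of $\Lip_\B$; and for the support, writing $K_s = \cBall{\M_\B^\sigma}{\sigmaKantorovich{\Lip_\B}}{\mu_\B}{s}$, each $b_{\varphi(n)}$ lies in $\indicator{K_{s_n}}\B^{\ast\ast}\indicator{K_{s_n}}$ with $s_n = r+4\varepsilon_{m_n}\downarrow r$, so for fixed $s>r$ one has $b_{\varphi(n)} = \indicator{K_s}b_{\varphi(n)}\indicator{K_s}$ for $n$ large, whence $b = \indicator{K_s}b\,\indicator{K_s}$ by norm-closedness of $\lsa{\B}{\M_\B}{K_s}$; letting $s\downarrow r$ and using $\indicator{K_s}\downarrow\indicator{K_r}$ strongly (an abelian fact, as $\bigcap_{s>r}K_s = K_r$) yields $b = \indicator{K_r}b\,\indicator{K_r}$, i.e. $b\in\Loc{\B}{\M_\B}{\mu_\B,r}$ since $\Lip_\B(b) < \infty$.

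For (2): any convergent $\varphi$-subsequence has a singleton limit because its terms have $\|\cdot\|_\B$-diameter $\le 2l\varepsilon_{f\circ\varphi(n)}\to 0$; and two such limits $\{c_1\},\{c_2\}$ (along $\varphi_1,\varphi_2$) must agree, for enumerating $\varphi_1(\N)\cup\varphi_2(\N)$ increasingly as $\varphi_3$ yields a subsequence that converges by hypothesis yet has both $\{c_1\}$ and $\{c_2\}$ among its subsequential limits; since (1) already exhibits a subsequence converging to $\{b\}$, the common limit is $\{b\}$, and the last clause of (2) follows from $\|b_n - b\|_\B \le \Haus{\|\cdot\|_\B}(\targetsettunnel{\tau_{f\circ\varphi(n)}}{a}{l,R,\varepsilon_{f\circ\varphi(n)},K^{f\circ\varphi(n)}_R},\{b\})\to 0$ by (1). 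For (3): if $a\in\M_\A$, the fourth clause of $r$-left-admissibility makes each $S'_n\cap\M_\B$ nonempty, so choosing $\widetilde b_n\in S'_n\cap\M_\B$ gives $\|\widetilde b_n - b\|_\B\le\Haus{\|\cdot\|_\B}(S'_n,\{b\})\to 0$, hence $b\in\overline{\M_\B}=\M_\B$. The hard part will be the support control in (1) — upgrading ``$b$ is a norm limit of elements locally supported on $K_{s_n}$ with $s_n\downarrow r$'' to ``$b$ is locally supported on $K_r$'' — which rests on the strong convergence of the corner projections of the closed balls as their radius decreases to $r$; a secondary technicality is the norm-compactness of $C$, where one must split off the genuinely non-compact case (Proposition \ref{compact-liploc-prop}) from the quantum-compact one.
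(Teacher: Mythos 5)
Your proposal is correct and follows essentially the same route as the paper's proof: confine all the target sets in a fixed norm-compact subset of $\sa{\B}$ (using the very same dichotomy between $K\subsetneq\M_\B^\sigma$ and the unital case via Proposition (\ref{compact-liploc-prop})), extract a convergent subsequence, and let the diameter bound $2l\varepsilon_n\rightarrow 0$ from Theorem (\ref{fundamental-thm}), together with monotonicity of target sets in $(l,R,K)$, collapse everything to a single $b$, with lower semi-continuity of $\Lip_\B$ giving $\Lip_\B(b)\leq\Lip_\A(a)$ and clause (4) of admissibility plus closedness of $\M_\B$ giving (3). The only variations are minor: you extract a norm-convergent sequence of selected points directly instead of applying Blaschke selection to the sets (equivalent here, given the vanishing diameters), and you verify the support claim $b\in\Loc{\B}{\M_\B}{\mu_\B,r}$ inside the proof by the corner-projection argument, which is exactly the content the paper defers to its Lemma (\ref{support-lemma}).
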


\begin{proof}
Let $a\in\Loc{\A}{\M_\A}{\mu_\A,r}$ for some $r > 0$ with $\Lip_\A(a)<\infty$ and $l \geq \Lip_\A(a)$. By Theorem (\ref{fundamental-thm}), since $\varepsilon_n\leq 1$ for all $n>0$, and by Lemma (\ref{compact-lift-set-lemma}), the sequence:
\begin{equation*}
\left(\targetsettunnel{\tau_{f(n)}}{a}{l,r,\varphi_{f(n)},K^{f(n)}_r}\right)_{n\in\N_r}
\end{equation*}
is a sequence of compact subsets of the set:
\begin{equation*}
\mathfrak{b} = \set{ b \in l\cdot \LipLoc{\B}{\M_\B}{r + 4\varepsilon_n}}{ \|b\|_\B \leq \|a\|_\A + 4l }
\end{equation*}
and the sets of this sequence are non-empty for $n\geq r$ by Definition (\ref{left-admissible-def}). Now, $\mathfrak{b}$ is norm compact in $\sa{\B}$. Indeed, either $\LipLoc{\B}{\M_\B}{r+4\varepsilon_n}$ is compact, or by Proposition (\ref{compact-liploc-prop}), $\B$ is unital, and thus $(\B,\Lip_\B)$ is a {\Lqcms}. In this case, the set $\{ b \in \sa{\B} : \Lip_\B(b) \leq L, \|b\|\leq M\}$ is totally bounded by \cite{Rieffel98a} for any $L, M > 0$, and is closed since $\Lip_\B$ is lower semi-continuous. Thus it is a compact set in $\sa{\B}$ which contains the closed set $\mathfrak{b}$. 

Hence, by Blaschke selection's theorem \cite{burago01}, there exists a strictly increasing function $\varphi:\N\rightarrow\N_r$ such that:
\begin{equation}\label{k-eq0}
\left(\targetsettunnel{\tau_{f\circ\varphi(n)}}{a}{l,r,\varepsilon_{f\circ\varphi(n)},K^{f\circ\varphi(n)}_r}\right)_{n\in\N}
\end{equation}
converges in the Hausdorff distance induced on the closed subsets of the compact $\mathfrak{b}$ by the norm $\|\cdot\|_\B$ of $\B$.

It is now enough to show that, for any sequence of the form given by Expression (\ref{k-eq0}) and convergent to some subset $\mathfrak{t}(a|l,r)$ of $\sa{\B}$ for the Hausdorff distance $\Haus{\|\cdot\|_\B}$ induced by the norm of $\B$ on the compact subsets of $\sa{\B}$, the limit satisfies the conclusions given in Assertion (2) and (3) of our lemma.

Now, for all $n\in\N$, Theorem (\ref{fundamental-thm}) implies that:
\begin{equation}\label{t-eq1}
\diam{\targetsettunnel{\tau_{f\circ\varphi(n)}}{a}{l,r,\varepsilon_{f\circ\varphi(n)},K^{f\circ\varphi(n)}_r}}{\|\cdot\|_\B} \leq 2l\varepsilon_{f\circ\varphi(n)}\text{.}
\end{equation}
This has two consequences. First, we conclude that:
\begin{equation*}
\diam{\mathfrak{t}(a|l,r)}{\|\cdot\|_\B} = 0\text{.}
\end{equation*}
Second, let $b_n \in \targetsettunnel{\tau_{f\circ\varphi(n)}}{a}{l,r,\varepsilon_{f\circ\varphi(n)},K^{f\circ\varphi(n)}_r}$ for all $n\in\N$. Let $\varepsilon > 0$. There exists $N\in\N$ such that, for all $n\geq N$, we have $2l\varepsilon_n \leq \frac{\varepsilon}{2}$. 
Moreover, there exists $M\in\N$ such that, for all $p,q\geq N$, we have:
\begin{equation*}
\Haus{\|\cdot\|_\B}\left(\targetsettunnel{\tau_{f\circ\varphi(p)}}{a}{l,r,\varepsilon_{f\circ\varphi(p)},K^{f\circ\varphi(p)}_r},\targetsettunnel{\tau_{f\circ\varphi(q)}}{a}{l,r,\varepsilon_{f\circ\varphi(q)},K^{f\circ\varphi(q)}_r}\right)\leq \frac{\varepsilon}{2}\text{.}
\end{equation*}
Thus, for all $p,q \geq M$, there exists $c_q\in\targetsettunnel{\tau_{f\circ\varphi(q)}}{a}{l,r,\varepsilon_{f\circ\varphi(q)},K^{f\circ\varphi(q)}_r}$ such that $\|b_p - c_q\|_\B\leq\frac{\varepsilon}{2}$, and thus:
\begin{equation*}
\begin{split}
\|b_p - b_q\|_\B &\leq \|b_p - c_q\|_\B + \|b_q - c_q\|_\B\\
&\leq \frac{\varepsilon}{2} + \frac{\varepsilon}{2} \text{ using Inequality (\ref{t-eq1}),}\\
&= \varepsilon\text{.}
\end{split}
\end{equation*}
Hence $(b_n)_{n\in\N}$ is a Cauchy sequence in the complete metric space $\mathfrak{b}$: therefore $(b_n)_{n\in\N}$ converges in norm to some $b\in \mathfrak{b}$.

By Definition (\ref{admissible-pair-def}) of admissibility and Definition (\ref{lift-set-def}) of lift sets, we conclude that $\Lip_\B(b_n)\leq l$ for all $n\in\N$. Since $\Lip_\B$ is lower semi-continuous, we conclude that $\Lip_\B(b)\leq l$.

We then check easily that $\{ b \} = \mathfrak{t}(a|l,r)$. Indeed, for all $\varepsilon>0$, there exists $N\in\N$ such that for all $n\geq N$, we have $\|b_n-b\|\leq\frac{\varepsilon}{2}$ so:
\begin{equation*}
\{b\} \subseteq_\varepsilon \targetsettunnel{\tau_{f\circ\varphi(n)}}{a}{l,r,\varepsilon_{f\circ\varphi(n)},K^{f\circ\varphi(n)}_r}\text{.}
\end{equation*}
On the other hand, there exists $M\in\N$ such that, for all $n\geq M$, we have:
\begin{equation*}
\diam{\targetsettunnel{\tau_{f\circ\varphi(n)}}{a}{l,r,\varepsilon_{f\circ\varphi(n)},K^{f\circ\varphi(n)}_r}}{\|\cdot\|_\B}\leq\frac{\varepsilon}{2}\text{.}
\end{equation*}

If $n\geq\max\{N,M\}$, then for any $c\in\targetsettunnel{\tau_{f\circ\varphi(n)}}{a}{l,r,\varepsilon_{f\circ\varphi(n)},K^{f\circ\varphi(n)}_r}$, we have:
\begin{equation*}
\|b-c\|_\B \leq \|b-b_n\|_\B + \|b_n-c\|_\B \leq \varepsilon\text{,}
\end{equation*}
hence:
\begin{equation*}
\{b\} = \Haus{\|\cdot\|_\B}\text{--}\lim_{n\rightarrow\infty} \targetsettunnel{\tau_{f\circ\varphi(n)}}{a}{l,r,\varepsilon_{f\circ\varphi(n)},K^{f\circ\varphi(n)}_r}
\end{equation*}
so $\{b\} = \mathfrak{t}(a|l,r)$ by uniqueness of limits for the Hausdorff distance on compact sets.

Now, let $L\geq l$ and $R \geq r$. we let $\varepsilon > 0$ and let $N\in\N$ such that for all $n\geq N$, we have:
\begin{equation}\label{k-eq2}
\diam{\targetsettunnel{\tau_n}{a}{L,R,\varepsilon_n,K^{n}_R}}{\|\cdot\|_\B} \leq \frac{\varepsilon}{2}\text{.}
\end{equation}
Moreover, we let $M\in\N$ such that, for all $n\geq M$, we have:
\begin{equation}\label{k-eq3}
\Haus{\|\cdot\|_\B}\left(\targetsettunnel{\tau_{f\circ\varphi(n)}}{a}{l,r,\varepsilon_{f\circ\varphi(n)},K^{f\circ\varphi(n)}_r},\{b\}\right) \leq \frac{\varepsilon}{2}\text{.}
\end{equation}
Let $n\geq \max\{N,M\}$. Let $c\in\targetsettunnel{\tau_{f\circ\varphi(n)}}{a}{L,R,\varepsilon_{f\circ\varphi(n)},K^{f\circ\varphi(n)}_R}$. Since, by Definition (\ref{lift-set-def}):
\begin{equation*}
\targetsettunnel{\tau_{f\circ\varphi(n)}}{a}{l,r,\varepsilon_{f\circ\varphi(n)},K^{f\circ\varphi(n)}_r} \subseteq \targetsettunnel{\tau_{f\circ\varphi(n)}}{a}{L,R,\varepsilon_{f\circ\varphi(n)},K^{f\circ\varphi(n)}_R}\text{,}
\end{equation*}
as $K^{f\circ\varphi(n)}_r\subseteq K_R^{f\circ\varphi(n)}$, we conclude that there exists:
\begin{equation*}
b_n \in \targetsettunnel{\tau_{f\circ\varphi(n)}}{a}{l,r,\varepsilon_{f\circ\varphi(n)},K^{f\circ\varphi(n)}_r}
\end{equation*}
such that $\|b_n-c\|_\B\leq \frac{\varepsilon}{2}$ by Inequality (\ref{k-eq2}). By Inequality (\ref{k-eq3}), in turn, we conclude that $\|b_n - b\|_\B\leq\frac{\varepsilon}{2}$. Consequently, $\|c-b\|_\B\leq\varepsilon$. From this, we conclude:
\begin{equation*}
\{b\} = \Haus{\|\cdot\|_\B}\text{--}\lim_{n\rightarrow\infty} \targetsettunnel{\tau_{f\circ\varphi(n)}}{a}{L,R,\varepsilon_{f\circ\varphi(n)},K^{f\circ\varphi(n)}_R}\text{.}
\end{equation*}
We note that the above argument also establishes that if $(b_n)_{n\in\N}$ is a sequence in $\sa{\B}$ with $b_n \in \targetsettunnel{\tau_{f\circ\varphi(n)}}{a}{L,R,\varepsilon_{f\circ\varphi(n)},K^{f\circ\varphi(n)}_R}$ for all $n\in\N$, then $(b_n)_{n\in\N}$ converges to $b$ in norm. Since we may now choose $l= \Lip_\A(a)$, we conclude that $\Lip_\B(b)\leq \Lip_\A(a)$ as desired.

Last, if $a\in\M_\A\cap\Loc{\A}{\M_\A}{\mu_\A,r}$, then by Definition (\ref{left-admissible-def}), for all $n\in\N$ there exists:
\begin{equation*}
b_n \in \targetsettunnel{\tau_{f\circ\varphi(n)}}{a}{l,r,\varepsilon_{f\circ\varphi(n)},K^{f\circ\varphi(n)}_r}\cap \M_\B
\end{equation*}
and by Lemma (\ref{key-lemma}), we conclude that $b = \lim_{n\rightarrow\infty} b_n \in \M_\B$ since $\M_\B$ is closed.

This completes our key lemma.
\end{proof}

\begin{lemma}\label{diagonal-lemma}
Assume Hypothesis (\ref{zero-hyp}). If $\mathfrak{a}$ is a countable subset of $\Loc{\A}{\M_\A}{\star}$ with a subset dense in $\Loc{\A}{\M_\A}{\star}\cap\M_\A$, then there exists a strictly increasing function $\psi : \N \rightarrow \N\setminus\{0\}$ and a function $\pi : \mathfrak{a} \rightarrow \sa{\B}$ such that, for all $a \in \mathfrak{a}$, for all $l \geq \Lip_\A(a)$ and for all $r > 0$ with $a\in\Loc{\A}{\M_\A}{\mu_\A,r}$:
\begin{enumerate}
\item we have:
\begin{equation*}
\Haus{\|\cdot\|_\B}\text{--}\lim_{n\rightarrow\infty}\targetsettunnel{\tau_{\psi(n)}}{a}{l,r,\varepsilon_{\psi(n)},K^{\psi(n)}_r} = \{\pi(a)\}\text{,}
\end{equation*}
\item we have $\Lip_\B(h(a))\leq \Lip_\A(a)$,
\item we have $h(a) \in \Loc{\B}{\M_\B}{\mu_\B,r}$,
\item if $a\in\M_\A$ then $\pi_\A(a)\in\M_\B$,
\item the function $\pi$ is uniformly continuous in norm.
\end{enumerate}
\end{lemma}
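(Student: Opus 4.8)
The plan is a Cantor diagonal extraction built on Lemma (\ref{key-lemma}), closed off by a continuity estimate taken from Theorem (\ref{fundamental-thm}).

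First I would enumerate $\mathfrak{a} = \{a_k : k \in \N\setminus\{0\}\}$, and for each $k$ fix the least radius $\rho_k > 0$ with $a_k\in\Loc{\A}{\M_\A}{\mu_\A,\rho_k}$; such a least radius exists because the closed balls $\cBall{\M_\A^\sigma}{\sigmaKantorovich{\Lip_\A}}{\mu_\A}{r}$ decrease to $\cBall{\M_\A^\sigma}{\sigmaKantorovich{\Lip_\A}}{\mu_\A}{\rho}$ as $r\downarrow\rho$, so the set of admissible radii for $a_k$ is $[\rho_k,\infty)$. I then build strictly increasing functions $g_0, g_1, g_2,\ldots : \N\rightarrow\N\setminus\{0\}$ inductively: put $g_0 : n\mapsto n+1$, and given $g_{k-1}$, apply Lemma (\ref{key-lemma}) to $a_k$ with input function $f = g_{k-1}$ and radius $\rho_k$. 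This yields a strictly increasing $\varphi_k : \N\rightarrow\N_{\rho_k}$ and an element $b_k\in\Loc{\B}{\M_\B}{\mu_\B,\rho_k}$ with $\Lip_\B(b_k)\leq\Lip_\A(a_k)$, and with $b_k\in\M_\B$ whenever $a_k\in\M_\A$, such that for every $l\geq\Lip_\A(a_k)$ and every $R\geq\rho_k$ the sequence of target sets $\targetsettunnel{\tau_{g_{k-1}\circ\varphi_k(n)}}{a_k}{l,R,\varepsilon_{g_{k-1}\circ\varphi_k(n)},K^{g_{k-1}\circ\varphi_k(n)}_R}$ converges to $\{b_k\}$ for $\Haus{\|\cdot\|_\B}$. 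I set $g_k = g_{k-1}\circ\varphi_k$, so that $g_k$ is strictly increasing, $(g_k(n))_n$ is a subsequence of $(g_{k-1}(n))_n$, and $g_k(m)\geq m$ for all $m$.

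Next I would set $\psi : n\mapsto g_n(n)$ and define $\pi(a_k) = h(a_k) = b_k$. Since $\varphi_{n+1}(n+1)\geq n+1$ and $g_n$ is strictly increasing, $\psi(n+1)=g_n(\varphi_{n+1}(n+1))>g_n(n)=\psi(n)$, and $\psi(n)\geq 1$; thus $\psi$ is a strictly increasing $\N\setminus\{0\}$-valued function. For each fixed $k$ and all $n\geq k$ one has $\psi(n)=g_k\bigl(\theta_k(n)\bigr)$ with $\theta_k(n)=\varphi_{k+1}\circ\cdots\circ\varphi_n(n)$ (the empty composition when $n=k$), and $\theta_k$ is strictly increasing on $\{k,k+1,\ldots\}$; hence $(\psi(n))_{n\geq k}$ is a subsequence of $(g_k(m))_m$. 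Consequently the convergence recorded at step $k$ passes to this subsequence: for every $l\geq\Lip_\A(a_k)$ and every $R\geq\rho_k$, the sets $\targetsettunnel{\tau_{\psi(n)}}{a_k}{l,R,\varepsilon_{\psi(n)},K^{\psi(n)}_R}$ converge to $\{b_k\}$ for $\Haus{\|\cdot\|_\B}$. Since every $r>0$ with $a_k\in\Loc{\A}{\M_\A}{\mu_\A,r}$ satisfies $r\geq\rho_k$, this is exactly conclusion (1) with $\pi(a_k)=b_k$; conclusion (2) is $\Lip_\B(b_k)\leq\Lip_\A(a_k)$; conclusion (3) follows from $b_k\in\Loc{\B}{\M_\B}{\mu_\B,\rho_k}\subseteq\Loc{\B}{\M_\B}{\mu_\B,r}$; and conclusion (4) is $b_k\in\M_\B$ when $a_k\in\M_\A$ --- all read off from Lemma (\ref{key-lemma}) applied at step $k$.

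It remains to prove (5). Given $a,a'\in\mathfrak{a}$, pick $r\geq\max\{\rho_a,\rho_{a'}\}$ and $l\geq\max\{\Lip_\A(a),\Lip_\A(a')\}$. For $n$ large enough that $\psi(n)\geq r$, left admissibility of $(\varepsilon_{\psi(n)},K^{\psi(n)}_r)$ makes the target sets $\targetsettunnel{\tau_{\psi(n)}}{a}{l,r,\varepsilon_{\psi(n)},K^{\psi(n)}_r}$ and $\targetsettunnel{\tau_{\psi(n)}}{a'}{l,r,\varepsilon_{\psi(n)},K^{\psi(n)}_r}$ nonempty, so I choose $b_n$ in the first and $b_n'$ in the second. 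By the ``moreover'' part of Lemma (\ref{key-lemma})(2), $(b_n)_n\to\pi(a)$ and $(b_n')_n\to\pi(a')$ in norm, while Inequality (\ref{distance-eq}) of Theorem (\ref{fundamental-thm}) gives
\begin{equation*}
\|b_n - b_n'\|_\B \leq \|a - a'\|_\A + 2l\,\varepsilon_{\psi(n)}\text{,}
\end{equation*}
with $\varepsilon_{\psi(n)}\leq\frac{1}{\psi(n)}\to 0$. Letting $n\to\infty$ yields $\|\pi(a)-\pi(a')\|_\B\leq\|a-a'\|_\A$, so $\pi$ is $1$-Lipschitz, in particular uniformly continuous. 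The only genuinely delicate point is the bookkeeping that makes a single $\psi$ serve every $a_k$ and every admissible pair $(l,r)$ at once: this is precisely why Lemma (\ref{key-lemma})(1) is formulated to hold for \emph{all} $l\geq\Lip_\A(a)$ and all $R$ above the chosen radius, and why I apply it at the minimal radius $\rho_k$; everything else is the standard architecture of a diagonal argument together with the quantitative bounds of Theorem (\ref{fundamental-thm}).
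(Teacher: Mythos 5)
Your proposal is correct and takes essentially the same route as the paper's own proof: iterate Lemma (\ref{key-lemma}) to produce nested extraction functions, take the diagonal $\psi(n)=g_n(n)$, verify that each tail $(\psi(n))_{n\geq k}$ is a subsequence of the $k$-th stage so the Hausdorff limits persist, and deduce uniform continuity of $\pi$ from Inequality (\ref{distance-eq}) of Theorem (\ref{fundamental-thm}) together with convergence of representative sequences. The only (harmless) differences are your use of a minimal admissible radius for each $a_k$, where the paper simply fixes some nondecreasing sequence of radii, and your slightly stronger conclusion that $\pi$ is $1$-Lipschitz rather than merely uniformly continuous.
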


\begin{proof}
We employ a diagonal procedure to prove this lemma. We write $\mathfrak{a} = \{a_n : n\in\N\}$ in such a way that we may choose a nondecreasing sequence $(r_n)_{n\in\N}$ such that $a_n \in \Loc{\A}{\M_\A}{\mu_\A,r_n}$.

Let $\varphi_0 :\N\rightarrow\N$ be a function be a strictly increasing function given Lemma (\ref{key-lemma}) for $a_0$ (note that this function does not depend on our choice of $r_0$). Assume we have constructed $\varphi_0,\varphi_1,\ldots,\varphi_m$ for some $m\in\N$, all strictly increasing functions from $\N$ to $\N\setminus\{0\}$, such that:
\begin{equation}\label{diag-eq0}
\left(\targetsettunnel{\tau_{\psi_j(n)}}{a_j}{\Lip_\A(a_j),r_j,\varepsilon_{\psi_j(n)},K^{\psi_j(n)}_{r}}\right)_{n\in\N}
\end{equation}
converges to some $\{\pi(a_j)\}$, where $\psi_j = \varphi_0\circ\ldots\circ\varphi_j$, for all $j\in\{0,\ldots,m\}$. 

Since $\psi_m$ is a strictly increasing function from $\N$ to $\N\setminus\{0\}$, Lemma (\ref{key-lemma}) applied to $a_{m+1}$ provides us with a function $\varphi_{m+1}$ such that:
\begin{equation*}
\left(\targetsettunnel{\tau_{\psi_m\circ\varphi_{m+1}(n)}}{a_{m+1}}{\Lip_\A(a_{m+1}),r_{m+1},\varepsilon_{\psi_m\circ\varphi_{m+1}(n)},K^{\psi_m\circ\varphi_{m+1}(n)}_{r_{m+1}}}\right)_{n\in\N}
\end{equation*}
converges to some singleton, denoted by $\{\pi(a_{m+1})\}$. The induction hypothesis is thus satisfied, since:
\begin{equation*}
\left(\targetsettunnel{\tau_{\psi_j\circ\varphi_{m+1}(n)}}{a_j}{\Lip_\A(a_j),r_j,\varepsilon_{\psi_j\circ\varphi_{m+1}(n)},K^{\psi_j\circ\varphi_{m+1}(n)}_{r_{m+1}}}\right)_{n\in\N}
\end{equation*}
is a subsequence of the sequence in Expression (\ref{diag-eq0}) for all $j\in\{0,\ldots,m\}$. 

We then define $\psi : \N \rightarrow \N$ by setting for all $n\in\N$:
\begin{equation*}
\psi(n) = \varphi_0\circ\varphi_1\circ\ldots\circ\varphi_n(n) = \circ_{j=0}^n\varphi_j(n)\text{.}
\end{equation*}
Now, $\psi$ is a strictly increasing function from $\N$ to $\N\setminus\{0\}$, and moreover, for any fix $m\in\N$, the sequence:
\begin{equation*}
\left(\targetsettunnel{\tau_{\psi(n)}}{a_m}{\Lip_\A(a_m),r_m,\varepsilon_{\psi(n)},K^{\psi(n)}_{r_m}}\right)_{n\in\N_m}
\end{equation*}
is a subsequence of $\left(\targetsettunnel{\tau_{\psi_m(n)}}{a_m}{\Lip_\A(a),r_m,\frac{1}{\psi_m(n)},K^{\psi_m(n)}_{r}}\right)_{n\in\N_m}$, and thus converges to some singleton $\{\pi(a)\}$ for the Hausdorff distance associated with the norm of $\B$.

Now, by Lemma (\ref{key-lemma}), for any $n \in \N$, $r > 0$ and $l > 0$ such that $a_n \in \Loc{\A}{\M_\A}{\mu_\A,r}$ and $\Lip_\A(a_n)\leq l$, we conclude that:
\begin{equation*}
\left(\targetsettunnel{\tau_{\psi(n)}}{a_m}{l,r,\varepsilon_{\psi(n)},K^{\psi(n)}_r}\right)_{n\in\N_m}
\end{equation*}
converges in $\Haus{\|\cdot\|_\B}$ to $\{\pi(a)\}$ as desired. 

Let now $\varepsilon > 0$ and $a,a'\in\mathfrak{a}$ with $\|a-a'\|_\A\leq\frac{\varepsilon}{4}$. Let $l > \max\{\Lip_\A(a),\Lip_\A(a')\}$ and $r > 0$ such that $a,a'\in\Loc{\A}{\M_\A}{\mu_\A,r}$. By construction of $\pi$, there exists $N\in\N$ such that for all $n\geq N$ we have:
\begin{equation*}
\Haus{\|\cdot\|_\B}\left(\{\pi(a)\},\targetsettunnel{\tau_{\psi(n)}}{a}{l,r,\varepsilon_{\psi(n)},K^{\psi(n)}_r}\right) \leq \frac{\varepsilon}{4}
\end{equation*}
and similarly, there exists $N'\in\N$ such that for all $n\geq N'$ we have:
\begin{equation*}
\Haus{\|\cdot\|_\B}\left(\{\pi(a')\},\targetsettunnel{\tau_{\psi(n)}}{a'}{l,r,\varepsilon_{\psi(n)},K^{\psi(n)}_r}\right) \leq \frac{\varepsilon}{4}\text{.}
\end{equation*}
Let $b \in \targetsettunnel{\tau_{\psi(n)}}{a}{l,r,\varepsilon_{\psi(n)},K^{\psi(n)}_r}$ and $b'\in \targetsettunnel{\tau_{\psi(n)}}{a'}{l,r,\varepsilon_{\psi(n)},K^{\psi(n)}_r}$ such that:
\begin{equation*}
\|b-\pi(a)\|_\B\leq\frac{\varepsilon}{4}\text{ and }\|b'-\pi(a')\|_\A\leq\frac{\varepsilon}{4}\text{.}
\end{equation*}
We note that the inequality are weak since we are working with compact sets, so the Hausdorff distance is reached.

Let $P \in \N$ such that for all $n\geq P$, we have $\varepsilon_n\leq\frac{\varepsilon}{8 l}$. If $n\geq\max\{N,M,P\}$ then, by Inequality (\ref{distance-eq}) of Theorem (\ref{fundamental-thm}), we have:
\begin{equation*}
\|b-b'\|_\A\leq \|a-a'\|_\A + 2l\varepsilon_{\psi(n)} \leq \frac{\varepsilon}{2} + \frac{\varepsilon}{4} \leq \frac{\varepsilon}{2}\text{.}
\end{equation*}
Thus:
\begin{equation*}
\|\pi(a)-\pi(a')\|_\B \leq \|\pi(a)-b\|_\B + \|b - b'\|_\B + \|b' - \pi_\A(b')\|_\A \leq \varepsilon\text{.}
\end{equation*}

This concludes our proof, as we also observe that $\pi(\M_\A)\subseteq\M_\B$ since $\M_\B$ is closed.
\end{proof}

\begin{corollary}\label{map-cor}
Assume Hypothesis (\ref{zero-hyp}). There exists $\psi : \N\rightarrow\N\setminus\{0\}$ strictly increasing, and $\pi:\sa{\A}\rightarrow\sa{\B}$ uniformly continuous in norm, such that:
\begin{enumerate}
\item for all $a\in\Loc{\A}{\M_\A}{\star}$, and for any $l \geq \Lip_\A(a)$ and $r > 0$ such that $a\in\Loc{\A}{\M_\A}{\mu_\A,r}$, we have:
\begin{equation*}
\Haus{\|\cdot\|_\B}\text{--}\lim_{n\rightarrow\infty} \targetsettunnel{\tau_{\psi(n)}}{a}{l,r,\varepsilon_{\psi(n)},K^{\psi(n)}_r} = \{ \pi(a) \}\text{,}
\end{equation*}
\item for all $a\in \Loc{\A}{\M_\A}{\star}$ we have $\Lip_\B(\pi(a))\leq \pi(a)$,
\item for all $a\in \sa{\A}$ we have $\|\pi(a)\|_\B \leq \|a\|_\A$,
\item for all $a\in\sa{\M_\A}$ we have $\pi(a) \in \sa{\M_\B}$.
\end{enumerate}
\end{corollary}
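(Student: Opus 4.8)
The plan is to bootstrap Corollary \ref{map-cor} from Lemma \ref{diagonal-lemma} by a density argument. First I would fix a countable subset $\mathfrak{a}$ of $\Loc{\A}{\M_\A}{\star}$ which is dense in $\Loc{\A}{\M_\A}{\star}$ for the norm and whose intersection with $\M_\A$ is dense in $\Loc{\A}{\M_\A}{\star}\cap\M_\A$; such a set exists because $\A$ is separable and because, in a {\pqpms}, $\Loc{\A}{\M_\A}{\star}\cap\M_\A$ is dense in $\sa{\M_\A}$ by Definition \ref{pqms-def}. Apply Lemma \ref{diagonal-lemma} to this $\mathfrak{a}$ to obtain a strictly increasing $\psi:\N\rightarrow\N\setminus\{0\}$ and a uniformly norm-continuous map $\pi:\mathfrak{a}\rightarrow\sa{\B}$ satisfying the five listed properties on $\mathfrak{a}$. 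Since $\pi$ is uniformly continuous on the norm-dense subset $\mathfrak{a}$ of $\Loc{\A}{\M_\A}{\star}$ and $\sa{\B}$ is complete, $\pi$ extends uniquely to a uniformly continuous map on the norm-closure of $\Loc{\A}{\M_\A}{\star}$; but by Definition \ref{pqms-def}(4), $\Loc{\A}{\M_\A}{\star}$ is norm-dense in $\dom{\Lip_\A}$, and by density of $\dom{\Lip_\A}$ in $\sa{\A}$ (Lipschitz pair axiom), the closure is all of $\sa{\A}$. So $\pi$ extends to a uniformly continuous $\pi:\sa{\A}\rightarrow\sa{\B}$; keep the same name.

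Next I would verify the four asserted properties for the extended $\pi$. For (3), note the extent bound: by Inequality \eqref{target-norm-bound-eq} of Theorem \ref{fundamental-thm}, every element of $\targetsettunnel{\tau_{\psi(n)}}{a}{l,r,\varepsilon_{\psi(n)},K^{\psi(n)}_r}$ has norm at most $\|a\|_\A + l\varepsilon_{\psi(n)}$; since $\varepsilon_{\psi(n)}\to 0$, passing to the Hausdorff limit gives $\|\pi(a)\|_\B\leq\|a\|_\A$ for $a\in\mathfrak{a}$, hence for all $a\in\sa{\A}$ by continuity of the norm and of $\pi$. For (2), Lemma \ref{diagonal-lemma}(2) gives $\Lip_\B(\pi(a))\leq\Lip_\A(a)$ for $a\in\mathfrak{a}$; for general $a\in\Loc{\A}{\M_\A}{\star}$, pick $a_k\in\mathfrak{a}$ with $a_k\to a$ in norm and $\Lip_\A(a_k)\to\Lip_\A(a)$ — this is exactly the approximation furnished by the density of $\mathfrak{a}$ inside $\Loc{\A}{\M_\A}{\star}$ together with lower semicontinuity of $\Lip_\A$ (one may need here that $\mathfrak{a}$ be chosen so that such an approximating sequence exists with convergent Lip-norms, which again follows from separability and lower semicontinuity; alternatively invoke the strongly-proper approximation when available) — and then $\Lip_\B(\pi(a))\leq\liminf_k\Lip_\B(\pi(a_k))\leq\liminf_k\Lip_\A(a_k)=\Lip_\A(a)$ by lower semicontinuity of $\Lip_\B$. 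For (4), Lemma \ref{diagonal-lemma}(4) gives $\pi(a)\in\M_\B$ for $a\in\mathfrak{a}\cap\M_\A$, and since this set is dense in $\sa{\M_\A}$ and $\M_\B$ is norm-closed, $\pi(\sa{\M_\A})\subseteq\sa{\M_\B}$. Finally, for (1) — the Hausdorff-limit statement for arbitrary $a\in\Loc{\A}{\M_\A}{\star}$ and arbitrary $l\geq\Lip_\A(a)$, $r>0$ with $a\in\Loc{\A}{\M_\A}{\mu_\A,r}$ — I would combine the diameter bound \eqref{diameter-eq} (which forces every target set to have diameter $\leq 2l\varepsilon_{\psi(n)}\to 0$, so its Hausdorff limit, if it exists, is a singleton) with a $3\varepsilon$ argument: approximate $a$ by $a_k\in\mathfrak{a}$ in norm, use Inequality \eqref{distance-eq} of Theorem \ref{fundamental-thm} to control the Hausdorff distance between the target set of $a$ and that of $a_k$ by $\|a-a_k\|_\A + 2l\varepsilon_{\psi(n)}$, invoke convergence of the target sets of $a_k$ to $\{\pi(a_k)\}$, and use $\pi(a_k)\to\pi(a)$. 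One also checks the limit is independent of $l$ and $r$ exactly as in Lemma \ref{key-lemma}, using the monotonicity of target sets in the Remark following Definition \ref{left-admissible-def}.

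The main obstacle I anticipate is the interplay in step (2) between norm-density of $\mathfrak{a}$ in $\Loc{\A}{\M_\A}{\star}$ and the need for the approximating sequences to have $\Lip_\A$-values converging (not merely $\liminf$ behaving well), since otherwise the extension $\pi$ might a priori fail the sharp Lip-norm inequality for elements of $\Loc{\A}{\M_\A}{\star}\setminus\mathfrak{a}$. This is precisely where lower semicontinuity of $\Lip_\A$ and the specific choice of a well-adapted countable dense set $\mathfrak{a}$ (closed under the relevant approximations, e.g. built from a countable dense $\Q$-subalgebra intersected with the domain) must be used carefully; in the strongly proper case Proposition \ref{strongly-proper-implies-proper-prop} hands us exactly such approximations, and in the general {\pqpms} case the separability assumption plus lower semicontinuity still suffice, but this is the step that requires the most care rather than a routine limiting argument. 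Everything else is a matter of pushing the singleton-limit statements through uniform continuity and the quantitative bounds of Theorem \ref{fundamental-thm}.
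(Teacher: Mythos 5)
Your overall route is the paper's own: pick a countable set $\mathfrak{a}$ of locally supported Lipschitz elements, norm-dense and with $\mathfrak{a}\cap\M_\A$ dense in $\sa{\M_\A}$, apply Lemma (\ref{diagonal-lemma}), extend $\pi$ by uniform continuity to $\sa{\A}$, and then upgrade assertion (1) to arbitrary $a\in\Loc{\A}{\M_\A}{\star}$ by comparing the target sets of $a$ with those of a nearby $a'\in\mathfrak{a}$ through Inequality (\ref{distance-eq}) and the diameter bound (\ref{diameter-eq}), using the monotonicity of target sets in $l$, $r$, $K$. Assertions (3) and (4) are also handled as in the paper.

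The genuine gap is your argument for the Lip-norm inequality (2). You want approximants $a_k\in\mathfrak{a}$ with $a_k\to a$ in norm \emph{and} $\Lip_\A(a_k)\to\Lip_\A(a)$, and you assert this follows from separability and lower semicontinuity. It does not: lower semicontinuity only gives the one-sided bound $\Lip_\A(a)\leq\liminf_k\Lip_\A(a_k)$, and a fixed countable norm-dense subset of $\Loc{\A}{\M_\A}{\star}$ has no reason to contain approximants whose Lip-norms stay controlled --- securing such approximations is precisely the content of the strongly proper hypothesis (Proposition (\ref{strongly-proper-implies-proper-prop})), which is not available under Hypothesis (\ref{zero-hyp}). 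Even granting the comparison $\Lip_\B(\pi(a))\leq\liminf_k\Lip_\A(a_k)$, this recovers $\Lip_\A(a)$ only if the Lip-norms converge, which is the unjustified point. The repair is the paper's, and you already have the ingredients, just in the wrong order: establish (1) first with $l=\Lip_\A(a)$ --- the sets $\targetsettunnel{\tau_{\psi(n)}}{a}{\Lip_\A(a),r,\varepsilon_{\psi(n)},K^{\psi(n)}_r}$ are nonempty because $r$-left admissibility applies to $a$ itself, and your three-epsilon argument shows they converge to $\{\pi(a)\}$ --- then pick $b_n$ in them; Definition (\ref{lift-set-def}) combined with condition (5) of Definition (\ref{left-admissible-def}) gives $\Lip_\B(b_n)\leq\Lip_\A(a)$, and lower semicontinuity of $\Lip_\B$ yields $\Lip_\B(\pi(a))\leq\Lip_\A(a)$. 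As written, however, your step (2) rests on an approximation that fails in general.
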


\begin{proof}
Let $\mathfrak{a}$ be a countable subset of $\sa{\A}$ such that:
\begin{enumerate}
\item for all $a\in\mathfrak{a}$, we have $a\in\Loc{\A}{\M_\A}{\mu_\A,r}$ for some $r>0$,
\item $\mathfrak{a}$ is dense in $\sa{\A}$,
\item $\mathfrak{a}\cap\M_\A$ is dense in $\sa{\M_\A}$.
\end{enumerate}
Such a set exists since $\A$ by Definition (\ref{pqms-def}).

Let $\psi :\N\rightarrow\N$ and $\pi:\mathfrak{a}\rightarrow\sa{\B}$ given by Lemma (\ref{diagonal-lemma}) applied to $\mathfrak{a}$. Since $\pi$ is uniformly continuous on the dense subset $\mathfrak{a}$ of $\sa{\A}$, it admits a unique uniformly continuous extension to $\sa{\A}$, which we still denote by $\pi$.

More can be said for Lipschitz, locally supported elements. Let $a\in\Loc{\A}{\M_\A}{r}$ for some $r > 0$. Let $\varepsilon > 0$. Since $\pi$ is uniformly continuous on $\sa{\A}$, there exists $\delta > 0$ such that, if $a,a'\in\sa{\A}$ with $\|a-a'\|_\A\leq\delta$, then $\|\pi(a)-\pi(a')\|_\B\leq\frac{\varepsilon}{4}$.

Since $\mathfrak{a}$ is dense in $\sa{\A}$, there exists $a'\in\mathfrak{a}$ with $\|a-a'\|_\A\leq \min\{\frac{\varepsilon}{4},\delta\}$. We may choose $a'\in\M_\A$ if $a\in\M_\A$. Let $l > \max\{\Lip_\A(a),\Lip_\A(a')\}$ and $R = \max\{r,r'\}$ if $a'\in\Loc{\A}{\M_\A}{\mu_\A,r'}$. 

Let $P \in \N$ such that for all $n\geq P$ we have $\varepsilon_n\leq\frac{\varepsilon}{8 l}$. Let $n\geq P$. Let $b_n \in \targetsettunnel{\tau_{\psi(n)}}{a}{\Lip_\A(a),r,\varepsilon_{\psi(n)},K_r^{\psi(n)}}$. 

For all $n\in\N$ let $c_n \in \targetsettunnel{\tau_{\psi(n)}}{a'}{l,r,\varepsilon_{\psi(n)},K^{\psi(n)}_r}$. By Lemma (\ref{key-lemma}), the sequence $(c_n)_{n\in\N}$ converges to $\pi(a')$, so there exists $N\in\N$ such that for all $n\geq N$ we have $\|c_n-\pi(a')\|_\B \leq\frac{\varepsilon}{4}$. 

Then, using again the observation that:
\begin{equation*}
\targetsettunnel{\tau_{\psi(k)}}{a}{\Lip_\A(a),r,\varepsilon_{\psi(k)},K^{\psi(k)}_r}\subseteq\targetsettunnel{\tau_{\psi(k)}}{a}{l,r,\varepsilon_{\psi(k)},K^{\psi(k)}_r}
\end{equation*}
for all $k\in\N$, and Inequality (\ref{distance-eq}) of Theorem (\ref{fundamental-thm}), we obtain that for all $n \geq \max\{P,N\}$:
\begin{equation*}
\begin{split}
\|b_n-\pi(a)\|_\B &\leq \|b_n - c_n\|_\B + \|c_n - \pi(a')\|_\B + \|\pi(a') - \pi(a)\|_\B\\
&\leq \|a-a'\|_\A + 2l\varepsilon_{\psi(n)} + \frac{\varepsilon}{4} + \frac{\varepsilon}{4} \\
&\leq \varepsilon\text{.}
\end{split}
\end{equation*}

This shows that the sequence:
\begin{equation}\label{c-eq1}
\left(\targetsettunnel{\tau_{\psi(k)}}{a}{\Lip_\A(a),r,\varepsilon_{\psi(k)},K_r^{\psi(k)}}\right)_{k\in\N}
\end{equation}
converges to $\{\pi(a)\}$ for the Hausdorff distance $\Haus{\|\cdot\|_\B}$.  Note that if $a\in\M_\A$ then $\pi(a)\in\M_\B$ as $\M_\B$ is closed. Lemma (\ref{key-lemma}) completes our proof; in particular we note that since $\pi(a)$ is the limit of elements with Lip-norm less than $\Lip_\A(a)$ for all $a\in\sa{\A}$, we must have $\Lip_\B(\pi_\A(a))\leq \Lip_\A(a)$.
\end{proof}

\begin{lemma}\label{support-lemma}
Let $(\A,\Lip,\M,\mu)$ be a {\pqpms} and $a\in\Loc{\A}{\M}{\star}$. If there exists a decreasing sequence $(r_n)_{n\in\N}$ converging to some $r>0$ and such that $a\in\Loc{\A}{\M}{\mu_\A,r_n}$, then $a\in \Loc{\A}{\M}{\mu_\A,r}$.
\end{lemma}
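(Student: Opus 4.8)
The plan is to exploit the fact that $\Loc{\A}{\M}{\mu_\A,s} = \Loc{\A}{\M}{\cBall{\M^\sigma}{\sigmaKantorovich{\Lip}}{\mu_\A}{s}}$, so the statement is really about the behaviour of $a = \corner{\cBall{\M^\sigma}{\sigmaKantorovich{\Lip}}{\mu_\A}{r_n}}{a}$ as the radii $r_n$ shrink to $r$. Write $B_s = \cBall{\M^\sigma}{\sigmaKantorovich{\Lip}}{\mu_\A}{s}$ for brevity, and let $\indicator{B_s}$ be the associated indicator projection in $\M^{\ast\ast}$. Since $(r_n)_{n\in\N}$ is decreasing to $r$, the projections $\indicator{B_{r_n}}$ form a decreasing sequence (as $B_r \subseteq B_{r_n} \subseteq B_{r_m}$ whenever $n \geq m$), and one expects $\indicator{B_{r_n}} \downarrow \indicator{B_r}$ in the strong operator topology of $\M^{\ast\ast}$, because $\bigcap_{n\in\N} B_{r_n} = B_r$ (here properness of $(\M^\sigma,\sigmaKantorovich{\Lip})$ is used: each $B_{r_n}$ is compact, and a point at distance $s > r$ from $\mu_\A$ eventually fails to lie in $B_{r_n}$). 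The first step I would carry out is to make this intersection identity and the resulting strong convergence $\indicator{B_{r_n}} \to \indicator{B_r}$ precise, using that the $B_{r_n}$ are nested compacts with intersection $B_r$ and that, for the commutative von Neumann algebra $\M^{\ast\ast}$, the indicator of a decreasing net of closed sets converges strongly to the indicator of the intersection.

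Granting that, the second step is purely computational: for each $n$ we have $a = \indicator{B_{r_n}} a \indicator{B_{r_n}}$, so
\begin{equation*}
\indicator{B_r} a \indicator{B_r} = \indicator{B_r}\indicator{B_{r_n}} a \indicator{B_{r_n}}\indicator{B_r} = \indicator{B_r} a \indicator{B_r}
\end{equation*}
trivially once we know $\indicator{B_r}\indicator{B_{r_n}} = \indicator{B_r}$, which holds because $B_r \subseteq B_{r_n}$. In fact this already shows $\corner{B_r}{a}$ is well defined; what must be shown is that it equals $a$. Using strong convergence $\indicator{B_{r_n}} \to \indicator{B_r}$ and boundedness of $a$, one gets $\indicator{B_{r_n}} a \indicator{B_{r_n}} \to \indicator{B_r} a \indicator{B_r}$ strongly in $\M^{\ast\ast}$ (multiplication is jointly strongly continuous on bounded sets); but the left-hand side is the constant sequence $a$, so $a = \indicator{B_r} a \indicator{B_r} = \corner{B_r}{a}$. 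Since $\Lip(a) < \infty$ by hypothesis (as $a \in \Loc{\A}{\M}{\star}$), and $a \in \sa{\A} \cap \indicator{B_r}\A\indicator{B_r}$, the discussion following the definition of $\Loc{\A}{\M_\A}{K}$ in the text (namely $\Loc{\A}{\M_\A}{K} = \set{a\in\sa{\A}}{\corner{K}{a}=a \text{ and } \Lip(a)<\infty}$) gives exactly $a \in \Loc{\A}{\M}{B_r} = \Loc{\A}{\M}{\mu_\A,r}$.

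The main obstacle I anticipate is the first step: verifying $\bigcap_n B_{r_n} = B_r$ and the strong convergence of indicator projections cleanly. The set equality is elementary — $x \in \bigcap_n B_{r_n}$ iff $\sigmaKantorovich{\Lip}(x,\mu_\A) \leq r_n$ for all $n$, iff $\sigmaKantorovich{\Lip}(x,\mu_\A) \leq \inf_n r_n = r$ — but one should be slightly careful that $\sigmaKantorovich{\Lip}$ may be an extended metric and that closed balls are genuinely closed (true, since $\sigmaKantorovich{\Lip}$ metrizes the topology of the compact $B_{r_1}$ by Proposition~(\ref{strongly-proper-implies-proper-prop})-style reasoning, or simply because balls are closed in any metric space). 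The passage to strong convergence of the projections is standard for abelian von Neumann algebras (it is the bicommutant/Gel'fand picture: $\indicator{B_{r_n}}$ is a decreasing sequence of projections with infimum $\indicator{B_r}$, and a decreasing sequence of projections converges strongly to its infimum), so no real difficulty should remain there. An alternative, perhaps cleaner, route avoiding $\M^{\ast\ast}$ entirely: approximate $a$ in norm by $e_k a e_k$ for a suitable local approximate unit and argue directly on the level of $\A$; but the von Neumann algebra argument above is the most direct and is what I would present.
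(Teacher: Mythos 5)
Your proposal is correct and is, at bottom, the same argument as the paper's, expressed in von Neumann algebra language rather than state by state. The paper also passes to the indicator projections $p_n$ and $p$ of the closed balls of radii $r_n$ and $r$ about the base point, observes that $(p_n)_{n\in\N}$ decreases pointwise to $p$, and deduces that $\varphi(p_n-p)\rightarrow 0$ for every state $\varphi$ of $\A$, because $\varphi|_\M$ is a Radon probability measure (here the approximate unit in $\M$ is used); it then concludes $a=\corner{B_r}{a}$ via a Cauchy--Schwarz estimate of the form $|\varphi(a-p a p)|=|\varphi(p_n a p_n - p a p)|\leq 3\sqrt{\varphi(p_n-p)}\,\|a\|_\A$ for states $\varphi$ charging the ball of radius $r_0$, together with the norm-through-states Lemma (\ref{norm-lemma}). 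Your route replaces that estimate by strong convergence of the decreasing projections to their infimum plus joint strong continuity of multiplication on bounded sets; this works, with two points you should make explicit. First, identifying the infimum of the $p_n$ with $\indicator{B_r}$ is precisely the continuity-from-above of finite Radon measures on the nested compact balls (not a purely lattice-theoretic fact), which is the same measure-theoretic input as the paper's monotone convergence step; your sketch acknowledges this. Second, the products $p_n a p_n$ do not live in $\M^{\ast\ast}$, since $a$ is a general element of $\sa{\A}$; the multiplication and the strong convergence must be taken in $\A^{\ast\ast}$ (or any faithful normal representation). This is a routine repair --- the embedding $\M^{\ast\ast}\rightarrow\A^{\ast\ast}$ induced by the inclusion is a normal *-monomorphism, hence carries the monotone limit over, or equivalently one tests against states of $\A$ directly, which is exactly what the paper does --- but as written the phrase ``strongly in $\M^{\ast\ast}$'' is not quite the right statement. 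With that adjustment your proof is complete and matches the paper's in substance; neither route is more general, yours trading the explicit Cauchy--Schwarz bound for standard von Neumann algebra continuity facts.
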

\begin{proof}
Let $p_n$ be the indicator function of $\cBall{\M_\A^\sigma}{}{\mu}{r_n}$ and $p$ the indicator function of $\cBall{\M_\A^\sigma}{}{\mu}{r}$. By assumption, $(p_n)_{n\in\N}$ is a decreasing sequence which converges pointwise to $p$. Thus, by the monotone convergence theorem, if $\varphi\in\StateSpace(\A)$ then $(\varphi(p-p_n))_{n\in\N}$ converges to $0$ (note that $\M$ is Abelian so the restriction of $\varphi$ to $\M$ is a Radon probability measure).

By assumption, $p_n a p_n = a$ for all $n\in\N$, so in particular, $p_0 a p_0 = a$ and thus $a$, and $p_n a p_n$ for all $n\in\N$, are in $\Loc{\A}{\M_\A}{\mu,r_0}$. Let $\varphi\in\StateSpace[\A|\mu,r_0]$. Then:
\begin{equation*}
\begin{split}
|\varphi(a-p a p)| &= |\varphi(p_n a p_n - p a p)|\\
&\leq 3 \sqrt{\varphi(p_n - p)} \|a\|_\A \text{ by Cauchy-Schwarz,}\\
&\stackrel{n\rightarrow\infty}{\longrightarrow} 0 \text{.}
\end{split}
\end{equation*}
Thus, $\|a-p a p\|_\A = 0$ and this concludes our lemma.
\end{proof}

\begin{theorem}\label{coincidence-thm}
Let $\mathcal{C}$ be a nonempty class of {\pqpms s} and let $\mathcal{T}$ be a weakly appropriate class of tunnels for $\mathcal{C}$. If:
\begin{equation*}
\propinquity{\mathcal{T}}(\mathds{A},\mathds{B}) = 0
\end{equation*}
for $\mathds{A},\mathds{B} \in \mathcal{C}$, then there exists a pointed isometric isomorphism $\pi : \mathds{A} \longrightarrow \mathds{B}$.
\end{theorem}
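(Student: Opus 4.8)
The plan is to extract, from the hypothesis $\propinquity{\mathcal{T}}(\mathds{A},\mathds{B}) = 0$, a sequence of tunnels realizing Hypothesis~(\ref{zero-hyp}): for each $n > 0$ there is an $n$-tunnel $\tau_n$ from $\mathds{A}$ to $\mathds{B}$ with an $n$-admissible number $\varepsilon_n \leq \frac{1}{n}$, together with the increasing families of compacts $(K^n_x)_{x\in(0,n]}$. This is immediate from the definition of $\propinquity{\mathcal{T}}$, the definition of the local propinquity $\propinquity{r,\mathcal{T}}$, and Definition~(\ref{admissible-def}). Then I would invoke Corollary~(\ref{map-cor}) to produce a strictly increasing $\psi : \N \to \N\setminus\{0\}$ and a uniformly continuous map $\pi : \sa{\A} \to \sa{\B}$ which is the Hausdorff limit of target sets, contracts Lip-norms and C*-norms, and carries $\sa{\M_\A}$ into $\sa{\M_\B}$. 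Symmetrically, applying Corollary~(\ref{map-cor}) to the inverse tunnels $\tau_n^{-1}$ (using Remark~(\ref{inclusion-adm-rmk}), so that the $\varepsilon_n$ remain admissible, and passing to a further subsequence of $\psi$), I get a map $\rho : \sa{\B} \to \sa{\A}$ with the same properties.

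Next I would show $\rho \circ \pi = \mathrm{id}_{\sa{\A}}$ and $\pi \circ \rho = \mathrm{id}_{\sa{\B}}$. This is where Remark~(\ref{inversion-rmk}) is the crucial tool: for $a \in \Loc{\A}{\M_\A}{\mu_\A,r}$ with $\Lip_\A(a) \leq l$, any $b$ in the target set of $a$ for $\tau_n$ satisfies $a \in \targetsettunnel{\tau_n^{-1}}{b}{l,r+4\varepsilon_n,\varepsilon_n,K^n_r}$; combining the convergence of target sets for $\tau_n$ to $\{\pi(a)\}$ with the convergence of target sets for $\tau_n^{-1}$ to $\{\rho(\pi(a))\}$ and the diameter estimate~(\ref{diameter-eq}) of Theorem~(\ref{fundamental-thm}) forces $\rho(\pi(a)) = a$ for all locally supported $a$, hence on all of $\sa{\A}$ by density and continuity. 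The same argument on the other side gives $\pi\circ\rho = \mathrm{id}$. Thus $\pi$ is a norm-isometric bijection of $\sa{\A}$ onto $\sa{\B}$ preserving Lip-norms (the reverse inequality $\Lip_\A(a) = \Lip_\A(\rho(\pi(a))) \leq \Lip_\B(\pi(a)) \leq \Lip_\A(a)$), sending $\sa{\M_\A}$ onto $\sa{\M_\B}$, and, because $\Loc{\A}{\M_\A}{\star}$-elements are carried to elements of the same local radius (Corollary~(\ref{map-cor}) item (1) together with Lemma~(\ref{support-lemma}) to handle the infimum of admissible radii), restricting to a bijection $\Loc{\A}{\M_\A}{\star} \to \Loc{\B}{\M_\B}{\star}$.

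Then I would upgrade $\pi$ from a linear order-isomorphism to a Jordan--Lie isomorphism, and thence to a $*$-isomorphism. The Jordan and Lie product identities follow from~(\ref{Jordan-eq}) and~(\ref{Lie-eq}) of Theorem~(\ref{fundamental-thm}): for locally supported $a,a'$, the target set of $\Jordan{a}{a'}$ contains $\Jordan{b}{b'}$ for $b,b'$ in the target sets of $a,a'$, so passing to the Hausdorff limit (using the diameter bound to control the radius inflation $l \mapsto l(\|a\|+\|a'\|+2l\varepsilon_n) \to l(\|a\|+\|a'\|)$, which is harmless by the monotonicity remark after Definition~(\ref{lift-set-def})) gives $\pi(\Jordan{a}{a'}) = \Jordan{\pi(a)}{\pi(a')}$ and similarly for the Lie bracket; density and joint continuity of these products on norm-bounded sets extend this to all of $\sa{\A}$. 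A Jordan--Lie isomorphism between the self-adjoint parts of C*-algebras that is also norm-preserving extends canonically to a $*$-isomorphism $\A \to \B$; combined with $\pi(\sa{\M_\A}) = \sa{\M_\B}$ this is a topographic morphism, and the base-point condition $\mu_\B \circ \pi = \mu_\A$ comes from the $r$-admissibility clause $\sigmaKantorovich{\Lip_n}(\mu_\A\circ\pi_n,\mu_\B\circ\rho_n) \leq \varepsilon_n \to 0$ in Definition~(\ref{admissible-pair-def}), which forces $\pi(a)$ and $a$ to have the same value under the respective base points in the limit. The main obstacle I anticipate is the bookkeeping around radii: ensuring that target sets for $\tau_n$ and $\tau_n^{-1}$ can be compared at a common radius despite the $4\varepsilon_n$ shifts, that $\rho\circ\pi$ really lands back at $a$ rather than merely near it, and that the ``local radius'' is exactly preserved (not just approximately), for which Lemma~(\ref{support-lemma}) and careful use of the monotonicity of target sets in $(l,r,\varepsilon,K)$ will be essential.
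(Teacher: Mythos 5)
Your proposal is correct and follows essentially the same route as the paper: extract tunnels satisfying Hypothesis~(\ref{zero-hyp}), build $\pi$ via Corollary~(\ref{map-cor}), build $\rho$ by applying the same machinery to the inverse tunnels, use Remark~(\ref{inversion-rmk}) together with the estimates of Theorem~(\ref{fundamental-thm}) to show the compositions are the identities (hence Lip-norms and norms are preserved), obtain the Jordan--Lie structure from~(\ref{Jordan-eq}) and~(\ref{Lie-eq}) and extend to a $*$-isomorphism by splitting into real and imaginary parts, and recover the base-point and local-radius conditions from the admissibility clause and Lemma~(\ref{support-lemma}). The only difference is cosmetic ordering (you establish $\rho\circ\pi=\mathrm{id}$ before the algebraic structure, the paper after), which does not affect the argument.
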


\begin{proof}
We denote $\mathds{A} = (\A,\Lip_\A,\M_\A,\mu_\A)$ and $\mathds{B} = (\B,\Lip_\B,\M_\B,\mu_\B)$. By assumption, for all $n\in\N$ with $n > 0$, there exists a $n$-tunnel $\tau_n$ such that:
\begin{equation*}
\tunnelextent{\tau_n}{n} < \varepsilon_n\text{.}
\end{equation*}
For all $n\in\N$ with $n > 0$, let $\varepsilon_n \leq\frac{1}{n}$ be $n$-admissible, so that we now may assume Hypothesis (\ref{zero-hyp}).

Let $\pi$ and $\psi$ be provided by Corollary (\ref{map-cor}). Let $a,a'\in \Loc{\A}{\M_\A}{\star}$. Let $r > 0$ so that $a,a'\in\Loc{\A}{\M_\A}{\mu_\A,r}$ and let $l \geq\max\{\Lip_\A(a),\Lip_\A(a')\}$.

For each $n\in\N$, let:
\begin{equation*}
b_n\in\targetsettunnel{\tau_{\psi(n)}}{a}{r,l,\varepsilon_{\psi(n)},K^{\psi(n)}_r}\text{ and }b'_n\in\targetsettunnel{\tau_{\psi(n)}}{a'}{r,l,\varepsilon_{\psi(n)},K^{\psi(n)}_r}\text{.}
\end{equation*}
By Lemma (\ref{key-lemma}), we conclude that $(b_n)_{n\in\N}$ converges to $\pi(a)$ and $(b'_n)_{n\in\N}$ converges to $\pi(a')$.

We begin with the proof that $\pi$ is linear. Let $t\in\R$. By Assertion (\ref{linearity-eq}) of Theorem (\ref{fundamental-thm}), we thus have:
\begin{equation*}
b_n + tb_n' \in \targetsettunnel{\tau_{\psi(n)}}{a+ta'}{r,(1+|t|)l,\varepsilon_{\psi(n)},K^{\psi(n)}_r}
\end{equation*}
and thus, by Lemma (\ref{key-lemma}), we conclude that $(b_n+tb_n')_{n\in\N}$ converges to $\pi(a+ta')$. On the other hand, by continuity of the vectorial operations, $b_n+tb_n'$ converges to $\pi(a)+t\pi(a')$. Thus $\pi(a+ta')=\pi(a)+t\pi(a')$, i.e. $\pi$ is linear on $\Loc{\A}{\M_\A}{\star}$. As $\pi$ is continuous on $\sa{\A}$ and $\Loc{\A}{\M_\A}{\star}$ is dense in $\sa{\A}$, we conclude that $\pi$ is linear on $\sa{\A}$.

We now prove that $\pi$ is a Jordan-Lie morphism. By Assertion (\ref{Jordan-eq}) of Theorem (\ref{fundamental-thm}), we have:
\begin{equation*}
\Jordan{b_n}{b_n'} \in \targetsettunnel{\tau_{\psi(n)}}{\Jordan{a}{a'}}{r,l\left(\|a\|_\A + \|a'\|_\A + 2\varepsilon_{\psi(n)}l\right),\varepsilon_{\psi(n)},K^{\psi(n)}_r}\text{.}
\end{equation*}
By Lemma (\ref{key-lemma}), we conclude that $(\Jordan{b}{b'})_{n\in\N}$ converges to $\pi(\Jordan{a}{a'})$. On the other hand, by continuity of the Jordan product, $\lim_{n\rightarrow\infty}\Jordan{b_n}{b_n'} = \Jordan{\pi(a)}{\pi(a')}$. Consequently, $\pi(\Jordan{a}{a'})=\Jordan{\pi(a)}{\pi(a')}$.

Similarly, $\pi(\Lie{a}{a'})=\Lie{\pi(a)}{\pi(a')}$ using Assertion (\ref{Lie-eq}) of Theorem (\ref{fundamental-thm}) and Lemma (\ref{key-lemma}), since the Lie product is also continuous.

This proves that $\pi : \sa{\A}\rightarrow\sa{\B}$ is a Jordan-Lie morphism since $\pi$ is continuous on $\sa{\A}$ and $\Loc{\A}{\M_\A}{\star}$ is dense in $\sa{\A}$. We already have the properties that $\|\pi(a)\|_\B\leq\|a\|_\A$ and $\Lip_\B(\pi(a))\leq\Lip_\A(a)$ by Corollary (\ref{map-cor}).

We now proceed to extent $\pi$ by linearity: for any $a\in\A$, we write $\Re(a) = \frac{a+a^\ast}{2}$ and $\Im(a) = \frac{a-a^\ast}{2i}$ so that $\Re(a),\Im(a)\in\sa{\A}$ and $a = \Re(a) + i\Im(a)$. We set, for all $a\in\A$:
\begin{equation*}
\pi(a) = \pi(\Re(a)) + i\pi(\Im(a))\text{.}
\end{equation*}
Our definition does not induce any confusion since $a\in\sa{\A}$ if and only if $\Im(a)=0$ and $\pi(0) = 0$, and thus we keep the same notation for $\pi$ and its extension to $\A$. As $\pi$ is linear on $\sa{\A}$, it is straightforward that $\pi$ is linear on $\A$.

It is also easy to check that $\pi(a^\ast) = \pi(a)^\ast$ for all $a\in\A$. It remains to check that this extension of $\pi$ is multiplicative. We first observe that, for all $a,b \in \sa{\A}$:
\begin{equation}\label{mul-eq1}
\begin{split}
\pi(ab) &= \pi(\Jordan{a}{b} + \Lie{a}{b}) \\
&= \pi(\Jordan{a}{b}) + \pi(\Lie{a}{b})\\
&= \Jordan{\pi(a)}{\pi(b)} + \Lie{\pi(a)}{\pi(b)}\\
&= \pi(a)\pi(b) \text{,}
\end{split}
\end{equation}
since $\pi$ is a Jordan-Lie morphism on $\sa{\A}$ (note that $ab$ itself is not in $\sa{\A}$ in general).

Now, if $a,b \in \A$, then, since $\Re(a),\Im(a) \in\sa{\A}$:
\begin{equation*}
\begin{split}
\pi(ab) &= \pi(\Re(a)\Re(b)) - \pi(\Im(a)\Im(b)) + i\left(\pi(\Re(a)\Im(b)) + \pi(\Im(a)\Re(b)) \right)\\
&= \pi(\Re(a))\pi(Re(b)) - \pi(\Im(a))\pi(\Im(b)) \\
&\quad + i\left(\pi(\Re(a))\pi(\Im(b)) + \pi(\Im(a))\pi(\Re(b)) \right) \text{by Equation (\ref{mul-eq1}),}\\
&= Re(\pi(a))\Re(\pi(b)) - \Im(\pi(a))\Im(\pi(b)) \\
&\quad + i\left(\Re(\pi(a))\Im(\pi(b)) + \Im(\pi(a))\Re(\pi(b)) \right) \text{ as $\pi$ linear, self-adjoint,}\\
&= (\Re(\pi(a))+i\Im(\pi(a)))(\Re(\pi(b))+i\Im(\pi(b)))\\
&= \pi(a)\pi(b)\text{.}
\end{split}
\end{equation*}

This concludes the proof that $\pi:\A\rightarrow\B$ is a *-morphism with $\Lip_\B\circ\pi\leq\pi_\A$ on $\Loc{\A}{\M_\A}{\star}$. In particular, we have $\|\pi(a)\|_\B\leq\|a\|_\A$ for all $a\in\A$.

It remains to prove that $\pi$ is an isomorphism. We first observe that by Lemma (\ref{support-lemma}), we have: 
\begin{equation*}
a\in\Loc{\A}{\M_\A}{\mu_\A,r} \implies \pi(a) \in \Loc{\B}{\M_\B}{\mu_\B,r}\text{,}
\end{equation*}
since $\pi(a) \in \Loc{\B}{\M_\B}{\mu_\B,r+4\varepsilon_n}$ for all $n\in\N$, and we can always extract a monotone subsequence from $(\varepsilon_n)_{n\in\N}$; since this sequence converges to $0$ and consists of positive numbers, it must be decreasing.

Let us now apply the construction of Lemma (\ref{key-lemma}), Lemma (\ref{diagonal-lemma}) and Corollary (\ref{map-cor}) to the sequence of tunnels $(\tau_{\psi(n)}^{-1})_{n\in\N}$, which satisfies Hypothesis (\ref{zero-hyp}). We thus get a map $\rho : \sa{\B} \rightarrow \sa{\A}$ which, using the method we applied to $\pi$ in the current proof, can be extended to a *-morphism $\rho : \B\rightarrow\A$ with $\Lip_\A\circ\rho \leq\Lip_\B$ on $\Loc{\B}{\M_\B}{\star}$, and a strictly increasing function $\theta :\N\rightarrow\N$ such that, for all $r > 0$ and for all $b\in\Loc{\B}{\M_\B}{\mu_\B,r}$ and for all $l\geq \Lip_\B(b)$:
\begin{equation*}
\{ \rho(a) \} = \Haus{\|\cdot\|_\A}\text{--}\lim_{n\rightarrow\infty} \targetsettunnel{\tau^{-1}_{\psi\circ\theta(n)}}{b}{r,l,\varepsilon_{\psi\circ\theta(n)},K^{\psi\circ\theta(n)}_r}\text{.}
\end{equation*}

We check that $\pi\circ\rho$ and $\rho\circ\pi$ are respectively given by the identity map of $\A$ and $\B$. It is sufficient to check this on a norm dense subset of $\sa{\A}$, and by symmetry in $\A$ and $\B$, it is sufficient to write the proof for $\pi\circ\rho$.

Let $a\in\Loc{\A}{\M_\A}{\mu_\A,r}$ for some $r > 0$. By construction:
\begin{equation*}
\{\pi(a)\} = \Haus{\|\cdot\|_\B}\text{--}\lim_{n\rightarrow\infty} \targetsettunnel{\tau_{\psi(n)}}{a}{l,r,\varepsilon_{\psi(n)},K^{\psi(n)}_{r}}\text{.}
\end{equation*}
Similarly, we have:
\begin{equation*}
\{\rho(\pi(a))\} = \Haus{\|\cdot\|_\B}\text{--}\lim_{n\rightarrow\infty} \targetsettunnel{\tau_{\psi\circ\theta(n)}^{-1}}{\pi(a)}{l,r,K^{\psi\circ\theta(n)}_r}\text{.}
\end{equation*}

For all $n\in\N$ we let:
\begin{equation*}
 b_n \in \targetsettunnel{\tau_{\psi\circ\theta(n)}}{a}{l,r,\varepsilon_{\psi\circ\theta(n)},K^{\psi\circ\theta(n)}_r}
\end{equation*}
and
\begin{equation*}
a_n\in\targetsettunnel{\tau_{\psi\circ\theta(n)}^{-1}}{\pi(a)}{l,r,\varepsilon_{\psi\circ\theta(n)},K^{\psi\circ\theta(n)}_{r}}\text{.}
\end{equation*}

By Lemma (\ref{key-lemma}), there exists $N_1\in\N$ such that for all $n\geq N_1$ we have:
\begin{equation*}
\|b_n-\pi(a)\|_\B\leq \frac{\varepsilon}{3}\text{,}
\end{equation*}
and there exists $N_2\in\N$ such that:
\begin{equation*}
\|a_n-\rho(\pi(a))\|_\A \leq\frac{\varepsilon}{3}\text{.}
\end{equation*}

Let $M\in\N$ be chosen in $\N_{r+4\varepsilon}$ so that for all $n\geq M$, we have $2l\varepsilon_n\leq \frac{\varepsilon}{3}$. Let $n = \max\{M,N_1,N_2\}$. 

We note first that since $b_n\in\targetsettunnel{\tau_{\psi\circ\theta(n)}}{a}{l,r,\varepsilon_{\psi\circ\theta(n)},K^{\psi(n)}_r}$, we have:
\begin{equation*}
a\in\targetsettunnel{\tau_{\psi\circ\theta(n)}^{-1}}{b_n}{l,r,\varepsilon_{\psi(n)},K^{\psi(n)}_{r}}
\end{equation*}
by Remark (\ref{inversion-rmk}). 

Now, since $\psi\circ\theta(n)\geq n \geq n+4\varepsilon$, we conclude that $\tau^{-1}_{\psi\circ\theta_n}$ and its inverse are $(r+4\varepsilon)$-tunnels. Moreover, we have by our choice of notations that $K_{r}^{\psi\circ\theta(n)}\subseteq K_{r+4\varepsilon}^{\psi\circ\theta(n)}$. Thus we observe that:
\begin{multline*}
a\in\targetsettunnel{\tau^{-1}_{\psi\circ\theta(n)}}{b_n}{l,r+4\varepsilon,\varepsilon_{\psi\circ\theta(n)},K_{r+4\varepsilon}^{\psi\circ\theta(n)}}\text{ and }\\ a_n \in \targetsettunnel{\tau^{-1}_{\psi\circ\theta(n)}}{\pi(a)}{l,r+4\varepsilon,\varepsilon_{\psi\circ\theta(n)},K_{r+4\varepsilon}^{\psi\circ\theta(n)}}\text{.}
\end{multline*}

Thus, by Theorem (\ref{fundamental-thm}):
\begin{equation*}
\begin{split}
\|a-\rho(\pi(a))\|_\A &\leq  \|a-a_n\|_\A + \|a_n-\rho(\pi(a))\|_\A\\
&\leq 2l\varepsilon_{\psi\circ\theta(n)} + \|b_n - \pi(a)\|_\B + \frac{\varepsilon}{3}\\
&\leq \frac{\varepsilon}{3} + \frac{\varepsilon}{3} + \frac{\varepsilon}{3}\\
&= \varepsilon\text{.}
\end{split}
\end{equation*}

We conclude, as desired, that $\|a-\rho(\pi(a))\|_\A = 0$. By continuity, since $\Loc{\A}{\M_\A}{\star}$ is dense, we conclude that $\rho\circ\pi$ is the identity of $\A$. The same reasoning shows that $\pi\circ\rho$ is the identity of $\B$.

Moreover, we note that for all $a\in\Loc{\A}{\M_\A}{\star}$:
\begin{equation*}
\Lip_\A(a) = \Lip_\A(\rho\circ\pi(a)) \leq \Lip_\B(\pi(a)) \leq \Lip_\A(a)
\end{equation*}
and thus $\pi\circ\Lip_\A = \Lip_\B$; the same results holds for $\rho$.

Last, let $a\in\Loc{\A}{\M_\A}{r}$ for some $r > 0$ and $\varepsilon > 0$. There exists $n\in\N$ such that $\varepsilon_n\leq\varepsilon$ and $n \geq r$ while:
\begin{equation*}
\Haus{\|\cdot\|_\B}\left(\targetsettunnel{\tau_n}{a}{\Lip_\A(a),r,\varepsilon_n,K^n_r},\{\pi(a)\}\right) < \varepsilon\text{.}
\end{equation*}

To fix notations, write:
\begin{equation*}
\tau_{\psi(n)} = (\D_n,Lip_n,\M_n,\pi_n,\mathds{A},\rho_n,\mathds{B})\text{.}
\end{equation*}
Let $d\in\liftsettunnel{\tau_n}{a}{\Lip_\A(a),r,\varepsilon_n,K^n_r}$ and $b = \rho_n(d)$. Then:
\begin{equation*}
\begin{split}
|\mu_\B\circ\pi(a) - \mu_\A(a)| &\leq |\mu_\B\circ\pi(a)-\mu_\B(b)| + |\mu_\B(b) - \mu_\A(a)| \\
&\leq \Lip_\A(a)\varepsilon + |\mu_\A\circ\pi_n(d)-\mu_\B\circ\rho_n(d)|\\
&\leq \Lip_\A(a)\varepsilon + \Lip_\A(a)\varepsilon = 2\Lip_\A(a)\varepsilon\text{.}
\end{split}
\end{equation*}
Since $\varepsilon > 0$, we conclude that $\mu_\B\circ\pi(a) = \mu_\A(a)$. As $\Loc{\A}{\M_\A}{\star}$ is dense in $\sa{\A}$, we conclude that $\mu_\B\circ\pi = \mu_\A$ (and similarly $\mu_\A\circ\rho = \mu_\B$). This concludes our proof.
\end{proof}
We observe that in the proof of Theorem (\ref{coincidence-thm}) and using its notations, we could replace $\psi$ by $\psi\circ f$ for any strictly increasing function $f:\N\rightarrow\N$ prior to the construction of $\rho$. We thus would construct, a priori, different versions of $\rho$ --- one for each choice of $f$ --- but our proof shows that they all are inverse of $\pi$, and thus they all agree. A consequence of this observation is that $\theta$ may in fact be chosen to be the identity.

We also note that of course, once an isometric isomorphism is given between two {\pqpms s}, then the composition on either end by an isometric automorphism would lead to a new isometric isomorphism. This non-uniqueness implies that a choice must be made in the construction of Theorem (\ref{coincidence-thm}). This choice, of course, is given by the construction in Lemma (\ref{diagonal-lemma}), itself relying on the choices made in Lemma (\ref{key-lemma}). Both, in turn, are consequences of compactness of the Hausdorff distance on a well-chosen set.

When dealing with strongly proper quantum metric spaces, we may slightly strengthen the separation result as follows:

\begin{corollary}
Let $\mathcal{C}$ be a nonempty class of {\pqpms s} and let $\mathcal{T}$ be a weakly appropriate class of tunnels for $\mathcal{C}$. If:
\begin{equation*}
\propinquity{\mathcal{T}}(\mathds{A},\mathds{B}) = 0
\end{equation*}
for $\mathds{A}=(\A,\Lip_\A,\M_\A,\mu_A)$,$\mathds{B}=(\B,\Lip_\B,\M_\B,\mu_\B) \in \mathcal{C}$ strongly proper quantum metric spaces, then there exists a pointed isometric isomorphism $\pi : \mathds{A} \longrightarrow \mathds{B}$ such that for all $a\in\sa{\A}$ we have $\Lip_\B\circ\pi(a) = \Lip_\A(a)$ and for all $b\in\sa{\B}$ we have $\Lip_\A\circ\pi^{-1}(b) = \Lip_\B(b)$.
\end{corollary}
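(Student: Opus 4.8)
The plan is to bootstrap from Theorem (\ref{coincidence-thm}), which already produces a pointed isometric isomorphism $\pi : \mathds{A}\longrightarrow\mathds{B}$ with inverse $\pi^{-1}$ satisfying $\Lip_\B\circ\pi = \Lip_\A$ on $\Loc{\A}{\M_\A}{\star}$ and $\Lip_\A\circ\pi^{-1} = \Lip_\B$ on $\Loc{\B}{\M_\B}{\star}$ (this is recorded explicitly in that proof). The only task remaining is to promote these identities from locally supported Lipschitz elements to \emph{all} self-adjoint elements, and this is exactly where the extra hypothesis of strong properness enters, through the approximation statement of Proposition (\ref{strongly-proper-implies-proper-prop}).

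First I would treat $a\in\sa{\A}$ with $\Lip_\A(a)<\infty$. By Proposition (\ref{strongly-proper-implies-proper-prop}) applied to the strongly proper space $\mathds{A}$, there is a sequence $(a_n)_{n\in\N}$ in $\Loc{\A}{\M_\A}{\star}$ converging to $a$ in norm with $\lim_{n\rightarrow\infty}\Lip_\A(a_n) = \Lip_\A(a)$. Since $\pi$ is a $*$-morphism of C*-algebras it is norm contractive, so $\pi(a_n)\rightarrow\pi(a)$ in $\sa{\B}$; combining $\Lip_\B(\pi(a_n)) = \Lip_\A(a_n)$ with the lower semi-continuity of $\Lip_\B$ gives $\Lip_\B(\pi(a))\leq\Lip_\A(a)$. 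For the reverse inequality, observe that $\pi(a)\in\sa{\B}$ has $\Lip_\B(\pi(a))<\infty$, so Proposition (\ref{strongly-proper-implies-proper-prop}) applied to $\mathds{B}$ furnishes $(b_n)_{n\in\N}$ in $\Loc{\B}{\M_\B}{\star}$ with $b_n\rightarrow\pi(a)$ and $\Lip_\B(b_n)\rightarrow\Lip_\B(\pi(a))$; then $\pi^{-1}(b_n)\rightarrow a$ in norm, $\Lip_\A(\pi^{-1}(b_n)) = \Lip_\B(b_n)$, and lower semi-continuity of $\Lip_\A$ yields $\Lip_\A(a)\leq\Lip_\B(\pi(a))$. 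Hence $\Lip_\B\circ\pi(a) = \Lip_\A(a)$ whenever $\Lip_\A(a)<\infty$.

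Next I would dispatch the case $\Lip_\A(a) = \infty$: if $\Lip_\B(\pi(a))$ were finite, the previous paragraph applied with the roles of $\mathds{A}$ and $\mathds{B}$ interchanged (legitimate because $\mathds{B}$ is also strongly proper) would give $\Lip_\A(a) = \Lip_\A(\pi^{-1}(\pi(a))) = \Lip_\B(\pi(a)) < \infty$, a contradiction; so $\Lip_\B(\pi(a)) = \infty = \Lip_\A(a)$. Thus $\Lip_\B\circ\pi = \Lip_\A$ on all of $\sa{\A}$, and running the identical argument with $\mathds{A}$ and $\mathds{B}$ exchanged (and $\pi$ replaced by $\pi^{-1}$) gives $\Lip_\A\circ\pi^{-1} = \Lip_\B$ on $\sa{\B}$, which completes the proof. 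There is no genuinely new estimate here beyond those already in hand; the only point requiring mild care is the bookkeeping between the $\Lip<\infty$ and $\Lip=\infty$ regimes and the fact that strong properness must be invoked on \emph{both} spaces so that the approximating sequences of Proposition (\ref{strongly-proper-implies-proper-prop}) are available on each side.
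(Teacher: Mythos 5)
Your proposal is correct and follows essentially the same route as the paper: invoke Theorem (\ref{coincidence-thm}) for the isometric isomorphism, then use the approximation statement of Proposition (\ref{strongly-proper-implies-proper-prop}) together with norm continuity of $\pi$, lower semi-continuity of the Lip-norms, and symmetry in $\A$ and $\B$ to extend the equality of Lip-norms from $\Loc{\A}{\M_\A}{\star}$ to all of $\sa{\A}$. You merely spell out details the paper compresses into ``the result is symmetric in $\A$ and $\B$'', including the reverse inequality via $\pi^{-1}$ and the $\Lip=\infty$ case.
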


\begin{proof}
Theorem (\ref{coincidence-thm}) provides us with an isometric isomorphism $\pi$. Let $a\in\sa{\A}$ with $\Lip_\A(a)<\infty$. Then by Proposition (\ref{strongly-proper-implies-proper-prop}), there exists $(a_n)_{n\in\N}$ such that $a_n\in\Loc{\A}{\M_\A}{\star}$ for all $n\in\N$, converging to $a$ in norm and with $\lim_{n\rightarrow \infty}\Lip_\A(a_n) = \Lip_\A(a)$.

Now, $(\pi(a_n))_{n\in\N}$ converges to $\pi(a)$ by continuity, and $(\Lip_\B\circ\pi(a_n))_{n\in\N} = (\Lip_\A(a_n))$ converges to $\Lip_\A(a)$, as desired. The result is symmetric in $\A$ and $\B$.
\end{proof}

\section{Comparison}

This section proves that our hypertopology on the class of {\pqpms s} restricts to the dual Gromov-Hausdorff propinquity topology for {\Lqcms s} and is weaker than the Gromov-Hausdorff distance on classical proper metric spaces.

\subsection{Classical Gromov-Hausdorff Distance}

Our hypertopology is weaker than the Gromov-Hausdorff topology on pointed proper metric spaces. To establish this result, we recall from \cite{Latremoliere12b} that, to any pointed proper metric space $(X,\mathsf{d}_X,x)$, we associate a canonical {\pqpms} $(C_0(X),\Lip,C_0(X),x)$ where $\Lip$ is the Lipschitz seminorm for $\mathsf{d}_X$. 

\begin{theorem}\label{classical-comparison-thm}
Let $\mathcal{C}$ be a class of {\pqpms s} which contains all classical pointed proper metric spaces. Let $\mathcal{T}$ be a class of tunnels, weakly appropriate for $\mathcal{C}$, and such that if $\mathds{X}$, $\mathds{Y}$ and $\mathds{Z}$ are three proper metric spaces such that there exists isometric embeddings $\iota_X : \mathds{X}\hookrightarrow\mathds{Z}$ and $\iota_Y:\mathds{Y}\hookrightarrow\mathds{Z}$, then $(\mathds{Z},\iota_X,(\mathds{X},x),\iota_Y,\mathds{Y},y) \in \mathcal{T}$ for any $x\in\mathds{X}$ and $y\in\mathds{Y}$.

If a sequence $(X_n,\mathsf{d}_n,x_n)_{n\in\N}$ of pointed proper metric spaces converges to a pointed proper metric space $(Y,\mathsf{d}_Y,y)$ for the Gromov-Hausdorff distance, then the sequence:
\begin{equation*}
\left(\mathds{X}_n\right)_{n\in\N} = \left(C_0(X_n),\Lip_n,C_0(X_n),x_n\right)_{n\in\N}
\end{equation*}
converges to $\mathds{Y} = (C_0(Y),\Lip,C_0(Y),y)$ for the topographic Gromov-Hausdorff propinquity $\propinquity{}$, where $\Lip_n$ and $\Lip$ are the Lipschitz seminorms on, respectively, $C_0(X_n)$ for $\mathsf{d}_n$ for any $n\in\N$, and $C_0(Y)$ for $\mathsf{d}_Y$.
\end{theorem}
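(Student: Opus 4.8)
The plan is to convert the given Gromov--Hausdorff convergence into a supply of $r$-tunnels of small $r$-extent, and then to quote the topological reformulation (Theorem that says convergence for $\propinquity{\mathcal{T}}$ is equivalent to $\lim_j \propinquity{r,\mathcal{T}}(\mathds{A}_j,\mathds{L}) = 0$ for all $r>0$). Concretely, fix $r>0$ and $\varepsilon>0$. By Theorem~(\ref{net-version-GH-thm}), for all large $n$ there is a proper metric space $(Z_n,\mathsf{d}_{Z_n})$ and isometric embeddings $\iota_n : X_n \hookrightarrow Z_n$, $\iota : Y\hookrightarrow Z_n$ with the base points within $\varepsilon$ of each other and with $\cBall{X_n}{}{x_n}{r}$ mapped $\varepsilon$-close to a compact $K\subseteq Y$ containing $\cBall{Y}{}{y}{r-2\varepsilon}$ (and symmetrically); equivalently $\delta_r^{(Z_n,\mathsf{d}_{Z_n})} \le \varepsilon$ in the sense of Definition~(\ref{delta-r-def}) and Theorem~(\ref{GH-equivalences-thm}). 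I would then form the passage $\tau_n = (C_0(Z_n), \Lip_{Z_n}, C_0(Z_n), \iota_n^\ast, \mathds{X}_n, \iota^\ast, \mathds{Y})$, where $\iota_n^\ast, \iota^\ast$ are the restriction $*$-epimorphisms $C_0(Z_n)\twoheadrightarrow C_0(X_n)$, $C_0(Z_n)\twoheadrightarrow C_0(Y)$; these are topographic morphisms since $Z_n$ is proper, and $\Lip_{Z_n}$ is lower semicontinuous, so $\tau_n$ is a passage in the sense of Definition~(\ref{passage-def}), and it lies in $\mathcal{T}$ by the hypothesis on $\mathcal{T}$.

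The core step is to show that for a suitable $\varepsilon' = O(\varepsilon)$ and a suitable compact $K_t\subseteq Z_n^\sigma = Z_n$ (increasing in $t\in(0,r']$ for appropriate $r'$), the pair $(\varepsilon',K_t)$ is $t$-admissible for $\tau_n$ in the sense of Definition~(\ref{admissible-pair-def}) and Definition~(\ref{admissible-def}). The state-space inclusions (1)--(2) of Definition~(\ref{left-admissible-def}) translate into the $\delta_r$-estimate via the fact that $\sigmaKantorovich{\Lip}$ restricts to $\mathsf{d}$ on points (the Proposition following Proposition~(\ref{proper-extension-prop})) and that the Monge--Kantorovich distance on $\StateSpace(C_0(Z_n))$ restricted to measures supported in a ball is controlled by the metric; here one chooses $K_t$ to be (slightly enlarged) closed balls around the common image of the base points. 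Conditions (3)--(4) on nonemptiness of target sets are exactly where Corollary~(\ref{Lipschitz-lift-main-corollary}) (and Theorem~(\ref{Lipschitz-classical-lift-thm})) enters: given a $1$-Lipschitz $f$ compactly supported in a ball of radius $t$ around $x_n$ in $X_n$, that corollary produces a $1$-Lipschitz lift to $Z_n$ (indeed to $X_n\coprod Y$, which embeds isometrically in an enlargement of $Z_n$) whose restriction to $Y$ is supported in a ball of radius $t + 2\varepsilon$ around $y$, i.e. lands in $\Loc{C_0(Y)}{C_0(Y)}{\mu_Y, t+4\varepsilon'}$ for $\varepsilon'$ chosen $\ge \varepsilon$; the algebraic conditions (5) and the Jordan--Lie Leibniz inequalities~(\ref{admissible-Jordan-eq})--(\ref{admissible-Lie-eq}) are automatic because $\Lip_{Z_n}$ is the honest Lipschitz seminorm, which is Leibniz and satisfies $\Lip_{Z_n}(f\circ\iota) \le \Lip_{Z_n}(f)$. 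The base-point condition $\sigmaKantorovich{\Lip_{Z_n}}(\mu_{X_n}\circ\iota_n^\ast, \mu_Y\circ\iota^\ast) \le \varepsilon'$ is precisely $\mathsf{d}_{Z_n}(\iota_n(x_n),\iota(y))\le \varepsilon \le \varepsilon'$.

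Assembling these, one gets $\tunnelextent{\tau_n}{t} \le \varepsilon'$ for all $t$ up to roughly $r$, hence $\propinquity{t,\mathcal{T}}(\mathds{X}_n,\mathds{Y}) \le \varepsilon'$ for all large $n$; letting $\varepsilon\to 0$ gives $\lim_n \propinquity{t,\mathcal{T}}(\mathds{X}_n,\mathds{Y}) = 0$ for each $t>0$, and the reformulation theorem then yields $\lim_n \propinquity{}(\mathds{X}_n,\mathds{Y}) = 0$. \textbf{The main obstacle} I expect is bookkeeping the interplay of radii and the $4\varepsilon$-shifts: the target sets in Definition~(\ref{lift-set-def}) ask for supports in balls of radius $t+4\varepsilon$ in the codomain while the classical extension result of Corollary~(\ref{Lipschitz-lift-main-corollary}) gives $t+2\varepsilon$; one must choose the admissibility constant to be a fixed multiple of $\varepsilon$ (say $\varepsilon' = \varepsilon$ suffices, or a small constant times $\varepsilon$), verify the hypothesis $\varepsilon' \in (0,R/2)$ with $R = \inf\{\mathsf{d}(z,x_n) : z\notin \cBall{X_n}{}{x_n}{t}\}$ of that corollary holds for $n$ large (which uses properness and the Gromov--Hausdorff convergence to keep $R$ bounded below uniformly, or else handle the degenerate compact case via Proposition~(\ref{compact-tunnel-prop})), and confirm that the family $(K_t)_t$ can be taken increasing. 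A secondary but routine point is the degenerate situation where some $X_n$ or $Y$ is compact with diameter below $r$, which is handled separately by the existence results Proposition~(\ref{existence-prop}) and Proposition~(\ref{compact-tunnel-prop}); in the genuinely non-compact case no isometry needs to be built by hand since $\iota_n^\ast, \iota^\ast$ are literally restriction maps.
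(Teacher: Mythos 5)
Your overall strategy (turn Gromov--Hausdorff closeness into $r$-tunnels of small extent, check admissibility, then invoke the equivalence between convergence of the local propinquities and of $\propinquity{\mathcal{T}}$) is the paper's strategy, but there is a genuine gap at the central step: your choice of tunnel. You take $\tau_n = (C_0(Z_n),\Lip_{Z_n},C_0(Z_n),\iota_n^\ast,\mathds{X}_n,\iota^\ast,\mathds{Y})$ with the restriction maps of the Gromov--Hausdorff ambient space $Z_n$, and you claim the non-emptiness of the target sets (condition (3) of left admissibility) follows from Corollary (\ref{Lipschitz-lift-main-corollary}). It does not: that corollary produces a $1$-Lipschitz function on the disjoint union $X_n\coprod Y$ equipped with a \emph{new} metric $\mathsf{w}$, whose cross-distances are the $Z_n$-distances inflated by $\varepsilon$ (and $2\varepsilon$), precisely because the pair (extension $f$ on $X_n$, truncated replica $h$ on $Y$) is in general \emph{not} $1$-Lipschitz for the metric that $Z_n$ induces on $X_n\cup Y$. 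For your tunnel, a lift must be an element of $C_0(Z_n)$ restricting exactly to $a$ on $X_n$ and lying, after restriction to $Y$, in $\Loc{C_0(Y)}{C_0(Y)}{\mu_Y,r+4\varepsilon}$; Theorem (\ref{Lipschitz-classical-lift-thm}) only guarantees an extension $g$ whose restriction to $Y$ is within $\varepsilon$ in norm of such a function, while its actual support is only controlled by the radius $r+\|f\|_{C_0(X_n)}$, which can be of the order of $r+R_n$, far larger than $r+4\varepsilon$. Indeed, when $X_n$ is not a length space (say the $r$-ball around $x_n$ is isolated, so $1$-Lipschitz functions supported in it can take values comparable to $r$ near the edge of their support), \emph{every} $1$-Lipschitz extension on $Z_n$ can be forced to be nonzero at points of $Y$ well beyond distance $r+4\varepsilon$ from $y$, so the target set $\targetsettunnel{\tau_n}{a}{\Lip_n(a),r,\varepsilon,K}$ is empty for every $K$ and every small $\varepsilon$; no choice of admissible pair of small extent exists for this passage. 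Your parenthetical that $X_n\coprod Y$ ``embeds isometrically in an enlargement of $Z_n$'' does not repair this, because a lift living on that enlargement is not an element of $C_0(Z_n)$.

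The fix is exactly what the paper does, and it is a change of tunnel rather than of bookkeeping: take the tunnel to be $\left(C_0\left(X_n\coprod Y\right),\Lip_{\mathsf{w}_n},C_0\left(X_n\coprod Y\right),\pi_{X_n},\mathds{X}_n,\pi_Y,\mathds{Y}\right)$, where $\mathsf{w}_n$ is the metric on the disjoint union provided by Corollary (\ref{Lipschitz-lift-main-corollary}) (constructed from embeddings of large balls, of radius about $2r+R_Y+1$, that are mutually $\varepsilon$-close) and $\pi_{X_n},\pi_Y$ are the canonical surjections; this still lies in $\mathcal{T}$ by your hypothesis, since $X_n$ and $Y$ embed isometrically into $\left(X_n\coprod Y,\mathsf{w}_n\right)$. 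With this choice, the compacts $K_t = \cBall{X_n}{}{x_n}{t+2\varepsilon}\cup\cBall{Y}{}{y}{t+2\varepsilon}$ form an increasing family, the lift condition is supplied directly by the corollary, and the remaining items of your outline (state-space inclusions via convexity of the Monge--Kantorovich metric, the base-point estimate, the Leibniz conditions for the genuine Lipschitz seminorm $\Lip_{\mathsf{w}_n}$, the bound $R_n\leq R_Y+1$ guaranteeing the corollary's hypotheses for large $n$, and the passage from local propinquities to $\propinquity{\mathcal{T}}$) go through as you describe; no separate treatment of the compact case is needed.
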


\begin{proof}
We refer to \cite{Latremoliere12b} for the proof that the triples $(C_0(X_n),\Lip_n,C_0(X_n))$, for all $n\in\N$, and $(C_0(Y),\Lip,C_0(Y))$ are {\lcqms s}. It is then easy to check that they are in fact {\pqms s}, as required. We also check easily that isometric embeddings give rise not only to passage, but in fact, to tunnels (albeit possibly of very large extent).

Let $\varepsilon \in \left(0,\frac{1}{2}\right)$ and $r > 0$. Define, for all $n\in\N$:
\begin{equation*}
R_n = \inf\set{\mathsf{d}_n(x_n,x)}{x\in X,x\not\in\cBall{X_n}{\mathsf{d}_n}{x_n}{r}}
\end{equation*}
and
\begin{equation*}
R_Y = \inf\set{\mathsf{d}_Y(y,x)}{x\in Y,x\not\in\cBall{Y}{\mathsf{d}_Y}{y}{r}} \text{.}
\end{equation*}
Since $(X_n,\mathsf{d}_n,x_n)_{n\in\N}$ converges to $(Y,\mathsf{d}_Y,y)$, there exists $N \in \N$ such that for all $n\geq N$, there exists two isometric embeddings of $(X_n,\mathsf{d}_n)$ and $(Y,\mathsf{d}_Y)$ (which we will omit in our notations) into a proper metric space $(Z_n,\mathsf{D}_n)$ such that:
\begin{equation*}
\cBall{X_n}{}{x_n}{2r + R_Y + 1} \almostsubseteq{(Z,\mathsf{D}_n)}{\varepsilon} Y \text{ and }\cBall{Y}{}{y}{2r + R_Y + 1} \almostsubseteq{(Z,\mathsf{D}_n)}{\varepsilon} X_n \text{,}
\end{equation*}
while $\mathsf{D}_n(x_n,y)\leq\varepsilon$.

Let $\alpha \in (0,1)$ and $n\geq N$. There exists $x\in Y\setminus\cBall{Y}{}{y}{r}$ such that $\mathsf{d}_Y(y,x) \leq R_Y + \alpha$. Thus there exists $x' \in X_n$ such that $\mathsf{D}_n(x,x')\leq\varepsilon$. Consequently, we have:
\begin{equation*}
\begin{split}
\mathsf{d}_n(x_n,x') &\leq \mathsf{D}_n(x_n,y) + \mathsf{d}_Y(y,x) + \mathsf{D}_n(x,x')\\
&\leq 2\varepsilon + R_Y + \alpha\text{.}
\end{split}
\end{equation*}
Thus $R_n \leq 2\varepsilon + R_Y + \alpha$, and as $\alpha$ is arbitrary, we conclude that $R_n \leq 2\varepsilon + R_Y \leq R_Y + 1$.

Consequently, we have:
\begin{equation*}
\cBall{X_n}{\mathsf{d}_X}{x_n}{2r + R_n} \subseteq_\varepsilon Y\text{ and }\cBall{Y}{\mathsf{d}_Y}{y}{2r + R_Y} \subseteq_\varepsilon X\text{.}
\end{equation*}
Let $\mathsf{w}_n$ be the metric on $X_n\coprod Y$ given by Corollary (\ref{Lipschitz-lift-main-corollary}) (with $r+2\varepsilon$ in lieu of $r$). We now have:
\begin{enumerate}
\item the restriction of $\mathsf{w}_n$ to $X_n$ and $Y$ is given respectively by $\mathsf{d}_n$ and $\mathsf{d}_Y$,
\item moreover:
\begin{equation*}
\cBall{X_n}{\mathsf{d}_n}{x_n}{r} \almostsubseteq{(X_n\coprod Y,\mathsf{w}_n)}{\varepsilon} \cBall{Y}{\mathsf{d}_Y}{y}{r+4\varepsilon}\text{ and }\cBall{Y}{\mathsf{d}_Y}{y}{r} \almostsubseteq{(X_n\coprod Y,\mathsf{w}_n)}{\varepsilon} \cBall{X_n}{\mathsf{d_n}}{x_n}{r+4\varepsilon}\text{,}
\end{equation*}
\item if $t\in (0,r + 2\varepsilon]$ and if $f$ is a $1$-Lipschitz function on $X_n$ supported on $\cBall{X_n}{}{x_n}{t}$, then there exists a $1$-Lipschitz function $g$ on $X_n\coprod Y$ supported on $\cBall{X_n}{}{x_n}{t}\cup\cBall{Y}{}{y}{t+2\varepsilon}$ whose restriction to $X_n$ if $f$, and similarly if we reverse the roles of $X_n$ and $Y$.
\end{enumerate}

Let $K_n = \cBall{X_n}{}{x_n}{r+2\varepsilon}\cup\cBall{Y}{}{y}{r+2\varepsilon}$. We first observe that:
\begin{equation*}
K_n \subseteq_\varepsilon \cBall{X_n}{}{x_n}{r+4\varepsilon}
\end{equation*}
and
\begin{equation*}
K_n \subseteq_\varepsilon \cBall{Y}{}{y}{r+4\varepsilon}\text{.}
\end{equation*}

A standard argument, based on the convexity of the {\mongekant}, then shows that:
\begin{equation*}
\StateSpace\left[C_0\left(X_n\coprod Y_n\right)\middle\vert K_n\right] \almostsubseteq{(\StateSpace(C_0(X)\oplus\C_0(Y)),\Kantorovich{\mathsf{w}})}{\varepsilon} \StateSpace[C_0(X)|\Kantorovich{\mathsf{d}_X}]
\end{equation*}
and
\begin{equation*}
\StateSpace\left[C_0\left(X_n\coprod Y_n\right)\middle\vert K_n\right] \almostsubseteq{(\StateSpace(C_0(X)\oplus\C_0(Y)),\Kantorovich{\mathsf{w}})}{\varepsilon} \StateSpace[C_0(Y)|\Kantorovich{\mathsf{d}_Y}]
\end{equation*}
where $\Kantorovich{\mathsf{d}}$ for any metric $\mathsf{d}$ is the {\mongekant} $\Kantorovich{\Lip_{\mathsf{d}}}$ for the Lipschitz seminorm associated with $\mathsf{d}$.

Let $\pi_X : C_0(X\coprod Y)\twoheadrightarrow C_0(X)$ and $\pi_Y: C_0(X\coprod Y)\twoheadrightarrow C_0(Y)$ be the canonical surjections. We have established that:
\begin{equation*}
\left(C_0\left(X\coprod Y\right),\Lip_{\mathsf{w}},C_0\left(X\coprod Y\right),\pi_X,\mathds{X}_n,\pi_Y,\mathds{Y}\right)
\end{equation*}
is a $r$-tunnel with $(\varepsilon,\cBall{X}{}{x}{r+2\varepsilon}\cup\cBall{Y}{}{y}{r+2\varepsilon})$ as an $r$-admissible pair.

Consequently, $\propinquity{\mathcal{T}}(\mathds{X}_n,\mathds{Y})\leq\varepsilon$. This completes our proof.
\end{proof}

We pause to observe that if we defined the extent of tunnels in terms of pure states, rather than states, then the proof of Theorem (\ref{classical-comparison-thm}) and our work in the second section of this paper show that our modified hypertopology, when restricted to the class of classical pointed proper metric spaces, would agree with the Gromov-Hausdorff topology. We have made the standard choice, in noncommutative metric geometry, to work with more general states, as the pure state spaces may not be topologically well-behaved in general. This is a standard observation in noncommutative Gromov-Hausdorff theory (e.g. \cite{Rieffel00,Kerr02,Latremoliere13,Latremoliere13b}).

\subsection{Dual Gromov-Hausdorff Propinquity}

\begin{theorem}
A sequence $(\A_n,\Lip_n)$ of {\Lqcms s} converges to a {\Lqcms} $(\A,\Lip)$ for the dual propinquity if, and only if it converges for the topographic Gromov-Hausdorff hypertopology.
\end{theorem}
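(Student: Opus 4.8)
The plan is to prove the equivalence by showing that each notion of convergence implies the other, using the identification of a {\Lqcms} $(\A,\Lip)$ with its canonical {\pqpms} $(\A,\Lip,\C\unit_\A,\epsilon)$ and the fact established in Corollary (\ref{compact-tunnel-corollary}) that, for passages between {\Lqcms s}, being an $r$-tunnel for some $r > 0$, being an $r$-tunnel for all $r > 0$, and being a tunnel with finite extent in the sense of \cite{Latremoliere14} are all equivalent --- and moreover the $r$-extent equals the extent in every case. This last point is the crucial bridge: it means that on the subclass of {\Lqcms s}, the quantities $\propinquity{r,\mathcal{T}}(\mathds{A}_n,\mathds{A})$ do not depend on $r$ and coincide with the infimum of extents of tunnels, which is exactly (up to the choice of class $\mathcal{T}$) the dual Gromov-Hausdorff propinquity $\propinquity{}$ of \cite{Latremoliere14}.

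First I would fix the relevant class: take $\mathcal{C}$ to be any weakly appropriate class containing the canonical {\pqpms s} of all {\Lqcms s} (for instance $\PQMS$ itself, or the class in Item (3) of the discussion after Definition (\ref{w-appropriate-def}) whose tunnels have *-epimorphisms with the quotient property, which by \cite[Theorem 3.1]{Latremoliere14} corresponds exactly to the tunnels defining the dual propinquity). Then I would argue: if $(\A_n,\Lip_n)$ converges to $(\A,\Lip)$ for the dual propinquity, then for each $n$ there is a tunnel $\tau_n$ between the associated {\Lqcms s} with $\chi(\tau_n) \to 0$; by Corollary (\ref{compact-tunnel-corollary}), each $\tau_n$ is an $r$-tunnel for every $r > 0$ with $\tunnelextent{\tau_n}{r} = \chi(\tau_n)$, so $\propinquity{r,\mathcal{T}}(\mathds{A}_n,\mathds{A}) \leq \chi(\tau_n) \to 0$ for every $r$, hence $\propinquity{\mathcal{T}}(\mathds{A}_n,\mathds{A}) \to 0$ by the theorem characterizing $\propinquity{\mathcal{T}}$-convergence via the local propinquities. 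Conversely, if $\propinquity{\mathcal{T}}(\mathds{A}_n,\mathds{A}) \to 0$, then $\propinquity{r,\mathcal{T}}(\mathds{A}_n,\mathds{A}) \to 0$ for all $r$; fixing $r > \max\{\diam{\M^\sigma_n}{\sigmaKantorovich{\Lip_n}}, \diam{\M^\sigma}{\sigmaKantorovich{\Lip}}\} = 0$ (trivial topography, so diameter zero — any $r > 0$ works), Proposition (\ref{compact-tunnel-prop}) forces any $r$-admissible pair to have $K = \M_\D^\sigma$, the passage to be a genuine tunnel, and $\pi_j$ to be a *-epimorphism with $\Lip_j$ the quotient of $\Lip_\D$; so there are tunnels $\tau_n$ in the dual-propinquity sense with $\chi(\tau_n) = \tunnelextent{\tau_n}{r} \to 0$, giving convergence for the dual propinquity.

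The main obstacle I expect is matching the \emph{class} of tunnels: the dual propinquity of \cite{Latremoliere13b,Latremoliere14} is built from a specific class (Leibniz tunnels whose legs are quotient maps), while $\propinquity{\mathcal{T}}$ is defined relative to an arbitrary weakly appropriate $\mathcal{T}$; so the statement must really be read with $\mathcal{T}$ chosen to be the natural class extending the dual-propinquity tunnels (Items (3)--(4) after Definition (\ref{w-appropriate-def})), and one has to verify (a) that an $r$-tunnel between {\Lqcms s} in this $\mathcal{T}$, after restricting its topography to $\M_\D^\sigma = K$, is precisely a dual-propinquity tunnel with the same extent, and (b) that conversely a dual-propinquity tunnel, equipped with the trivial topography $\C\unit_\D$, lands in $\mathcal{T}$ and is an $r$-tunnel for all $r$ with matching extent — both of which follow from Corollary (\ref{compact-tunnel-corollary}) together with \cite[Theorem 3.1]{Latremoliere14}. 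The rest is bookkeeping with the already-proven monotonicity (Lemma (\ref{monotone-local-propinquity-lemma})) and the characterization of $\propinquity{\mathcal{T}}$-convergence, so once the class is pinned down the proof is short.
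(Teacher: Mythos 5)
Your proposal is correct and follows essentially the same route as the paper: both directions hinge on the compact-case correspondence of Proposition (\ref{compact-tunnel-prop})/Corollary (\ref{compact-tunnel-corollary}) --- a dual-propinquity tunnel, given the trivial topography, is an $r$-tunnel for every $r$ with the same extent, and conversely an $r$-tunnel between {\Lqcms s} yields a dual-propinquity tunnel of equal extent --- combined with the characterization of $\propinquity{\mathcal{T}}$-convergence via the local propinquities. Your additional care in pinning down the class $\mathcal{T}$ of tunnels is a point the paper's own proof leaves implicit, not a divergence in method.
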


\begin{proof}
Let $(\A_n,\Lip_n)_{n\in\N}$ be a sequence of {\Lqcms s}. For each $n\in\N$, let $\mathds{A}_n$ be the canonically associated {\pqpms} for $(\A_n,\Lip_n)$. Similarly, let $(\A,\Lip)$ be a {\Lqcms} and $\mathds{A}$ the canonically associated {\pqpms}.

If $(\A_n,\Lip_n)_{n\in\N}$ converges to $(\A,\Lip)$ for the dual Gromov-Hausdorff propinquity, then by \cite{Latremoliere13b,Latremoliere14} for all $\varepsilon > 0$, there exists $N\in\N$ such that for all $n\geq N$, there exists a tunnel $\tau_n$ from $(\A_n,\Lip_n)$ to $(\A,\Lip)$ with $\chi(\tau_n)\leq\varepsilon$. For each $n\geq N$, by Proposition (\ref{compact-tunnel-prop}), we can associate to $\tau_n$ a $r$-tunnel, for any $r > 0$, with the same extent as $\tau_n$, from $\mathds{A}_n$ to $\mathds{A}$. Thus $\propinquity{r}(\mathds{A}_n,\mathds{A})\leq\varepsilon$ for all $n\geq N$. Thus $(\mathds{A}_n)_{n\in\N}$ converges to $\mathds{A}$ in the topographic Gro\-mov-Haus\-dorff topology.

Assume now that $(\mathds{A}_n)_{n\in\N}$ converges to $\mathds{A}$ for the topographic Gromov-Hausdorff topology. Let $r > 0$ and let $\varepsilon > 0$. There exists $N\in\N$ such that, for all $n\geq N$, there exists an $r$-tunnel $\tau_n$ from $\mathds{A}_n$ to $\mathds{A}$ with $\tunnelextent{\tau}{r}\leq\varepsilon$. 

Since $\mathds{A}_n$ is compact for all $n\in\N$ and so is $\mathds{A}$, we conclude that if $\tau_n=(\D,\Lip,\M,\pi,\mathds{A}_n,\rho,\mathds{A})$ then $\beta_n = (\D,\Lip,\pi,\rho)$ is a tunnel in the sense of \cite{Latremoliere13b,Latremoliere14}, with $\chi(\beta_n)=\tunnelextent{\tau_n}{r}\leq\varepsilon$. Thus $\Lambda^\ast((\A_n,\Lip_n),(\A,\Lip))\leq\varepsilon$ for all $n\geq N$ and our theorem is proven.
\end{proof}

\bibliographystyle{amsplain}
\bibliography{../thesis}
\vfill

\end{document}